\newcommand{\N}{\mathbb{N}}
\newcommand{\R}{\mathbb{R}}
\newcommand{\C}{\mathbb{C}}
\newcommand{\cA}{\mathcal{A}}
\newcommand{\cB}{\mathcal{B}}
\newcommand{\cC}{\mathcal{C}}
\newcommand{\cF}{\mathcal{F}}
\newcommand{\sF}{\mathscr{F}}
\newcommand{\sH}{\mathscr{H}}
\newcommand{\cM}{\mathcal{M}}
\newcommand{\e}{\varepsilon}
\newcommand{\Om}{\Omega} 
\newcommand{\om}{\omega}
\newcommand{\blambda}{\boldsymbol{\lambda}}
\DeclareMathOperator{\ev}{ev}
\DeclareMathOperator{\Hom}{Hom}
\DeclareMathOperator{\End}{End}
\newcommand{\MnC}{\mathrm{M}_N(\C)}
\newcommand{\Tr}{\mathrm{Tr}}
\newcommand{\tr}{\mathrm{tr}}
\DeclareMathOperator{\id}{id}
\DeclareMathOperator{\cRe}{Re}
\DeclareMathOperator{\cIm}{Im}
\newcommand{\flip}{\mathsmaller{\mathrm{flip}}}
\newcommand{\la}{\langle}
\newcommand{\ra}{\rangle}
\newcommand{\sh}{\text{\smaller $\#$}}
\newcommand{\op}{\mathrm{op}}
\newcommand{\sa}{\mathrm{sa}}
\newcommand{\vertiii}[1]{{\left\vert\kern-0.25ex\left\vert\kern-0.25ex\left\vert #1 
    \right\vert\kern-0.25ex\right\vert\kern-0.25ex\right\vert}}
\DeclareMathOperator{\supp}{supp}
\newcommand{\loc}{\mathrm{loc}}
\newcommand{\hotimes}{\otimes_2}
\newcommand{\potimes}{\hat{\otimes}_{\pi}}
\newcommand{\wotimes}{\bar{\otimes}}
\newcommand{\iotimes}{\hat{\otimes}_i}
\newcommand{\wh}{\widehat}
\newcommand{\SB}{\mathbb{S}}
\newcommand{\SBa}{\mathbb{S}_{\mathrm{a}}}
\newcommand\numberthis{\addtocounter{equation}{1}\tag{\theequation}}
\newcommand{\oset}[3][0ex]{%
  \mathrel{\mathop{#3}\limits^{
    \vbox to#1{\kern-2\ex@
    \hbox{$\scriptstyle#2$}\vss}}}}
\theoremstyle{plain}
\newtheorem{prop}{Proposition}[subsection]
\newtheorem{propa}{Proposition}[section]
\newtheorem{lem}[prop]{Lemma}
\newtheorem{lema}[propa]{Lemma}
\newtheorem{thm}[prop]{Theorem}
\newtheorem{thma}[propa]{Theorem}
\newtheorem{pthm}[prop]{Pseudotheorem}
\newtheorem{cor}[prop]{Corollary}
\newtheorem{cora}[propa]{Corollary}
\theoremstyle{definition}
\newtheorem{defi}[prop]{Definition}
\newtheorem{defia}[propa]{Definition}
\newtheorem{nota}[prop]{Notation}
\newtheorem{notaa}[propa]{Notation}
\newtheorem{ex}[prop]{Example}
\newtheorem{rem}[prop]{Remark}
\newtheorem{rema}[propa]{Remark}
\newtheorem*{ack}{Acknowledgements}
\renewenvironment{proof}[1][\proofname]{%
  \par\pushQED{\qed}\normalfont%
  \topsep6\p@\@plus6\p@\relax
  \trivlist\item[\hskip\labelsep\bfseries#1\@addpunct{.}]%
  \ignorespaces
}{%
  \popQED\endtrivlist\@endpefalse
}
\begin{document}

\title{It\^{o}'s formula for noncommutative \\ \texorpdfstring{$C^2$}{} functions of free It\^{o} processes}
\author{Evangelos A. Nikitopoulos\thanks{Supported by NSF grant DGE 2038238 and partially supported by NSF grants DMS 1253402 and DMS 1800733}}
\affil{Department of Mathematics, University of California San Diego\protect\\
\noindent 9500 Gilman Drive, La Jolla, CA 92093-0112 (USA)\protect\\
Email: {\tt \href{mailto:enikitop@ucsd.edu}{enikitop@ucsd.edu}}}
\date{\vspace{-6ex}}

\maketitle

\begin{abstract}
In a recent paper, the author introduced a rich class $NC^k(\mathbb{R})$ of ``noncommutative $C^k$" functions $\mathbb{R} \to \mathbb{C}$ whose operator functional calculus is $k$-times differentiable and has derivatives expressible in terms of multiple operator integrals (MOIs).
In the present paper, we explore a connection between free stochastic calculus and the theory of MOIs by proving an It\^{o} formula for noncommutative $C^2$ functions of self-adjoint free It\^{o} processes.
To do this, we first extend P. Biane and R. Speicher's theory of free stochastic calculus --- including their free It\^{o} formula for polynomials --- to allow free It\^{o} processes driven by multidimensional semicircular Brownian motions.
Then, in the self-adjoint case, we reinterpret the objects appearing in the free It\^{o} formula for polynomials in terms of MOIs.
This allows us to enlarge the class of functions for which one can formulate and prove a free It\^{o} formula from the space originally considered by Biane and Speicher (Fourier transforms of complex measures with two finite moments) to the strictly larger space $NC^2(\mathbb{R})$.
Along the way, we also obtain a useful ``traced" It\^{o} formula for arbitrary $C^2$ scalar functions of self-adjoint free It\^{o} processes.
Finally, as motivation, we study an It\^{o} formula for $C^2$ scalar functions of $N \times N$ Hermitian matrix It\^{o} processes.

$\,$\vspace{-1.35mm}

\noindent \textbf{Keywords:} free probability, free stochastic calculus, matrix stochastic calculus, It\^{o} formula, functional calculus, multiple operator integral

$\,$\vspace{-1.35mm}

\noindent \textbf{MSC (2020):} 46L54, 47A60, 60H05
\end{abstract}\vspace{-1.5mm}
\tableofcontents
\clearpage

\section{Introduction}\label{sec.intro}

\subsection{Motivation}

P. Biane and R. Speicher developed in \cite{bianespeicher} a theory of free stochastic calculus with respect to semicircular Brownian motion that has yielded many fruitful applications --- e.g., to the theories of free SDEs \cite{capitaine,demni,gao,kargin}, entropy \cite{bianespeicherF}, and transport \cite{dabrowskietal};
analysis on Wigner space \cite{bianespeicher,kemp4th};
and the calculation of Brown measures \cite{driverhallkemp,hozhong,demnihamdi,hallho}.
In this paper, we present an extension and reinterpretation of this free stochastic calculus that naturally connects the It\^{o}-type formulas thereof to the theory of \textit{multiple operator integrals} (MOIs) via the class $NC^k(\R)$ of \textit{noncommutative $C^k$ functions} (Definition \ref{def.NCk}) introduced by the author in \cite{nikitopoulosNCk}.

Our main results (Theorems \ref{thm.trFFIF} and \ref{thm.FFIF}) are ``free It\^{o} formulas" for scalar functions of self-adjoint ``free It\^{o} processes" with respect to an $n$-dimensional semicircular Brownian motion $(x_1,\ldots,x_n)$.
As a consequence of the work of D.-V. Voiculescu \cite{voiculescu}, $(x_1,\ldots,x_n)$ is in a precise sense the large-$N$ limit of an $n$-tuple $\big(X_1^{(N)},\ldots,X_n^{(N)}\big)$ of independent Brownian motions on the space of $N \times N$ Hermitian matrices.
Therefore, interesting formulas involving $(x_1,\ldots,x_n)$ are often best motivated by studying formulas involving $\big(X_1^{(N)},\ldots,X_n^{(N)}\big)$ and then (formally or rigorously) taking $N \to \infty$.
This is certainly true for our formulas.
In Appendix \ref{sec.fdmotiv}, we study some independently interesting matrix stochastic calculus formulas that motivate the present paper's main results.
In order to explain the appearance of MOIs, we discuss a special case of one of these formulas.

In this preliminary discussion and in Appendix \ref{sec.fdmotiv}, we assume familiarity with the theory of continuous-time stochastic processes and stochastic integration, though these subjects are not used elsewhere in the paper.
Please see \cite{chungwilliams,karatzasshreve} for some relevant background. Fix a filtered probability space $(\Om,\sF,(\sF_t)_{t \geq 0},P)$, with filtration satisfying the usual conditions, to which all processes we discuss will be adapted.

We begin by recalling the statement of It\^{o}'s formula from classical stochastic analysis.
Let $V$ and $W$ be finite-dimensional inner product spaces, and let $M = (M(t))_{t \geq 0}$ be a continuous $V$-valued semimartingale.
It\^{o}'s formula says that if $F \in C^2(V;W)$, then
\[
d\,F(M(t)) = DF(M(t))[dM(t)] + \frac{1}{2}D^2F(M(t))[dM(t), dM(t)],\numberthis\label{eq.Itoform}
\]
where $D^kF$ is the $k^{\text{th}}$ Fr\'{e}chet derivative of $F$.
The $DF(M)[dM]$ term in Equation \eqref{eq.Itoform} is the differential notation for the stochastic integral against $M$ of the $\Hom(V;W) = \{$linear maps $V \to W\}$-valued process $DF(M)$.
The notation for the second term (the ``It\^{o} correction term") in Equation \eqref{eq.Itoform} is to be understood as follows.
Let $e_1,\ldots,e_n \in V$ be a basis for $V$, and write $M = \sum_{i=1}^nM_i e_i$.
Then
\[
\int_0^t D^2F(M(s))[dM(s),dM(s)] = \sum_{i,j=1}^n\int_0^t \underbrace{D^2F(M(s))[e_i,e_j]}_{\partial_{e_i}\partial_{e_j}F(M(s))} \, dM_i(s)\,dM_j(s),
\]
where $dM_i(s)\,dM_j(s) = d\la\hspace{-0.3mm}\la M_i,M_j\ra\hspace{-0.3mm}\ra(s)$ denotes Riemann--Stieltjes integration against the quadratic covariation $\la\hspace{-0.3mm}\la M_i,M_j\ra\hspace{-0.3mm}\ra$ of $M_i$ and $M_j$.
Our present motivation is an application of Equation \eqref{eq.Itoform} to matrix-valued processes $M$ and maps $F$ arising from scalar functional calculus.

\begin{nota}\label{nota.mat}
Fix $N \in \N$.
\begin{enumerate}[label=(\alph*),leftmargin=2\parindent]
    \item Write $\MnC$ for the set of $N \times N$ complex matrices and $\MnC_{\sa}$ for the set of $M \in \MnC$ such that $M^*=M$.
    \item Write $\la A,B \ra_N \coloneqq N \, \Tr(B^*A) = N^2\,\tr(B^*A)$ for all $A,B \in \MnC$, where $\tr = \frac{1}{N}\Tr$ is the normalized trace.
    Note that $\la \cdot,\cdot \ra_N$ restricts to a real inner product on the real vector space $\MnC_{\sa}$.
    \item For $M \in \MnC_{\sa}$ and $\lambda \in \sigma(M) = \{$eigenvalues of $M\} \subseteq \R$, write $P_{\lambda}^M \in \MnC$ for (the standard representation of) the orthogonal projection onto the $\lambda$-eigenspace of $M$.
    For a function $f \colon \sigma(M) \to \C$, write $f(M) \coloneqq \sum_{\lambda \in \sigma(M)} f(\lambda)\,P_{\lambda}^M \in \MnC$.
    Recall that the Spectral Theorem for Hermitian matrices says precisely that if $M \in \MnC_{\sa}$, then $M = \sum_{\lambda \in \sigma(M)}\lambda\,P_{\lambda}^M = \id_{\sigma(M)}(M)$.
\end{enumerate}
\end{nota}

Now, let $\big(X_1^{(N)},\ldots,X_n^{(N)}\big) = (X_1,\ldots,X_n)$ be an $n$-tuple of independent standard $(\MnC_{\sa},\la \cdot, \cdot \ra_N)$-valued Brownian motions, and let $M$ be a $\MnC$-valued stochastic process satisfying
\[
dM(t) = \sum_{i=1}^n\sum_{j=1}^{\ell} A_{ij}(t)\,dX_i(t)\,B_{ij}(t) + K(t)\,dt \numberthis\label{eq.specialmatrItoprocess}
\]
for continuous adapted $\MnC$-valued processes $A_{ij}$, $B_{ij}$, $K$.
The term $A_{ij}(t)\,dX_i(t)\,B_{ij}(t)$ above is the differential notation for the stochastic integral against $X_i$ of the $\End(\MnC)= \Hom(\MnC;\MnC)$-valued process $[0,\infty)  \times \Om \ni (t,\om) \mapsto (\MnC \ni E \mapsto A_{ij}(t,\om)\,E\,B_{ij}(t,\om) \in \MnC)$.
Such processes $M$ are special kinds of $N \times N$ \textit{matrix It\^{o} processes} (Definition \ref{def.matItoprocess}).
Finally, for $f \in C^2(\R)$, define
\[
f^{[1]}(\lambda,\mu) \coloneqq \frac{f(\lambda)-f(\mu)}{\lambda-\mu} \; \text{ and } \; f^{[2]}(\lambda,\mu,\nu) \coloneqq \frac{f^{[1]}(\lambda,\mu) - f^{[1]}(\lambda,\nu)}{\mu - \nu}
\]
to be the \textit{first} and \textit{second divided differences} (Definition \ref{def.divdiff} and Proposition \ref{prop.divdiff}) of $f$, respectively.

\begin{thm}\label{thm.specialFIF}
If $M$ is as in Equation \eqref{eq.specialmatrItoprocess}, $M^*=M$, and $f \in C^2(\R)$, then
\[
d\,f(M(t)) = \sum_{\lambda,\mu \in \sigma(M(t))}f^{[1]}(\lambda,\mu) \,P_{\lambda}^{M(t)}\,dM(t)\,P_{\mu}^{M(t)} + \sum_{i=1}^n C_i(t)\,dt,
\]
where the process $C_i$ above is given by
\[
C_i = \sum_{j,k=1}^{\ell}\sum_{\lambda,\mu,\nu \in \sigma(M)}f^{[2]}(\lambda,\mu,\nu)\big(P_{\lambda}^MA_{ij}\,\tr(B_{ij}P_{\mu}^M A_{ik})\,B_{ik}P_{\nu}^M + P_{\lambda}^MA_{ik}\,\tr(B_{ik}P_{\mu}^M A_{ij})\,B_{ij}P_{\nu}^M\big).
\]
\end{thm}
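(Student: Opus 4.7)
The strategy is to view $f$ as a $C^2$ map $\MnC_{\sa} \to \MnC$ via the functional calculus of Notation \ref{nota.mat}(c), apply the finite-dimensional It\^o formula \eqref{eq.Itoform} to it, and then unfold the two resulting terms using (a) the Daletskii--Krein formulas for $Df$ and $D^2 f$ and (b) the basic quadratic covariation identity for Hermitian Brownian motion.

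The first ingredient consists of the Daletskii--Krein formulas
\[
Df(M)[H] = \sum_{\lambda,\mu \in \sigma(M)} f^{[1]}(\lambda,\mu)\,P_\lambda^M\,H\,P_\mu^M
\]
and
\[
D^2 f(M)[H,K] = \sum_{\lambda,\mu,\nu \in \sigma(M)} f^{[2]}(\lambda,\mu,\nu)\bigl(P_\lambda^M H P_\mu^M K P_\nu^M + P_\lambda^M K P_\mu^M H P_\nu^M\bigr).
\]
For polynomial $f$ these follow from expanding $(M+tH)^m$ and $(M+sH+tK)^m$, using the identity $\sum_\lambda \lambda^j P_\lambda^M = M^j$ together with the algebraic properties of divided differences in Proposition \ref{prop.divdiff}; the general $C^2$ case follows by polynomial approximation on the finite set $\sigma(M) \subset \R$. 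Substituting the first formula into the stochastic-integral term of \eqref{eq.Itoform} immediately yields the first summand in the theorem.

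The second ingredient is the quadratic covariation identity
\[
dX_i(t)\cdot A\cdot dX_j(t) = \delta_{ij}\,\tr(A)\,I_N\,dt
\]
for any $\MnC$-valued adapted process $A$ and any $i,j \in \{1,\ldots,n\}$. I would prove this by fixing an orthonormal basis $(e_k)$ of $(\MnC_{\sa},\la\cdot,\cdot\ra_N)$, writing $X_i = \sum_k X_{i,k}\,e_k$ with independent standard real Brownian motions $X_{i,k}$, and computing $\sum_k e_k A e_k = \tr(A)\,I_N$ by direct calculation. The appearance of the \emph{normalized} trace (rather than $\Tr$) is precisely what compensates for the factor of $N$ in $\la\cdot,\cdot\ra_N$.

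To conclude, substitute $dM = \sum_{i,j} A_{ij}\,dX_i\,B_{ij} + K\,dt$ into $\tfrac12 D^2 f(M)[dM,dM]$, expand by bilinearity, and discard the products whose covariation vanishes (namely $dt\,dt$, $dt\,dX_i$, and $dX_i\,dX_{i'}$ for $i\ne i'$). Each of the two symmetric pieces of $D^2 f(M)[A_{ij}\,dX_i\,B_{ij},\;A_{ik}\,dX_i\,B_{ik}]$ then collapses, via the covariation identity, to one of the two matrix products appearing in the stated expression for $C_i$. The main obstacle is really bookkeeping: keeping the five indices $(\lambda,\mu,\nu,j,k)$ and the two symmetric terms of the Daletskii--Krein expansion properly aligned with the three positions in $P_\lambda\,\cdot\,P_\mu\,\cdot\,P_\nu$. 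Once that is done, the formula for each $C_i$ emerges by inspection.
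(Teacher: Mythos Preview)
Your proposal is correct and follows essentially the same route as the paper: apply the classical It\^{o} formula \eqref{eq.Itoform}, insert the Daletskii--Krein expressions \eqref{eq.matder1}--\eqref{eq.matder2} for $Df_{\MnC}$ and $D^2f_{\MnC}$, and evaluate the quadratic covariation via the identity $dX_i\,A\,dX_j=\delta_{ij}\tr(A)I_N\,dt$ coming from the Magic Formula (Lemma \ref{lem.magic}). The paper packages this as the special case $U_i=\sum_{j=1}^\ell A_{ij}\otimes B_{ij}$ of Theorem \ref{thm.FIF}, but the underlying computation is exactly yours.
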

\begin{rem}
This is the special case of Theorem \ref{thm.FIF} with $U_i = \sum_{j=1}^{\ell} A_{ij}\otimes B_{ij}$.
\end{rem}

This result is proven from It\^{o}'s formula using the quadratic covariation rules (Theorem \ref{thm.IMPR})
\begin{align}
    & A(t)\,dX_i(t)\,B(t)\,dX_j(t)\,C(t) = \delta_{ij} \,A(t)\,\tr(B(t))\,C(t)\,dt \; \text{ and} \label{eq.dXidXj1}\\
    & A(t)\,dX_i(t)\,B(t)\,dt\,C(t) = A(t)\,dt\,B(t)\,dX_i(t)\,C(t) = A(t)\,dt\,B(t)\,dt\,C(t) = 0\label{eq.dXidXj2}
\end{align}
and a result (Theorem \ref{thm.matfunccalcder}) of Yu. L. Daletskii and S. G. Krein \cite{daletskiikrein} saying that if $f \in C^2(\R)$ and $f_{\MnC}$ denotes the map $\MnC_{\sa} \ni M \mapsto f(M) \in \MnC$, then $f_{\MnC} \in C^2(\MnC_{\sa};\MnC)$ with
\begin{align}
    D f_{\MnC}(M)[B] & = \sum_{\lambda,\mu \in \sigma(M)}f^{[1]}(\lambda,\mu) \, P_{\lambda}^MBP_{\mu}^M \; \text{ and} \label{eq.matder1}\\
    D^2f_{\MnC}(M)[B_1,B_2] & = \sum_{\lambda,\mu,\nu \in \sigma(M)} f^{[2]}(\lambda,\mu,\nu) \, \big(P_{\lambda}^MB_1P_{\mu}^MB_2P_{\nu}^M + P_{\lambda}^MB_2P_{\mu}^MB_1P_{\nu}^M\big),\label{eq.matder2}
\end{align}
for all $M,B,B_1,B_2 \in \MnC_{\sa}$.
One of the main results of this paper is the formal large-$N$ limit of (a generalization \hspace{-0.25mm}of)\hspace{-0.25mm} Theorem \hspace{-0.25mm}\ref{thm.specialFIF}\hspace{-0.25mm} that \hspace{-0.25mm}arises\hspace{-0.25mm} --- \hspace{-0.25mm}at\hspace{-0.25mm} least \hspace{-0.25mm}heuristically\hspace{-0.25mm} --- \hspace{-0.25mm}by\hspace{-0.25mm} taking $N \hspace{-0.25mm}\to\hspace{-0.25mm} \infty$ in \hspace{-0.25mm}Equations\hspace{-0.25mm} \eqref{eq.dXidXj1}--\eqref{eq.matder2}.

Loosely speaking, Voiculescu's results from \cite{voiculescu} imply that there is an operator algebra $\cA$ with a ``trace" $\tau \colon \cA \to \C$ and ``freely independent processes" $x_1,\ldots,x_n \colon [0,\infty) = \R_+ \to \cA_{\sa} = \{a \in \cA : a^*=a\}$ called \textit{semicircular Brownian motions} such that
\[
\tr\Big(P\Big(X_{i_1}^{(N)}(t_1),\ldots,X_{i_r}^{(N)}(t_r)\Big)\Big) \to \tau(P(x_{i_1}(t_1),\ldots,x_{i_r}(t_r)))
\]
almost surely (and in expectation) as $N \to \infty$, for all $i_1,\ldots,i_r \in \{1,\ldots,n\}$, times $t_1,\ldots,t_r \geq 0$, and polynomials $P$ in $r$ noncommuting indeterminates.
Now, using Biane and Speicher's work from \cite{bianespeicher}, one can make sense of stochastic differentials $a(t)\,dx_i(t)\,b(t)$ when $a,b \colon \R_+ \to \cA$ are ``continuous adapted processes."
Imagining then a situation in which
\[
(A,B,C) = \big(A^{(N)},B^{(N)},C^{(N)}\big) \,\text{``}\to\text{"} \,(a,b,c)
\]
as $N \to \infty$, we might expect to be able to take $N \to \infty$ in Equations \eqref{eq.dXidXj1}--\eqref{eq.dXidXj2} and thus to get quadratic covariation rules
\begin{align}
    & a(t)\,dx_i(t)\,b(t)\,dx_j(t)\,c(t) = \delta_{ij} \,a(t)\,\tau(b(t))\,c(t)\,dt \; \text{ and} \label{eq.dxidxj1}\\
    & a(t)\,dx_i(t)\,b(t)\,dt\,c(t) = a(t)\,dt\,b(t)\,dx_i(t)\,c(t) = a(t)\,dt\,b(t)\,dt\,c(t) = 0.\label{eq.dxidxj2}\displaybreak
\end{align}
Interpreted appropriately, these rules do in fact hold (Theorem \ref{thm.FIPR}).
How about Equations \eqref{eq.matder1}--\eqref{eq.matder2}?
In this operator algebraic setting, we would be working with the map $f_{\mathsmaller{\cA}} \colon \cA_{\sa} \to \cA$ defined via functional calculus by $\cA_{\sa} \ni m \mapsto f(m) = \int_{\sigma(m)} f\,dP^m \in \cA$, where $P^m$ is the projection-valued spectral measure of $m$ (Section \ref{sec.freeprob}).
Therefore, it would be appropriate to guess that one could replace the sums $\sum_{\lambda \in \sigma(M)} \boldsymbol{\cdot}\,P_{\lambda}^M$ in Equations \eqref{eq.matder1}--\eqref{eq.matder2} with integrals $\int_{\sigma(m)} \boldsymbol{\cdot} \, dP^m$.
Explicitly, we might expect that if $f \in C^2(\R)$, then $f_{\mathsmaller{\cA}} \in C^2(\cA_{\sa};\cA)$ and
\begin{align*}
    D f_{\mathsmaller{\cA}}(m)[b] & = \int_{\sigma(m)}\hspace{-0.3mm}\int_{\sigma(m)} f^{[1]}(\lambda,\mu) \, P^m(d\lambda) \, b\, P^m(d\mu) \; \text{ and} \numberthis\label{eq.opder1}\\
    D^2 f_{\mathsmaller{\cA}}(m)[b_1,b_2] & = \int_{\sigma(m)}\hspace{-0.3mm}\int_{\sigma(m)}\hspace{-0.3mm}\int_{\sigma(m)} f^{[2]}(\lambda,\mu,\nu) \, \big(P^m(d\lambda)\,b_1\,P^m(d\mu)\,b_2\,P^m(d\nu) \\
    & \hspace{57.5mm} + P^m(d\lambda)\,b_2\,P^m(d\mu)\,b_1\,P^m(d\nu)\big),\numberthis\label{eq.opder2}
\end{align*}
for all $m,b,b_1,b_2 \in \cA_{\sa}$.
(These integrals do not actually make sense with standard projection-valued measure theory.
We shall ignore this subtlety for now.)
Finally, consider a process $m \colon \R_+ \to \cA$ satisfying
\[
dm(t) = \sum_{i=1}^n\sum_{j=1}^{\ell}a_{ij}(t)\,dx_i(t)\,b_{ij}(t) + k(t)\,dt \numberthis\label{eq.specialfrItoprocess}
\]
for some continuous adapted processes $a_{ij},b_{ij},k \colon \R_+ \to \cA$.
Such processes $m$ are special kinds of \textit{free It\^{o}\hspace{-0.2mm} processes} \hspace{-0.2mm}(Definition\hspace{-0.2mm} \ref{def.freeItoprocess}).\hspace{-0.2mm}
Formally \hspace{-0.2mm}combining\hspace{-0.2mm} Equations \hspace{-0.2mm}\eqref{eq.dxidxj1}--\eqref{eq.opder2}\hspace{-0.2mm} and \hspace{-0.2mm}applying\hspace{-0.2mm} the \hspace{-0.2mm}hypothetical\hspace{-0.2mm} It\^{o} \hspace{-0.2mm}formula
\[
d\,f_{\mathsmaller{\cA}}(m(t)) =Df_{\mathsmaller{\cA}}(m(t))[dm(t)]+\frac{1}{2}D^2f_{\mathsmaller{\cA}}(m(t))[dm(t),dm(t)]
\]
then gives the following guess.

\begin{pthm}\label{pthm.specialFFIF}
If $m$ is as in Equation \eqref{eq.specialfrItoprocess}, $m^*=m$, and $f \in C^2(\R)$, then
\[
d\,f(m(t)) = \int_{\sigma(m(t))}\hspace{-0.5mm}\int_{\sigma(m(t))}f^{[1]}(\lambda,\mu) \,P^{m(t)}(d\lambda)\,dm(t)\,P^{m(t)}(d\mu) + \sum_{i=1}^n c_i(t)\,dt,
\]
where
\begin{align*}
    c_i & = \sum_{j,k=1}^{\ell}\int_{\sigma(m)}\hspace{-0.5mm}\int_{\sigma(m)}\hspace{-0.5mm}\int_{\sigma(m)}f^{[2]}(\lambda,\mu,\nu)\,\big(P^m(d\lambda)\,a_{ij}\tau(b_{ij}\,P^m(d\mu)\, a_{ik})\,b_{ik}\,P^m(d\nu) \\
    & \hspace{65mm} + P^m(d\lambda)\,a_{ik}\tau(b_{ik}\,P^m(d\mu) \,a_{ij})\,b_{ij}\,P^m(d\nu)\big).
\end{align*}
\end{pthm}

As we hinted above, the integrals in Equations \eqref{eq.opder1}--\eqref{eq.opder2} and the pseudotheorem above are purely formal:
\textit{a priori}, it doesn't make sense to integrate operator-valued functions against projection-valued measures.
In fact, this is precisely the (nontrivial) problem multiple operator integrals (MOIs) were invented to solve.
However, even with the realization that a MOI is the right object to consider when interpreting Pseudotheorem \ref{pthm.specialFFIF}, the relevant MOIs do not necessarily make sense for arbitrary $f \in C^2(\R)$.
This is where noncommutative $C^2$ functions come in.
The space $NC^2(\R) \subseteq C^2(\R)$ is essentially tailor-made to ensure that MOI expressions such as the ones above make sense and are well-behaved.
(For example, the derivative formulas \eqref{eq.opder1}--\eqref{eq.opder2} are proven rigorously in \cite{nikitopoulosNCk} for $f \in NC^2(\R)$.)
The result is that we are able to turn Pseudotheorem \ref{pthm.specialFFIF} into a (special case of a) rigorous statement --- Theorem \ref{thm.FFIF} --- if we take $f \in NC^2(\R)$.
Moreover, we demonstrate in Example \ref{ex.BS} that Theorem \ref{thm.FFIF} generalizes and conceptually clarifies Proposition 4.3.4 in \cite{bianespeicher}, Biane and Speicher's free It\^{o} formula for a certain class (strictly smaller than $NC^2(\R)$) of scalar functions.

\subsection{Summary and Guide to Reading}\label{sec.guide}

In this section, we describe the structure of the paper and summarize our results.
All the results in the paper are proven both for $n$-dimensional semicircular Brownian motions and $n$-dimensional circular Brownian motions.
To ease the present exposition, we summarize only the statements in the semicircular case.

Section \ref{sec.freeprob} contains a review of some terminology and relevant results from free probability theory, for example the concepts of filtered $W^*$-probability spaces and (semi)circular Brownian motions.
Section \ref{sec.tensprod} contains a review of the various topological tensor products of which we shall make use, for example the von Neumann algebra tensor product $\wotimes$.

In Section \ref{sec.integhasdx}, we review Biane and Speicher's construction from \cite{bianespeicher} of the free stochastic integral of certain ``biprocesses" against semicircular Brownian motion.
More specifically, if $(\cA,(\cA_t)_{t \geq 0},\tau)$ is a filtered $W^*$-probability space and $x \colon \R_+ \to \cA_{\sa}$ is a semicircular Brownian motion, then $\int_0^t u(s)\sh dx(s) \in \cA_t$ is defined for certain maps $u \colon \R_+ \to \cA \wotimes \cA^{\op}$, where $\cA^{\op}$ is the opposite of $\cA$.
The $\#$ stands for the operation determined by $(a \otimes b)\sh c = acb$, and the free stochastic integral $\int_0^t u(s)\sh dx(s)$ is determined in an appropriate sense by $\int_0^t (1_{[r_1,r_2)} a \otimes b)(s)\sh dx(s) = (a \otimes b)\sh[x(r_1 \wedge t) - x(r_2 \wedge t)] = a(x(r_1 \wedge t) - x(r_2 \wedge t))b$ whenever $r_1 \leq r_2$ and $a,b \in \cA_{r_1}$.
Now, fix an $n$-dimensional semicircular Brownian motion $(x_1,\ldots,x_n) \colon \R_+ \to \cA_{\sa}^n$.
In Section \ref{sec.FIPR}, we define a free It\^{o} process (Definition \ref{def.freeItoprocess}) as a process $m \colon \R_+ \to \cA$ that satisfies (the integral form of) an equation
\[
dm(t) = \sum_{i=1}^n u_i(t) \sh dx_i(t) + k(t)\,dt \numberthis\label{eq.frItoprocessintro}
\]
for biprocesses $u_1,\ldots,u_n \colon \R_+ \to \cA \wotimes \cA^{\op}$ and a process $k \colon \R_+ \to \cA$.
Then we prove a product rule for free It\^{o} processes (Theorem \ref{thm.FIPR}) that makes the quadratic covariation rules \eqref{eq.dxidxj1}--\eqref{eq.dxidxj2} rigorous.
This product rule is a ``well-known" generalization of Biane and Speicher's product formula (the $n=1$ case, Theorem 4.1.2 in \cite{bianespeicher}).
It is ``well-known" in the sense that it is used regularly in the literature, and it was proven in the ``concrete" setting (the Cuntz algebra) as Theorem 5 in \cite{kummererspeicher}.
However, it seems that --- until now --- the literature lacks a full proof of this formula in the present ``abstract Wigner space" setting.

In Section \ref{sec.NCder1}, we define divided differences $f^{[k]}$ and noncommutative derivatives $\partial^kf$ of various scalar functions $f$.
When $f$ is a polynomial, $\partial^1f$ corresponds to Voiculescu's free difference quotient from \cite{voiculescudiffquot}.
In Section \ref{sec.FFIFpoly}, we use the free It\^{o} product rule to prove a ``functional" It\^{o} formula for polynomials of free It\^{o} processes (Theorem \ref{thm.FFIFpoly}), which says that if $m$ is a free It\^{o} process satisfying Equation \eqref{eq.frItoprocessintro}, then
\[
d\,p(m(t)) = \partial p(m(t)) \sh dm(t) + \frac{1}{2}\sum_{i=1}^n\Delta_{u_i(t)}p(m(t))\,dt,
\]
where $\Delta_up(m)$ is defined (Notation \ref{nota.Deltap} and Definition \ref{def.Deltap}) in terms of the second noncommutative derivative $\partial^2 p$ of $p$.
This formula generalizes Proposition 4.3.2 in \cite{bianespeicher} (the $n=1$ case).

Our first main result comes in Section \ref{sec.trFFIF}, where we use the free It\^{o} formula for polynomials, some beautiful symmetry properties of the objects in the formula, and an approximation argument to prove a ``traced" It\^{o} formula (Theorem \ref{thm.trFFIF}) for \textit{all} $C^2$ functions of self-adjoint free It\^{o} processes.
(The aforementioned symmetry properties allow one to avoid the multiple-operator-integral-related complications mentioned in the previous section.)
This formula says that if 1) $m$ is a free It\^{o} process satisfying Equation \eqref{eq.frItoprocessintro}, 2) $m^* = m$, and 3) $f \colon \R \to \C$ is a function that is $C^2$ on a neighborhood of the closure of $\bigcup_{t \geq 0}\sigma(m(t))$, then
\[
\frac{d}{dt} \tau(f(m(t))) = \tau\big(f'(m(t))\,k(t)\big) + \frac{1}{2}\sum_{i=1}^n\int_{\R^2}\frac{f'(\lambda)-f'(\mu)}{\lambda-\mu}\,\rho_{m(t),u_i(t)}(d\lambda,d\mu),
\]
where $\rho_{m,u_i}$ is the finite Borel measure on $\R^2$ determined by
\[
\int_{\R^2}\lambda^{j_1}\mu^{j_2}\,\rho_{m,u_i}(d\lambda,d\mu) = \la (m^{j_1} \otimes m^{j_2})u_i,u_i \ra_{L^2(\tau \wotimes \tau^{\op})} = (\tau \wotimes \tau^{\op})(u_i^*(m^{j_1} \otimes m^{j_2})u_i)
\]
for $j_1,j_2 \in \N_0$.
The result is not stated in exactly this way, but this interpretation is derived in Remark \ref{rem.law}.
As an application, we demonstrate in Example \ref{ex.trlog} how to use Theorem \ref{thm.trFFIF} to give simple, computationally transparent (re-)proofs of some key identities from \cite{driverhallkemp,hozhong,demnihamdi,hallho} that are used in the computation of Brown measures of solutions to various free SDEs.
The original proofs of these identities proceeded via rather unintuitive power series arguments, and understanding what was really happening in these arguments was the original motivation for the present study of functional free It\^{o} formulas.
We note that Theorem \ref{thm.trFFIF} is also motivated in the appendix;
the corresponding matrix stochastic calculus formula is given in Corollary \ref{cor.trFIF}.

In Section \ref{sec.NCk}, we define --- following \cite{nikitopoulosNCk} --- the space $NC^k(\R)$ of noncommutative $C^k$ functions $\R \to \C$ (Definition \ref{def.NCk}) and describe large classes of examples (Theorem \ref{thm.NCk}).
For the purposes of this discussion, there is just one important thing to know.
Write $W_k(\R)$ for the space of functions $\R \to \C$ that are Fourier transforms of Borel complex measures on $\R$ with $k$ finite moments (Definition \ref{def.Wk}).
Now, write $W_k(\R)_{\loc}$ for the space of functions $f \colon \R \to \C$ such that for all $r > 0$, there exists $g \in W_k(\R)$ satisfying $f|_{[-r,r]} = g|_{[-r,r]}$.
Then we have\vspace{-0.2mm}
\[
W_k(\R)_{\loc} \subsetneq NC^k(\R), \, \text{ for all } k \in \N. \numberthis\label{eq.Wkloc}
\]
Please see Theorem \ref{thm.NCk} and the sketch of its proof for references to the relevant parts of \cite{nikitopoulosNCk}.

In Section \ref{sec.MOI}, we review the portion of the theory of multiple operator integrals (MOIs) that is relevant to this paper.
Most importantly, we lay out what is needed to make sense of the MOIs in Pseudotheorem \ref{pthm.specialFFIF} when $f \in NC^2(\R)$.
This brings us to Section \ref{sec.FFIF}, which contains our second main result:
the functional free It\^{o} formula for noncommutative $C^2$ functions (Theorem \ref{thm.FFIF}), a generalization of the rigorous version of Pseudotheorem \ref{pthm.specialFFIF} and an extension --- in the self-adjoint case --- of the free It\^{o} formula for polynomials to functions in $NC^2(\R)$.
It says that if 1) $m$ is a free It\^{o} process satisfying Equation \eqref{eq.frItoprocessintro}, 2) $m^*=m$, and 3) $f \in NC^2(\R)$, then\vspace{-0.2mm}
\begin{align*}
    d\,f(m(t)) & = \partial f(m(t))\sh dm(t) + \frac{1}{2}\sum_{i=1}^n\Delta_{u_i(t)}f(m(t))\,dt \\
    & = \int_{\sigma(m(t))}\hspace{-0.5mm}\int_{\sigma(m(t))} f^{[1]}(\lambda,\mu)\,P^{m(t)}(d\lambda)\,dm(t)\,P^{m(t)}(d\mu) + \frac{1}{2}\sum_{i=1}^n\Delta_{u_i(t)}f(m(t))\,dt, \numberthis\label{eq.FFIFintro}\vspace{-0.2mm}
\end{align*}
where $\Delta_u f(m)$ (defined officially in Definition \ref{def.Deltaf}) is determined, in a certain sense (Corollary \ref{cor.expressDeltaf} and Remark \ref{rem.tech}), as a quadratic form by\vspace{-0.2mm}
\[
\frac{1}{2}\Delta_{a \otimes b}f(m) = \int_{\sigma(m)}\hspace{-0.5mm}\int_{\sigma(m)}\hspace{-0.5mm}\int_{\sigma(m)}f^{[2]}(\lambda_1,\lambda_2,\lambda_3)\, P^m(d\lambda_1)\,a\,\tau(b\,P^m(d\lambda_2) \,a)\,b\,P^m(d\lambda_3)\vspace{-0.2mm}
\]
for $a,b \in \cA$.
Recall from the previous section that $f^{[1]}(\lambda,\mu) = \frac{f(\lambda)-f(\mu)}{\lambda-\mu}$ and $f^{[2]}(\lambda,\mu,\nu) = \frac{f^{[1]}(\lambda,\mu) - f^{[1]}(\lambda,\nu)}{\mu - \nu}$ are, respectively, the first and second divided differences of $f$.
Now, Biane and Speicher also established a formula (Proposition 4.3.4 in \cite{bianespeicher}) for $f(m)$ when $f \in W_2(\R)$ and $m$ is a self-adjoint free It\^{o} process driven by a single semicircular Brownian motion.
In Example \ref{ex.BS}, we show that when $n=1$ and $f \in W_2(\R)$, Equation \eqref{eq.FFIFintro} recovers Biane and Speicher's formula.
Owing to the strict containment in Equation \eqref{eq.Wkloc}, this means that not only have we extended Biane and Speicher's formula to the case $n > 1$, but we have also --- through the use of MOIs --- meaningfully enlarged the class of functions for which it can be formulated.
\vspace{-0.75mm}

\section{Background}\label{sec.bg}

\subsection{Free Probability}\label{sec.freeprob}

In this section, we discuss some basic definitions and facts about free probability, noncommutative $L^p$-spaces, noncommutative martingales, and free Brownian motions.
We assume the reader is familiar with these, and we recall only what is necessary for the present application.
For a proper treatment of the basics of free probability, please see \cite{mingospeicher} or \cite{nicaspeicher}.

A pair $(\cA,\varphi)$ is called a \textbf{$\boldsymbol{\ast}$-probability space} if $\cA$ is a unital $\ast$-algebra and $\varphi \colon \cA \to \C$ is a \textbf{state} --- i.e., $\varphi$ is $\C$-linear, unital ($\varphi(1)=1$), and positive ($\varphi(a^*a) \geq 0$ for $a \in \cA$).
We say that a collection $(\cA_i)_{i \in I}$ of (not necessarily $\ast$-)subalgebras of $\cA$ is \textbf{freely independent} if $\varphi(a_1\cdots a_n) = 0$ whenever $\varphi(a_1) = \cdots = \varphi(a_n) = 0$ and $a_1 \in \cA_{i_1},\ldots,a_n \in \cA_{i_n}$ with $i_1 \neq i_2,i_2 \neq i_3$, $\ldots, i_{n-2} \neq i_{n-1}, i_{n-1}\neq i_n$.
We say that a collection $(a_i)_{i \in I}$ of elements of $\cA$ is \textbf{($\boldsymbol{\ast}$-)freely independent} if the collection of ($\ast$-)subalgebras generated by these elements is freely independent.

Let $H$ be a complex Hilbert space and $B(H) \coloneqq \{$bdd. linear maps $H \to H\}$.
A \textbf{von Neumann algebra} is a unital $\ast$-subalgebra of $B(H)$ that is closed in the weak operator topology (WOT).
A pair $(\cA,\tau)$ is a \textbf{$\boldsymbol{W^*}$-probability space} if $\cA$ is a von Neumann algebra and $\tau \colon \cA \to \C$ a \textbf{trace} --- i.e., $\tau$ is a state that is tracial ($\tau(ab) = \tau(ba)$ for $a,b \in \cA$), faithful ($\tau(a^*a)=0$ implies $a=0$), and normal ($\sigma$-WOT continuous).
All $\ast$-probability spaces considered in this paper will be $W^*$-probability spaces.
For more information about von Neumann algebras, please see \cite{dixmier}.
\pagebreak

Fix now a $W^*$-probability space $(\cA,\tau)$.
If $a \in \cA$ is normal, i.e., $a^*a = aa^*$, then the \textbf{$\boldsymbol{\ast}$-distribution} of $a$ is the Borel probability measure
\[
\mu_a(d\lambda) \coloneqq \tau(P^a(d\lambda))
\]
on the spectrum $\sigma(a) \subseteq \C$ of $a$, where $P^a \colon \cB_{\sigma(a)} \to \cA$ is the \textbf{projection-valued spectral measure} of $a$, i.e., the projection-valued measure characterized by the identity $a = \int_{\sigma(a)} \lambda \, P^a(d\lambda)$.
Please see Chapter IX of \cite{conwayfunc} for the basics of projection-valued measures and the Spectral Theorem.
Recall in particular that $f(a) = \int_{\sigma(a)} f(\lambda) \, P^a(d\lambda) = \int_{\sigma(a)} f \, dP^a \in \cA$ for all bounded Borel measurable functions $f \colon \sigma(a) \to \C$.

Let $\mu^{\mathrm{sc}}_0 \coloneqq \delta_0$ and, for $t > 0$,
\[
\mu^{\mathrm{sc}}_t(ds) \coloneqq \frac{1}{2\pi t}\sqrt{(4t-s^2)_+} \, ds
\]
be the semicircle distribution of variance $t$.
Notice that if $t \geq 0$, then $\supp \mu_t^{\mathrm{sc}}$ is equal to $[-2\sqrt{t},2\sqrt{t}] \subseteq \R$, so that if $a \in \cA$ is normal and has $\ast$-distribution $\mu_t^{\mathrm{sc}}$, then $a \in \cA_{\sa}$.
Such an element $a$ is called a \textbf{semicircular element of variance $\boldsymbol{t}$}.
We call $b \in \cA$ a \textbf{circular element of variance $\boldsymbol{t}$} if
\[
b = \frac{1}{\sqrt{2}}(a_1+ia_2)
\]
for two freely independent semicircular elements $a_1,a_2 \in \cA_{\sa}$ of variance $t$.
Since $-a_2$ is still semicircular, we have that if $b \in \cA$ is a circular element of variance $t$, then $b^*$ is as well.

It is worth mentioning that there is a more general algebraic/combinatorial definition of $\ast$-distribution, and one may define (semi)circular elements in a $\ast$-probability space in a more ``intrinsic" way using the notion of \textit{free cumulants}.
Please see \cite{nicaspeicher} for this approach.
Since we shall not need this combinatorial machinery, we content ourselves with the analytic definition above.

Next, we turn to noncommutative $L^p$-spaces.
Please see \cite{dasilva} for a detailed development of the basic properties of noncommutative $L^p$-spaces.

\begin{nota}[Noncommutative $L^p$-spaces]
Let $(\cA,\tau)$ be a $W^*$-probability space.
If $p \in [1,\infty)$, then we define
\[
\|a\|_{L^p(\tau)} \coloneqq \tau(|a|^p)^{\frac{1}{p}} = \tau\big((a^*a)^{\frac{p}{2}}\big)^{\frac{1}{p}},
\]
for all $a \in \cA$, and $L^p(\cA,\tau)$ to be the completion of $\cA$ with respect to the norm $\|\cdot\|_{L^p(\tau)}$.
We also define $L^{\infty}(\cA,\tau) \coloneqq \cA$ and $\|\cdot\|_{L^{\infty}(\tau)} \coloneqq \|\cdot\|_{\cA} = \|\cdot\|$. 
\end{nota}

Similar to the classical case, we have \textit{noncommutative H\"{o}lder's inequality}:
if $a_1,\ldots,a_n \in \cA$, then $\|a_1 \cdots a_n\|_{L^p(\tau)} \leq \|a_1\|_{L^{p_1}(\tau)}\cdots\|a_n\|_{L^{p_n}(\tau)}$ whenever $p_1,\ldots,p_n,p \in [1,\infty]$ and $p_1^{-1}+\cdots+p_n^{-1}=p^{-1}$.
This allows us to extend multiplication to a bounded $n$-linear map $L^{p_1}(\cA,\tau) \times \cdots \times L^{p_n}(\cA,\tau) \to L^p(\cA,\tau)$.
In addition, there is a dual characterization of the noncommutative $L^p$-norm:
if $a \in \cA$, then we have $\|a\|_{L^p(\tau)} = \sup\{\tau(ab) : b \in \cA, \|b\|_{L^q(\tau)} \leq 1\}$ whenever $p^{-1}+q^{-1}=1$.
This leads to the duality relationship $L^q(\cA,\tau) \cong L^p(\cA,\tau)^*$, via the map $a \mapsto (b \mapsto \tau (ab))$, when $p^{-1}+q^{-1}=1$ and $p \neq \infty$, as in the classical case.
Moreover, the $\sigma$-WOT on $\cA$ coincides with the weak$^*$ topology on $L^1(\cA,\tau)^* \cong L^{\infty}(\cA,\tau) = \cA$.

Finally, we briefly discuss noncommutative martingales and free Brownian motions.
For this, we recall that if $\cB \subseteq \cA$ is a \textbf{$\boldsymbol{W^*}$-subalgebra} --- i.e., a WOT-closed $\ast$-subalgebra --- then there is a unique positive linear map $\tau[ \,\cdot  \mid \cB] \colon \cA \to \cB$ such that $\tau[b_1ab_2 \mid \cB] = b_1 \tau[a \mid \cB] b_2$, for all $a \in \cA$ and $b_1,b_2 \in \cB$.
We call $\tau[\,\cdot \mid \cB]$ the \textbf{conditional expectation onto $\boldsymbol{\cB}$}. It was introduced in \cite{takesaki}.
It extends to a (weak) contraction $L^p(\cA,\tau) \to L^p(\cB,\tau)$, for all $p \in [1,\infty]$.
When $p=2$, we get the orthogonal projection of $L^2(\cA,\tau)$ onto $L^2(\cB,\tau) \subseteq L^2(\cA,\tau)$.
In particular, as it is often useful to remember, if $a \in \cA$ and $b \in \cB$, then
\[
b=\tau[a \mid \cB] \iff \tau(b_0a) = \tau(b_0b), \text{ for all } b_0 \in \cB.
\]
This implies, for instance, that if $a$ is freely independent from $\cB$, then we have $\tau[a \mid \cB] = \tau(a)1 = \tau(a)$.

Now, an increasing collection $(\cA_t)_{t \geq 0}$ of $W^*$-subalgebras of $\cA$ is called a \textbf{filtration} of $\cA$, and the triple $(\cA,(\cA_t)_{t \geq 0},\tau)$ is called a \textbf{filtered $\boldsymbol{W^*}$-probability space}.
Fix a filtration $(\cA_t)_{t \geq 0}$ of $\cA$ and $p \in [1,\infty]$.
A \textbf{$\boldsymbol{L^p}$-process} $a = (a(t))_{t \geq 0} \colon \R_+ \to L^p(\cA,\tau)$ is \textbf{adapted} (\textbf{to $\boldsymbol{(\cA_t)_{t \geq 0}}$}) if $a(t) \in L^p(\cA_t,\tau) \subseteq L^p(\cA,\tau)$, for every $t \geq 0$.
An adapted $L^p$-process $m \colon \R_+ \to L^p(\cA,\tau)$ is called a \textbf{noncommutative $\boldsymbol{L^p}$-martingale} (\textbf{with respect to $\boldsymbol{((\cA_t)_{t \geq 0},\tau)}$}) if $\tau[m(t) \mid \cA_s] = m(s)$ whenever $0 \leq s \leq t < \infty$.
If $p=\infty$, then we shall omit the ``$L^p$" from these terms.

An\hspace{-0.2mm} $n$-tuple $m \hspace{-0.2mm}=\hspace{-0.2mm} (m_1,\ldots,m_n) \hspace{-0.2mm}\colon\hspace{-0.2mm} \R_+ \hspace{-0.2mm}\to\hspace{-0.2mm} \cA^n$ \hspace{-0.2mm}of\hspace{-0.2mm} adapted \hspace{-0.2mm}processes\hspace{-0.2mm} is \hspace{-0.2mm}called\hspace{-0.2mm} an \textbf{$\boldsymbol{n}$-dimensional\hspace{-0.2mm} (semi)circular Brownian motion} (\textbf{in $\boldsymbol{(\cA,(\cA_t)_{t \geq 0},\tau)}$}) if $m(0) = 0$ and $\{m_i(t)-m_i(s) : 1 \leq i \leq n\}$ is a $\ast$-freely independent collection of (semi)circular elements of variance $t-s$ that is $\ast$-freely independent from $\cA_s$ when $0 \leq s < t < \infty$.
More concisely, $m(0) = 0$ and $m$ has ``jointly $\ast$-free (semi)circular increments."
It follows from the comments about conditional expectation and the free increments property that (semi)circular Brownian motion is a noncommutative martingale.
Also, if $m$ is an $n$-dimensional circular Brownian motion, then the process $\sqrt{2} (\cRe m,\cIm m) = \frac{1}{\sqrt{2}}(m+m^*,\frac{1}{i}(m-m^*))$ is a $2n$-dimensional semicircular Brownian motion.

\subsection{Tensor Products}\label{sec.tensprod}

In this section, we set notation for and review some information about several topological tensor products of which we shall make use.

\begin{nota}
Write $\otimes$ for the algebraic tensor product, $\otimes_2$ for the Hilbert space tensor product, $\otimes_{\min}$ for the minimal (or spatial) $C^*$-tensor product, $\wotimes$ for the (spatial) von Neumann algebra tensor product, and $\potimes$ for the Banach space projective tensor product.
\end{nota}

Though we assume the reader has some familiarity with these tensor products, we recall their definitions/constructions for convenience.
Let $(H,\la \cdot, \cdot \ra_H)$ and $(K, \la \cdot, \cdot \ra_K)$ be Hilbert spaces.
There exists a unique inner product $\la \cdot, \cdot \ra_{H \hotimes K}$ on $H \otimes K$ determined by $\la h_1 \otimes k_1, h_2 \otimes k_2 \ra_{H \hotimes K} = \la h_1,h_2 \ra_H\la k_1,k_2 \ra_K$, for all $h_1,h_2 \in H$ and $k_1,k_2 \in K$.
The Hilbert space tensor product $H \hotimes K$ is defined to be the completion of $H \otimes K$ with respect to $\la \cdot, \cdot \ra_{H \hotimes K}$.
If $a \in B(H)$ and $b \in B(K)$, then there exists unique $a \hotimes b \in B(H \hotimes K)$ such that $(a \hotimes b)(h \otimes k) = ah \otimes bk$, for all $h \in H$ and $k \in K$.
Moreover, $\|a \hotimes b\|_{B(H \hotimes K)} = \|a\|_{B(H)}\|b\|_{B(K)}$.
It is not difficult to show that the linear map $B(H) \otimes B(K) \to B(H \hotimes K)$ determined by $a \otimes b \mapsto a \hotimes b$ is an injective $\ast$-homomorphism when $B(H) \otimes B(K)$ is given the tensor product $\ast$-algebra structure.
This allows us to view $B(H) \otimes B(K)$ as a $\ast$-subalgebra of $B(H \hotimes K)$ and justifies writing, as we shall, $a \otimes b$ instead of $a \hotimes b$.
In particular, if $\cA \subseteq B(H)$ and $\cB \subseteq B(K)$ are $C^*$-algebras, then we may naturally view $\cA \otimes \cB$ as a $\ast$-subalgebra of $B(H \hotimes K)$.
The minimal $C^*$-tensor product $\cA \otimes_{\min} \cB$ of $\cA$ and $\cB$ is the operator norm closure of $\cA \otimes \cB$ in $B(H \hotimes K)$.
If in addition $\cA$ and $\cB$ are von Neumann algebras, then $\cA \wotimes \cB$ is the WOT closure --- equivalently, by the Kaplansky Density Theorem, the $\sigma$-WOT closure --- of $\cA \otimes \cB$ in $B(H \hotimes K)$.
If $\tau_1$ and $\tau_2$ are traces on $\cA$ and $\cB$, respectively, then we write $\tau_1 \wotimes \tau_2 \colon \cA \wotimes \cB \to \C$ for the unique trace on $\cA \wotimes \cB$ determined by $(\tau_1 \wotimes \tau_2)(a \otimes b) = \tau_1(a) \, \tau_2(b)$, for all $a \in \cA$ and $b \in \cB$.
This is the \textbf{tensor product trace} on $\cA \wotimes \cB$.
For more information on $\otimes_{\min}$ and $\wotimes$, please see Chapter 3 of \cite{brownozawa} or Chapter 11 of \cite{kadisonringrose2}.

Now, let $(V,\|\cdot\|_V)$ and $(W,\|\cdot\|_W)$ be Banach spaces. For $u \in V \otimes W$, define
\[
\|u\|_{V \potimes W} \coloneqq \inf\Bigg\{\sum_{j=1}^n\|v_j\|_V\|w_j\|_W : u = \sum_{j=1}^n v_j\otimes w_j\Bigg\}.
\]
Then $\|\cdot\|_{V \potimes W}$ is a norm on $V \otimes W$, and the projective tensor product $V \potimes W$ is defined as the completion of $V \otimes W$ with respect to $\|\cdot\|_{V \potimes W}$.
This tensor product satisfies the type of universal property that the algebraic tensor product satisfies:
it bounded-linearizes bounded bilinear maps.
If $V$ and $W$ are in addition Banach algebras, then $V \potimes W$ is also a Banach algebra with multiplication extending the tensor product multiplication on $V \otimes W$.

There is also a concrete description of the elements of $V \potimes W$.
Indeed, it can be shown that every element $u \in V \potimes W$ admits a decomposition
\[
u = \sum_{n=1}^{\infty} v_n \otimes w_n \; \text{ with } \; \sum_{n=1}^{\infty}\|v_n\|_V\|w_n\|_W < \infty \numberthis\label{eq.projdecomp}
\]
and that
\[
\|u\|_{V \potimes W} = \inf\Bigg\{\sum_{n=1}^{\infty}\|v_n\|_V\|w_n\|_W : u = \sum_{n=1}^{\infty} v_n \otimes w_n \text{ as in Equation \eqref{eq.projdecomp}}\Bigg\}.
\]
Please see Chapter 2 of \cite{ryan} for a proper development.
This has a number of consequences.
The most relevant one for us will be a description of $\cA \potimes \cB$ for $C^*$-algebras $\cA$ and $\cB$.

\begin{prop}\label{prop.projC}
Let $\cA$ and $\cB$ be $C^*$-algebras and $\iota_{\min} \colon \cA \potimes \cB \to \cA \otimes_{\min} \cB$ be the natural map obtained by applying the universal property of $\potimes$ to the inclusion $\cA \otimes \cB \hookrightarrow \cA \otimes_{\min} \cB$.
Then $\iota_{\min}$ is injective.
\end{prop}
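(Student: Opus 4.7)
The plan is to reduce injectivity of $\iota_{\min}$ to the observation that slice maps associated to elements of $\cA^*$ extend continuously to $\cA \otimes_{\min} \cB$, and then to invoke the approximation property of $\cA^*$ to conclude. As a first step, I would construct, for each $\varphi \in \cA^*$, a bounded slice map $R_\varphi \colon \cA \otimes_{\min} \cB \to \cB$ satisfying $R_\varphi(a \otimes b) = \varphi(a)\,b$ and $\|R_\varphi\| \leq \|\varphi\|$. For states $\varphi$, the ``partial evaluation'' $\varphi \otimes \mathrm{id}_\cB$ is completely positive and hence extends continuously to the minimal $C^*$-tensor product (cf.\ Chapter 3 of \cite{brownozawa}); for general $\varphi$ one uses the Jordan decomposition to write $\varphi$ as a complex linear combination of at most four states.

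Next, for $u \in \ker(\iota_{\min})$ with expansion $u = \sum_{n=1}^{\infty} v_n \otimes w_n$ satisfying $\sum_n \|v_n\|\|w_n\| < \infty$, I would define the (nuclear) operator $T_u \colon \cA^* \to \cB$ by $T_u(\varphi) \coloneqq \sum_{n=1}^{\infty} \varphi(v_n)\,w_n$ and verify that $T_u(\varphi) = R_\varphi\bigl(\iota_{\min}(u)\bigr)$ for every $\varphi \in \cA^*$. This identity holds by density and continuity, since both sides agree on the algebraic tensor product $\cA \otimes \cB$. In particular, the hypothesis $\iota_{\min}(u) = 0$ forces $T_u \equiv 0$ in $B(\cA^*, \cB)$.

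The final and most delicate step is to deduce $u = 0$ in $\cA \potimes \cB$ from $T_u = 0$---that is, to show that the natural assignment $\cA \potimes \cB \to B(\cA^*, \cB)$, $u \mapsto T_u$, is itself injective. By a classical theorem of Grothendieck, this injectivity is equivalent to the dual $\cA^*$ possessing the approximation property. For any $C^*$-algebra $\cA$, the self-adjoint part $(\cA^*)_{\sa}$ is an abstract $L$-space, and by Kakutani's representation theorem it is isometrically isomorphic to an $L^1$-space; such $L^1$-spaces enjoy the metric approximation property, and complexification then transfers this property to $\cA^*$ itself. The main obstacle is this last invocation of the approximation property, a nontrivial but well-known fact from Banach space theory; everything upstream of it is essentially a bookkeeping exercise in the functorial properties of $\potimes$ and $\otimes_{\min}$.
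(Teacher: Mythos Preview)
Your first two steps are fine: slice maps $R_\varphi$ do extend to $\cA \otimes_{\min} \cB$ for every $\varphi \in \cA^*$, and the identity $T_u(\varphi) = R_\varphi(\iota_{\min}(u))$ follows by continuity, so $\iota_{\min}(u)=0$ forces $T_u=0$. This correctly reduces the problem to the injectivity of the canonical map $\cA \potimes \cB \to B(\cA^*,\cB)$, which is the same as injectivity of $\cA \potimes \cB \to \cA \eotimes \cB$.

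The gap is in your final step. First, by Grothendieck's theorem (see, e.g., Corollary 4.8 in \cite{ryan}), injectivity of $\cA \potimes \cB \to \cA \eotimes \cB$ is governed by whether $\cA$ or $\cB$ has the approximation property, not $\cA^*$; your attribution is off, though $\cA^*$ having the AP would imply $\cA$ does, so this misstatement alone is not fatal. What is fatal is the claim that $(\cA^*)_{\sa}$ is an abstract $L$-space: Kakutani's theorem requires a Banach \emph{lattice}, and for noncommutative $\cA$ the self-adjoint part of $\cA^*$ with the natural order is not a lattice (already for $\cA = \mathrm{M}_2(\C)$ the positive-semidefinite order on Hermitian matrices admits no suprema in general). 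More decisively, the conclusion you are aiming for is simply false: Szankowski proved that $B(H)$ fails the approximation property for infinite-dimensional $H$, and hence so does $B(H)^*$. Thus no purely Banach-space argument via the AP can establish the proposition for arbitrary $C^*$-algebras.

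The paper does not supply its own proof; it cites Proposition 2.2 of Haagerup's paper \cite{haagerup} and the remark following it. Haagerup's argument genuinely uses the operator-algebraic structure, specifically the Grothendieck inequality for bilinear forms on $C^*$-algebras, to show that the bounded bilinear forms on $\cA \times \cB$ (i.e., the dual of $\cA \potimes \cB$) are controlled by data that factors through $\cA \otimes_{\min} \cB$. This is where the $C^*$-structure enters essentially, and it is exactly what your Banach-space reduction cannot see.
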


This follows from Proposition 2.2 and the remark following it in \cite{haagerup}.
From Proposition \ref{prop.projC} and the concrete description of $V \potimes W$ above, we see that if $\cA \subseteq B(H)$ and $\cB \subseteq B(K)$ are $C^*$-algebras, then $\cA \potimes \cB$ can be represented as the subalgebra of $B(H \hotimes K)$ of elements $u \in B(H \hotimes K)$ admitting a decomposition $u = \sum_{n = 1}^{\infty}a_n \otimes b_n \in B(H \hotimes K)$ such that $(a_n)_{n \in \N} \in \cA^{\N}, (b_n)_{n \in \N} \in \cB^{\N}$, and $\sum_{n=1}^{\infty}\|a_n\|_{B(H)}\|b_n\|_{B(K)} < \infty$.
In particular, we have the chain of inclusions $\cA \otimes \cB \subseteq \cA \potimes \cB \subseteq \cA \otimes_{\min} \cB \subseteq B(H \hotimes K)$.

\section{Free Stochastic Calculus I: Polynomials and the Traced Formula}\label{sec.fstochcalc1}

\subsection{The Free Stochastic Integral against (Semi)circular Brownian Motion}\label{sec.integhasdx}

In this section, we review Biane and Speicher's construction from \cite{bianespeicher} of the free stochastic integral against semicircular Brownian motion and use it to define free stochastic integrals against circular Brownian motion.
We begin by setting notation for a few useful algebraic operations.

\begin{nota}\label{nota.alg}
Let $H$ be a complex Hilbert space and $\cA \subseteq B(H)$ be a von Neumann algebra.
\begin{enumerate}[label=(\alph*),leftmargin=2\parindent]
    \item $\cA^{\op}$ is the opposite von Neumann algebra of $\cA$, i.e., the von Neumann algebra with the same addition, $\ast$-operation, and topological structure as $\cA$ but the opposite multiplication operation $a \cdot b \coloneqq ba$.
    If $\tau \colon \cA \to \C$ is a trace, then we write $\tau^{\op} \colon \cA^{\op} \to \C$ for the induced trace on $\cA^{\op}$.\label{item.op}
    \item We write $(\cdot)^{\flip} \colon \cA \wotimes \cA^{\op} \to \cA \wotimes \cA^{\op}$ for the unique $\sigma$-WOT continuous (and isometric) linear map determined by $(a \otimes b)^{\flip} = b \otimes a$.
    Also, write $u^{\mathsmaller{\bigstar}} \coloneqq (u^*)^{\flip}$, for all $u \in \cA \wotimes \cA^{\op}$, where $(\cdot)^*$ denotes the standard tensor product $\ast$-operation on $\cA \wotimes \cA^{\op}$ (e.g., $(a \otimes b)^* = a^* \otimes b^*$).\label{item.bigstar}
    \item We write $\# \colon \cA \potimes \cA^{\op} \to B(\cA) = \{$bounded linear maps $\cA \to \cA\}$ for the bounded linear map --- and, actually, algebra homomorphism --- determined by $\#(a \otimes b)c = acb$.
    Also, we write $u \sh c \coloneqq \#(u)c$, for all $u \in \cA \potimes \cA^{\op}$ and $c \in \cA$.
    Note that if $u \in \cA \potimes \cA^{\op} \subseteq \cA \wotimes \cA^{\op}$, then $u^*,u^{\flip},u^{\mathsmaller{\bigstar}} \in \cA \potimes \cA^{\op}$ and $(u \sh c)^* = (u^{\mathsmaller{\bigstar}})\sh c^*$, for all $c\in \cA$.\label{item.hash}
    \item (Not used until Section \ref{sec.FFIFpoly}) Write $B_2((\cA \wotimes \cA^{\op})^2;\cA \wotimes \cA^{\op})$ for the space of bounded bilinear maps $(\cA \wotimes \cA^{\op})^2 \to \cA \wotimes \cA^{\op}$, and write $\#_2^{\otimes} \colon (\cA \wotimes \cA^{\op})^{\potimes 3} \to B_2((\cA \wotimes \cA^{\op})^2;\cA \wotimes \cA^{\op})$ for the bounded linear map determined by $\#_2^{\otimes}(u_1 \otimes u_2 \otimes u_3)[v_1,v_2] = u_1v_1u_2v_2u_3$ for $u_1,u_2, u_3,v_1,v_2 \in \cA \wotimes \cA^{\op}$.
    If $A \in (\cA \wotimes \cA^{\op})^{\potimes 3}$ and $u,v \in \cA \wotimes \cA^{\op}$, then we write $A \sh_2^{\mathsmaller{\otimes}}[u,v] \coloneqq \#_2^{\otimes}(A)[u,v]$.\label{item.tenshash}
\end{enumerate}
\end{nota}
\begin{rem}
If $H$ is finite-dimensional and $\cA = B(H)$, then one can use elementary linear algebra to show that $\# \colon \cA \potimes \cA^{\op} = \cA \otimes \cA^{\op} \to B(\cA)$ is a linear isomorphism.
Moreover, $\#$ is a $\ast$-homomorphism when $\cA \otimes \cA^{\op}$ is given the tensor product $\ast$-operation and $B(\cA)$ is given the adjoint operation associated to the Hilbert--Schmidt inner product on $\cA = B(H)$.
This is why we have chosen to write $(\cdot)^*$ for the tensor product $\ast$-operation on $\cA \wotimes \cA^{\op}$;
in \cite{bianespeicher}, the symbol $(\cdot)^*$ is used for the operation $(\cdot)^{\mathsmaller{\bigstar}}$ from \ref{item.bigstar}.
\end{rem}

Some justification is in order for what is written in \ref{item.op} and \ref{item.bigstar} above.
First, we observe that $\cA^{\op}$ is, indeed, a von Neumann algebra.
Abstractly, $\cA^{\op}$ is clearly a $C^*$-algebra with a predual (the same predual as $\cA$).
Concretely, $\cA^{\op}$ can be represented on the dual $H^*$ of $H$ via the transpose map
\[
B(H) \ni a \mapsto (H^* \ni \ell \mapsto \ell \circ a \in H^*) \in B(H^*).
\]
This map is a $\ast$-anti-homomorphism that is a homeomorphism with respect to the WOT and the $\sigma$-WOT, so the image of $\cA$ under the transpose map is a von Neumann algebra isomorphic to $\cA^{\op}$.
Next, using this representation of $\cA^{\op}$, we confirm that $(\cdot)^{\flip}$ is well-defined.
Certainly, the condition in the definition determines a linear map $(\cdot)^{\flip} \colon \cA \otimes \cA^{\op} \to \cA \otimes \cA^{\op}$.
What remains to be confirmed is that the latter linear map is $\sigma$-WOT continuous and isometric.
To see this, write $(\cdot)^{\mathfrak{f}} \colon H \hotimes H^* \to H \hotimes H^*$ for the conjugate-linear surjective isometry determined by $h \otimes \la \cdot,k \ra \mapsto k \otimes \la \cdot,h \ra$.
Then it is easy to show that
\[
\la u^{\flip}\xi, \eta \ra_{H \hotimes H^*} = \big\la u\eta^{\mathfrak{f}},\xi^{\mathfrak{f}} \big\ra_{H \hotimes H^*},
\]
for all $u \in \cA \otimes \cA^{\op} \subseteq B(H \hotimes H^*)$ and $\xi,\eta \in H \hotimes H^*$.
This implies both desired conclusions. 
\pagebreak

Next, we define simple biprocesses and their integrals against arbitrary functions.
For the remainder  of this paper, fix a filtered $W^*$-probability space $(\cA,(\cA_t)_{t \geq 0},\tau)$.

\begin{defi}[Biprocesses]
A map $u \colon \R_+ \to \cA \otimes \cA^{\op}$ is called a \textbf{biprocess}.
If $u(t) \in \cA_t \otimes \cA_t^{\op}$, for every $t \geq 0$, then $u$ is called \textbf{adapted}.
If there is a finite partition $0 = t_0 < t_1 < \cdots < t_n < \infty$ of $\R_+$ such that, for all $i \in \{1,\ldots,n\}$, $u$ is constant on $[t_{i-1},t_i)$, and $u(t) = 0$ for $t \geq t_n$, then $u$ is called \textbf{simple}.
We write $\SB$ for the space of simple biprocesses and $\SBa \subseteq \SB$ for the subspace of simple adapted biprocesses.
\end{defi}

\begin{nota}[Integrals of Simple Biprocesses]
If $u \in \SB$, then
\[
u = \sum_{i=1}^n1_{[t_{i-1},t_i)} u(t_{i-1})
\]
for some partition $0 = t_0 < \cdots < t_n < \infty$.
If $m \colon \R_+ \to \cA$ is any function, then we define
\[
\int_0^{\infty} u(t) \sh dm(t) = \int_0^{\infty} u \sh dm \coloneqq \sum_{i=1}^n u(t_{i-1})\sh[m(t_i)-m(t_{i-1})] \in \cA.
\]
By standard arguments (from scratch or using the basic theory of finitely additive vector measures), the element $\int_0^{\infty} u \sh dm$ does not depend on the chosen decomposition of $u$, and $\SB \ni u \mapsto \int_0^{\infty} u \sh dm \in \cA$ is linear.
\end{nota}

Note that if $u \in \SB$ and $r,s \geq 0$ are such that $r \leq s$, then $1_{[r,s)}u \in \SB$ and $u^{\mathsmaller{\bigstar}} \in \SB$.
Thus the statement of the lemma below, the proof of which we leave to the reader, makes sense.

\begin{lem}[Properties of Integrals of Simple Biprocesses]\label{lem.algstochint}
Let $m \colon \R_+ \to \cA$ be any function and $u \in \SB$.
Fix $r,s \geq 0$ with $r \leq s$, and define
\[
\int_r^s u(t) \sh dm(t) = \int_r^s u \sh dm \coloneqq \int_0^{\infty}(1_{[r,s)}u)\sh dm \in \cA.
\]
Then
\begin{enumerate}[label=(\roman*),font=\normalfont,leftmargin=2\parindent]
    \item $\SB \ni u \mapsto \int_r^s u(t) \sh dm(t) \in \cA$ is linear;
    \item $\big(\int_r^s u \sh dm \big)^* = \int_r^s u^{\mathsmaller{\bigstar}} \sh dm^*$;
    \item if $u \in \SBa$ and $m$ is adapted, then $\int_0^{\boldsymbol{\cdot}} u \sh dm \coloneqq \big(\int_0^t u \sh dm\big)_{t \geq 0}$ is adapted;
    \item if $u(t) = 0$ for all $t \geq s > 0$, then $\int_r^{s_1} u(t) \sh dm(t) = \int_r^{s_2} u(t) \sh dm(t)$ whenever $s_1,s_2 \geq s$;
    and
    \item $\int_r^s u(t) \sh dm(t) = \int_0^s u(t) \sh dm(t) - \int_0^r u(t) \sh dm(t)$.
\end{enumerate}
\end{lem}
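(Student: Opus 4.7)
The plan is to verify each of the five properties by unwinding the definition, with the only nontrivial preliminary being the ``standard argument'' that $\int_0^{\infty} u \sh dm$ is independent of the chosen representation of $u$ as a step function. First I would establish this partition-independence: given a decomposition $u = \sum_{i=1}^n 1_{[t_{i-1},t_i)}u(t_{i-1})$ and any refinement obtained by inserting a single point $t_{i-1} < s < t_i$, the value $u(t_{i-1}) \sh [m(t_i)-m(t_{i-1})]$ matches $u(t_{i-1})\sh[m(s)-m(t_{i-1})] + u(t_{i-1})\sh[m(t_i) - m(s)]$ by linearity of $\#$; iterating gives invariance under passing to any common refinement. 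All five items then reduce to manipulations on a single sufficiently fine partition.

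For (i), given $u_1,u_2 \in \SB$ and scalars $\alpha,\beta$, choose a common refinement of the two defining partitions; then $1_{[r,s)}(\alpha u_1 + \beta u_2)$ is simple on a further refinement incorporating $\{r,s\}$, and linearity follows from linearity of $\#(\cdot)c$ and of the finite sum. For (ii), apply $(\cdot)^*$ termwise to $\int_r^s u \sh dm = \sum_i u(t_{i-1})\sh[m(t_i) - m(t_{i-1})]$ and invoke the identity $(v\sh c)^* = v^{\mathsmaller{\bigstar}} \sh c^*$ from Notation~\ref{nota.alg}\ref{item.hash}, together with $\sum_i(\cdot)^* = (\sum_i \cdot)^*$, noting that $u^{\mathsmaller{\bigstar}}$ is constant on the same subintervals as $u$.

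For (iii), let $u \in \SBa$ with partition $\{t_i\}$ and fix $t \geq 0$. Using a refinement containing $t$, write
\[
\int_0^t u \sh dm = \sum_{i : t_{i-1} < t} u(t_{i-1}) \sh \big[m(t_i \wedge t) - m(t_{i-1})\big].
\]
Since $u$ is adapted, $u(t_{i-1}) \in \cA_{t_{i-1}} \otimes \cA_{t_{i-1}}^{\op} \subseteq \cA_t \otimes \cA_t^{\op}$ for each $i$ in the sum, and adaptedness of $m$ places $m(t_i \wedge t)$ and $m(t_{i-1})$ in $\cA_t$; since $\#(\cA_t \otimes \cA_t^{\op})$ carries $\cA_t$ into $\cA_t$, the sum lies in $\cA_t$. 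Items (iv) and (v) are immediate: (iv) because all terms with $t_{i-1} \geq s$ vanish by hypothesis on $u$, and (v) because $1_{[r,s)} = 1_{[0,s)} - 1_{[0,r)}$ combined with linearity from (i).

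The only genuine obstacle is the partition-independence preliminary, which must be handled before (i) makes sense (since $\alpha u_1 + \beta u_2$ may not be simple on either of the original partitions); once that is in place, each part is bookkeeping.
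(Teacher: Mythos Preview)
Your proof is correct and is exactly the kind of direct verification expected; the paper explicitly leaves this lemma to the reader, so there is no ``paper's own proof'' to compare against. One small remark: the partition-independence you establish as a preliminary is in fact already asserted in the paper's Notation immediately preceding the lemma (``By standard arguments \ldots\ the element $\int_0^{\infty} u \sh dm$ does not depend on the chosen decomposition of $u$, and $\SB \ni u \mapsto \int_0^{\infty} u \sh dm \in \cA$ is linear''), so you may simply invoke it rather than re-derive it.
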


Next, we introduce a larger space of integrands for the case when $m$ is a (semi)circular Brownian motion.
Notice that a simple biprocess $u \colon \R_+ \to \cA \otimes \cA^{\op} \subseteq L^p(\cA \wotimes \cA^{\op}, \tau \wotimes \tau^{\op})$ is a compactly supported simple --- in particular, Bochner integrable\footnote{For information on the Bochner integral, please see Appendix E of \cite{cohn}.} --- map $\R_+ \to L^p(\cA \wotimes \cA^{\op},\tau \wotimes \tau^{\op})$, for all $p \in [1,\infty]$.

\begin{nota}
Fix $p,q \in [1,\infty]$, and let $(\cB,\eta)$ be a $W^*$-probability space. 
\begin{enumerate}[label=(\alph*),leftmargin=2\parindent]
    \item For $u \in L_{\loc}^q(\R_+;L^p(\cB,\eta)) = L_{\loc}^q(\R_+,\mathrm{Lebesgue};L^p(\cB,\eta))$, define
    \[
    \|u\|_{L_t^qL^p(\eta)} \coloneqq \Bigg(\int_0^t \|u(s)\|_{L^p(\eta)}^q \, ds \Bigg)^{\frac{1}{q}} \; \text{ and } \; \|u\|_{L^qL^p(\eta)} \coloneqq \Bigg(\int_0^{\infty} \|u(s)\|_{L^p(\eta)}^q \, ds \Bigg)^{\frac{1}{q}}
    \]
    for $t \geq 0$ (with the obvious modification for $q=\infty$).
    Of course, $\|\cdot\|_{L_t^2L^2(\eta)}$ comes from the ``inner product" $\la u,v \ra_{L_t^2L^2(\eta)} = \int_0^t \la u(s),v(s) \ra_{L^2(\eta)} \, ds$.
    \item Define
    \begin{align*}
    \mathcal{L}^{2,p} & \coloneqq \overline{\SBa} \subseteq L^2(\R_+;L^p(\cA \wotimes \cA^{\op},\tau \wotimes \tau^{\op})) \; \text{ and} \\
    \Lambda^{2,p} & \coloneqq \overline{\SBa} \subseteq L_{\loc}^2(\R_+;L^p(\cA \wotimes \cA^{\op},\tau \wotimes \tau^{\op})),
    \end{align*}
    where the first closure above takes place in the Banach space $L^2(\R_+;L^p(\cA \wotimes \cA^{\op},\tau \wotimes \tau^{\op}))$ and the second takes place in the Fr\'{e}chet space $L_{\loc}^2(\R_+;L^p(\cA \wotimes \cA^{\op},\tau \wotimes \tau^{\op}))$.
    We write
    \begin{align*}
        \mathcal{L}^2 & \coloneqq \mathcal{L}^{2,\infty} \subseteq L^2(\R_+;\cA \wotimes \cA^{\op}) \; \text{ and} \\
        \Lambda^2 & \coloneqq \Lambda^{2,\infty} \subseteq L_{\loc}^2(\R_+;\cA \wotimes \cA^{\op})
    \end{align*}
    for the case $p=\infty$.
\end{enumerate}
To be clear, the $L^q$- and $L_{\loc}^q$-spaces above are the Bochner $L^q$- and $L_{\loc}^q$-spaces.
\end{nota}
\begin{rem}\label{rem.min}
The use of $\mathcal{L}$ and $\Lambda$ above is inspired by the notation used in \cite{chungwilliams} for the classical case.
Biane and Speicher use the notation $\mathscr{B}_p^a$ in \cite{bianespeicher} for the space $\mathcal{L}^{2,p}$, though their definition is stated as an abstract completion of $\SBa$.
Also, we note that simple biprocesses take values in $\cA \otimes \cA^{\op} \subseteq \cA \otimes_{\min} \cA^{\op}$, and $\cA \otimes_{\min} \cA^{\op} \subseteq \cA \wotimes \cA^{\op}$ is a norm-closed subspace.
In particular, all elements of $\Lambda^2$ actually take values (almost everywhere) in $\cA \otimes_{\min} \cA^{\op}$.
In other words, $\Lambda^2 \subseteq L_{\loc}^2(\R_+;\cA \otimes_{\min} \cA^{\op})$.
\end{rem}

Only the case $p=\infty$ will matter to us in later sections.
However, we note that in the case $p=2$, there is an It\^{o} Isometry, just as in the classical case.
It says that if $x \colon \R_+ \to \cA$ is a semicircular Brownian motion (or, in fact, a circular Brownian motion), then
\[
\Bigg\la \int_0^t u \sh dx, \int_0^t v \sh dx \Bigg\ra_{L^2(\tau)} = \la u,v \ra_{L_t^2L^2(\tau \wotimes \tau^{\op})},
\]
for all $u,v \in \SBa$ and $t \geq 0$.
Please see Proposition 3.1.1 in \cite{bianespeicher}.
We now focus on the case $p=\infty$.

\begin{thm}[Biane--Speicher \cite{bianespeicher}]\label{thm.BSstochint}
Let $x \colon \R_+ \to \cA_{\sa}$ be a semicircular Brownian motion and $z \colon \R_+ \to \cA$ be a circular Brownian motion.
Fix $u \in \SBa$ and $m \in \{x,z,z^*\}$.
\begin{enumerate}[label=(\roman*),font=\normalfont,leftmargin=2\parindent]
    \item $\int_0^{\boldsymbol{\cdot}}u \sh dm$ is a noncommutative martingale.\label{item.mart}
    \item ($L^{\infty}$-Burkholder--Davis--Gundy (BDG) Inequality) We have
    \[
    \Bigg\|\int_0^{\infty} u(t) \sh dx(t) \Bigg\| \leq 2\sqrt{2}\|u\|_{L^2L^{\infty}(\tau \wotimes \tau^{\op})}.
    \]
    It follows that the map $\{(r_1,r_2) : 0 \leq r_1 \leq r_2\} \ni (s,t) \mapsto \int_s^t u \sh dm \in \cA$ is continuous.\label{item.BDG}
\end{enumerate}
\end{thm}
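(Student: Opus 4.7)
My plan is to verify (i) by a direct computation on simple biprocesses using the free-increments structure of $m$, and to obtain (ii) through a Fock-space realization of $x$ that exposes the stochastic integral as a sum of creation and annihilation type operators whose norms can be estimated by hand. Throughout, the simplicity hypothesis reduces everything to finite-sum manipulations.

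For (i), fix $0 \le s \le t$, write $u = \sum_{i=1}^n 1_{[t_{i-1}, t_i)} u_i$ with $u_i \in \cA_{t_{i-1}} \otimes \cA_{t_{i-1}}^{\op}$, and after refining the partition I may assume $s, t \in \{t_0, \dots, t_n\}$. By linearity and item (v) of Lemma \ref{lem.algstochint}, the identity $\tau\big[\int_0^t u \sh dm \bigm| \cA_s\big] = \int_0^s u \sh dm$ reduces to showing $\tau[u_i \sh (m(t_i) - m(t_{i-1})) \mid \cA_s] = 0$ for each $i$ with $t_{i-1} \ge s$. By the tower property it suffices to evaluate the conditional expectation onto $\cA_{t_{i-1}}$, and for a pure tensor $u_i = a \otimes b$ with $a, b \in \cA_{t_{i-1}}$ the module property yields
\[
\tau[a(m(t_i) - m(t_{i-1}))b \mid \cA_{t_{i-1}}] \;=\; a \cdot \tau(m(t_i) - m(t_{i-1})) \cdot b \;=\; 0,
\]
using $\ast$-free independence of the increment from $\cA_{t_{i-1}}$ together with the vanishing first moment of the (semi)circular distribution. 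Linearity extends this to general $u_i$.

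For (ii), I would represent $(\cA, \tau)$ faithfully so that $x(t) = \ell(1_{[0,t]}) + \ell^*(1_{[0,t]})$, where $\ell(\cdot), \ell^*(\cdot)$ are the creation and annihilation operators on a free Fock space over $L^2(\R_+)$ (tensored with an auxiliary space carrying the initial algebra). Then for $u = \sum_i 1_{[t_{i-1}, t_i)}(a_i \otimes b_i) \in \SBa$, the integral decomposes as $S = L_+ + L_-$ with
\[
L_+ = \sum_i a_i \, \ell(1_{[t_{i-1}, t_i)}) \, b_i, \qquad L_- = \sum_i a_i \, \ell^*(1_{[t_{i-1}, t_i)}) \, b_i.
\]
I would bound each $\|L_\pm\|$ by computing $L_+ L_+^*$ and $L_-^* L_-$ on Fock space: the orthogonality of the disjoint indicators $1_{[t_{i-1}, t_i)}$ in $L^2(\R_+)$, combined with the adaptedness constraint $a_i, b_i \in \cA_{t_{i-1}}$, forces the cross terms into a diagonal contribution controllable in operator norm by $\|u\|_{L^2 L^\infty(\tau \wotimes \tau^{\op})}$. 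Careful tracking of constants yields $\|L_\pm\| \le \sqrt{2}\,\|u\|_{L^2 L^\infty(\tau \wotimes \tau^{\op})}$, after which the triangle inequality gives the stated bound $\|S\| \le 2\sqrt{2}\,\|u\|_{L^2 L^\infty(\tau \wotimes \tau^{\op})}$. Continuity of $(s,t) \mapsto \int_s^t u \sh dm$ is then immediate by applying this BDG estimate to the difference biprocess $1_{[s',t')}u - 1_{[s,t)}u$ and invoking dominated convergence on the scalar function $(s,t) \mapsto \|1_{[s,t)}u\|_{L^2 L^\infty}^2$. The case $m = z$ (and $m = z^*$) would be handled by the decomposition $z = \tfrac{1}{\sqrt{2}}(x_1 + i x_2)$ into freely independent semicircular Brownian motions.

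The main obstacle is the $L^\infty$-BDG bound itself. Whereas the corresponding $L^2$-isometry (mentioned in the text before the theorem) falls out by orthogonality from free independence and the identity $\tau((m(t_i)-m(t_{i-1}))^2) = t_i - t_{i-1}$, the operator-norm upgrade has no trace-based shortcut and appears to require the explicit Fock-space model; getting the sharp constant $2\sqrt{2}$ out of it demands careful bookkeeping of the disjoint-interval cancellations that occur when expanding $L_\pm L_\pm^*$ and $L_\pm^* L_\pm$.
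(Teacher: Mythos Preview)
Your proposal is correct and aligns with the paper's approach: the paper simply cites Biane--Speicher's original results (Proposition 2.2.2 for the martingale property and Theorem 3.2.1 for the $L^\infty$-BDG inequality), and your sketch is precisely an outline of those proofs---the free-increment conditional-expectation argument for (i), the Fock-space creation/annihilation decomposition for (ii)---together with the same reduction of the circular case to the semicircular one via $z = \tfrac{1}{\sqrt{2}}(x_1 + ix_2)$.
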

\begin{proof}
If $m=x$, then item \ref{item.mart} is Proposition 2.2.2 in \cite{bianespeicher}.
The inequality in item \ref{item.BDG} is Theorem 3.2.1 in \cite{bianespeicher}.
The remainder of the claims in the theorem (i.e., those for $m \in \{z,z^*\}$) follow from the corresponding claims for $m=x$ because $z = \frac{1}{\sqrt{2}}(x_1+ix_2)$ and $z^* = \frac{1}{\sqrt{2}}(x_1-ix_2)$, where $x_1 = \sqrt{2} \cRe z$ and $x_2 = \sqrt{2} \cIm z$ are (freely independent) semicircular Brownian motions.
\end{proof}

\begin{cor}\label{cor.extensionofstochinteg}
Retain the setup of Theorem \ref{thm.BSstochint}, and fix $s \geq 0$.
The linear map $\int_s^{\boldsymbol{\cdot}}\boldsymbol{\cdot} \, \sh dm \colon \SBa \to C([s,\infty);\cA)$ extends uniquely to a continuous linear map $\Lambda^2 \to C([s,\infty);\cA)$, which we notate the same way.
If $u \in \Lambda^2$, then $\int_0^{\boldsymbol{\cdot}}u \sh dm$ is a continuous noncommutative martingale that satisfies the identities
\[
\int_s^t u \sh dm = \int_0^t u \sh dm - \int_0^s u \sh dm \; \text{ and } \; \Bigg(\int_s^t u \sh dm\Bigg)^* = \int_s^t u^{\mathsmaller{\bigstar}} \sh dm^*,
\]
and the bounds
\[
\Bigg\|\int_s^t u \sh dx \Bigg\| \leq 2\sqrt{2}\Bigg(\int_s^t\|u(r)\|_{L^{\infty}(\tau \wotimes \tau^{\op})}^2\,dr\Bigg)^{\frac{1}{2}} \; \text{ and } \; \Bigg\|\int_s^t u \sh dz^{\e} \Bigg\| \leq 4\Bigg(\int_s^t\|u(r)\|_{L^{\infty}(\tau \wotimes \tau^{\op})}^2\,dr\Bigg)^{\frac{1}{2}}
\]
for $t \geq s$ and $\e \in \{1,\ast\}$.
Similar comments apply to $\int_0^{\infty} u \sh dm$ for $u \in \mathcal{L}^2$.
\end{cor}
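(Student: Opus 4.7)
The plan is to extend the map by density from $\SBa$, using the $L^\infty$-BDG inequality of Theorem \ref{thm.BSstochint}\ref{item.BDG} as the required continuity estimate, and then transfer each listed property from $\SBa$ to $\Lambda^2$ by approximation. Topologize $C([s,\infty);\cA)$ with the Fr\'{e}chet seminorms $f \mapsto \sup_{r \in [s,t]}\|f(r)\|_{\cA}$ ($t \geq s$), and recall that $\Lambda^2$ inherits the Fr\'{e}chet seminorms $\|\cdot\|_{L^2_t L^\infty(\tau \wotimes \tau^{\op})}$ from $L_{\loc}^2(\R_+;\cA\wotimes\cA^{\op})$.

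The key estimate is as follows. For $u \in \SBa$ and $r \in [s,t]$, Lemma \ref{lem.algstochint} gives $\int_s^r u \sh dm = \int_0^{\infty}(1_{[s,r)}u) \sh dm$, which is again a simple adapted biprocess. Applying Theorem \ref{thm.BSstochint}\ref{item.BDG} in the semicircular case (and, for $m \in \{z,z^*\}$, the decomposition $z = \tfrac{1}{\sqrt{2}}(x_1+ix_2)$ with the triangle inequality) yields
\[
\sup_{r\in[s,t]}\bigg\|\int_s^r u \sh dm\bigg\|_{\cA} \,\leq\, C_m \|u\|_{L^2_t L^\infty(\tau \wotimes \tau^{\op})}, \qquad C_m = \begin{cases}2\sqrt{2}, & m = x,\\ 4, & m \in \{z,z^*\}.\end{cases}
\]
The same bound applied on shrinking sub-intervals shows $r \mapsto \int_s^r u \sh dm$ is continuous on $[s,\infty)$, so the map indeed lands in $C([s,\infty);\cA)$ and is continuous between the two Fr\'{e}chet topologies. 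Given $u \in \Lambda^2$ and any approximating sequence $(u_n)\subseteq\SBa$, the estimate makes $\big(\int_s^{\boldsymbol{\cdot}} u_n \sh dm\big)_n$ Cauchy in $C([s,\infty);\cA)$, so the limit defines $\int_s^{\boldsymbol{\cdot}} u \sh dm$ uniquely.

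The remaining assertions are limit passages. The identity $\int_s^t = \int_0^t - \int_0^s$ and the adjoint formula hold on $\SBa$ by Lemma \ref{lem.algstochint}(v) and (ii); they extend because $(\cdot)^*$ and $(\cdot)^{\mathsmaller{\bigstar}}$ are isometries of $\cA$ and $\cA\wotimes\cA^{\op}$, with $(\cdot)^{\mathsmaller{\bigstar}}$ sending $\SBa$ into itself and hence extending isometrically to $\Lambda^2$. Adaptedness of $\int_0^t u \sh dm$ survives norm limits since $\cA_t$ is WOT-, hence norm-, closed, and the martingale identity $\tau\big[\int_0^t u \sh dm \mid \cA_r\big] = \int_0^r u \sh dm$ (valid on $\SBa$ by Theorem \ref{thm.BSstochint}\ref{item.mart}) extends because $\tau[\,\cdot\mid\cA_r]$ is an $L^2(\tau)$-contraction and $\|\cdot\|_{L^2(\tau)}\leq\|\cdot\|_{\cA}$. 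The explicit BDG bounds are the displayed inequality extended by continuity, and the $\mathcal{L}^2$-variant is the same argument with the Banach space $L^2(\R_+;L^\infty(\tau\wotimes\tau^{\op}))$ replacing its local Fr\'{e}chet counterpart and $\cA$ replacing $C([s,\infty);\cA)$.

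The argument is soft; the main obstacle is bookkeeping. Three modes of convergence appear (uniform-on-compacts in $t$, operator norm in $\cA$, and $L^2(\tau)$ for the martingale step), and one must match each property with a topology in which it survives the limit. Once the BDG inequality is localized to bound the Fr\'{e}chet seminorms of $C([s,\infty);\cA)$ by those of $\Lambda^2$, every remaining step is a routine application of Lemma \ref{lem.algstochint}, Theorem \ref{thm.BSstochint}, and basic continuity of $(\cdot)^*$, $(\cdot)^{\mathsmaller{\bigstar}}$, and $\tau[\,\cdot\mid\cA_r]$.
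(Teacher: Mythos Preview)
Your proposal is correct and is precisely the standard density-and-continuity argument the paper leaves implicit; the corollary is stated without proof, as it follows immediately from Theorem \ref{thm.BSstochint} (the $L^\infty$-BDG inequality and martingale property) together with Lemma \ref{lem.algstochint}, and your write-up fills in exactly those routine details.
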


\begin{defi}[Free Stochastic Integral]
For every $u \in \Lambda^2$ and $m \in \{x,z,z^*\}$ as above, the process $\int_0^{\boldsymbol{\cdot}}u \sh dm$ from Corollary \ref{cor.extensionofstochinteg} is called the \textbf{free stochastic integral of} $\boldsymbol{u}$ \textbf{against} $\boldsymbol{m}$.
\end{defi}

We end this section by giving a large class of examples of members of $\Lambda^{2,p}$.
Note that $u \in \Lambda^{2,p}$ if and only if $1_{[0,t)}u \in \mathcal{L}^{2,p}$, for all $t > 0$.
We shall use this freely below. 

\begin{prop}\label{prop.RCLB}
Suppose that $u \colon \R_+ \to \cA \otimes_{\min} \cA^{\op}$ is (norm) right-continuous, locally bounded, and adapted, i.e., $u(t) \in \cA_t \otimes_{\min} \cA_t^{\op}$ for all $t \geq 0$.
If $p \in [1,\infty]$ and $v \in \Lambda^{2,p}$, then $u\,v \in \Lambda^{2,p}$.
The latter juxtaposition is the (pointwise) usual action of $\cA \wotimes \cA^{\op}$ on $L^p(\cA \wotimes \cA^{\op},\tau \wotimes \tau^{\op})$.
\end{prop}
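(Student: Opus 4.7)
The plan is to approximate $uv$ in $L^2_{\loc}(\R_+; L^p(\cA \wotimes \cA^{\op}, \tau \wotimes \tau^{\op}))$ by pointwise products $\tilde{u}^{(\delta)} v^{(n)} \in \SBa$ of step approximants of $u$ and $v$. Fixing $T > 0$ (since $\Lambda^{2,p}$ is the $L^2_{\loc}$-closure of $\SBa$, it suffices to handle each $[0,T]$), I would first pick $v^{(n)} \in \SBa$ with $v^{(n)} \to v$ in $L_T^2 L^p(\tau \wotimes \tau^{\op})$, and use the pointwise noncommutative H\"{o}lder inequality together with the local bound $M_T \coloneqq \sup_{t \leq T}\|u(t)\|_{\cA \otimes_{\min} \cA^{\op}} < \infty$ to get $\|u(v - v^{(n)})\|_{L_T^2 L^p} \leq M_T\|v - v^{(n)}\|_{L_T^2 L^p} \to 0$, reducing the problem to approximating each $uv^{(n)}$ by $\SBa$.

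Next I would approximate $u$ itself by left-endpoint step functions. Given a partition $0 = s_0 < \cdots < s_K = T$ of mesh $\delta$, set $u_L^{(\delta)}(t) \coloneqq u(s_{k-1})$ for $t \in [s_{k-1}, s_k)$. This is simple and adapted, with values in $\cA_{s_{k-1}} \otimes_{\min} \cA_{s_{k-1}}^{\op}$; by density of the algebraic in the minimal tensor product (in operator norm), each $u(s_{k-1})$ can be replaced with a nearby elementary-tensor sum to give $\tilde{u}^{(\delta)} \in \SBa$ with $\|\tilde{u}^{(\delta)} - u_L^{(\delta)}\|_{L_T^2 L^\infty}$ as small as desired.

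The crux is showing $u_L^{(\delta)} \to u$ in $L_T^2 L^\infty$ as $\delta \to 0$. The key subclaim is that $u$ has at most countably many discontinuities on $[0,T]$: for each $n \in \N$, the set $D_n$ of points where the left-oscillation of $u$ (in the operator norm of $\cA \otimes_{\min} \cA^{\op}$) exceeds $1/n$ is right-isolated from itself (by right-continuity at each $t \in D_n$, some $(t, t+\delta_t)$ is disjoint from $D_n$), so $D_n$ injects into a pairwise disjoint family of nonempty open intervals and is therefore countable; similarly for the set of left-jumps of size exceeding $1/n$. Thus $u$ is continuous a.e.\ on $[0,T]$, so $u_L^{(\delta)}(t) \to u(t^-) = u(t)$ for a.e.\ $t$, and bounded convergence (dominator $2M_T$) gives the required $L_T^2 L^\infty$-convergence.

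Finally, $\tilde{u}^{(\delta)} v^{(n)}$ lies in $\SBa$, and the triangle-inequality bound
\[
\|uv - \tilde{u}^{(\delta)} v^{(n)}\|_{L_T^2 L^p} \leq M_T\|v - v^{(n)}\|_{L_T^2 L^p} + \|u - \tilde{u}^{(\delta)}\|_{L_T^2 L^\infty}\|v^{(n)}\|_{L_T^\infty L^p}
\]
becomes arbitrarily small by first taking $n$ large, then $\delta$ small (and then the further alg-approximation arbitrarily fine), using crucially that $\|v^{(n)}\|_{L_T^\infty L^p} < \infty$ for each fixed $n$ since $v^{(n)}$ is a finite-range simple biprocess. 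The main obstacle is the discontinuity-counting subclaim in the third paragraph, which the right-isolation observation dispatches cleanly.
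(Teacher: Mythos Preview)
Your proof is correct and close in spirit to the paper's, with one structural difference and one point where you are actually more careful. Both arguments approximate $u$ by adapted left-endpoint step functions and then pass to the limit via dominated convergence. The paper multiplies its step approximation $u^n$ directly by $v$, having first shown (by norm-density of $\cA_s \otimes \cA_s^{\op}$ in $\cA_s \otimes_{\min} \cA_s^{\op}$) that each piece $1_{[s,t)}\,w\,v$ already lies in $\mathcal{L}^{2,p}$; you instead also approximate $v$ by $v^{(n)} \in \SBa$ and work with the product $\tilde{u}^{(\delta)} v^{(n)} \in \SBa$ directly. The paper's route is slightly more economical (no need for the bound $\|v^{(n)}\|_{L_T^\infty L^p} < \infty$), but both are short.

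Where your approach is sharper: the paper asserts that right-continuity of $u$ yields \emph{pointwise} convergence of the left-endpoint step approximation $u^n \to 1_{[0,t)}u$, but left-endpoint sampling actually converges only at continuity points of $u$. Your right-isolation argument showing that the large-oscillation sets $D_n$ are countable (and hence that $u$ is continuous almost everywhere) is precisely what is needed to upgrade this to almost-everywhere convergence, after which dominated convergence applies. So the step you flag as the ``main obstacle'' is a genuine one, and your handling of it is cleaner than the paper's.
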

\begin{proof}
First, note that if $0 \leq s \leq t < \infty$ and $w \in \cA_s \otimes \cA^{\op}_s$, then we have $1_{[s,t)}w\,v \in \mathcal{L}^{2,p}$.
(Approximate $v$ by simple adapted biprocesses to see this.)
We claim this holds for $w \in \cA_s \otimes_{\min} \cA_s^{\op}$ as well.
Indeed, let $(w_n)_{n \in \N}$ be a sequence in $\cA_s \otimes \cA_s^{\op}$ converging in the norm topology to $w$.
By noncommutative H\"{o}lder's Inequality, we have
\[
\|1_{[s,t)}w_n\,v-1_{[s,t)}w\,v\|_{L^2L^p(\tau\wotimes \tau^{\op})} \leq \|w_n-w\|_{L^{\infty}(\tau\wotimes\tau^{\op})}\|v\|_{L_t^2L^p(\tau \wotimes \tau^{\op})} \to 0,
\]
i.e., $1_{[s,t)}w_n\,v \to 1_{[s,t)} w\,v$ in $L^2(\R_+;L^p(\cA \wotimes \cA^{\op},\tau \wotimes \tau^{\op}))$ as $n \to \infty$.
Thus $1_{[s,t)}w\,v \in \mathcal{L}^{2,p}$, as claimed.

Now, let $t > 0$, and define
\[
u^n \coloneqq \sum_{i=1}^n 1_{[\text{\scalebox{0.7}{$\frac{i-1}{n}t,\frac{i}{n}t$}})}u\big(\mathsmaller{\frac{i-1}{n}}t\big)
\]
for $n \in \N$. Then $u^n\,v \in \mathcal{L}^2$ by the previous paragraph.
Since $u$ is right-continuous, $u^n \to 1_{[0,t)}u$ pointwise in $\cA \otimes_{\min} \cA^{\op} \subseteq \cA \wotimes \cA^{\op}$ as $n \to \infty$.
In particular, $u^n\,v \to 1_{[0,t)}u\,v$ pointwise in $L^p(\cA \wotimes \cA^{\op},\tau \wotimes \tau^{\op})$ as $n \to \infty$.
Also, $\sup_{n \in \N} \|u^n\,v\|_{L^p(\tau \wotimes \tau^{\op})} \leq  1_{[0,t)}\|v\|_{L^p(\tau \wotimes \tau^{\op})} \sup_{0 \leq r < t}\|u(r)\|_{L^{\infty}(\tau \wotimes \tau^{\op})}$, which is in $L^2(\R_+)$ because $u$ is locally bounded.
Therefore, by the Dominated Convergence Theorem,
\[
\|u^n\,v-1_{[0,t)}u\,v\|_{L^2L^p(\tau \wotimes \tau^{\op})} = \|u^n\,v-u\,v\|_{L_t^2L^p(\tau \wotimes \tau^{\op})} \to 0
\]
as $n \to \infty$.
Thus $u^n\,v \to 1_{[0,t)}u\,v$ in $L^2(\R_+;L^p(\cA \wotimes \cA^{\op},\tau \wotimes \tau^{\op}))$ as $n \to \infty$.
We conclude $1_{[0,t)}u\,v \in \mathcal{L}^{2,p}$, and therefore, since $t > 0$ was arbitrary, $u \,v \in \Lambda^{2,p}$, as desired.
\end{proof}

\begin{cor}\label{cor.RCLL}
Suppose that $u \colon \R_+ \to \cA \otimes_{\min} \cA^{\op}$ is RCLL, i.e., $u$ is (norm) right-continuous and the left limit $u(t-) \coloneqq \lim_{s \nearrow t} u(s) \in \cA \otimes_{\min} \cA^{\op}$ exists for each $t \geq 0$.
If $u$ is adapted, $p \in [1,\infty]$, and $v \in \Lambda^{2,p}$, then $u\,v \in \Lambda^{2,p}$.
\end{cor}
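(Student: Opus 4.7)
The plan is to reduce this corollary directly to Proposition \ref{prop.RCLB} by showing that any RCLL function from $\R_+$ to a Banach space is automatically locally bounded. Since $u$ is already assumed to be right-continuous and adapted, the only hypothesis left to verify in order to invoke Proposition \ref{prop.RCLB} is local boundedness in the norm of $\cA \otimes_{\min} \cA^{\op}$.

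To establish local boundedness, I would argue by contradiction: fix $t > 0$ and suppose there is a sequence $(s_n)_{n \in \N} \subseteq [0,t]$ with $\|u(s_n)\|_{\cA \otimes_{\min} \cA^{\op}} \to \infty$. By the Bolzano-Weierstrass theorem, pass to a subsequence (still denoted $(s_n)$) converging to some $s_\infty \in [0,t]$. Split into two cases. If infinitely many terms satisfy $s_n \geq s_\infty$, then by right-continuity of $u$ at $s_\infty$ the corresponding sub-subsequence $u(s_n)$ converges in norm to $u(s_\infty)$, which contradicts $\|u(s_n)\| \to \infty$. If instead infinitely many terms satisfy $s_n < s_\infty$, then by the existence of the left limit $u(s_\infty-) \in \cA \otimes_{\min} \cA^{\op}$, the corresponding sub-subsequence $u(s_n)$ converges in norm to $u(s_\infty-)$, again a contradiction. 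Hence $\sup_{0 \leq r \leq t}\|u(r)\|_{\cA \otimes_{\min} \cA^{\op}} < \infty$ for every $t > 0$, so $u$ is locally bounded.

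With this in hand, the hypotheses of Proposition \ref{prop.RCLB} are satisfied, and the conclusion $u\,v \in \Lambda^{2,p}$ follows immediately. The main (and only real) obstacle is the abstract Banach space fact that RCLL implies locally bounded; once that is noted, the corollary is essentially immediate. Note that we do use that $\cA \otimes_{\min} \cA^{\op}$ is a Banach space under the norm in question — which holds since it is a $C^*$-algebra — so Bolzano-Weierstrass applies to scalar norms and the limits $u(s_\infty)$ and $u(s_\infty-)$ live in the same Banach space where the norm bound is being tested.
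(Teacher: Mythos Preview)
Your proposal is correct and follows exactly the paper's approach: the paper's proof is the single sentence ``If $u$ is RCLL, then $u$ is right-continuous and locally bounded, so Proposition \ref{prop.RCLB} applies.'' You have simply supplied the (standard) Banach-space argument that RCLL implies locally bounded, which the paper leaves implicit.
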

\begin{proof}
If $u$ is RCLL, then $u$ is right-continuous and locally bounded, so Proposition \ref{prop.RCLB} applies.
\end{proof}

\begin{ex}\label{ex.elemtens}
Suppose $u \colon \R_+ \to \cA \potimes \cA^{\op}$ is continuous with respect to $\|\cdot\|_{\cA \potimes \cA^{\op}}$ and $u(t) \in \cA_t \potimes \cA_t^{\op}$, for all $t \geq 0$.
Since $\cA \potimes \cA^{\op} \subseteq \cA \otimes_{\min} \cA^{\op}$ and $\|\cdot\|_{\cA \potimes \cA^{\op}} \leq \|\cdot\|_{\cA \otimes_{\min} \cA^{\op}}$, $u$ satisfies the hypotheses of Proposition \ref{prop.RCLB} (even Corollary \ref{cor.RCLL}).
A common example of this form is
\[
u = \sum_{i=1}^n a_i \otimes b_i = \Bigg(\sum_{i=1}^n a_i(t) \otimes b_i(t)\Bigg)_{t \geq 0}
\]
for continuous adapted processes $a_1,b_1\ldots,a_n,b_n \colon \R_+ \to \cA$. 
\end{ex}

\subsection{Free It\^{o} Product Rule}\label{sec.FIPR}

In this section, we set up and prove an It\^{o} product rule for free It\^{o} processes (Theorem \ref{thm.FIPR}).
We begin by officially introducing free It\^{o} processes.
Recall that $(\cA,(\cA_t)_{t \geq 0},\tau)$ is a fixed $W^*$-probability space.

\begin{defi}[Free It\^{o} Process]\label{def.freeItoprocess}
Fix $n \in \N$ and an $n$-dimensional semicircular Brownian motion $(x_1,\ldots,x_n) \colon \R_+ \to \cA_{\sa}^n$.
A \textbf{free It\^{o} process} is a process $m \colon \R_+ \to \cA$ satisfying
\begin{align*}
    dm(t) & = \sum_{i=1}^n u_i(t) \sh dx_i(t) + k(t) \, dt, \,\text{ i.e.,} \numberthis\label{eq.frItoprocess}\\
    m & = m(0)+\sum_{i=1}^n\int_0^{\boldsymbol{\cdot}} u_i(t) \sh dx_i(t) + \int_0^{\boldsymbol{\cdot}} k(t) \, dt,\displaybreak
\end{align*}
where $m(0) \in \cA_0$, $u_i \in \Lambda^2$ for all $i \in \{1,\ldots,n\}$, and $k \colon \R_+ \to \cA$ is locally Bochner integrable and adapted.
If $w \colon \R_+ \to \cA \potimes \cA^{\op}$ is continuous and adapted (as in Example \ref{ex.elemtens}), then we write
\begin{align*}
    w(t)\sh dm(t) & \coloneqq \sum_{i=1}^n (w(t) \, u_i(t))\sh dx_i(t) + w(t) \sh k(t) \,dt , \, \text{ i.e.,} \\
    \int_0^{\boldsymbol{\cdot}} w(t)\sh dm(t) & \coloneqq w(0) \sh m(0) + \sum_{i=1}^n\int_0^{\boldsymbol{\cdot}}(w(t) \, u_i(t))\sh dx_i(t) + \int_0^{\boldsymbol{\cdot}} w(t) \sh k(t) \,dt,
\end{align*}
where the multiplication $w\,u_i$ occurs in $\cA \wotimes \cA^{\op}$.
If $w = a \otimes b$ for continuous adapted $a,b \colon \R_+ \to \cA$, then we write $a(t)\,dm(t) \,b(t) \coloneqq w(t) \sh dm(t)  = (a(t) \otimes b(t))\sh dm(t)$.
\end{defi}

Note that if $k$ is as above, then $\int_0^{\cdot} k(t) \, dt \colon \R_+ \to \cA$ is adapted because $\cA_t \subseteq \cA$ is norm-closed, for all $t \geq 0$.
In particular, free It\^{o} processes are continuous and adapted.
Also, if $m$ and $w$ are as above, then $w\,u_i \in \Lambda^2$ by Corollary \ref{cor.RCLL}, and $w \sh k \colon \R_+ \to \cA$ is locally Bochner integrable because $k$ is locally Bochner integrable and $\R_+ \ni t \mapsto \#(w(t)) \in B(\cA)$ is continuous.
In particular, both the free stochastic integrals and the Bochner integrals in the second part of the definition above make sense.

Now, suppose that $(z_1,\ldots,z_n) \colon \R_+ \to \cA^n$ is an $n$-dimensional circular Brownian motion.
If $k \colon \R_+ \to \cA$ is locally Bochner integrable and adapted, $u_1,v_1,\ldots,u_n,v_n \in \Lambda^2$, and $m \colon \R_+ \to \cA$ satisfies
\[
dm(t) = \sum_{i=1}^n\big(u_i(t)\sh dz_i(t) + v_i(t)\sh dz_i^*(t)\big) + k(t)\,dt, \numberthis\label{eq.zfrItoprocess}
\]
then $m$ is a free It\^{o} process driven by a $2n$-dimensional semicircular Brownian motion.
Indeed, if $x_i \coloneqq \sqrt{2} \cRe z_i$ and $y_i \coloneqq \sqrt{2} \cIm z_i$, then $(x_1,y_1\ldots,x_n,y_n) \colon \R_+ \to \cA_{\sa}^{2n}$ is a $2n$-dimensional semicircular Brownian motions, and $m$ satisfies
\[
dm(t) = \frac{1}{\sqrt{2}}\sum_{j=1}^n\big((u_j(t)+v_j(t))\sh dx_j(t) + i(u_j(t)-v_j(t))\sh dy_j(t)\big) + k(t)\,dt.
\]
Next, we introduce the operations that show up in the free It\^{o} product rule.

\begin{nota}\label{nota.MtauandQtau}
Let $\mathfrak{m}_{\cA} \colon \cA \otimes \cA \to \cA$ be the linear map induced by multiplication and
\[
\mathcal{M}_{\tau} \coloneqq \mathfrak{m}_{\cA} \circ (\id_{\cA} \otimes \tau \otimes \id_{\cA}) \colon \cA \otimes \cA \otimes \cA \to \cA,
\]
i.e., $\mathcal{M}_{\tau}$ is the linear map determined by $\mathcal{M}_{\tau}(a \otimes b \otimes c) = a\, \tau(b)\,c = \tau(b)\,ac$.
Now, for $u,v \in \cA \otimes \cA^{\op}$, let
\[
Q_{\tau}(u,v) \coloneqq \mathcal{M}_{\tau}((1 \otimes v)\boldsymbol{\cdot}(u \otimes 1)), \numberthis\label{eq.Qtau}
\]
where $\boldsymbol{\cdot}$ is multiplication in $\cA \otimes \cA^{\op} \otimes \cA$, i.e., $Q_{\tau} \colon (\cA \otimes \cA^{\op}) \times (\cA \otimes \cA^{\op}) \to \cA$ is the bilinear map determined by $Q_{\tau}(a \otimes b, c \otimes d) = a\,\tau(bc)\,d$.
\end{nota}

In \cite{bianespeicher}, $\mathcal{M}_{\tau}$ is written as $\eta$, and $Q_{\tau}$ is written as $\la \la \cdot,\cdot \ra\ra$.
Note that, using the universal property of the projective tensor product, we can extend $\mathcal{M}_{\tau}$ to a bounded linear map $\cA^{\potimes 3} \to \cA$ and $Q_{\tau}$ to a bounded bilinear map $(\cA \potimes \cA^{\op})^2 \to \cA$.
Unfortunately, however, the multiplication map $\mathfrak{m}_{\cA} \colon \cA \otimes \cA \to \cA$ is \textit{not} bounded with respect to $\|\cdot\|_{L^{\infty}(\tau \wotimes \tau)}$ (Proposition 3.6 in \cite{deyaschott}), so there is no hope of extending $\mathcal{M}_{\tau}$ to a bounded linear map $\cA \otimes_{\min} \cA \otimes_{\min} \cA \to \cA$, let alone $\cA \wotimes \cA \wotimes \cA \to \cA$. 
Nevertheless, using the following elementary but crucial algebraic observation, we learn that the ``tracing out the middle" in the definition implies that $Q_{\tau}$ can be extended to a bounded bilinear map $(\cA \wotimes \cA^{\op})^2 \to \cA$.

\begin{lem}\label{lem.keyalg}
If $u,v \in \cA \otimes \cA^{\op}$ and $a,b,c,d \in \cA$, then
\[
\tau(a\,\mathcal{M}_{\tau} ((1 \otimes v) \boldsymbol{\cdot} (b \otimes c \otimes d) \boldsymbol{\cdot} (u \otimes 1)) ) = (\tau \otimes \tau^{\op})((a \otimes 1)(b \otimes 1)uv^{\flip}(1 \otimes c)(d \otimes 1)),
\]
where the juxtapositions on the right hand side are multiplications in $\cA \otimes \cA^{\op}$.
\end{lem}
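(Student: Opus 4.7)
The plan is to prove the identity by direct computation on elementary tensors. Since both sides are $\C$-bilinear in $(u,v) \in (\cA \otimes \cA^{\op})^2$, it suffices to fix $u = u_1 \otimes u_2$ and $v = v_1 \otimes v_2$ with $u_1, u_2, v_1, v_2 \in \cA$ and verify the identity in this case. The only real issue is careful bookkeeping: one must keep straight which tensor slot uses $\cA$-multiplication and which uses $\cA^{\op}$-multiplication.

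For the left-hand side, I would multiply out $(1 \otimes v) \boldsymbol{\cdot} (b \otimes c \otimes d) \boldsymbol{\cdot} (u \otimes 1)$ in $\cA \otimes \cA^{\op} \otimes \cA$. Writing $u \otimes 1 = u_1 \otimes u_2 \otimes 1$ and $1 \otimes v = 1 \otimes v_1 \otimes v_2$, the opposite multiplication in the middle slot collapses the triple product to the single elementary tensor $bu_1 \otimes u_2 c v_1 \otimes v_2 d$. Applying $\mathcal{M}_{\tau}$ and then $\tau(a \,\cdot\,)$ produces $\tau(u_2 c v_1)\,\tau(a b u_1 v_2 d)$.

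For the right-hand side, I would similarly multiply out in $\cA \otimes \cA^{\op}$, using $v^{\flip} = v_2 \otimes v_1$ so that $u v^{\flip} = u_1 v_2 \otimes v_1 u_2$. Combined with $(a \otimes 1)(b \otimes 1) = ab \otimes 1$ and $(1 \otimes c)(d \otimes 1) = d \otimes c$, the full product collapses to the single elementary tensor $a b u_1 v_2 d \otimes c v_1 u_2$, and applying $\tau \wotimes \tau^{\op}$ yields $\tau(a b u_1 v_2 d)\,\tau(c v_1 u_2)$. The two sides are then equal by the trace property $\tau(c v_1 u_2) = \tau(u_2 c v_1)$. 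There is no analytic subtlety anywhere in the argument; the entire content is the algebraic bookkeeping above.
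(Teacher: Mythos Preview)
Your proposal is correct and follows essentially the same approach as the paper's proof: reduce by bilinearity to pure tensors $u = u_1 \otimes u_2$, $v = v_1 \otimes v_2$, carefully expand both sides using the $\cA^{\op}$-multiplication in the appropriate slots, and match the results via the traciality of $\tau$. The paper organizes the computation as a chain of equalities transforming the left side into the right (invoking traciality of $\tau \otimes \tau^{\op}$ along the way), whereas you compute each side separately and compare at the end, but the content is identical.
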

\begin{proof}
It suffices to assume $u = a_1 \otimes b_1$ and $v = c_1 \otimes d_1$ are pure tensors.
In this case, we have
\begin{align*}
    \tau(a\,\mathcal{M}_{\tau} ((1 \otimes v) \boldsymbol{\cdot} (b \otimes c \otimes d) \boldsymbol{\cdot} (u \otimes 1)) ) & = \tau(aba_1\tau(b_1cc_1)d_1d) = (\tau \otimes \tau^{\op})((aba_1d_1d) \otimes (c_1\cdot c \cdot b_1)) \\
    & = (\tau \otimes \tau^{\op})((a \otimes 1)(b\otimes 1)(a_1 \otimes 1)(d_1 \otimes c_1)(d\otimes c)(1 \otimes b_1)) \\
    & = (\tau \otimes \tau^{\op})((1 \otimes b_1)(a \otimes 1)(b \otimes 1)(a_1 \otimes 1)(d_1 \otimes c_1)(1 \otimes c)(d \otimes 1)) \\
    & = (\tau \otimes \tau^{\op})((a \otimes 1)(b \otimes 1)(a_1 \otimes b_1)(d_1 \otimes c_1)(1 \otimes c)(d \otimes 1)),
\end{align*}
where in the second-to-last equality we used traciality of $\tau \otimes \tau^{\op}$.
\end{proof}

In particular, if $u,v \in \cA \otimes \cA^{\op}$ and $a \in \cA$, then
\[
\tau(a \, Q_{\tau}(u,v)) = (\tau \wotimes \tau^{\op})((a \otimes 1)uv^{\flip}). \numberthis\label{eq.Qpredef}
\]
Now, note that the right hand side of Equation \eqref{eq.Qpredef} makes sense for arbitrary $u,v \in \cA \wotimes \cA^{\op}$ and $a \in L^1(\cA,\tau)$.
We may therefore use the duality relationship $L^1(\cA,\tau)^* \cong L^{\infty}(\cA,\tau) = \cA$ to extend the definition of $Q_{\tau}$.
Specifically, if $u,v \in \cA \wotimes \cA^{\op}$, $a \in L^1(\cA,\tau)$, and $\ell_{u,v}(a) \coloneqq (\tau \wotimes \tau^{\op})((a \otimes 1)uv^{\flip})$, then
\[
|\ell_{u,v}(a)| \leq \|(a \otimes 1)uv^{\flip}\|_{L^1(\tau \wotimes \tau^{\op})} \leq \|a \otimes 1\|_{L^1(\tau \wotimes \tau^{\op})}\|uv^{\flip}\|_{L^{\infty}(\tau \wotimes \tau^{\op})} = \|a\|_{L^1(\tau)}\|uv^{\flip}\|_{L^{\infty}(\tau \wotimes \tau^{\op})}.
\]
Thus $\|\ell_{u,v}\|_{L^1(\cA,\tau)^*} \leq \|uv^{\flip}\|_{L^{\infty}(\tau \wotimes \tau^{\op})} < \infty$.
In particular, since $\cA$ is dense in $L^1(\cA,\tau)$, the following definition makes sense and extends the algebraic definition of $Q_{\tau}$.

\begin{defi}[Extended Definition of $Q_{\tau}$]\label{def.Q}
If $u,v \in \cA \wotimes \cA^{\op}$, then we define $Q_{\tau}(u,v)$ to be the unique element of $\cA$ such that
\[
\tau(a \, Q_{\tau}(u,v)) = (\tau \wotimes \tau^{\op})((a \otimes 1)uv^{\flip}),
\]
for all $a \in \cA$ (and thus $a \in L^1(\cA,\tau)$).
\end{defi}

It is clear from the definition that the map $Q_{\tau}(u,v)$ is bilinear in $(u,v)$.
Also, by the paragraph before Definition \ref{def.Q}, if $u,v \in \cA \wotimes \cA^{\op}$, then
\[
\|Q_{\tau}(u,v)\| = \|\ell_{u,v}\|_{L^1(\cA,\tau)^*} \leq \|uv^{\flip}\|_{L^{\infty}(\tau \wotimes \tau^{\op})} \leq \|u\|_{L^{\infty}(\tau \wotimes \tau^{\op})}\|v\|_{L^{\infty}(\tau \wotimes \tau^{\op})}.
\]
Therefore, by the Cauchy--Schwarz Inequality, if $u,v \in L_{\loc}^2(\R_+;\cA \wotimes \cA^{\op})$, then $Q_{\tau}(u,v) \in L_{\loc}^1(\R_+;\cA)$ and
\[
\|Q_{\tau}(u,v)\|_{L_t^1L^{\infty}(\tau)} \leq \|u\|_{L_t^2L^{\infty}(\tau \wotimes \tau^{\op})}\|v\|_{L_t^2L^{\infty}(\tau \wotimes \tau^{\op})}, \numberthis\label{eq.Qbd}
\]
for all $t \geq 0$.
It is then easy to see --- by starting with simple adapted biprocesses and then taking limits --- that if $u,v \in \Lambda^2$, then $Q_{\tau}(u,v) \in L_{\loc}^1(\R_+;\cA)$ is adapted (i.e., has an adapted representative).
This is all the information we need about $Q_{\tau}$, so we are now in a position to state the free It\^{o} product rule.
(However, please see Remark \ref{rem.tech} for additional comments about $Q_{\tau}$.)

\begin{thm}[Free It\^{o} Product Rule]\label{thm.FIPR}
The following formulas hold.
\begin{enumerate}[label=(\roman*),font=\normalfont,leftmargin=2\parindent]
    \item Suppose that $(x_1,\ldots,x_n) \colon \R_+ \to \cA_{\sa}^n$ is an $n$-dimensional semicircular Brownian motion.
    If, for each $\ell \in \{1,2\}$, $m_{\ell} \colon \R_+ \to \cA$ is a free It\^{o} process satisfying $dm_{\ell}(t) = \sum_{i=1}^n u_{\ell i}(t) \sh dx_i(t) + k_{\ell}(t)\,dt$, then
    \[
    d(m_1m_2)(t) = dm_1(t) \,m_2(t) + m_1(t) \, dm_2(t) + \sum_{i=1}^n Q_{\tau}(u_{1i}(t),u_{2i}(t)) \, dt,
    \]
    i.e., $dm_1(t)\,dm_2(t) = \sum_{i=1}^n Q_{\tau}(u_{1i}(t),u_{2i}(t)) \, dt$ in the classical notation.\label{item.FIPRx}
    \item Suppose that $(z_1,\ldots,z_n) \colon \R_+ \to \cA^n$ is an $n$-dimensional circular Brownian motion.
    If, for each $\ell \in \{1,2\}$, $m_{\ell} \colon \R_+ \to \cA$ is a free It\^{o} process (driven by $(z_1,\ldots,z_n)$) satisfying
    \[
    dm_{\ell}(t) = \sum_{i=1}^n \big(u_{\ell i}(t) \sh dz_i(t) + v_{\ell i}(t) \sh dz_i^*(t)\big) + k_{\ell}(t) \,dt,
    \]
    then
    \[
    d(m_1m_2)(t) = dm_1(t) \,m_2(t) + m_1(t) \, dm_2(t) + \sum_{i=1}^n \big(Q_{\tau}(u_{1i}(t),v_{2i}(t)) + Q_{\tau}(v_{1i}(t),u_{2i}(t))\big) \, dt,
    \]
    i.e., $dm_1(t)\,dm_2(t) = \sum_{i=1}^n (Q_{\tau}(u_{1i}(t),v_{2i}(t)) + Q_{\tau}(v_{1i}(t),u_{2i}(t))) \, dt$ in the classical notation.\label{item.FIPRz}
\end{enumerate}
\end{thm}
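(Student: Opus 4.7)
The plan is to extend Biane and Speicher's proof of Theorem 4.1.2 in \cite{bianespeicher} (the $n=1$ case of Part (i)) by separating ``same-index'' and ``cross-index'' contributions and using $\ast$-free independence of $x_1,\ldots,x_n$ (resp.\ $z_1,\ldots,z_n$) to kill the latter. By multilinearity in $(u_{\ell i}, k_\ell)$, density of $\SBa$ in $\Lambda^2$, the operator-norm BDG inequality from Theorem \ref{thm.BSstochint}, and the bound \eqref{eq.Qbd} on $Q_\tau$, both sides of the claimed identity are jointly continuous in the integrands. Hence it suffices to verify the formula when each $u_{\ell i}$ is a simple adapted biprocess on a common partition $0 = t_0 < \cdots < t_N = T$, with $u_{\ell i}(t) = a_{\ell i}^{(k)} \otimes b_{\ell i}^{(k)} \in \cA_{t_k} \otimes \cA_{t_k}^{\op}$ on $[t_k, t_{k+1})$, and similarly for the drifts.

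Writing $\Delta m_\ell^{(k)} \coloneqq m_\ell(t_{k+1}) - m_\ell(t_k)$, the algebraic telescoping
\[
m_1(T)m_2(T) - m_1(0)m_2(0) = \sum_{k=0}^{N-1}\Big(\Delta m_1^{(k)}\, m_2(t_k) + m_1(t_k)\, \Delta m_2^{(k)} + \Delta m_1^{(k)}\,\Delta m_2^{(k)}\Big)
\]
holds. By adaptedness and the definition of the free stochastic integral on $\SBa$, the first two sums are \emph{exactly} (no limiting argument needed) $\int_0^T dm_1(s)\,m_2(s)$ and $\int_0^T m_1(s)\,dm_2(s)$. The entire content of the theorem thus reduces to identifying the bracket sum $B \coloneqq \sum_k \Delta m_1^{(k)} \Delta m_2^{(k)}$ in the limit as the mesh $|\pi| \coloneqq \max_k(t_{k+1}-t_k) \to 0$.

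Expanding $\Delta m_\ell^{(k)}$ into stochastic and Bochner pieces, the $dt$-containing cross terms contribute $O(|\pi|)$ in operator norm and disappear in the limit. The remaining terms are of the form
\[
B_{ij}^{(k)} \coloneqq a_{1i}^{(k)}\, \Delta x_i^{(k)}\, b_{1i}^{(k)}\, a_{2j}^{(k)}\, \Delta x_j^{(k)}\, b_{2j}^{(k)},
\]
with $\Delta x_i^{(k)} \coloneqq x_i(t_{k+1}) - x_i(t_k)$. Since $(\Delta x_i^{(k)})_{i=1}^n$ is a free semicircular family of variance $t_{k+1} - t_k$ that is freely independent from $\cA_{t_k}$, a direct moment computation gives
\[
\tau\big[B_{ij}^{(k)} \,\big|\, \cA_{t_k}\big] = \delta_{ij}\,(t_{k+1}-t_k)\, a_{1i}^{(k)}\,\tau\big(b_{1i}^{(k)}\,a_{2j}^{(k)}\big)\,b_{2j}^{(k)} = \delta_{ij}\,(t_{k+1}-t_k)\, Q_\tau\big(u_{1i}(t_k), u_{2j}(t_k)\big);
\]
for $i \neq j$ the Kronecker $\delta$ follows from $\tau(\Delta x_i^{(k)} \Delta x_j^{(k)}) = 0$ by freeness, while $i = j$ is the Biane-Speicher identity. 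Summing over $k, i, j$ and passing to the limit, the main term converges in $\cA$ to $\sum_{i=1}^n \int_0^T Q_\tau(u_{1i}(s), u_{2i}(s))\,ds$.

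The principal analytic difficulty is estimating the noncommutative-martingale discrepancy $B - \sum_k \tau[B_{ij}^{(k)} \mid \cA_{t_k}]$ in operator norm and showing it vanishes as $|\pi| \to 0$; this is handled by the same $L^2$-martingale/semicircular-moment argument Biane and Speicher use for $n=1$, which is index-blind and extends verbatim once the Kronecker-delta structure of the previous paragraph is in place. Part (ii) follows from Part (i) applied to the $2n$ freely independent semicircular Brownian motions $x_i \coloneqq \sqrt{2}\,\cRe z_i$, $y_i \coloneqq \sqrt{2}\,\cIm z_i$ (free by $\ast$-free independence of the $z_i$'s), by expanding $dz_i = \tfrac{1}{\sqrt 2}(dx_i + i\,dy_i)$ and $dz_i^* = \tfrac{1}{\sqrt 2}(dx_i - i\,dy_i)$; bilinearity of $Q_\tau$ and cancellation of the cross $x_i$-$y_i$ contributions collapse the resulting expansion precisely to $Q_\tau(u_{1i}, v_{2i}) + Q_\tau(v_{1i}, u_{2i})$.
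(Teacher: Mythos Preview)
Your overall strategy---reduce to simple adapted biprocesses by density and BDG, telescope, and identify the bracket term---is the same as the paper's. However, there is a genuine error in the middle step. You claim that once the $u_{\ell i}$ (and drifts) are simple on the partition $\{t_k\}$, the sums $\sum_k \Delta m_1^{(k)} m_2(t_k)$ and $\sum_k m_1(t_k)\Delta m_2^{(k)}$ are \emph{exactly} $\int_0^T dm_1(s)\,m_2(s)$ and $\int_0^T m_1(s)\,dm_2(s)$, ``no limiting argument needed.'' This is false: even with $u_{1i}$ piecewise constant, the process $m_2(s)$ is genuinely continuous (it contains $\int_0^s u_{2i}\sh dx_i$), so $(1\otimes m_2(s))\,u_{1i}(s)$ is \emph{not} a simple biprocess, and the Riemann-type sum $\sum_k \Delta m_1^{(k)} m_2(t_k)$ equals $\int_0^T dm_1(s)\,m_2^{\Pi}(s)$, not $\int_0^T dm_1(s)\,m_2(s)$. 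Your telescoping identity is therefore an exact equality relating $m_1(T)m_2(T)-m_1(0)m_2(0)$ to the \emph{wrong} integrals plus $B$, and the argument does not close without a further limiting step. This is precisely what the paper's Lemma~\ref{lem.RSsums} supplies: it shows $\int_0^T dm_1\,m_2^{\Pi}\to\int_0^T dm_1\,m_2$ in operator norm as $|\Pi|\to 0$ using uniform continuity of $m_2$ on $[0,T]$ and the $L^\infty$-BDG inequality, so that $B=\sum_k\Delta m_1^{(k)}\Delta m_2^{(k)}$ converges in $L^\infty$ to the correct expression.

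Once that gap is patched, your treatment of the bracket $B$ is close to the paper's but organized differently. You propose computing $\tau[B_{ij}^{(k)}\mid\cA_{t_k}]=\delta_{ij}(t_{k+1}-t_k)\,Q_\tau(u_{1i}(t_k),u_{2i}(t_k))$ via freeness and then controlling the martingale discrepancy in $L^2$. The paper instead proves directly (Lemma~\ref{lem.L2lim}) that $\sum_k (\Delta_k x_i)\,a\,(\Delta_k x_j)\to (t-s)\tau(a)\delta_{ij}$ in $L^2$ by a bare-hands moment computation, avoiding conditional expectations altogether; combined with the $L^\infty$ convergence from Lemma~\ref{lem.RSsums} and uniqueness of limits, this identifies the two expressions. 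Both routes work and use the same freeness-based moment calculations underneath; the paper's version is more self-contained, while yours leans on the Biane--Speicher $n=1$ machinery as a black box. Your reduction of part~\ref{item.FIPRz} to part~\ref{item.FIPRx} via $z_i=\tfrac{1}{\sqrt 2}(x_i+iy_i)$ is exactly what the paper does.
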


By the comments following Definition \ref{def.freeItoprocess}, item \ref{item.FIPRz} follows from item \ref{item.FIPRx} with twice the dimension.
Before launching into the proof of item \ref{item.FIPRx}, we perform a useful example calculation.

\begin{ex}\label{ex.FIprod}
Let $z \colon \R_+ \to \cA$ be a circular Brownian motion.
Written in the classical notation for quadratic covariation, Theorem \ref{thm.FIPR}\ref{item.FIPRz} says that
\begin{align}
    a(t)\,dz(t)\,b(t)\,dz^*(t)\,c(t) & = a(t)\,dz^*(t)\,b(t)\,dz(t)\,c(t) = a(t)\,\tau(b(t))\,c(t)\,dt \; \text{ and} \label{eq.fquadcov1} \\
    a(t)\,dz^{\e}(t)\,b(t)\,dz^{\e}(t)\,c(t) & = a(t)\,dz^{\e}(t)\,b(t)\,dt\,c(t) = a(t)\,dt\,b(t)\,dz^{\e}(t) \,c(t) = a(t) \, dt \, b(t) \,dt \, c(t) = 0 \label{eq.fquadcov2}
\end{align}
for $\e \in \{1,\ast\}$ and continuous adapted processes $a,b,c \colon \R_+ \to \cA$.
Now, let $n_1,n_2 \in \N$ be natural numbers, and fix continuous adapted processes $a_1,b_1\ldots,a_{n_1},b_{n_1},c_1,d_1\ldots,c_{n_2},d_{n_2}, k \colon \R_+ \to \cA$. Suppose that $m \colon \R_+ \to \cA$ is a free It\^{o} process satisfying
\[
dm(t) = \sum_{i=1}^{n_1}a_i(t)\,dz(t)\,b_i(t)+\sum_{j=1}^{n_2}c_j(t)\,dz^*(t)\,d_j(t) + k(t)\,dt. \numberthis\label{eq.exfrItoprocess}
\]
Such \hspace{-0.3mm}$m$\hspace{-0.3mm} show \hspace{-0.3mm}up\hspace{-0.3mm} frequently ``in \hspace{-0.3mm}the\hspace{-0.3mm} wild."
It \hspace{-0.3mm}is\hspace{-0.3mm} often necessary \hspace{-0.2mm}---\hspace{-0.2mm} especially when $m$ is not self-adjoint \hspace{-0.2mm}---\hspace{-0.2mm} to work with $|m|^2 \hspace{-0.3mm}=\hspace{-0.3mm} m^*m$.
The free It\^{o} product rule says that $d|m|^2(t) = dm^*(t)\,m(t)+m^*(t)\,dm(t)+dm^*(t)\,dm(t)$.
Let us derive an expression for $dm^*(t)\,dm(t)$.
First, we have
\[
dm^*(t) = \sum_{j=1}^{n_2}d_j^*(t)\,dz(t)\,c_j^*(t)+\sum_{i=1}^{n_1}b_i^*(t)\,dz^*(t)\,a_i^*(t) + k^*(t)\,dt.
\]
Therefore, by the free It\^{o} product rule (in the form of Equations \eqref{eq.fquadcov1}--\eqref{eq.fquadcov2}),
\begin{align*}
    dm^*(t)\,dm(t) & = \Bigg(\sum_{j=1}^{n_2}d_j^*(t)\,dz(t)\,c_j^*(t)+\sum_{i=1}^{n_1}b_i^*(t)\,dz^*(t)\,a_i^*(t)\Bigg)\Bigg(\sum_{i=1}^{n_1}a_i(t)\,dz(t)\,b_i(t)+\sum_{j=1}^{n_2}c_j(t)\,dz^*(t)\,d_j(t)\Bigg) \\
    & = \sum_{j_1,j_2=1}^{n_2}d_{j_1}^*(t)\,\tau\big(c_{j_1}^*(t) \, c_{j_2}(t)\big)\,d_{j_2}(t)\,dt+\sum_{i_1,i_2=1}^{n_1}b_{i_1}^*(t)\,\tau\big(a_{i_1}^*(t)\,a_{i_2}(t)\big)\,b_{i_2}(t)\,dt.
\end{align*}
Now, let $h \in \cA_0$ be arbitrary, and suppose $g \colon \R_+ \to \cA$ satisfies $dg(t) = g(t)\,dz(t)$ and $g(0)=h$, i.e., $g$ is \textbf{free multiplicative Brownian motion} starting at $h$.
Then, letting $\lambda \in \C$ and $g_{\lambda}(t) \coloneqq g(t) - \lambda1 = g(t)-\lambda$, we have $dg_{\lambda}(t) = g(t) \, dz(t)$ and $dg_{\lambda}^*(t) = dz^*(t) \, g^*(t)$.
Therefore, by the formula above, we have
\begin{align*}
    d|g_{\lambda}|^2(t) & = dg_{\lambda}^*(t)\,g_{\lambda}(t) + g_{\lambda}^*(t)\,dg_{\lambda}(t) + dg_{\lambda}^*(t)\,dg_{\lambda}(t) \\
    & = dg_{\lambda}^*(t)\,g_{\lambda}(t) + g_{\lambda}^*(t)\,dg_{\lambda}(t) + 1\tau(g^*(t)g(t))1\, dt \\
    & = dz^*(t)\,g^*(t) \,g_{\lambda}(t) + g_{\lambda}^*(t)\,g(t) \,dz(t) + \tau\big(|g(t)|^2\big)\, dt.
\end{align*}
We shall use this equation in Example \ref{ex.trlog}.
\end{ex}

We now turn to the proof of Theorem \ref{thm.FIPR}\ref{item.FIPRx}.
Our approach is similar to that of Biane and Speicher, though we use less free probabilistic machinery by mimicking a classical approach to calculating the quadratic covariation of It\^{o} processes:
computing a $L^2$-limit of second-order Riemann--Stieltjes-type sums.

\begin{nota}[Partitions]
Let $T > 0$ and $\Pi = \{0 = t_0  < \cdots < t_n = T\}$ be a partition of $[0,T]$.
If $t \in \Pi$, then we write $t_- \in \Pi$ for the member of $\Pi$ to the left of $t$;
more precisely, $(t_0)_- \coloneqq t_0$ and $(t_i)_- \coloneqq t_{i-1}$ whenever $1 \leq i \leq n$.
In addition, if $t \in \Pi$, then $\Delta t \coloneqq t-t_-$, and $|\Pi| \coloneqq \max_{s \in \Pi}\Delta s$.
Finally, if $V$ is a vector space, $F \colon [0,T] \to V$ is a function, and $t \in \Pi$, then we write $\Delta_tF \coloneqq F(t)-F(t_-)$ and $F^{\Pi} \coloneqq \sum_{s \in \Pi} 1_{[s_-,s)}F(s_-) \colon [0,T] \to V$.
\end{nota}

\begin{lem}\label{lem.RSsums}
If $m_1$ and $m_2$ are as in Theorem \ref{thm.FIPR}\ref{item.FIPRx} and $T > 0$, then
\[
L^{\infty}\text{-}\lim_{|\Pi| \to 0}\sum_{t \in \Pi} \big(\Delta_t m_1\big)\big(\Delta_t m_2\big) = m_1(T)m_2(T) - m_1(0) m_2(0) - \int_0^T dm_1(t)\,m_2(t) -\int_0^T m_1(t)\,dm_2(t),
\]
where the limit is over partitions $\Pi$ of $[0,T]$.
\end{lem}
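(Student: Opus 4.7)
The strategy is to combine a telescoping identity with a discrete product rule. For each $t \in \Pi$, direct expansion gives
\[
\Delta_t(m_1 m_2) = (\Delta_t m_1)\,m_2(t_-) + m_1(t_-)\,(\Delta_t m_2) + (\Delta_t m_1)(\Delta_t m_2).
\]
Summing over $t \in \Pi$, the left-hand side telescopes to $m_1(T)m_2(T) - m_1(0)m_2(0)$. Rearranging reduces the claim to the two Riemann-sum convergences
\[
\sum_{t \in \Pi}(\Delta_t m_1)\,m_2(t_-) \xrightarrow{|\Pi| \to 0} \int_0^T dm_1(s)\,m_2(s), \qquad \sum_{t \in \Pi} m_1(t_-)\,(\Delta_t m_2) \xrightarrow{|\Pi| \to 0} \int_0^T m_1(s)\,dm_2(s),
\]
both in $\cA$-norm. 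These are symmetric, so I focus on the first.

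Using the decomposition of $m_1$ as a free It\^{o} process together with Definition \ref{def.freeItoprocess} of the notation $w \sh dm$, and the fact that $m_2(t_-) = m_2^{\Pi}(s)$ for $s \in [t_-,t)$, I rewrite
\[
\sum_{t \in \Pi}(\Delta_t m_1)\,m_2(t_-) = \int_0^T (1 \otimes m_2^{\Pi}(s))\sh dm_1(s) = \sum_{i=1}^n \int_0^T \bigl((1 \otimes m_2^{\Pi}(s))\,u_{1i}(s)\bigr)\sh dx_i(s) + \int_0^T k_1(s)\,m_2^{\Pi}(s)\,ds.
\]
The target integral $\int_0^T dm_1(s)\,m_2(s) = \int_0^T (1 \otimes m_2(s))\sh dm_1(s)$ has the same form with $m_2^{\Pi}$ replaced by $m_2$. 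Because the free It\^{o} process $m_2$ is continuous (and therefore uniformly continuous on $[0,T]$), we have $\sup_{s \in [0,T]}\|m_2^{\Pi}(s) - m_2(s)\| \to 0$ as $|\Pi| \to 0$.

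With this in hand, the Bochner integral difference is estimated by $\|k_1\|_{L_T^1L^{\infty}(\tau)} \cdot \sup_{s \in [0,T]}\|m_2^{\Pi}(s) - m_2(s)\| \to 0$. The stochastic integral difference is controlled via the $L^{\infty}$-BDG inequality of Corollary \ref{cor.extensionofstochinteg}:
\[
\biggl\|\int_0^T \bigl((1 \otimes (m_2^{\Pi}(s) - m_2(s)))\,u_{1i}(s)\bigr)\sh dx_i(s)\biggr\| \leq 2\sqrt{2}\, \sup_{s \in [0,T]}\|m_2^{\Pi}(s) - m_2(s)\|\,\|u_{1i}\|_{L_T^2L^{\infty}(\tau \wotimes \tau^{\op})},
\]
which also tends to zero since $u_{1i} \in \Lambda^2 \subseteq L_{\loc}^2$. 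The symmetric argument handles the second Riemann sum, completing the proof. The main conceptual step is the matching of the Riemann sum to a single free stochastic integral (via Definition \ref{def.freeItoprocess}); everything else is then a straightforward application of uniform continuity and the $L^{\infty}$-BDG inequality. Notably, no product-rule, quadratic-covariation, or traciality input is needed here -- this lemma is a purely analytic preparatory step for the product rule of Theorem \ref{thm.FIPR}.
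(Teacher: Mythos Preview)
Your proof is correct and follows essentially the same route as the paper: the same telescoping/discrete product rule, the same identification of the Riemann sums as $\int_0^T (1 \otimes m_2^{\Pi})\sh dm_1$ and $\int_0^T (m_1^{\Pi} \otimes 1)\sh dm_2$, and the same conclusion via uniform continuity of $m_\ell$ on $[0,T]$ together with the $L^\infty$-BDG inequality (plus dominated convergence for the Bochner part). One small nitpick: Definition~\ref{def.freeItoprocess} literally requires $w$ to be continuous, whereas $1 \otimes m_2^{\Pi}$ is only RCLL; the paper glosses over this too, and the justification is Proposition~\ref{prop.RCLB} or Corollary~\ref{cor.RCLL}.
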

\begin{proof}
If $\Pi$ is a partition of $[0,T]$, then
\begin{align*}
    \delta_T & \coloneqq m_1(T)m_2(T) - m_1(0) m_2(0) = \sum_{t \in \Pi}\big(m_1(t)m_2(t) - m_1(t_-)m_2(t_-)\big) \\
    & = \sum_{t \in \Pi} \big((m_1(t_-)+\Delta_tm_1)(m_2(t_-)+\Delta_tm_2) - m_1(t_-)m_2(t_-)\big) \\
    & = \sum_{t \in \Pi}\big( \big(\Delta_tm_1\big)m_2(t_-)+m_1(t_-)\,\Delta_tm_2 + \big(\Delta_t m_1\big)\big(\Delta_t m_2\big)\big) \\
    & = \int_0^T dm_1(t)\,m_2^{\Pi}(t) + \int_0^Tm_1^{\Pi}(t) \,dm_2(t) + \sum_{t \in \Pi}\big(\Delta_t m_1\big)\big(\Delta_t m_2\big).
\end{align*}
Now, since $m_{\ell}$ is continuous --- and therefore uniformly continuous --- on $[0,T]$, $m_{\ell}^{\Pi} \to m_{\ell}$ uniformly on $[0,T]$ as $|\Pi| \to 0$.
Therefore, by the $L^{\infty}$-BDG Inequality (and the vector-valued Dominated Convergence Theorem), $\int_0^T dm_1(t)\,m_2^{\Pi}(t) \to \int_0^T dm_1(t)\,m_2(t)$ and $\int_0^Tm_1^{\Pi}(t) \,dm_2(t) \to \int_0^Tm_1(t) \,dm_2(t)$ in $\cA$ as $|\Pi| \to 0$.
It then follows from the calculation above that
\[
\sum_{t \in \Pi} \big(\Delta_t m_1\big)\big(\Delta_t m_2\big) \to m_1(T)m_2(T) - m_1(0) m_2(0) - \int_0^T dm_1(t)\,m_2(t) -\int_0^T m_1(t)\,dm_2(t)
\]
in $\cA$ as $|\Pi| \to 0$, as desired.
\end{proof}

\begin{lem}\label{lem.L2lim}
Let $(x_1,\ldots,x_n) \colon \R_+ \to \cA_{\sa}^n$ be an $n$-dimensional semicircular Brownian motion, and fix $s,t \geq 0$ such that $s < t$.
For all $N \in \N$ and $k \in \{0,\ldots,N\}$, define $t_{k,N} \coloneqq \frac{N-k}{N}s + \frac{k}{N}t$.
If $a \in \cA_s$, then
\[
L^2\text{-}\lim_{N \to \infty}\sum_{k=1}^N\big(x_i(t_{k,N})-x_i(t_{k-1,N})\big)a\big(x_j(t_{k,N})-x_j(t_{k-1,N})\big) = (t-s)\,\tau(a)\,\delta_{ij},
\]
for all $i,j \in \{1,\ldots,n\}$.
\end{lem}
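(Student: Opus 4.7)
Write $\Delta_k y \coloneqq y(t_{k,N}) - y(t_{k-1,N})$ for brevity; by hypothesis, each $\Delta_k x_i$ is semicircular of variance $(t-s)/N$, the family $\{\Delta_k x_1,\ldots,\Delta_k x_n\}$ is jointly freely independent from $\cA_{t_{k-1,N}}$ (hence from $a \in \cA_s \subseteq \cA_{t_{k-1,N}}$), and its individual members are pairwise freely independent. Denote the sum in the lemma by $S_N$ and set $c \coloneqq (t-s)\,\tau(a)\,\delta_{ij}$. The plan is to prove $L^2$-convergence via the identity
\[
\|S_N - c\|_{L^2(\tau)}^2 = \tau(|S_N|^2) - 2\cRe(\bar c \, \tau(S_N)) + |c|^2,
\]
establishing (a) $\tau(S_N) = c$ and (b) $\tau(|S_N|^2) \to |c|^2$ as $N \to \infty$.

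For (a), traciality and the free independence of $\Delta_k x_i, \Delta_k x_j$ from $a$ give $\tau(\Delta_k x_i\,a\,\Delta_k x_j) = \tau(\Delta_k x_j\,\Delta_k x_i)\,\tau(a)$, while $\tau(\Delta_k x_j\,\Delta_k x_i) = ((t-s)/N)\,\delta_{ij}$; summing over $k$ yields $c$. For (b), I would expand $\tau(|S_N|^2) = \sum_{k,l=1}^N \tau(\Delta_k x_j\, a^*\, \Delta_k x_i\, \Delta_l x_i\, a\, \Delta_l x_j)$ (using self-adjointness of each $x_i$) and split into diagonal ($k=l$) and off-diagonal terms. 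The computational workhorse is the standard two-algebra freeness moment formula
\[
\tau(\alpha_1 \pi_1 \alpha_2 \pi_2) = \tau(\alpha_1\alpha_2)\,\tau(\pi_1)\tau(\pi_2) + \tau(\pi_1\pi_2)\,\tau(\alpha_1)\tau(\alpha_2) - \tau(\alpha_1)\tau(\alpha_2)\tau(\pi_1)\tau(\pi_2),
\]
valid whenever $\{\pi_1,\pi_2\}$ is freely independent from $\{\alpha_1,\alpha_2\}$, combined with the semicircular moments $\tau(p^2)=(t-s)/N$ and $\tau(p^4)=2((t-s)/N)^2$.

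Each diagonal summand works out to $O(N^{-2})$, so the total diagonal contribution is $O(N^{-1}) \to 0$. For an off-diagonal term with $k < l$ (the $k>l$ case being symmetric), I would absorb the $k$-indexed factors into $C \coloneqq \Delta_k x_j\,a^*\,\Delta_k x_i \in \cA_{t_{l-1,N}}$ and then, using the freeness of $\Delta_l x_i, \Delta_l x_j$ from $\cA_{t_{l-1,N}}$, apply the formula to $\tau(C\,\Delta_l x_i\,a\,\Delta_l x_j)$. The result is $((t-s)/N)\,\tau(C)\,\tau(a)\,\delta_{ij}$, and the same argument at the $k$-scale gives $\tau(C)=((t-s)/N)\,\tau(a^*)\,\delta_{ij}$, so each off-diagonal summand equals $((t-s)/N)^2\,|\tau(a)|^2\,\delta_{ij}$. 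Summing over the $N(N-1)$ pairs yields $(1-1/N)(t-s)^2|\tau(a)|^2\,\delta_{ij} \to |c|^2$.

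The main subtlety is handling the off-diagonal terms in the case $i \neq j$, where the two-algebra formula does not apply directly because three mutually free subalgebras are in play (namely $\cA_{t_{l-1,N}}$, $W^*(\Delta_l x_i)$, and $W^*(\Delta_l x_j)$). I would resolve this by centering $C$ and $a$ about their traces, expanding, and observing that every resulting term either factors through $\tau(\Delta_l x_i \Delta_l x_j) = 0$ or is an alternating product of centered elements drawn from two distinct free subalgebras of $\cA$ (grouping $\{\Delta_l x_i, \Delta_l x_j\}$ into one algebra), whose trace vanishes by the very definition of free independence. Beyond this observation, the argument is routine bookkeeping with the moment formula.
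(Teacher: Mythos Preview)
Your argument is correct. Both your proof and the paper's proceed by expanding $\|S_N - c\|_{L^2(\tau)}^2$ into diagonal and off-diagonal pieces and then using free independence to control each, so the underlying mechanism is the same. The organizational difference is that the paper first decomposes $a = (a - \tau(a)1) + \tau(a)1$, which reduces the problem to two simple special cases: (1) $a$ centered (any $i,j$) or $i \neq j$ with $a \in \{1\} \cup \ker\tau$, where the off-diagonal terms vanish outright by the alternating-word definition of freeness and the diagonal sum is crudely bounded by $16\|a\|^2(t-s)^2/N$; and (2) $i=j$, $a=1$, where one directly computes $\big\|\sum_k (\Delta_k x_i)^2 - (t-s)\big\|_{L^2(\tau)}^2 = (t-s)^2/N$ using the Catalan moments of the semicircle law. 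Your route keeps $a$ general throughout, computes $\tau(S_N)=c$ and $\tau(|S_N|^2)\to|c|^2$ separately, and leans on the explicit two-algebra moment identity $\tau(\alpha_1\pi_1\alpha_2\pi_2)=\tau(\alpha_1\alpha_2)\tau(\pi_1)\tau(\pi_2)+\tau(\pi_1\pi_2)\tau(\alpha_1)\tau(\alpha_2)-\tau(\alpha_1)\tau(\alpha_2)\tau(\pi_1)\tau(\pi_2)$ as a uniform computational tool. The paper's reduction is slightly slicker and avoids the explicit moment formula, but your approach has the minor advantage of not needing a separate treatment of the $a=1$ case. One small remark: your claim that ``the family $\{\Delta_k x_1,\ldots,\Delta_k x_n\}$ is jointly freely independent from $\cA_{t_{k-1,N}}$'' is exactly the freeness input the paper also invokes (it asserts that $\cA_s$, $\Delta_{k,N}x_i$, $\Delta_{\ell,N}x_i$, $\Delta_{k,N}x_j$, $\Delta_{\ell,N}x_j$ are jointly free when $i\neq j$), so you are on the same footing there.
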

\begin{proof}
By writing $a = (a - \tau(a)1)+\tau(a)1$, it suffices to prove the formula when $a$ is centered and when $a=1$.
To this end, write $\Delta_{k,N}x_i \coloneqq x_i(t_{k,N})-x_i(t_{k-1,N})$, and fix $i,j \in \{1,\ldots,n\}$.
First, note that if $k \neq \ell$, then $\cA_s$, $\Delta_{k,N}x_i$, $\Delta_{\ell,N}x_i$ are freely independent;
and if in addition $i \neq j$, then $\cA_s$, $\Delta_{k,N}x_i$, $\Delta_{\ell,N}x_i$, $\Delta_{k,N}x_j$, $\Delta_{\ell,N}x_j$ are freely independent.
(This is because $s = t_{0,N} < t_{k,N}$ when $k \geq 1$.)
Second, recall that $\|x_i(r_1)-x_i(r_2)\| =  2\sqrt{|r_1-r_2|}$ whenever $r_1,r_2 \geq 0$.
Therefore, by definition of free independence, if either 1) $i=j$ and $a \in \{b \in \cA_s : \tau(b) = 0\}$ or 2) $i \neq j$ and $a \in \{b \in \cA_s : \tau(b)=0\} \cup \{1\}$, then
\begin{align*}
    \Bigg\|\sum_{k = 1}^N\Delta_{k,N}x_i\, a\,\Delta_{k,N}x_j\Bigg\|_{L^2(\tau)}^2 & = \sum_{k,\ell = 1}^N\tau\big(\Delta_{k,N}x_j a^* \Delta_{k,N}x_i \Delta_{\ell,N}x_i \, a\,\Delta_{\ell,N}x_j\big) \\
    & = \sum_{k = 1}^N \tau\big(\Delta_{k,N}x_j a^* \Delta_{k,N}x_i \Delta_{k,N}x_i \, a\,\Delta_{k,N}x_j\big) \\
    & \hspace{15mm} + \sum_{k \neq \ell} \tau\big(\Delta_{k,N}x_j a^* \Delta_{k,N}x_i \Delta_{\ell,N}x_i \, a\,\Delta_{\ell,N}x_j\big)\\
    & = \sum_{k = 1}^N \tau\big(\Delta_{k,N}x_j a^* \Delta_{k,N}x_i \Delta_{k,N}x_i \, a\,\Delta_{k,N}x_j\big) \\
    & \leq \|a\|^2\frac{16(t-s)^2}{N} \to 0,
\end{align*}
as $N \to \infty$.
The only case that remains is $i=j$ and $a=1$.
To take care of this case, note that if $k \neq \ell$, then the elements $(\Delta_{k,N}x_i)^2-(t_{k,N}-t_{k-1,N})$ and $(\Delta_{\ell,N}x_i)^2-(t_{\ell,N}-t_{\ell-1,N})$ are freely independent and centered.
Thus
\[
\tau\Big(\big((\Delta_{k,N}x_i)^2-(t_{k,N}-t_{k-1,N})\big)\big((\Delta_{\ell,N}x_i)^2-(t_{\ell,N}-t_{\ell-1,N})\big)\Big) = 0,
\]
from which it follows --- as above --- that
\begin{align*}
    \Bigg\|\sum_{k = 1}^N(\Delta_{k,N}x_i)^2 - (t-s)\Bigg\|_{L^2(\tau)}^2 & = \Bigg\|\sum_{k=1}^N\big((\Delta_{k,N}x_i)^2 - (t_{k,N}-t_{k-1,N})\big)\Bigg\|_{L^2(\tau)}^2 \\
    & = \sum_{k=1}^N\tau\Big(\big((\Delta_{k,N}x_i)^2 - (t_{k,N}-t_{k-1,N})\big)^2\Big) \\
    & = \sum_{k=1}^N(t_{k,N}-t_{k-1,N})^2 =  \frac{(t-s)^2}{N} \to 0
\end{align*}
as $N \to \infty$.
The third equality holds because $x \coloneqq \Delta_{k,N}x_i$ is semicircular with variance $r \coloneqq t_{k,N}-t_{k-1,N}$, so $\tau(x^{2p}) = C_pr^p$ whenever $p \in \N_0$, where $C_p = \frac{1}{p+1}\binom{2p}{p}$ is the $p^{\text{th}}$ Catalan number.
\end{proof}

\begin{proof}[Proof of Theorem \ref{thm.FIPR}\ref{item.FIPRx}]
By the $L^{\infty}$-BDG Inequality, Equation \eqref{eq.Qbd}, and the vector-valued Dominated Convergence Theorem, it suffices to prove the formula when $u_{\ell i} \in \SBa$, for all $\ell \in \{1,2\}$ and $i \in \{1,\ldots,n\}$.
By Lemma \ref{lem.RSsums}, it therefore suffices to prove that if $T > 0$ and $u_{\ell i} \in \SBa$, then
\[
L^2\text{-}\lim_{|\Pi| \to 0}\sum_{t \in \Pi} \big(\Delta_t m_1\big)\big(\Delta_t m_2\big) = \sum_{i=1}^n\int_0^T Q_{\tau}(u_{1i}(t),u_{2i}(t))\,dt,
\]
where the limit is over partitions $\Pi$ of $[0,T]$.
To this end, write $a_{\ell} \coloneqq \int_0^{\boldsymbol{\cdot}}k_{\ell}(t)\,dt$ for $\ell \in \{1,2\}$. Then
\begin{align*}
    \sum_{t \in \Pi} \big(\Delta_t m_1\big)\big(\Delta_t m_2\big) & = \sum_{t \in \Pi} \big(\Delta_t a_1 + \Delta_t(m_1-a_1)\big)\big(\Delta_t a_2 + \Delta_t(m_2-a_2)\big) \\
    & = \sum_{t \in \Pi}\Delta_t(m_1-a_1)\,\Delta_t(m_2-a_2) + \sum_{t \in \Pi} \big(\Delta_ta_1 \big) \big(\Delta_t m_2\big) + \sum_{t \in \Pi} \Delta_t(m_1-a_1)\,\Delta_ta_2.
\end{align*}
Since $\Delta_ta_{\ell} = \int_{t_-}^t k_{\ell}(s)\,ds$ for $t \in \Pi$, we have
\begin{align*}
    \Bigg\|\sum_{t \in \Pi} \big(\Delta_ta_1 \big) \big(\Delta_t m_2\big)\Bigg\| & \leq \max_{s \in \Pi}\|\Delta_s m_2\|\sum_{t \in \Pi} \|\Delta_ta_1\| \leq \max_{s \in \Pi}\|\Delta_s m_2\|\int_0^T\|k_1(t)\|\,dt \to 0 \; \text{ and} \\
    \Bigg\|\sum_{t \in \Pi} \Delta_t(m_1-a_1)\,\Delta_ta_2\Bigg\| & \leq \max_{s \in \Pi}\|\Delta_s(m_1-a_1)\|\sum_{t \in \Pi} \|\Delta_ta_2\| \leq \max_{s \in \Pi}\|\Delta_s(m_1-a_1)\|\int_0^T\|k_2(t)\|\,dt \to 0
\end{align*}
as $|\Pi| \to 0$ because $m_2$ and $m_1-a_1$ are continuous --- and therefore uniformly continuous --- on $[0,T]$.
In particular, if we write $I_i[u] \coloneqq \int_0^{\boldsymbol{\cdot}} u \sh dx_i$ for $u \in \Lambda^2$ and $i \in \{1,\ldots,n\}$, then we have
\[
L^2\text{-}\lim_{|\Pi| \to 0}\sum_{t \in \Pi}\big(\Delta_t m_1\big)\big(\Delta_t m_2\big) = L^2\text{-}\lim_{|\Pi| \to 0}\sum_{t \in \Pi} \sum_{i,j=1}^n \Delta_t(I_i[u_{1i}])\,\Delta_t(I_j[u_{2j}]).
\]
Thus the proof is complete if we can show that
\[
L^2\text{-}\lim_{|\Pi| \to 0}\sum_{t \in \Pi} \Delta_t(I_i[u])\,\Delta_t(I_j[v]) = \delta_{ij}\int_0^TQ_{\tau}(u(t),v(t))\,dt, \numberthis\label{eq.IzIz}
\]
for all $u,v \in \SBa$ and $i,j \in \{1,\ldots,n\}$.
Since Equation \eqref{eq.IzIz} is bilinear in $(u,v)$, it suffices to prove it assuming that $u = 1_{[s_1,t_1)} a \otimes b$ and $v = 1_{[s_2,t_2)} c \otimes d$, where $[s_1,t_1), [s_2,t_2) \subseteq [0,T)$, $a,b \in \cA_{s_1}$, $c,d \in \cA_{s_2}$, and either $[s_1,t_1) \cap [s_2,t_2) = \emptyset$ or $[s_1,t_1) = [s_2,t_2)$.
We take both cases in turn, but we first observe that if $w \in \SBa$, $i \in \{1,\ldots,n\}$, and $t \in \Pi$, then
\[
\Delta_t(I_i[w]) = \int_0^{\infty} \big(1_{[t_-,t)}w\big)\sh dx_i = \int_{t_-}^t w \sh dx_i.
\]
In particular, if $w \equiv 0$ on $[t_-,t)$, then $\Delta_t(I_i[w]) = 0$.

\underline{Case 1}:
$[s_1,t_1) \cap [s_2,t_2) = \emptyset$.
In this case, the observation at the end of the previous paragraph gives immediately that $\sum_{t \in \Pi}\Delta_t(I_i[u])\,\Delta_t(I_j[v]) = 0$ when $|\Pi|$ is sufficiently small.
But also $Q_{\tau}(u,v) \equiv 0$, so Equation \eqref{eq.IzIz} holds.
\pagebreak

\underline{Case 2}:
$[s_1,t_1) = [s_2,t_2) =: [s,t)$.
Fix $N \in \N$, let $\{t_{k,N} : 0 \leq k \leq N\}$ be as in Lemma \ref{lem.L2lim}, and suppose that $\Pi_N$ is a partition on $[0,T]$ such that $\{t_{k,N} : 0 \leq k \leq N\} \subseteq \Pi_N$.
If $|\Pi_N| \to 0$ as $N \to \infty$, then
\begin{align*}
    L^2\text{-}\lim_{|\Pi| \to 0} \sum_{t \in \Pi} \Delta_t(I_i[u])\,\Delta_t(I_j[v]) & = L^2\text{-}\lim_{N \to \infty}\sum_{t \in \Pi_N} \Delta_t(I_i[u])\,\Delta_t(I_j[v]) \\
    & = L^2\text{-}\lim_{N \to \infty}\sum_{k=1}^N a\big(x_i(t_{k,N}) - x_i(t_{k-1,N})\big)bc\big(x_j(t_{k,N}) - x_j(t_{k-1,N})\big)d \\
    & = (t-s)\,a\,\tau(bc)\,d\,\delta_{ij} = \delta_{ij}\int_0^T Q_{\tau}(u(t),v(t))\,dt
\end{align*}
by the observation made just before the previous paragraph, the definition of $I_i$, Lemma \ref{lem.L2lim}, and the definition of $Q_{\tau}$.
This completes the proof.
\end{proof}

\begin{cor}
If $m_1$ and $m_2$ are as in Theorem \ref{thm.FIPR}\ref{item.FIPRx} and $T > 0$, then
\[
L^{\infty}\text{-}\lim_{|\Pi| \to 0}\sum_{t \in \Pi} \big(\Delta_tm_1\big)\big(\Delta_tm_2\big) = \sum_{i=1}^n\int_0^T Q_{\tau}(u_{1i}(t),u_{2i}(t))\,dt,
\]
where the limit is over partitions of $[0,T]$.
\end{cor}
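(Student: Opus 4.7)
The proof of this corollary is essentially a one-line combination of two results already in hand, and the plan is correspondingly short. The key observation is that Lemma~\ref{lem.RSsums} already establishes convergence in the strong (operator norm) sense $L^\infty$ — not merely in $L^2$ — for the Riemann sums to the quantity
\[
R(T) \coloneqq m_1(T)m_2(T) - m_1(0)m_2(0) - \int_0^T dm_1(t)\,m_2(t) - \int_0^T m_1(t)\,dm_2(t).
\]
So the only thing left is to identify $R(T)$ with $\sum_{i=1}^n \int_0^T Q_\tau(u_{1i}(t), u_{2i}(t))\,dt$.

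The plan is to observe that Theorem~\ref{thm.FIPR}.\ref{item.FIPRx}, which we have just proven, states precisely that
\[
d(m_1m_2)(t) = dm_1(t)\,m_2(t) + m_1(t)\,dm_2(t) + \sum_{i=1}^n Q_\tau(u_{1i}(t),u_{2i}(t))\,dt,
\]
interpreted as an equation in integral form. Integrating from $0$ to $T$ and rearranging yields
\[
R(T) = \sum_{i=1}^n \int_0^T Q_\tau(u_{1i}(t), u_{2i}(t))\,dt,
\]
which is exactly the desired expression. Combining this identity with Lemma~\ref{lem.RSsums} proves the corollary.

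There is no real obstacle here: the heavy lifting was done in Theorem~\ref{thm.FIPR}.\ref{item.FIPRx}, where the analogous limit was established in $L^2$. The upgrade to $L^\infty$-convergence for the Riemann sums is automatic because the algebraic identity in the proof of Lemma~\ref{lem.RSsums} writes the sum $\sum_{t \in \Pi}(\Delta_t m_1)(\Delta_t m_2)$ as the difference of $m_1(T)m_2(T) - m_1(0)m_2(0)$ (which is $\Pi$-independent) and two stochastic/Bochner-type integrals of the processes $m_\ell^\Pi$, both of which converge in $\cA = L^\infty(\cA,\tau)$ as $|\Pi| \to 0$ thanks to the $L^\infty$-BDG inequality (Theorem~\ref{thm.BSstochint}.\ref{item.BDG}) and the vector-valued Dominated Convergence Theorem, as in the proof of Lemma~\ref{lem.RSsums}. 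Thus the corollary is a direct consequence of Lemma~\ref{lem.RSsums} and Theorem~\ref{thm.FIPR}.\ref{item.FIPRx}, with no further estimates required.
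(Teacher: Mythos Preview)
Your proposal is correct and matches the paper's approach exactly: the paper's proof is simply ``Combine Lemma~\ref{lem.RSsums} and Theorem~\ref{thm.FIPR}.'' Your additional commentary about why the $L^\infty$-convergence comes for free from Lemma~\ref{lem.RSsums} is accurate but more than the paper itself bothers to say.
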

\begin{proof}
Combine Lemma \ref{lem.RSsums} and Theorem \ref{thm.FIPR}.
\end{proof}

\subsection{Noncommutative Derivatives}\label{sec.NCder1}

In this section, we define noncommutative derivatives of various scalar functions.
We begin by defining divided differences and collecting their relevant properties.

\begin{defi}[Divided Differences]\label{def.divdiff}
Let $S \subseteq \C$ and $f \colon S \to \C$ be a function.
Define $f^{[0]} \coloneqq f$ and, for $k \in \N$ and distinct $\lambda_1,\ldots,\lambda_{k+1} \in S$, recursively define
\[
f^{[k]}(\lambda_1,\ldots,\lambda_{k+1}) \coloneqq \frac{f^{[k-1]}(\lambda_1,\ldots,\lambda_k) - f^{[k-1]}(\lambda_1,\ldots,\lambda_{k-1},\lambda_{k+1})}{\lambda_k-\lambda_{k+1}}.
\]
We call $f^{[k]}$ the $\boldsymbol{k^{\textbf{th}}}$ \textbf{divided difference} of $f$.
\end{defi}

\begin{prop}[Properties of Divided Differences]\label{prop.divdiff}
Fix $S \subseteq \C$, functions $f,g \colon S \to \C$, and $k \in \N$.
In addition, write $\Sigma_k \coloneqq \big\{(s_1,\ldots,s_k) \in \R_+^k : s_1+\cdots+s_k \leq 1\big\}$.
\begin{enumerate}[label=(\roman*),font=\normalfont,leftmargin=2\parindent]
   \item $f^{[k]}$ is a symmetric function.\label{item.divdiffsymm}
    \item If $S = \R$ and $f \in C^k(\R)$ or if $S = \C$ and $f \colon \C \to \C$ is entire, then
    \[
    f^{[k]}(\lambda_1,\ldots,\lambda_{k+1}) = \int_{\Sigma_k}f^{(k)}\Bigg(\sum_{j=1}^ks_j\lambda_j+\Bigg(1-\sum_{j=1}^k s_j\Bigg)\lambda_{k+1}\Bigg) \,  ds_1 \cdots ds_k,
    \]
    for all distinct $\lambda_1,\ldots,\lambda_{k+1}$ belonging to $\R$ or $\C$, respectively.
    In particular, if $f \in C^k(\R)$, then $f^{[k]}$ extends uniquely to a (symmetric) continuous function $\R^{k+1} \to \C$;
    and if $f \colon \C \to \C$ is entire, then $f^{[k]}$ extends uniquely to a (symmetric) continuous function $\C^{k+1} \to \C$.
    We shall use the same notation for these extensions.\label{item.divdiffcont}
    \item If $\lambda_1,\ldots,\lambda_{k+1} \in S$ are distinct, then 
    \[
    (fg)^{[k]}(\lambda_1,\ldots,\lambda_{k+1}) = \sum_{j=0}^kf^{[j]}(\lambda_1,\ldots,\lambda_{j+1})\, g^{[k-j]}(\lambda_{j+1},\ldots,\lambda_{k+1}).
    \]
    If $S = \R$ and $f,g \in C^k(\R)$, then the product formula above holds for all $\lambda_1,\ldots,\lambda_{k+1} \in \R$.
    If $S = \C$ and $f,g \colon \C \to \C$ are entire, then the formula holds for all $\lambda_1,\ldots,\lambda_{k+1} \in \C$.\label{item.divdiffprod}
\end{enumerate}
\end{prop}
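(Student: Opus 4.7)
The plan is to dispatch the three items in order; (i) is essentially a direct consequence of the defining recursion once one extracts the explicit symmetric representation, while (ii) and (iii) are parallel inductions on $k$ that use the recursion together with the fundamental theorem of calculus (for (ii)) or a telescoping identity (for (iii)). All three arguments begin at tuples of \emph{distinct} $\lambda_j$'s; the extensions to arbitrary tuples (when $f, g$ are $C^k$ or entire) then follow by continuity once (ii) is in hand.

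For (i), I would prove by induction on $k$ the explicit formula
\[
f^{[k]}(\lambda_1,\ldots,\lambda_{k+1}) = \sum_{j=1}^{k+1}\frac{f(\lambda_j)}{\prod_{i \neq j}(\lambda_j - \lambda_i)},
\]
from which symmetry is immediate. The inductive step reduces to a partial-fractions identity: in the recursion's numerator, for $j \in \{1,\ldots,k-1\}$ the $f(\lambda_j)$-coefficients from the two $f^{[k-1]}$ expansions differ only in one factor of the denominator, and
\[
\frac{1}{\lambda_j - \lambda_k} - \frac{1}{\lambda_j - \lambda_{k+1}} = \frac{\lambda_k - \lambda_{k+1}}{(\lambda_j - \lambda_k)(\lambda_j - \lambda_{k+1})},
\]
so dividing by $\lambda_k - \lambda_{k+1}$ yields the correct denominator. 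The remaining two terms account for the $j=k$ and $j=k+1$ summands.

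For (ii), the base case $k=1$ is the identity $f(\lambda_1) - f(\lambda_2) = (\lambda_1 - \lambda_2)\int_0^1 f'(s\lambda_1 + (1-s)\lambda_2)\,ds$, obtained by applying the FTC to $s \mapsto f(s\lambda_1+(1-s)\lambda_2)$. For the inductive step, apply the induction hypothesis to $f^{[k-1]}(\lambda_1,\ldots,\lambda_{k-1},\lambda_k)$ and $f^{[k-1]}(\lambda_1,\ldots,\lambda_{k-1},\lambda_{k+1})$; the two integrands differ only in the last argument of the affine combination, which carries coefficient $1-\sum_{j=1}^{k-1}s_j$. A second FTC application introduces an auxiliary variable $t \in [0,1]$, and the linear change of variable $s_k = (1-\sum_{j=1}^{k-1}s_j)t$ converts the $[0,1]$-integration (with its Jacobian) into the missing facet of $\Sigma_k$. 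Since the resulting integrand depends continuously (resp.\ holomorphically) on $(\lambda_1,\ldots,\lambda_{k+1}) \in \R^{k+1}$ (resp.\ $\C^{k+1}$) uniformly on the compact domain $\Sigma_k$, the integral provides the advertised continuous (resp.\ entire) extension.

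For (iii), I again induct on $k$, the base case $k=1$ being the telescoping identity $f(\lambda_1)g(\lambda_1) - f(\lambda_2)g(\lambda_2) = f(\lambda_1)(g(\lambda_1)-g(\lambda_2)) + (f(\lambda_1)-f(\lambda_2))g(\lambda_2)$ divided by $\lambda_1 - \lambda_2$. For the inductive step, expand $(fg)^{[k-1]}(\lambda_1,\ldots,\lambda_k)$ and $(fg)^{[k-1]}(\lambda_1,\ldots,\lambda_{k-1},\lambda_{k+1})$ via the induction hypothesis and subtract. For each $j \in \{0,\ldots,k-2\}$ the factor $f^{[j]}(\lambda_1,\ldots,\lambda_{j+1})$ is common to both expansions, and the difference of the two $g^{[k-1-j]}$ factors, divided by $\lambda_k-\lambda_{k+1}$, is exactly $g^{[k-j]}(\lambda_{j+1},\ldots,\lambda_{k+1})$ by the defining recursion. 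The leftover $j=k-1$ term has the form $f^{[k-1]}(\lambda_1,\ldots,\lambda_k)g(\lambda_k) - f^{[k-1]}(\lambda_1,\ldots,\lambda_{k-1},\lambda_{k+1})g(\lambda_{k+1})$, and the same $k=1$ telescoping trick (inserting and subtracting $f^{[k-1]}(\lambda_1,\ldots,\lambda_k)g(\lambda_{k+1})$) splits it, after dividing by $\lambda_k-\lambda_{k+1}$, into the two missing summands for $j=k-1$ and $j=k$. The extension of the product formula to non-distinct arguments in the $C^k$ (resp.\ entire) setting is then an approximation argument using the continuity from (ii). The main obstacle I anticipate is the bookkeeping in (iii) — keeping straight which $f^{[j]}$ and $g^{[k-1-j]}$ factors coincide across the two expansions and which differ — but the partial-fractions step in (i) is the only truly computational moment and is self-contained.
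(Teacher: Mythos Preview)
Your proposal is correct and follows essentially the same route as the paper's sketch: induction on $k$ for all three items, with (i) established via the explicit partial-fractions formula $f^{[k]}(\lambda_1,\ldots,\lambda_{k+1}) = \sum_j f(\lambda_j)\prod_{i\neq j}(\lambda_j-\lambda_i)^{-1}$, (ii) via the FTC identity $\frac{f(\lambda)-f(\mu)}{\lambda-\mu}=\int_0^1 f'(t\lambda+(1-t)\mu)\,dt$ and the change of variable you describe, and (iii) via straightforward induction from the defining recursion. Your write-up simply fills in the details the paper leaves to the reader.
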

\begin{proof}[Sketch of proof]
Each item is proven by induction on $k$.
For item \ref{item.divdiffsymm}, one shows that if $\lambda_1,\ldots,\lambda_{k+1} \in S$ are distinct, then $f^{[k]}(\lambda_1,\ldots,\lambda_{k+1}) = \sum_{i=1}^{k+1} f(\lambda_i) \prod_{j \neq i}(\lambda_i-\lambda_j)^{-1}$, which is clearly symmetric in its arguments.
For \ref{item.divdiffcont}, one proceeds from the identity $\frac{f(\lambda)-f(\mu)}{\lambda-\mu} = \int_0^1 f'(t \lambda+(1-t)\mu)\,dt$, which follows in either case from the Fundamental Theorem of Calculus.
(Please see the proof of Proposition 2.1.3(ii) in \cite{nikitopoulosNCk} for details.)
The induction argument for item \ref{item.divdiffprod} proceeds straightforwardly from the definitions.
We encourage the reader to work out the details when $k \in \{1,2\}$, since these are the cases of interest in this paper.
\end{proof}

Next, we work out two important examples of divided differences.

\begin{ex}[Divided Differences of Polynomials]\label{ex.divdiffpoly}
Fix a polynomial $p(\lambda) = \sum_{i=0}^n c_i \lambda^i \in \C[\lambda]$, viewed as an entire function $\C \to \C$.
If $\blambda \coloneqq (\lambda_1,\ldots,\lambda_{k+1}) \in \C^{k+1}$ has distinct entries, then
\[
p^{[k]}(\blambda) = \sum_{i=0}^nc_i\sum_{|\delta| = i-k} \blambda^{\delta} = \sum_{i=0}^nc_i\sum_{\delta \in \N_0^{k+1} : |\delta| = i-k}\lambda_1^{\delta_1}\cdots \lambda_{k+1}^{\delta_{k+1}}, \numberthis\label{eq.divdiffpoly}
\]
where $|\delta| \coloneqq \delta_1+\cdots+\delta_{k+1}$ is the order of $\delta = (\delta_1,\ldots,\delta_{k+1}) \in \N_0^{k+1}$.
(Empty sums are, by convention, zero.)
As is the case with many properties of divided differences, Equation \eqref{eq.divdiffpoly} may be proven by induction on $k$.
Please see Example 2.1.5 in \cite{nikitopoulosNCk}.
By continuity --- i.e., Proposition \ref{prop.divdiff}\ref{item.divdiffcont} --- we have that Equation \eqref{eq.divdiffpoly} holds for \textit{all} $\blambda \in \C^{k+1}$.
In particular, $p^{[k]} \in \C[\lambda_1,\ldots,\lambda_{k+1}]$.
\end{ex}

\begin{ex}[Divided Differences of $W_k$ Functions]\label{ex.divdiffWk}
Suppose that $\mu$ is a Borel complex measure on $\R$ and $f(\lambda) \coloneqq \int_{\R} e^{i\lambda \xi} \,\mu(d\xi)$ for $\lambda \in \R$.
If $\int_{\R} |\xi|^k\,|\mu|(d\xi) < \infty$, in which case $f \in W_k(\R)$ (Definition \ref{def.Wk}), then $f \in C^k(\R)$ and $f^{(k)}(\lambda) = \int_{\R} (i\xi)^ke^{i\lambda\xi} \, \mu(d\xi)$, for all $\lambda \in \R$.
In particular, by Proposition \ref{prop.divdiff}\ref{item.divdiffcont}, we have
\[
\hspace{-0.55mm}f^{[k]}(\blambda) = \int_{\Sigma_k}\int_{\R} (i\xi)^k e^{is_1\lambda_1\xi}\cdots e^{is_k\lambda_k\xi}e^{i(1-\sum_{j=1}^ks_j)\lambda_{k+1}\xi}\,\mu(d\xi) \,ds_1\cdots ds_k, \numberthis\label{eq.divdiffWk}
\]
for all $\blambda = (\lambda_1,\ldots,\lambda_{k+1}) \in \R^{k+1}$.
\end{ex}

We now move on to noncommutative derivatives.
Let $A$ be a unital $\C$-algebra and $\tilde{a}_1,\ldots,\tilde{a}_{k+1} \in A$ be commuting elements.
Then there exists a unique unital algebra homomorphism
\[
\ev_{(\tilde{a}_1,\ldots,\tilde{a}_{k+1})} \colon \C[\lambda_1,\ldots,\lambda_{k+1}] \to A
\]
determined by $\lambda_j \mapsto \tilde{a}_j$, $1 \leq j \leq k+1$.

\begin{defi}[Noncommutative Derivatives of Polynomials]\label{def.NCderalg}
Let $A$ be a unital $\C$-algebra, and fix $\boldsymbol{a} = (a_1,\ldots,a_{k+1}) \in A^{k+1}$.
For $j$ between $1$ and $k+1$, write
\[
\tilde{a}_j \coloneqq 1^{\otimes (j-1)} \otimes a_j \otimes 1^{\otimes(k+1-j)}  \in A^{\otimes(k+1)}.
\]
Now, for $p(\lambda) = \sum_{i=0}^n c_i\lambda^i \in \C[\lambda]$, we define 
\[
\partial^kp(\boldsymbol{a}) \coloneqq k!\ev_{(\tilde{a}_1,\ldots,\tilde{a}_{k+1})}\big(p^{[k]}\big) = k!\,p^{[k]}(\tilde{a}_1,\ldots,\tilde{a}_{k+1}) = k!\sum_{i=0}^n c_i \sum_{|\delta| = i-k} a_1^{\delta_1} \otimes \cdots \otimes a_{k+1}^{\delta_{k+1}} \in A^{\otimes(k+1)} \numberthis\label{eq.polyNCder} 
\]
to be the \textbf{$\boldsymbol{k^{\text{th}}}$ noncommutative derivative} of $p$ evaluated at $\boldsymbol{a}$.
We often write $\partial \coloneqq \partial^1$ and consider $\partial p(a_1,a_2)$ as an element of $A \otimes A^{\op}$.
Finally, write
\[
\partial^kp(a) \coloneqq \partial^kp(\underbrace{a,\ldots,a}_{\mathsmaller{k+1 \,\mathrm{times}}})
\]
for a single element $a \in A$. 
\end{defi}

Now, fix a unital $C^*$-algebra $\cB$.
We use analysis to define $k^{\text{th}}$ noncommutative derivatives (in $\cB$) of $C^k$ functions.
First, note that if $a_1,\ldots,a_{k+1} \in \cB_{\sa}$, then
\[
a_1 \otimes 1^{\otimes k}, \ldots, 1^{\otimes(j-1)} \otimes a_j \otimes 1^{\otimes(k+1-j)},\ldots, 1^{\otimes k} \otimes a_{k+1} \in \cB^{\otimes (k+1)} \subseteq \cB^{\otimes_{\min}(k+1)}
\]
is a list of commuting self-adjoint elements in $\cB^{\otimes_{\min}(k+1)}$ with joint spectrum $\sigma(a_1) \times \cdots \times \sigma(a_{k+1}) \subseteq \R^{k+1}$.
(Please see \cite{ceausescu}.)
The following definition therefore makes sense using multivariate functional calculus.

\begin{defi}[Noncommutative Derivatives of $C^k$ Functions]\label{def.NCdertensor}
For $\boldsymbol{a} = (a_1,\ldots,a_{k+1}) \in \cB_{\sa}^{k+1}$ and $f \in C^k(\R)$, we define 
\[
\partial^kf(\boldsymbol{a}) \coloneqq k!\,f^{[k]}\big(a_1 \otimes 1^{\otimes k},\ldots,1^{\otimes k} \otimes a_{k+1}\big) \in \cB^{\otimes_{\min}(k+1)}
\]
to be the \textbf{$\boldsymbol{k^{\text{th}}}$ noncommutative derivative} of $f$ evaluated at $\boldsymbol{a}$.
As before, we often write $\partial \coloneqq \partial^1$ and consider $\partial f(a_1,a_2)$ as an element of $\cB \otimes_{\min} \cB^{\op}$.
Also, write
\[
\partial^kf(a) \coloneqq \partial^kf(\underbrace{a,\ldots,a}_{\mathsmaller{k+1 \,  \mathrm{times}}}) 
\]
for a single element $a \in \cB_{\sa}$.
\end{defi}

Of course, if we view $\cB^{\otimes(k+1)}$ as a subalgebra of $\cB^{\otimes_{\min}(k+1)}$, Definition \ref{def.NCdertensor} agrees with Definition \ref{def.NCderalg} when $f = p \in \C[\lambda]$.
We end this section with an important example.

\begin{ex}[Noncommutative Derivatives of $W_k$ Functions]\label{ex.NCderWk}
Let $\mu$ and $f$ be as in Example \ref{ex.divdiffWk}, and suppose again that $\int_{\R} |\xi|^k \, |\mu|(d\xi) < \infty$.
If $\boldsymbol{a} = (a_1,\ldots,a_{k+1}) \in \cB_{\sa}^{k+1}$, it follows from Equation \eqref{eq.divdiffWk} that
\[
\partial^kf(\boldsymbol{a}) = k!\int_{\Sigma_k}\int_{\R} (i\xi)^k e^{is_1\xi \,a_1} \otimes \cdots \otimes e^{is_k\xi \,a_k} \otimes e^{i(1-\sum_{j=1}^k s_j)\xi\,a_{k+1}}\mu(d\xi) \,ds_1\cdots ds_k
\]
where the above is an iterated Bochner integral in $\cB^{\otimes_{\min}(k+1)}$.
When $k=1$, we note for later use that actually $\partial f(a_1,a_2) = i\int_0^1 \int_{\R}\xi \,e^{ita_1} \otimes e^{i(1-t)a_2} \, \mu(d\xi)\,dt$ is an iterated Bochner integral in $\cB \potimes \cB^{\op} \subseteq \cB \otimes_{\min} \cB^{\op}$ (with respect to $\|\cdot\|_{\cB \potimes \cB^{\op}}$) because the map $[0,1] \times \R \ni (t,\xi) \mapsto \xi \,e^{i ta_1} \otimes e^{i (1-t) a_2} \in \cB \potimes \cB^{\op}$ is continuous.
\end{ex}

\subsection{Functional Free It\^{o} Formula for Polynomials}\label{sec.FFIFpoly}

In this section, we prove the ``functional" It\^{o} formula for polynomials of free It\^{o} processes (Theorem \ref{thm.FFIFpoly}).
We begin by defining the object that appears in the correction term.

\begin{nota}\label{nota.Deltap}
For $p \in \C[\lambda]$, $m \in \cA$, and $u,v \in \cA \otimes \cA^{\op}$, write
\[
\Delta_{u,v}p(m) \coloneqq \frac{1}{2}\mathcal{M}_{\tau}((1 \otimes v) \boldsymbol{\cdot} \partial^2p(m) \boldsymbol{\cdot} (u \otimes 1) + (1 \otimes u) \boldsymbol{\cdot} \partial^2p(m) \boldsymbol{\cdot} (v \otimes 1)), \numberthis\label{eq.Deltap}
\]
where $\boldsymbol{\cdot}$ is multiplication in $\cA \otimes \cA^{\op} \otimes \cA$.
\end{nota}

As was the case when we defined $Q_{\tau}$, we can still make sense of the formula defining $\Delta_{u,v}p(m)$ when $u,v \in \cA \potimes \cA^{\op}$.
And again, though the formula does not make sense as written when $u,v \in \cA \otimes_{\min} \cA^{\op}$ (let alone $u,v \in \cA \wotimes \cA^{\op}$), we can use Lemma \ref{lem.keyalg} to extend $\Delta_{\cdot,\cdot}p(m) \colon (\cA \otimes \cA^{\op})^2 \to \cA$ to a bounded bilinear map $(\cA \wotimes \cA^{\op})^2 \to \cA$.
At this time, we advise the reader to review Notation \ref{nota.alg}\ref{item.tenshash}, as we shall henceforth make heavy use of the $\#_2^{\mathsmaller{\otimes}}$ operation defined therein.

Fix $p \in \C[\lambda]$ and $m \in \cA$.
For $a \in L^1(\cA,\tau)$ and $u,v \in \cA \wotimes \cA^{\op}$, define
\[
\ell_{p,u,v}(a) \coloneqq \frac{1}{2}(\tau \wotimes \tau^{\op})\big((a \otimes 1)\, \partial^2p(m \otimes 1, 1 \otimes m, m \otimes 1)\sh_2^{\mathsmaller{\otimes}}[uv^{\flip}+vu^{\flip},1 \otimes 1]\big).
\]
If $a \in \cA$ and $u,v \in \cA \otimes \cA^{\op}$, then Lemma \ref{lem.keyalg} and Equation \eqref{eq.polyNCder} imply
\[
\tau\big(a\,\Delta_{u,v}p(m)\big) = \ell_{p,u,v}(a). \numberthis\label{eq.Deltappredef}
\]
We use this equation to extend the definition of $\Delta_{\cdot,\cdot}p(m)$.
Indeed, note that if $u,v \in \cA \wotimes \cA^{\op}$, then
\[
\|\ell_{p,u,v}\|_{L^1(\cA,\tau)^*} \leq \frac{1}{2}\big\|\partial^2p(m \otimes 1,1 \otimes m, m \otimes 1)\sh_2^{\mathsmaller{\otimes}}[uv^{\flip}+vu^{\flip},1 \otimes 1]\big\|_{L^{\infty}(\tau \wotimes \tau^{\op})}.
\]
Thus, by the duality relationship $L^1(\cA,\tau)^* \cong \cA$, the following definition makes sense and extends the algebraic definition of $\Delta_{u,v}p(m)$.

\begin{defi}[Extended Definition of $\Delta_{u,v}p(m)$]\label{def.Deltap}
For a polynomial $p \in \C[\lambda]$, an element $m \in \cA$, and tensors $u,v \in \cA \wotimes \cA^{\op}$, we define $\Delta_{u,v}p(m)$ to be the unique element of $\cA$ such that
\[
\tau\big(a\Delta_{u,v}p(m)\big) = \ell_{p,u,v}(a),
\]
for all $a \in \cA$ (and thus $a \in L^1(\cA,\tau)$).
Also, we write $\Delta_up(m) \coloneqq \Delta_{u,u}p(m)$.
\end{defi}
\pagebreak

It is clear from the definition that $\Delta_{u,v}p(m)$ is trilinear in $(u,v,p)$ and symmetric in $(u,v)$.
Also, if $n \in \N_0$ and $p_n(\lambda) = \lambda^n$, then, by Equation \eqref{eq.polyNCder} and the paragraph before Definition \ref{def.Deltap}, we have
\begin{align*}
    \|\Delta_{u,v}p_n(m)\| & = \|\ell_{p_n,u,v}\|_{L^1(\cA,\tau)^*} \leq \frac{1}{2}\Bigg\|2\sum_{|\delta|=n-2}(m \otimes 1)^{\delta_1}(uv^{\flip}+vu^{\flip})(1 \otimes m)^{\delta_2}(m \otimes 1)^{\delta_3}\Bigg\|_{L^{\infty}(\tau \wotimes \tau^{\op})} \\
    & \leq 2\|u\|_{L^{\infty}(\tau \wotimes \tau^{\op})}\|v\|_{L^{\infty}(\tau \wotimes \tau^{\op})}\sum_{|\delta|=n-2}\|m \otimes 1\|_{L^{\infty}(\tau \wotimes \tau^{\op})}^{\delta_1}\|1 \otimes m\|_{L^{\infty}(\tau \wotimes \tau^{\op})}^{\delta_2}\|m \otimes 1\|_{L^{\infty}(\tau \wotimes \tau^{\op})}^{\delta_3} \\
    & = n(n-1)\|m\|^{n-2}\|u\|_{L^{\infty}(\tau \wotimes \tau^{\op})}\|v\|_{L^{\infty}(\tau \wotimes \tau^{\op})}. \numberthis\label{eq.Deltapbd}
\end{align*}
Therefore, if $u,v \in L_{\loc}^2(\R_+;\cA \wotimes \cA^{\op})$ and $m \in C(\R_+;\cA)$, then we have that $\Delta_{u,v}p(m) \in L_{\loc}^1(\R_+;\cA)$ and
\[
\|\Delta_{u,v}p_n(m)\|_{L_t^1L^{\infty}(\tau)} \leq n(n-1) \|m\|_{L_t^{\infty}L^{\infty}(\tau)}^{n-2}\|u\|_{L_t^2L^{\infty}(\tau \wotimes \tau^{\op})}\|v\|_{L_t^2L^{\infty}(\tau \wotimes \tau^{\op})},
\]
for all $t \geq 0$.
It is then easy to see that if $u,v \in \Lambda^2$ and $m \colon \R_+ \to \cA$ is continuous and adapted, then $\Delta_{u,v}p(m) \in L_{\loc}^1(\R_+;\cA)$ is adapted as well.
The last fact we shall need about $\Delta_{u,v}p(m)$ to prove the functional free It\^{o} formula for polynomials is the following product rule.
(However, please see Remark \ref{rem.tech} for additional comments about $\Delta_{u,v}p(m)$.)

\begin{lem}[Product Rule for $\Delta_{u,v}p(m)$]\label{lem.Deltaprodrule}
If $p,q \in \C[\lambda]$, then
\[
\Delta_{u,v}(pq)(m) = \Delta_{u,v}p(m)\, q(m) + p(m)\,\Delta_{u,v}q(m) + Q_{\tau}(\partial p(m) \, u, \partial q(m) \, v) +  Q_{\tau}(\partial p(m) \, v,\partial q(m) \, u),
\]
for all $m \in \cA$ and $u,v \in \cA \wotimes \cA^{\op}$.
\end{lem}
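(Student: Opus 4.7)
The plan is to establish the identity first for $u, v \in \cA \otimes \cA^{\op}$, working from the algebraic definition (Notation \ref{nota.Deltap}) and the product rule for divided differences, and then to extend to $u, v \in \cA \wotimes \cA^{\op}$ via the dual characterization and $\sigma$-WOT density. The key input is Proposition \ref{prop.divdiff}.\ref{item.divdiffprod} applied at $k = 2$,
\begin{align*}
(pq)^{[2]}(\lambda_1,\lambda_2,\lambda_3) & = p(\lambda_1)\,q^{[2]}(\lambda_1,\lambda_2,\lambda_3) + p^{[1]}(\lambda_1,\lambda_2)\,q^{[1]}(\lambda_2,\lambda_3) \\
& \quad + p^{[2]}(\lambda_1,\lambda_2,\lambda_3)\,q(\lambda_3),
\end{align*}
which, after multiplication by $2! = 2$ and evaluation at the commuting triple $m \otimes 1 \otimes 1,\; 1 \otimes m \otimes 1,\; 1 \otimes 1 \otimes m$ in $\cA \otimes \cA \otimes \cA$, yields the decomposition
\begin{align*}
\partial^2(pq)(m) & = (p(m) \otimes 1 \otimes 1)\,\partial^2 q(m) + 2\,(\partial p(m) \otimes 1)(1 \otimes \partial q(m)) \\
& \quad + \partial^2 p(m)\,(1 \otimes 1 \otimes q(m)),
\end{align*}
where every product on the right takes place in the \textit{ordinary} tensor product algebra $\cA \otimes \cA \otimes \cA$.

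Substituting this decomposition into
$$\Delta_{u,v}(pq)(m) = \tfrac{1}{2}\,\mathcal{M}_{\tau}\bigl[(1 \otimes v)\boldsymbol{\cdot}\partial^2(pq)(m)\boldsymbol{\cdot}(u \otimes 1) + (1 \otimes u)\boldsymbol{\cdot}\partial^2(pq)(m)\boldsymbol{\cdot}(v \otimes 1)\bigr]$$
(where $\boldsymbol{\cdot}$ now denotes multiplication in $\cA \otimes \cA^{\op} \otimes \cA$) reduces the problem to handling the three summands in turn. The outer two are direct: $p(m) \otimes 1 \otimes 1$ commutes with both $1 \otimes v$ and $u \otimes 1$ in $\cA \otimes \cA^{\op} \otimes \cA$, and $\mathcal{M}_{\tau}$ is $\cA$-linear in its first slot, so the first summand contributes exactly $p(m)\,\Delta_{u,v}q(m)$; by symmetric reasoning (now using $\cA$-linearity of $\mathcal{M}_{\tau}$ in the third slot), the third contributes $\Delta_{u,v}p(m)\,q(m)$. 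For the middle summand, by bilinearity it suffices to take $u = a \otimes b$ and $v = c \otimes d$ and to expand $\partial p(m) = \sum_s \alpha_s \otimes \beta_s,\;\partial q(m) = \sum_t \gamma_t \otimes \delta_t$; then $(\partial p(m) \otimes 1)(1 \otimes \partial q(m)) = \sum_{s,t} \alpha_s \otimes \beta_s\gamma_t \otimes \delta_t$ in $\cA \otimes \cA \otimes \cA$. Performing the sandwich $(1 \otimes v)\boldsymbol{\cdot}(\cdots)\boldsymbol{\cdot}(u \otimes 1)$ in $\cA \otimes \cA^{\op} \otimes \cA$ — whereby the middle slot becomes $b\beta_s\gamma_t c$ (the $\cA^{\op}$-reversed order of composition) — and applying $\mathcal{M}_{\tau}$ gives
$$\sum_{s,t} \alpha_s a\,\tau(b\beta_s\gamma_t c)\,d\delta_t = Q_{\tau}(\partial p(m)\,u,\,\partial q(m)\,v),$$
using the identities $\partial p(m)\,u = \sum_s \alpha_s a \otimes b\beta_s$ and $\partial q(m)\,v = \sum_t \gamma_t c \otimes d\delta_t$ in $\cA \otimes \cA^{\op}$. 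The $u \leftrightarrow v$ partner contributes $Q_{\tau}(\partial p(m)\,v,\,\partial q(m)\,u)$, and the factor $2$ in the middle summand cancels the outer $\tfrac{1}{2}$, completing the identity on $\cA \otimes \cA^{\op}$.

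To pass to $u, v \in \cA \wotimes \cA^{\op}$, I will use that by Definitions \ref{def.Q} and \ref{def.Deltap} both sides of the asserted identity are determined by their pairings $\tau(a\,\boldsymbol{\cdot})$ against arbitrary $a \in \cA$, and for each fixed $a$ these pairings are $\sigma$-WOT separately continuous in $u$ and in $v$: all the building blocks — left/right multiplication by fixed elements of $\cA \wotimes \cA^{\op}$ (notably by the fixed operators $\partial p(m), \partial q(m) \in \cA \wotimes \cA^{\op}$), the flip map, and the trace $\tau \wotimes \tau^{\op}$ — are $\sigma$-WOT continuous. Since $\cA \otimes \cA^{\op}$ is $\sigma$-WOT dense in $\cA \wotimes \cA^{\op}$, two successive applications of separate continuity then promote the identity to all $u, v \in \cA \wotimes \cA^{\op}$. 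The hard part will be the notational bookkeeping: keeping straight that $\partial^2(pq)(m)$ is formed in the ordinary algebra $\cA \otimes \cA \otimes \cA$ while the sandwich by $u$ and $v$ uses the reversed middle-slot multiplication of $\cA \otimes \cA^{\op} \otimes \cA$, and correctly matching the two symmetric $Q_{\tau}$ terms produced by the $u \leftrightarrow v$ symmetrization in the definition of $\Delta_{u,v}$.
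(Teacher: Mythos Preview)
Your argument is correct. Both you and the paper start from the same key input---the product rule for divided differences, yielding the three-term decomposition of $\partial^2(pq)(m)$---but the executions differ. The paper works directly with general $u,v \in \cA \wotimes \cA^{\op}$ from the outset: it evaluates $\partial^2(pq)$ at $(m \otimes 1, 1 \otimes m, m \otimes 1)$ in the algebra $\cA \wotimes \cA^{\op}$, plugs into the dual definition (Definition \ref{def.Deltap}) via the $\sh_2^{\mathsmaller{\otimes}}$ operation, and handles the cross term by an explicit traciality computation (the quantity $R_a$), so no density step is needed at the end. Your route stays in $\cA^{\otimes 3}$ with the algebraic formula (Notation \ref{nota.Deltap}) and $\mathcal{M}_{\tau}$, reads off the three contributions by hand for $u,v \in \cA \otimes \cA^{\op}$, and then promotes the identity by separate $\sigma$-WOT continuity and density. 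Your approach is arguably more transparent for the algebraic case and avoids the $\sh_2^{\mathsmaller{\otimes}}$ bookkeeping, at the cost of the extra continuity/density argument; the paper's approach is more uniform in that it never leaves the extended setting.
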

\begin{proof}
By Proposition \ref{prop.divdiff}\ref{item.divdiffprod} and the definition of $\partial^2$, if $A$ is a unital $\C$-algebra and $p,q \in \C[\lambda]$, then
\begin{align*}
    \partial^2(pq)(a_1,a_2,a_3) & = \partial^2p(a_1,a_2,a_3) (1 \otimes 1 \otimes q(a_3)) \\
    & \hspace{10mm} + (p(a_1) \otimes 1 \otimes 1) \partial^2q(a_1,a_2,a_3) \\
    & \hspace{10mm} + 2(\partial p(a_1,a_2) \otimes 1)(1 \otimes \partial q(a_2,a_3)),
\end{align*}
for all $a_1,a_2,a_3 \in A$.
Applying this to the algebra $A = \cA \wotimes \cA^{\op}$ and writing $\boldsymbol{1} = 1 \otimes 1$ for the identity in $\cA \wotimes \cA^{\op}$ to avoid confusion, we have
\begin{align*}
    \partial^2(pq)(m \otimes 1, 1 \otimes m, m \otimes 1) & = \partial^2p(m \otimes 1, 1 \otimes m, m \otimes 1) (\boldsymbol{1} \otimes \boldsymbol{1} \otimes q(m \otimes 1)) \\
    & \hspace{10mm} + (p(m \otimes 1) \otimes \boldsymbol{1} \otimes \boldsymbol{1}) \partial^2q(m \otimes 1, 1 \otimes m, m \otimes 1) \\
    & \hspace{10mm} + 2(\partial p(m \otimes 1 ,1\otimes m) \otimes \boldsymbol{1})(\boldsymbol{1} \otimes \partial q(1 \otimes m,m \otimes 1)),
\end{align*}
for all $m \in \cA$.
Now, notice that if $u_1,u_2 \in \cA \wotimes \cA^{\op}$ and $A \in (\cA \wotimes \cA^{\op})^{\otimes 3}$, then
\[
((u_1 \otimes \boldsymbol{1} \otimes \boldsymbol{1}) A (\boldsymbol{1} \otimes \boldsymbol{1} \otimes u_2))\sh_2^{\mathsmaller{\otimes}}[c,d] = u_1 (A \sh_2^{\mathsmaller{\otimes}}[c,d])u_2.
\]
Since $p(m \otimes 1) = p(m) \otimes 1$ and $q(m \otimes 1) = q(m) \otimes 1$, it follows from the above that if $a \in \cA$, then
\begin{align*}
    \tau\big(a \,\Delta_{u,v}(pq)(m)\big) & = \frac{1}{2}(\tau \wotimes \tau^{\op})\big((a \otimes 1) \big(\partial^2p(m \otimes 1, 1 \otimes m, m \otimes 1) \sh_2^{\mathsmaller{\otimes}}[uv^{\flip}+vu^{\flip},\boldsymbol{1}]\big)(q(m) \otimes 1)\big) \\
    & \hspace{4.5mm} +\frac{1}{2}(\tau \wotimes \tau^{\op})\big((a \otimes 1)(p(m) \otimes 1) \partial^2q(m \otimes 1, 1 \otimes m, m \otimes 1) \sh_2^{\mathsmaller{\otimes}}[uv^{\flip}+vu^{\flip},\boldsymbol{1}]\big)  \\
    & \hspace{4.5mm} + (\tau \wotimes \tau^{\op})\big((a\otimes 1)((\partial p(m \otimes 1 ,1\otimes m) \otimes \boldsymbol{1})(\boldsymbol{1} \otimes \partial q(1 \otimes m,m \otimes 1)))\sh_2^{\mathsmaller{\otimes}}[uv^{\flip}+vu^{\flip},\boldsymbol{1}]\big) \\
    & = \frac{1}{2}(\tau \wotimes \tau^{\op})\big(((q(m)\,a) \otimes 1) \partial^2p(m \otimes 1, 1 \otimes m, m \otimes 1) \sh_2^{\mathsmaller{\otimes}}[uv^{\flip}+vu^{\flip},\boldsymbol{1}]\big) \\
    & \hspace{4.5mm} +\frac{1}{2}(\tau \wotimes \tau^{\op})\big(((a\,p(m)) \otimes 1)\partial^2q(m \otimes 1, 1 \otimes m, m \otimes 1) \sh_2^{\mathsmaller{\otimes}}[uv^{\flip}+vu^{\flip},\boldsymbol{1}]\big) + R_a \\
    & = \underbrace{\tau\big(q(m)\,a\,\Delta_{u,v}p(m)\big)}_{\tau(a\,\Delta_{u,v}p(m)\,q(m))} + \tau\big(a \,p(m)\,\Delta_{u,v}q(m)\big) + R_a,
\end{align*}
where
\[
R_a = (\tau \wotimes \tau^{\op})\big((a\otimes 1)((\partial p(m \otimes 1 ,1\otimes m) \otimes \boldsymbol{1})(\boldsymbol{1} \otimes \partial q(1 \otimes m,m \otimes 1)))\sh_2^{\mathsmaller{\otimes}}[uv^{\flip}+vu^{\flip},\boldsymbol{1}]\big).\pagebreak
\]
But now, note that if $P_1(\lambda_1,\lambda_2) = \lambda_1^{\gamma_1}\lambda_2^{\gamma_2}$, $P_2(\lambda_1,\lambda_2) = \lambda_1^{\delta_1}\lambda_2^{\delta_2}$, $u_1 = m \otimes 1$, and $u_2 = 1 \otimes m$, then
\begin{align*}
    & (\tau \wotimes \tau^{\op})\big((a \otimes 1)((P_1(u_1 \otimes \boldsymbol{1} , \boldsymbol{1} \otimes u_2) \otimes \boldsymbol{1})(\boldsymbol{1} \otimes P_2(u_2 \otimes \boldsymbol{1},\boldsymbol{1} \otimes u_1)))\sh_2^{\mathsmaller{\otimes}}[uv^{\flip},\boldsymbol{1}]\big)\\
    & \hspace{10mm} = (\tau \wotimes \tau^{\op})\big((a \otimes 1)u_1^{\gamma_1}uv^{\flip}u_2^{\gamma_2}u_2^{\delta_1}u_1^{\delta_2}\big) = (\tau \wotimes \tau^{\op})\big((a \otimes 1)u_1^{\gamma_1}u_2^{\gamma_2}u(u_1^{\delta_1}u_2^{\delta_2}v)^{\flip}\big) \\
    & \hspace{10mm} = (\tau \wotimes \tau^{\op})\big((a \otimes 1)\,P_1(m \otimes 1,1 \otimes m)u(P_2(m \otimes 1,1 \otimes m)v)^{\flip}\big)
\end{align*}
by traciality of $\tau \wotimes \tau^{\op}$, the fact that $u_2 = 1 \otimes m$ commutes with both $a \otimes 1$ and $u_1 = m \otimes 1$, and the identity $u_1^{\flip} = u_2$.
By linearity, the above formula holds for all $P_1,P_2 \in \C[\lambda_1,\lambda_2]$.
Applying the formula to $P_1 = p^{[1]}$ and $P_2 = q^{[1]}$ gives
\begin{align*}
    R_a & = (\tau \wotimes \tau^{\op})\big((a \otimes 1) \partial p(m)u(\partial q(m)v)^{\flip}\big) + (\tau \wotimes \tau^{\op})\big((a \otimes 1) \partial p(m)v(\partial q(m)u)^{\flip}\big) \\
    & = \tau(a \,Q_{\tau}(\partial p(m)\,u,\partial q(m)\,v)) + \tau(a \,Q_{\tau}(\partial p(m)\,v,\partial q(m)\,u)).
\end{align*}
This completes the proof.
\end{proof}

This, together with the free It\^{o} product rule, gives the functional free It\^{o} formula for polynomials.

\begin{thm}[Functional Free It\^{o} Formula for Polynomials]\label{thm.FFIFpoly}
Fix a polynomial $p \in \C[\lambda]$.
\begin{enumerate}[label=(\roman*),font=\normalfont,leftmargin=2\parindent]
    \item Suppose that $(x_1,\ldots,x_n) \colon \R_+ \to \cA_{\sa}^n$ is an $n$-dimensional semicircular Brownian motion.
    If $m$ is a free It\^{o} process satisfying Equation \eqref{eq.frItoprocess}, then $d\,p(m(t)) = \partial p(m(t))\sh dm(t) + \frac{1}{2}\sum_{i=1}^n\Delta_{u_i(t)}p(m(t))\,dt$.\label{item.FFIFpolyx}
    \item Suppose that $(z_1,\ldots,z_n) \colon \R_+ \to \cA^n$ is an $n$-dimensional circular Brownian motion.
    If $m$ is a free It\^{o} process satisfying Equation \eqref{eq.zfrItoprocess}, then $d\,p(m(t)) = \partial p(m(t))\sh dm(t) + \sum_{i=1}^n\Delta_{u_i(t),v_i(t)}p(m(t))\,dt$.\label{item.FFIFpolyz}
\end{enumerate}
\end{thm}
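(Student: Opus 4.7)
The plan is to establish part (i) in detail; part (ii) follows by decomposing each circular $z_i = \tfrac{1}{\sqrt{2}}\bigl(x_i^{(1)} + i x_i^{(2)}\bigr)$ into two freely independent semicircular Brownian motions (as in Section \ref{sec.freeprob}), rewriting the integrands $u_i, v_i$ in terms of the new $x_j^{(k)}$ drivers, applying (i), and checking that the two semicircular $Q_\tau$-corrections repackage into the single $\Delta_{u_i, v_i}p(m)\,dt$ term of Definition \ref{def.Deltap}.

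For part (i), observe that both $\partial p(m)$ and $\Delta_{u_i}p(m)$ are $\C$-linear in $p$ (immediate from Definitions \ref{def.NCderalg} and \ref{def.Deltap}), so it suffices to establish the formula for each monomial $p_n(\lambda) = \lambda^n$, $n \in \N_0$. The base cases $n \in \{0,1\}$ are trivial: for $p_0 \equiv 1$ every term vanishes, while for $p_1$ one has $\partial p_1 = 1 \otimes 1$ and $\partial^2 p_1 = 0$, so the formula collapses to the tautology $dm = dm$.

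Then induct on $n$. By the inductive hypothesis, $p_n(m)$ is itself a free It\^{o} process with $\sh dx_i$-biprocess $\partial p_n(m)\,u_i$ (which lies in $\Lambda^2$ by Corollary \ref{cor.RCLL}, since $\partial p_n(m) \colon \R_+ \to \cA \otimes_{\min} \cA^{\op}$ is continuous and adapted) and drift $\partial p_n(m)\sh k + \tfrac{1}{2}\sum_i\Delta_{u_i}p_n(m)$. Applying the Free It\^{o} Product Rule (Theorem \ref{thm.FIPR}.(i)) to $p_{n+1}(m) = m \cdot p_n(m)$ yields
\[
d\,p_{n+1}(m) = dm\cdot p_n(m) + m \cdot d\,p_n(m) + \sum_{i=1}^n Q_{\tau}\bigl(u_i,\partial p_n(m)\,u_i\bigr)\,dt.
\]
Using the elementary biprocess identities $(w\sh dm)\cdot a = \bigl((1 \otimes a)w\bigr) \sh dm$ and $a \cdot (w \sh dm) = \bigl((a \otimes 1)w\bigr)\sh dm$ (immediate from the definition of $\#$ and the multiplication in $\cA \otimes \cA^{\op}$), I repackage the stochastic-differential pieces into $\bigl[(1 \otimes m^n) + (m \otimes 1)\partial p_n(m)\bigr]\sh dm$; the bracket equals $\partial p_{n+1}(m)$ by the Leibniz identity $(p_1 p_n)^{[1]}(\lambda_1,\lambda_2) = \lambda_1\,p_n^{[1]}(\lambda_1,\lambda_2) + p_n(\lambda_2)$ of Proposition \ref{prop.divdiff}.(iii).

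For the drift, I apply the product rule for $\Delta$ (Lemma \ref{lem.Deltaprodrule}) to $p_{n+1} = p_1 \cdot p_n$ with $u = v = u_i$: because $\partial p_1(m) = 1 \otimes 1$ and $\Delta_{u_i}p_1(m) = 0$, it reduces to
\[
\Delta_{u_i}p_{n+1}(m) = m\cdot\Delta_{u_i}p_n(m) + 2\,Q_\tau\bigl(u_i, \partial p_n(m)\,u_i\bigr),
\]
so the $\tfrac{1}{2}\,m\cdot\Delta_{u_i}p_n(m)$ contribution from the inductive hypothesis plus the $Q_\tau$ correction from FIPR sum to exactly $\tfrac{1}{2}\Delta_{u_i}p_{n+1}(m)$, closing the induction. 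The main technical obstacle is bookkeeping: carefully distinguishing the tensor-product orderings on $\cA \otimes \cA^{\op}$ (the algebra multiplication reverses the second factor, while $\sh$ reads the factors in the natural order) so that the biprocess identities are applied on the correct side, and consistently translating between the compact differential notation and the rigorous biprocess formalism of Definition \ref{def.freeItoprocess}.
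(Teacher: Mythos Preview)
Your proof is correct and follows essentially the same approach as the paper: reduce (ii) to (i) via the semicircular decomposition of circular Brownian motions, then prove (i) by induction on the degree of the monomial using the Free It\^{o} Product Rule (Theorem \ref{thm.FIPR}), the Leibniz rule for divided differences (Proposition \ref{prop.divdiff}.(iii)), and the product rule for $\Delta$ (Lemma \ref{lem.Deltaprodrule}). The paper phrases the inductive step slightly more abstractly---first showing that the formula is closed under products of \emph{arbitrary} polynomials $p,q$ for which it holds, then specializing to $p_n \cdot p_1$---but the content is identical.
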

\begin{rem}
In either case, the map $\R_+ \ni t \mapsto \partial p(m(t)) \in \cA \potimes \cA^{\op}$ is clearly continuous and adapted.
In particular, if $\ell \in L_{\loc}^1(\R_+;\cA)$ and $u \in \Lambda^2$, then $\partial p(m) \sh \ell \in L_{\loc}^1(\R_+;\cA)$ and, by Corollary \ref{cor.RCLL}, $\partial p(m) \, u \in \Lambda^2$.
Thus all of the integrals in the statement of Theorem \ref{thm.FFIFpoly} make sense.
\end{rem}
\begin{proof}
Using the comments after Definition \ref{def.freeItoprocess}, it is easy to see that item \ref{item.FFIFpolyz} follows from item \ref{item.FFIFpolyx} with twice the dimension.
It therefore suffices to prove item \ref{item.FFIFpolyx}.
To this end, let $p,q \in \C[\lambda]$ be polynomials, and suppose that the formula in item \ref{item.FFIFpolyx} holds for both $p$ and $q$.
Then the free It\^{o} product rule (Theorem \ref{thm.FIPR}), Proposition \ref{prop.divdiff}\ref{item.divdiffprod}, the definition of $\partial$, and Lemma \ref{lem.Deltaprodrule} give
\begin{align*}
    d\,(pq)(m(t)) & = d\,p(m(t)) \; q(m(t)) + p(m(t)) \; d\,q(m(t)) + d\,p(m(t))\;d\,q(m(t)) \\
    & = \big((1 \otimes q(m(t))) \partial p(m(t)) + (p(m(t)) \otimes 1)\partial q(m(t))\big) \sh dm(t) \\
    & \hspace{10mm} +\sum_{i=1}^n\Big(\frac{1}{2} \big(\Delta_{u_i(t)}p(m(t))\,q(m(t)) + p(m(t))\Delta_{u_i(t)}q(m(t))\big) \\
    & \hspace{30mm} + Q_{\tau}\big(\partial p(m(t)) \, u_i(t), \partial q(m(t)) \, u_i(t)\big)\Big)\,dt \\
    & = \partial (pq)(m(t))\sh dm(t) + \frac{1}{2}\sum_{i=1}^n\Delta_{u_i(t)}(pq)(m(t))\,dt.
\end{align*}
Thus the formula of interest holds for the polynomial $pq$ as well.

Next, note that the formula holds trivially for $p(\lambda) = p_0(\lambda) \equiv 1$ and $p(\lambda) = p_1(\lambda) = \lambda$.
Now, let $n \geq 1$, and assume the formula holds for $p(\lambda) = p_n(\lambda) = \lambda^n$.
By what we just proved, this implies the formula holds for $p(\lambda) = p_n(\lambda)p_1(\lambda) = \lambda^{n+1} = p_{n+1}(\lambda)$.
By induction, the formula holds for $p = p_n$, for all $n \in \N_0$.
Since $\{p_n : n \in \N_0\}$ is a basis for $\C[\lambda]$, we are done.
\end{proof}

\subsection{The Traced Formula}\label{sec.trFFIF}

From Theorem \ref{thm.FFIFpoly} and a symmetrization argument, we obtain a highly useful ``traced" formula.
Before stating it, giving examples, and proving it, we present a rigorous proof of a ``folklore" characterization of when a free It\^{o} process is self-adjoint.

\begin{prop}\label{prop.frItoprocess0}
Suppose that $(x_1,\ldots,x_n) \colon \R_+ \to \cA_{\sa}^n$ is an $n$-dimensional semicircular Brownian motion.
For each $\ell \in \{1,2\}$, let $m_{\ell}$ be a free It\^{o} process satisfying $dm_{\ell}(t) = \sum_{i=1}^n u_{\ell i}(t) \sh dx_i(t) + k_{\ell}(t)\,dt$.
Then $m_1 = m_2$ if and only if $m_1(0)=m_2(0)$, $k_1 = k_2$ a.e., and $u_{1i} = u_{2i}$ a.e. for all $i$.
\end{prop}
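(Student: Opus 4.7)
The ``if'' direction is immediate from the linearity of the free stochastic and Bochner integrals, so I will focus on ``only if.'' The first move will be to subtract: setting $m \coloneqq m_1 - m_2$, $k \coloneqq k_1 - k_2$, and $u_i \coloneqq u_{1i} - u_{2i}$ for each $i$, one obtains a new free It\^{o} process satisfying $dm(t) = \sum_i u_i(t) \sh dx_i(t) + k(t)\,dt$ with $m(0) = m_1(0) - m_2(0)$, and the hypothesis $m_1 = m_2$ becomes $m \equiv 0$. Evaluating at $t = 0$ instantly gives $m_1(0) = m_2(0)$, so the task reduces to showing $u_i = 0$ a.e.\ and $k = 0$ a.e.

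The core idea is to extract a squared $L^2$-norm of $u_i$ from the It\^{o} correction of $d(mm^*)$. Since $x_i^* = x_i$, Corollary \ref{cor.extensionofstochinteg} gives $dm^*(t) = \sum_i u_i^{\mathsmaller{\bigstar}}(t) \sh dx_i(t) + k^*(t)\,dt$, so the free It\^{o} product rule (Theorem \ref{thm.FIPR}.\ref{item.FIPRx}) applies to $mm^*$. Integrating the resulting identity over $[0,T]$, the left-hand side $m(T)m(T)^* - m(0)m(0)^*$ vanishes, and the two ``first-order'' integrals $\int_0^T dm(t)\,m^*(t)$ and $\int_0^T m(t)\,dm^*(t)$ also vanish because every summand of their integrands carries a factor of $m(t) = 0$ or $m^*(t) = 0$. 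This leaves
\[
\sum_{i=1}^n \int_0^T Q_{\tau}\big(u_i(t), u_i^{\mathsmaller{\bigstar}}(t)\big)\,dt \;=\; 0 \quad \text{for every } T \geq 0.
\]

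Next I will apply $\tau$, expecting to convert the display into an integrated squared norm. Using Definition \ref{def.Q} with $a = 1$ together with the involution identity $(u_i^{\mathsmaller{\bigstar}})^{\flip} = u_i^*$, a short computation gives $\tau(Q_{\tau}(u_i, u_i^{\mathsmaller{\bigstar}})) = (\tau \wotimes \tau^{\op})(u_i u_i^*) = \|u_i\|_{L^2(\tau \wotimes \tau^{\op})}^2$, where the last equality uses traciality of $\tau \wotimes \tau^{\op}$. Hence $\sum_i \int_0^T \|u_i(t)\|_{L^2(\tau \wotimes \tau^{\op})}^2\,dt = 0$ for every $T$, forcing $u_i(t) = 0$ for a.e.\ $t$ and every $i$. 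With the martingale part eliminated, $m \equiv 0$ collapses to $\int_0^t k(s)\,ds = 0$ for every $t$, and Lebesgue differentiation of the Bochner integral delivers $k = 0$ a.e.

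I anticipate the only step requiring real care is the trace identity $\tau(Q_{\tau}(u, u^{\mathsmaller{\bigstar}})) = \|u\|_{L^2(\tau \wotimes \tau^{\op})}^2$, where one must correctly juggle $Q_{\tau}$, $(\cdot)^{\flip}$, and $(\cdot)^{\mathsmaller{\bigstar}}$ and invoke traciality of $\tau \wotimes \tau^{\op}$; once that nonnegativity is in hand, the rest of the argument is driven entirely by positivity and is essentially forced.
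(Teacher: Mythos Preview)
Your proposal is correct and follows essentially the same approach as the paper: subtract, apply the free It\^{o} product rule to $mm^*$, observe that the first-order terms vanish because $m \equiv 0$, and then use the identity $\tau(Q_{\tau}(u,u^{\mathsmaller{\bigstar}})) = \|u\|_{L^2(\tau \wotimes \tau^{\op})}^2$ to kill the $u_i$ before handling $k$ via Lebesgue differentiation. The only minor difference is that you apply $\tau$ directly to the integrated identity (swapping $\tau$ with the Bochner integral), whereas the paper first applies vector-valued Lebesgue differentiation to get $\sum_i Q_{\tau}(u_i(t),u_i^{\mathsmaller{\bigstar}}(t)) = 0$ pointwise a.e.\ and then traces; your route is marginally more direct and equally valid.
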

\pagebreak
\begin{proof}
Let $m$ be a free It\^{o} process satisfying Equation \eqref{eq.frItoprocess}.
It suffices to show that $m \equiv 0$ if and only if $m(0) = 0$, $k \equiv 0$ almost everywhere, and $u_1 \equiv \cdots \equiv u_n \equiv 0$ almost everywhere.
The ``if" direction is obvious.
For the converse, notice that if $m \equiv 0$, then 
\[
0=dm^*(t) = \sum_{i=1}^nu_i^{\mathsmaller{\bigstar}}(t)\sh dx_i(t) + k^*(t)\,dt,
\]
so that
\[
0= d(mm^*)(t) = dm(t)\,m^*(t) + m(t)\,dm^*(t) + \sum_{i=1}^nQ_{\tau}(u_i(t),u_i^{\mathsmaller{\bigstar}}(t)) \,dt = \sum_{i=1}^nQ_{\tau}(u_i(t),u_i^{\mathsmaller{\bigstar}}(t)) \,dt
\]
by the free It\^{o} product rule.
In other words, $\int_0^t \sum_{i=1}^nQ_{\tau}(u_i(s),u_i^{\mathsmaller{\bigstar}}(s)) \,ds = 0$, for all $t \geq 0$.
By, for instance, the (vector-valued) Lebesgue Differentiation Theorem, this implies $\sum_{i=1}^nQ_{\tau}(u_i(t),u_i^{\mathsmaller{\bigstar}}(t))= 0$ for almost every $t \geq 0$.
We claim that this implies $u_1 \equiv \cdots \equiv u_n \equiv 0$ almost everywhere.
Indeed, if $u \in \cA \wotimes \cA^{\op}$ is arbitrary, then, by definition of $Q_{\tau}$,
\[
\tau(Q_{\tau}(u,u^{\mathsmaller{\bigstar}})) = (\tau \wotimes \tau^{\op})(u(u^{\mathsmaller{\bigstar}})^{\flip}) = (\tau \wotimes \tau^{\op})(uu^*) = (\tau \wotimes \tau^{\op})(u^*u) = \|u\|_{L^2(\tau \wotimes \tau^{\op})}^2.
\]
Our claim is then proven by an appeal to the faithfulness of $\tau \wotimes \tau^{\op}$.
We are left with $\int_0^t k(s)\,ds = 0$ for all $t \geq 0$.
Again from the Lebesgue Differentiation Theorem, we conclude that $k \equiv 0$ almost everywhere.
\end{proof}

\begin{cor}\label{cor.safrItoprocess}
A free It\^{o} process $m$ as in Equation \eqref{eq.frItoprocess} satisfies $m^* = m$ if and only if $m(0)^*=m(0)$, $k^*=k$ a.e., and $u_i^{\mathsmaller{\bigstar}} = u_i$ a.e. for all $i$.
Also, a free It\^{o} process $m$ as in Equation \eqref{eq.zfrItoprocess} satisfies $m^* = m$ if and only if $m(0)^*=m(0)$, $k^*=k$ a.e., and $u_i^{\mathsmaller{\bigstar}} = v_i$ a.e. for all $i$.
\end{cor}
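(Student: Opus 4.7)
The plan is to apply Proposition \ref{prop.frItoprocess0} to $m_1 \coloneqq m$ and $m_2 \coloneqq m^*$, reducing each statement of the corollary to matching the free It\^{o} coefficients of $m^*$ with those of $m$. The key input in both cases is that taking adjoints commutes with free stochastic integration via $(u \sh c)^* = u^{\mathsmaller{\bigstar}} \sh c^*$ (Notation \ref{nota.alg}.\ref{item.hash}), promoted to $\big(\int_s^t u \sh dm\big)^* = \int_s^t u^{\mathsmaller{\bigstar}} \sh dm^*$ by Corollary \ref{cor.extensionofstochinteg}. Combined with the trivial $\big(\int_0^t k(s)\,ds\big)^* = \int_0^t k^*(s)\,ds$ for Bochner integrals, this writes $m^*$ in standard free It\^{o} form so that coefficients can be compared.

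For the semicircular case, since $x_i^* = x_i$, the above adjoint identity gives
\[
dm^*(t) = \sum_{i=1}^n u_i^{\mathsmaller{\bigstar}}(t) \sh dx_i(t) + k^*(t)\,dt,
\]
and a direct application of Proposition \ref{prop.frItoprocess0} to $m$ and $m^*$ yields $m^* = m$ if and only if $m(0)^* = m(0)$, $k^* = k$ a.e., and $u_i^{\mathsmaller{\bigstar}} = u_i$ a.e.\ for all $i$.

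For the circular case, I first promote Proposition \ref{prop.frItoprocess0} to a circular analog by decomposing each $z_i = \frac{1}{\sqrt{2}}(x_i + i y_i)$, where $x_i \coloneqq \sqrt{2}\,\cRe z_i$ and $y_i \coloneqq \sqrt{2}\,\cIm z_i$ give $2n$ freely independent semicircular Brownian motions (Section \ref{sec.freeprob}). A circular free It\^{o} process \eqref{eq.zfrItoprocess} then has semicircular $dx_i$-coefficient $\frac{1}{\sqrt{2}}(u_i + v_i)$ and $dy_i$-coefficient $\frac{i}{\sqrt{2}}(u_i - v_i)$; since $(u,v) \mapsto (u+v,\,u-v)$ is bijective, Proposition \ref{prop.frItoprocess0} shows that the data $(m(0), k, u_1, v_1, \ldots, u_n, v_n)$ is determined a.e.\ by $m$. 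Applying Corollary \ref{cor.extensionofstochinteg} with the identity $(z_i)^* = z_i^*$ then gives
\[
dm^*(t) = \sum_{i=1}^n\big(v_i^{\mathsmaller{\bigstar}}(t) \sh dz_i(t) + u_i^{\mathsmaller{\bigstar}}(t) \sh dz_i^*(t)\big) + k^*(t)\,dt,
\]
so matching coefficients yields $m^* = m$ iff $m(0)^* = m(0)$, $k^* = k$ a.e., $u_i = v_i^{\mathsmaller{\bigstar}}$ a.e., and $v_i = u_i^{\mathsmaller{\bigstar}}$ a.e. Since $(\cdot)^*$ and $(\cdot)^{\flip}$ are commuting involutions on $\cA \wotimes \cA^{\op}$ (immediate on pure tensors), $(\cdot)^{\mathsmaller{\bigstar}}$ is itself an involution, so the last two conditions collapse to the single condition $u_i^{\mathsmaller{\bigstar}} = v_i$ a.e. The whole argument is bookkeeping with the $(\cdot)^{\mathsmaller{\bigstar}}$ operation, and the only mildly substantive step is the circular analog of Proposition \ref{prop.frItoprocess0}, which is itself a direct consequence of the semicircular case.
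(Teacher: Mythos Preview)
Your proof is correct and is precisely the argument the paper has in mind: the corollary is stated without proof because it follows immediately from Proposition~\ref{prop.frItoprocess0} by comparing the free It\^{o} decompositions of $m$ and $m^*$, exactly as you do. Your handling of the circular case via the semicircular reduction and the observation that $(\cdot)^{\mathsmaller{\bigstar}}$ is an involution is the natural (and intended) way to fill in the details.
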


We now state the traced formula.

\begin{thm}[Traced Functional Free It\^{o} Formula]\label{thm.trFFIF}
The following formulas hold.
\begin{enumerate}[label=(\roman*),font=\normalfont,leftmargin=2\parindent]
    \item Suppose that $(x_1,\ldots,x_n) \colon \R_+ \to \cA_{\sa}^n$ is an $n$-dimensional semicircular Brownian motion.
    If $m$ is a free It\^{o} process satisfying Equation \eqref{eq.frItoprocess} and $f \in \C[\lambda]$, then
    \[
    \tau(f(m)) = \tau(f(m(0))) + \int_0^{\boldsymbol{\cdot}}\Big(\tau\big(f'(m(t))\,k(t)\big) + \frac{1}{2}\sum_{i=1}^n(\tau \wotimes \tau^{\op})\big(u_i^{\flip}(t)\,\partial f'(m(t))\,u_i(t)\big)\Big) \, dt. \numberthis\label{eq.trFFIFx}
    \]
    If $m^*=m$ (i.e., $m(0)^*=m(0)$, $k^*=k$ a.e., and $u_i^{\mathsmaller{\bigstar}} = u_i$ a.e. for all $i$), then Equation \eqref{eq.trFFIFx} holds for any $f \colon \R \to \C$ that is $C^2$ in a neighborhood of the closure of $\bigcup_{t \geq 0}\sigma(m(t))$.\label{item.trFFIFx}
    \item Suppose that $(z_1,\ldots,z_n) \colon \R_+ \to \cA^n$ is an $n$-dimensional circular Brownian motion.
    If $m$ is a free It\^{o} process satisfying Equation \eqref{eq.zfrItoprocess} and $f \in \C[\lambda]$, then
    \[
    \tau(f(m)) = \tau(f(m(0))) + \int_0^{\boldsymbol{\cdot}}\Big(\tau\big(f'(m(t))\,k(t)\big) + \sum_{i=1}^n(\tau \wotimes \tau^{\op})\big(v_i^{\flip}(t)\,\partial f'(m(t))\,u_i(t)\big)\Big) \, dt. \numberthis\label{eq.trFFIFz}
    \]
    If $m^*=m$ (i.e., $m(0)^*=m(0)$, $k^*=k$ a.e., and $u_i^{\mathsmaller{\bigstar}} = v_i$ a.e. for all $i$), then Equation \eqref{eq.trFFIFz} holds for any $f \colon \R \to \C$ that is $C^2$ in a neighborhood of the closure of $\bigcup_{t \geq 0}\sigma(m(t))$.\label{item.trFFIFz}
\end{enumerate}
\end{thm}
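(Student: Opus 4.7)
The strategy is to apply $\tau$ to the functional free It\^{o} formula for polynomials (Theorem \ref{thm.FFIFpoly}) to obtain the polynomial case of part (i), then extend to $C^2$ functions in the self-adjoint case via polynomial approximation. Part (ii) will reduce to part (i) by writing $(z_i, z_i^*)$ in terms of the $2n$ semicircular Brownian motions $x_i = \sqrt{2}\,\cRe z_i$, $y_i = \sqrt{2}\,\cIm z_i$.

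For the polynomial case of part (i), applying $\tau$ to $d\,p(m(t)) = \partial p(m(t))\sh dm(t) + \tfrac12\sum_i\Delta_{u_i(t)}p(m(t))\,dt$ kills the stochastic integral terms $\int_0^t(\partial p(m)\,u_i)\sh dx_i$, which are noncommutative martingales starting at zero. The drift part reduces, via a short cyclic computation on monomials, to $\tau(\partial p(m)\sh k) = \tau(p'(m)\,k)$. The core step is the identity
\[
\tau(\Delta_u p(m)) = (\tau \wotimes \tau^{\op})\big(u^{\flip}\,\partial p'(m)\,u\big).
\]
For $p(\lambda) = \lambda^n$, unraveling Example \ref{ex.divdiffpoly} and Definition \ref{def.Deltap} expresses the left-hand side as $2\sum_{j=0}^{n-2}(j+1)S_j$ and the right-hand side as $n\sum_{j=0}^{n-2}S_j$, where $S_j \coloneqq (\tau\wotimes\tau^{\op})\big(uu^{\flip}(m^j\otimes m^{n-2-j})\big)$. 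These agree because of the symmetry $S_j = S_{n-2-j}$, which follows from traciality of $\tau\wotimes\tau^{\op}$, its invariance under $(\cdot)^{\flip}$, the anti-homomorphism property of $(\cdot)^{\flip}$, and the crucial self-symmetry $(uu^{\flip})^{\flip} = uu^{\flip}$ (from $\flip^2 = \id$). Linearity extends the identity to all polynomials.

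For the $C^2$ extension under self-adjointness, fix $T > 0$; continuity and self-adjointness give $R \coloneqq \sup_{t\in[0,T]}\|m(t)\| < \infty$ with $\bigcup_{t\in[0,T]}\sigma(m(t)) \subseteq [-R,R]$, a subset of the neighborhood on which $f$ is $C^2$. A standard Weierstrass argument (uniformly approximate $f''$ on $[-R,R]$ by polynomials $q_n$, then set $p_n'(\lambda) = f'(0) + \int_0^\lambda q_n$ and $p_n(\lambda) = f(0) + \int_0^\lambda p_n'$) yields polynomials $p_n$ with $p_n^{(k)} \to f^{(k)}$ uniformly on $[-R,R]$ for $k\in\{0,1,2\}$. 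Applying the polynomial formula to each $p_n$ and passing to the limit in each term: spectral calculus gives $\tau(p_n(m(t))) \to \tau(f(m(t)))$ uniformly in $t\in[0,T]$; dominated convergence and $\|(p_n'-f')(m(s))\| \leq \|p_n' - f'\|_{\infty,[-R,R]}$ together with $k \in L_{\loc}^1(\R_+;\cA)$ handle the drift; and for the quadratic term, the integral representation $g^{[1]}(\lambda,\mu) = \int_0^1 g'(s\lambda + (1-s)\mu)\,ds$ from Proposition \ref{prop.divdiff} yields $(p_n')^{[1]} \to (f')^{[1]}$ uniformly on $[-R,R]^2$, whence functional calculus on the commuting self-adjoint pair $(m(s)\otimes 1,\, 1\otimes m(s))$ gives $\|\partial p_n'(m(s)) - \partial f'(m(s))\|_{\cA\otimes_{\min}\cA^{\op}} \to 0$ uniformly in $s\in[0,T]$, and noncommutative H\"{o}lder bounds the integrand by a constant multiple of $\|u_i(s)\|_{L^{\infty}(\tau\wotimes\tau^{\op})}^2$, which lies in $L^1[0,T]$ because $u_i \in \Lambda^2$.

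For part (ii), applying part (i) with the $2n$-tuple $(x_1,\ldots,x_n,y_1,\ldots,y_n)$ and biprocesses $\tfrac{1}{\sqrt 2}(u_i+v_i)$ (for $x_i$) and $\tfrac{i}{\sqrt 2}(u_i - v_i)$ (for $y_i$), the $i^2=-1$ cross-cancellations collapse the quadratic correction to $\tfrac12\big[(\tau\wotimes\tau^{\op})(u_i^{\flip}\,\partial f'(m)\,v_i) + (\tau\wotimes\tau^{\op})(v_i^{\flip}\,\partial f'(m)\,u_i)\big]$. Since $(f')^{[1]}$ is symmetric, $\partial f'(m)^{\flip} = \partial f'(m)$; combined with traciality, the anti-homomorphism property of $\flip$, and $\flip$-invariance of $\tau\wotimes\tau^{\op}$, the two summands coincide and equal the claimed $(\tau\wotimes\tau^{\op})(v_i^{\flip}\,\partial f'(m)\,u_i)$. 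I expect the main obstacle to be the moment-symmetry identity in the polynomial step; once that is established, the $C^2$ approximation and the circular-to-semicircular reduction are both routine.
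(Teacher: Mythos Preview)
Your overall strategy matches the paper's: apply $\tau$ to Theorem \ref{thm.FFIFpoly}, use the martingale property to kill the stochastic integrals, establish the two trace identities of Lemma \ref{lem.trid}, and then pass from polynomials to $C^2$ by Weierstrass approximation. Your monomial computation of $\tau(\Delta_u p_n(m))$ via the symmetry $S_j = S_{n-2-j}$ is correct and is a more hands-on variant of the paper's argument, which instead packages the same flip/traciality manipulations into the divided-difference identity $p^{[2]}(\lambda_1,\lambda_2,\lambda_1) + p^{[2]}(\lambda_2,\lambda_1,\lambda_2) = (p')^{[1]}(\lambda_1,\lambda_2)$; the paper's version has the advantage of yielding the general $\tau(\Delta_{u,v}p(m)) = (\tau\wotimes\tau^{\op})(v^{\flip}\,\partial p'(m)\,u)$ directly, so that part (ii) follows immediately from Theorem \ref{thm.FFIFpoly}.\ref{item.FFIFpolyz} without your semicircular reduction and the extra symmetry step (which is, however, also fine).

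There is one small gap in your $C^2$ extension. You assert that $[-R,R]$ is contained in the neighborhood $U$ on which $f$ is $C^2$, but the hypothesis only gives $U \supseteq \overline{\bigcup_{t\geq 0}\sigma(m(t))}$, and this closure can be a proper subset of $[-R,R]$ (e.g., if the spectra are all contained in $\{0,1\}$ with $R=1$). The paper fixes this by first choosing, for each $t$, a global $g_t \in C^2(\R)$ agreeing with $f$ on an open set $V_t$ with $K_t \coloneqq \overline{\bigcup_{0\leq s\leq t}\sigma(m(s))} \subseteq V_t \subseteq U$, approximating $g_t$ by polynomials, and then observing that the resulting formula only sees the values of $g_t$ and $(g_t')^{[1]}$ on $K_t$ and $K_t\times K_t$, where they coincide with those of $f$. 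Your approximation argument goes through verbatim once you insert this extension step.
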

\begin{rem}\label{rem.law}
Let $m$ be as in Equation \eqref{eq.frItoprocess}.
Note that when $m^*=m$ and $f \colon \R \to \C$ is $C^2$ on a neighborhood of the closure of $\bigcup_{t \geq 0} \sigma(m(t))$, we have $(\tau \wotimes \tau^{\op})(u_i^{\flip}\,\partial f'(m)\,u_i) = \la \partial f'(m)\,u_i,u_i \ra_{L^2(\tau \wotimes \tau^{\op})}$ because $u_i^{\flip} = u_i^*$.
By the functional-calculus-based definition of $\partial f'(m)$, we may therefore read Equation \eqref{eq.trFFIFx} (almost everywhere) more pleasantly as
\[
\frac{d}{dt} \tau(f(m(t))) = \tau\big(f'(m(t))\,k(t)\big) + \frac{1}{2}\sum_{i=1}^n\int_{\R^2}\frac{f'(\lambda)-f'(\mu)}{\lambda-\mu}\,\rho_{m(t),u_i(t)}(d\lambda,d\mu),
\]
where $\rho_{m,u_i}(d\lambda,d\mu) \coloneqq \la P^{m \otimes 1, 1 \otimes m}(d\lambda,d\mu)\,u_i,u_i\ra_{L^2(\tau \wotimes \tau^{\op})}$.
Here, $P^{m \otimes 1, 1 \otimes m}$ is the projection-valued joint spectral measure of the pair $(m \otimes 1, 1 \otimes m)$.
Similar comments apply to Equation \eqref{eq.trFFIFz}.
\end{rem}

Before proving this theorem, we demonstrate its utility.
\pagebreak

\begin{ex}\label{ex.trlog}
Fix a circular Brownian motion $z \colon \R_+ \to \cA$.
Also, let $a_1,b_1\ldots,a_n,b_n,k \colon \R_+ \to \cA$ be continuous adapted processes and $m \colon \R_+ \to \cA$ be a free It\^{o} process satisfying
\[
dm(t) = \sum_{i=1}^n (a_i(t)\,dz(t)\,b_i(t)+c_i(t)\,dz^*(t)\,d_i(t)) + k(t)\,dt.
\]
Now, suppose in addition that $m \geq 0$ (i.e., $m^*=m$ and $\sigma(m(t)) \subseteq \R_+$ whenever $t \geq 0$).
For example, if $\widetilde{m}$ is as in Equation \eqref{eq.exfrItoprocess} and $m \coloneqq |\widetilde{m}|^2 = \widetilde{m}^*\widetilde{m}$, then, as is shown in Example \ref{ex.FIprod}, $m$ is a free It\^{o} process of the form we have just described.

Now, let $\e > 0$, and define $f_{\e}(\lambda) \coloneqq \log(\lambda +\e)$ for $\lambda > -\e$ and $f_{\e} \equiv 0$ on $(-\infty,-\e]$.
Then $f_{\e} \in C^{\infty}((-\e,\infty))$ and $\bigcup_{t \geq 0}\sigma(m(t))\subseteq \R_+ \subseteq (-\e,\infty)$.
Also, if $\lambda,\mu > -\e$, then
\[
f_{\e}'(\lambda) = \frac{1}{\lambda+\e} \; \text{ and } \; (f_{\e}')^{[1]}(\lambda,\mu) = \frac{(\lambda+\e)^{-1}-(\mu+\e)^{-1}}{\lambda-\mu} = - \frac{1}{(\lambda+\e)(\mu+\e)}.
\]
Thus
\[
f_{\e}'(m) = (m+\e)^{-1} \; \text{ and } \; \partial f_{\e}'(m) = (f_{\e}')^{[1]}(m \otimes 1,1 \otimes m) = -(m+\e)^{-1}\otimes (m+\e)^{-1}.
\]
In particular, if $u = \sum_{i=1}^n a_i \otimes b_i$ and $v = \sum_{i=1}^n c_i \otimes d_i$, then
\[
v^{\flip}\partial f_{\e}'(m)\,u = -\sum_{i,j=1}^n\underbrace{(d_j \otimes c_j)((m+\e)^{-1}\otimes (m + \e)^{-1})(a_i \otimes b_i).}_{(d_j(m+\e)^{-1}a_i) \otimes (b_i(m+\e)^{-1}c_j)}
\]
It follows from Theorem \ref{thm.trFFIF} and the Fundamental Theorem of Calculus that
\begin{align*}
    \frac{d}{dt}\tau(f_{\e}(m(t))) & = \tau(f_{\e}'(m(t))\,k(t)) + (\tau \wotimes \tau^{\op})(v^{\flip}(t)\,\partial f_{\e}'(m(t))\,u(t)) \\
    & = \tau((m(t)+\e)^{-1}k(t)) - \sum_{i,j=1}^n\tau\big(d_j(t)(m(t)+\e)^{-1}a_i(t)\big)\,\tau\big(b_i(t)(m(t)+\e)^{-1}c_j(t)\big), \numberthis\label{eq.log}
\end{align*}
for all $t > 0$. Special cases of Equation \eqref{eq.log} have shown up in the calculation of Brown measures of solutions to various free SDEs. Please see \cite{driverhallkemp,hozhong,demnihamdi,hallho}. Thus far, such equations are proven in the literature using power series arguments. Theorem \ref{thm.trFFIF} provides a more intuitive/natural way to do such calculations.

For concreteness, we demonstrate how Equation \eqref{eq.log} can be used to re-prove a key identity (Lemma 5.2 in \cite{driverhallkemp}) that B. K. Driver, B. Hall, and T. Kemp use in the process of computing the Brown measure of free multiplicative Brownian motion.
Similar calculations can be used to re-prove formulas in \cite{hozhong,demnihamdi,hallho}.

We return to the setup of the end of Example \ref{ex.FIprod}, i.e., $dg(t)  = g(t)\,dz(t)$ and $g(0) = h \in \cA_0$.
We then take $g_{\lambda} \coloneqq g-\lambda$ for $\lambda \in \C$ and $m \coloneqq |g_{\lambda}|^2$.
As we showed in Example \ref{ex.FIprod},
\[
dm(t) = g_{\lambda}^*(t)\,g(t) \,dz(t) + dz^*(t)\,g^*(t) \,g_{\lambda}(t) + \tau(g^*(t)\,g(t))\, dt. 
\]
By Equation \eqref{eq.log},
\[
\frac{d}{dt}\tau(\log(m(t)+\e)) = \tau((m(t)+\e)^{-1})\tau(g^*(t)\,g(t))-\tau (g^*(t)\,g_{\lambda}(t)(m(t)+\e)^{-1}g_{\lambda}^*(t)\,g(t))\tau((m(t)+\e)^{-1}).  \numberthis\label{eq.DHKlog}
\]
But now, $\tau (g^*g_{\lambda}(m+\e)^{-1}g_{\lambda}^*g) = \tau ((m+\e)^{-1}g_{\lambda}^*gg^*g_{\lambda})$,
\[
\tau(g^*g) = \tau((m+\e)^{-1}(m+\e)g^*g),
\]
and
\[
(m+\e)g^*g - g_{\lambda}^*gg^*g_{\lambda} = \e \, g^*g + g_{\lambda}^*g_{\lambda}g^*g - g_{\lambda}^*gg^*g_{\lambda} = \e \, g^*g
\]
because $g_{\lambda} = g-\lambda$, and $\lambda = \lambda 1$ commutes with all elements.
From Equation \eqref{eq.DHKlog}, we then get
\[
\frac{d}{dt}\tau\big(\log(|g(t)-\lambda|^2+\e)\big) = \e\,\tau\big((|g(t)-\lambda|^2+\e)^{-1}|g(t)|^2\big)\tau\big((|g(t)-\lambda|^2+\e)^{-1}\big)
\]
for all $t > 0$.
This is equivalent to (a generalization to arbitrary starting point of) Lemma 5.2 in \cite{driverhallkemp}.
\end{ex}

We now begin the proof of Theorem \ref{thm.trFFIF}, the keys to which are the following identities.
\pagebreak

\begin{lem}\label{lem.trid}
If $p \in \C[\lambda]$, $m,k \in \cA$, and $u,v \in \cA \wotimes \cA^{\op}$, then
\[
\tau( \partial p(m) \sh k) = \tau(p'(m)\,k) \; \text{ and } \; \tau \big(\Delta_{u,v}p(m)\big) = (\tau \wotimes \tau^{\op})\big(v^{\flip}\partial p'(m)\,u\big).
\]
\end{lem}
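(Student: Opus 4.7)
The plan is to reduce both identities to the case of a monomial $p(\lambda) = p_n(\lambda) = \lambda^n$ for $n \in \N_0$, using the linearity of each side in $p$. The cases $n \in \{0,1\}$ are immediate since then $\partial p_n$, $p_n'$, and $\partial^2 p_n$ are either zero or constant, so I take $n \geq 2$ throughout.

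For the first identity, I would invoke Example \ref{ex.divdiffpoly} to write $\partial p_n(m) = \sum_{j=0}^{n-1} m^j \otimes m^{n-1-j}$, so that $\partial p_n(m) \sh k = \sum_{j=0}^{n-1} m^j\, k\, m^{n-1-j}$. Applying $\tau$ and using traciality collapses each of the $n$ summands to $\tau(k\, m^{n-1})$, yielding $n\tau(k\, m^{n-1}) = \tau(p_n'(m)\, k)$.

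For the second identity, my strategy is a density reduction: both sides are bilinear in $(u,v)$, and both are separately $\sigma$-WOT continuous. Indeed, Definition \ref{def.Deltap} (taking $a = 1$) presents $\tau(\Delta_{u,v}p(m))$ as a predual pairing, and the RHS is $\sigma$-WOT continuous because $\tau \wotimes \tau^{\op}$ and left/right multiplication by the fixed element $\partial p_n'(m) \in \cA \wotimes \cA^{\op}$ are all $\sigma$-WOT continuous. Since $\cA \otimes \cA^{\op}$ is $\sigma$-WOT dense in $\cA \wotimes \cA^{\op}$, it suffices to verify the identity when $u = a \otimes b$ and $v = c \otimes d$ for $a,b,c,d \in \cA$.

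For such pure tensors, the verification is a direct calculation. On the LHS, I would use the formula $\partial^2 p_n(\boldsymbol{a}) = 2 \sum_{|\delta| = n-2} a_1^{\delta_1} \otimes a_2^{\delta_2} \otimes a_3^{\delta_3}$, apply $\sh_2^{\mathsmaller{\otimes}}[uv^{\flip} + vu^{\flip},\, 1 \otimes 1]$ and $\tau \wotimes \tau^{\op}$, and then use traciality and the commutation of $m \otimes 1$ with $1 \otimes m$. Reindexing by $(j_1,j_2) = (\delta_1 + \delta_3,\, \delta_2)$ with multiplicity $j_1+1$ and symmetrizing the $uv^{\flip}$ and $vu^{\flip}$ contributions produces $n \sum_{j_1+j_2=n-2} \tau(adm^{j_1})\,\tau(cbm^{j_2})$. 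On the RHS, directly expanding $v^{\flip}\,\partial p_n'(m)\, u = n \sum_{j=0}^{n-2} dm^j a \otimes bm^{n-2-j}c$ in $\cA \wotimes \cA^{\op}$ and applying $\tau \wotimes \tau^{\op}$ with traciality yields the same expression. The main bookkeeping step -- and the only subtle point -- is the symmetrization that produces the factor $n = (j_1+1) + (j_2+1)$: it is essential to retain both the $uv^{\flip}$ and $vu^{\flip}$ contributions from the definition of $\Delta_{u,v}p$ to recover the correct constant.
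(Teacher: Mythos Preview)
Your proof is correct. The first identity is handled exactly as in the paper. For the second identity, your route differs from the paper's but is entirely valid.

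The paper does not reduce to pure tensors. Instead, it works directly with arbitrary $u,v \in \cA \wotimes \cA^{\op}$ via Definition \ref{def.Deltap} and isolates a structural reason the identity holds: writing $u_1 = m\otimes 1$, $u_2 = 1\otimes m$, it first shows that for any $P \in \C[\lambda_1,\lambda_2,\lambda_3]$ and any $w \in \cA \wotimes \cA^{\op}$ with $w^{\flip}=w$,
\[
(\tau \wotimes \tau^{\op})\big(P(u_1,u_2,u_1)\sh_2^{\mathsmaller{\otimes}}[w,\boldsymbol{1}]\big) = (\tau \wotimes \tau^{\op})\big(r(u_1,u_2)\,w\big), \qquad r(\lambda_1,\lambda_2) = \tfrac{1}{2}\big(P(\lambda_1,\lambda_2,\lambda_1)+P(\lambda_2,\lambda_1,\lambda_2)\big),
\]
using only traciality, $u_1^{\flip}=u_2$, and $(\tau\wotimes\tau^{\op})\circ(\cdot)^{\flip}=\tau\wotimes\tau^{\op}$. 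Taking $P = 2p^{[2]}$ and $w = \tfrac{1}{2}(uv^{\flip}+vu^{\flip})$, the key divided-difference identity
\[
p^{[2]}(\lambda_1,\lambda_2,\lambda_1)+p^{[2]}(\lambda_2,\lambda_1,\lambda_2) = (p')^{[1]}(\lambda_1,\lambda_2)
\]
(obtained by letting $\lambda_3\to\lambda_1$ in the defining recursion) yields $\tau(\Delta_{u,v}p(m)) = \tfrac{1}{2}(\tau\wotimes\tau^{\op})(\partial p'(m)(uv^{\flip}+vu^{\flip}))$; the symmetry of $(p')^{[1]}$ then removes the symmetrization. Your approach, by contrast, fixes a monomial, passes to pure tensors by $\sigma$-WOT density (a legitimate step, as you note, since both sides are separately $\sigma$-WOT continuous), and matches the two sides by an explicit index-combinatorics computation in which the factor $n = (j_1+1)+(j_2+1)$ emerges from the symmetrization. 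The paper's route has the advantage of exposing \emph{why} the second noncommutative derivative collapses to the first derivative of $p'$---this is precisely the mechanism later exploited in the $NC^2$ extension (Section \ref{sec.FFIF}). Your route is more elementary and self-contained, avoiding any functional-calculus-style symmetry argument, at the cost of a density reduction and some reindexing.
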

\begin{proof}
Fix $n \in \N_0$, and define $p_n(\lambda) \coloneqq \lambda^n$.
For the first identity, note that
\[
\tau(\partial p_n(m) \sh k) = \sum_{\delta_1+\delta_2=n-1}\tau( m^{\delta_1}k\,m^{\delta_2}) = \sum_{\delta_1+\delta_2=n-1}\tau(m^{\delta_2}m^{\delta_1}k) = \tau(n \,m^{n-1}k) = \tau(p_n'(m)\,k).
\]
By linearity, the first desired identity holds for all $p \in \C[\lambda]$.
Proving the second identity is slightly more involved.
We begin by making two key observations.
First, fix a polynomial $P \in \C[\lambda_1,\lambda_2,\lambda_3]$ and two elements $u_1,u_2 \in \cA \wotimes \cA^{\op}$ that commute.
If we define $q(\lambda_1,\lambda_2) \coloneqq P(\lambda_1,\lambda_2,\lambda_1)$ and $\boldsymbol{1} \coloneqq 1 \otimes 1$, then 
\[
(\tau \wotimes \tau^{\op})\big(P(u_1 \otimes \boldsymbol{1} \otimes \boldsymbol{1}, \boldsymbol{1} \otimes u_2 \otimes \boldsymbol{1},\boldsymbol{1} \otimes \boldsymbol{1} \otimes u_1)\sh_2^{\mathsmaller{\otimes}}[u,\boldsymbol{1}]\big) = (\tau \wotimes \tau^{\op})(q(u_1,u_2)\,u),
\]
as the reader may easily verify.
(The computation is similar to that of $R_a$ in the proof of Lemma \ref{lem.Deltaprodrule}.)
Second, $(\tau \wotimes \tau^{\op})(u^{\flip}) = (\tau \wotimes \tau^{\op})(u)$.
This is because $(\tau \wotimes \tau^{\op})(a \otimes b) = \tau(a)\,\tau(b) = \tau(b)\,\tau(a) = (\tau \wotimes \tau^{\op})(b \otimes a)$ whenever $a,b \in \cA$, and $\cA \otimes \cA^{\op}$ is $\sigma$-weakly dense in $\cA \wotimes \cA^{\op}$.
Now, note that
\[
(q(u_1,u_2)\,u)^{\flip} = u^{\flip} q(u_1,u_2)^{\flip} = u^{\flip} q(u_1^{\flip},u_2^{\flip}). \numberthis\label{eq.flip}
\]
Combining these observations and appealing again to traciality of $\tau \wotimes \tau^{\op}$, we get that if in addition $u_1^{\flip} = u_2$, and if $w \in \cA \wotimes \cA^{\op}$ satisfies $w^{\flip} = w$, then
\[
(\tau \wotimes \tau^{\op})\big(P(u_1 \otimes \boldsymbol{1} \otimes \boldsymbol{1}, \boldsymbol{1} \otimes u_2 \otimes \boldsymbol{1},\boldsymbol{1} \otimes \boldsymbol{1} \otimes u_1)\sh_2^{\mathsmaller{\otimes}}[w,\boldsymbol{1}]\big) = (\tau \wotimes \tau^{\op})(r(u_1,u_2)\,w), \numberthis\label{eq.symm}
\]
where
\[
r(\lambda_1,\lambda_2) = \frac{q(\lambda_1,\lambda_2)+q(\lambda_2,\lambda_1)}{2} = \frac{P(\lambda_1,\lambda_2,\lambda_1)+P(\lambda_2,\lambda_1,\lambda_2)}{2}.
\]
Now, if $P = 2\,p^{[2]}$, then
\[
r(\lambda_1,\lambda_2) = \frac{P(\lambda_1,\lambda_2,\lambda_1)+P(\lambda_2,\lambda_1,\lambda_2)}{2} = p^{[2]}(\lambda_1,\lambda_2,\lambda_1)+p^{[2]}(\lambda_2,\lambda_1,\lambda_2) = (p')^{[1]}(\lambda_1,\lambda_2),
\]
as can be seen by taking $\lambda_3 \to \lambda_1$ in the definition of $p^{[2]}(\lambda_1,\lambda_2,\lambda_3)$ and using the symmetry of $p^{[1]}$.
Therefore, if we apply Equation \eqref{eq.symm} with $P = 2 \, p^{[2]}$, $u_1 = m \otimes 1$, $u_2 = 1 \otimes m$, and $w = \frac{1}{2}(uv^{\flip}+vu^{\flip})$, then we obtain
\[
\tau\big(\Delta_{u,v}p(m)\big) = \frac{1}{2}(\tau \wotimes \tau^{\op})\big(\partial p'(m)\,(uv^{\flip}+vu^{\flip})\big) \numberthis\label{eq.almost}
\]
by definition of $\Delta_{u,v}p(m)$ and noncommutative derivatives.
To complete the proof, notice that if $q \in \C[\lambda_1,\lambda_2]$ is symmetric, $u_1$ and $u_2$ satisfy $u_1^{\flip} = u_2$, and $w \in \cA \wotimes \cA^{\op}$ is arbitrary, then, by Equation \eqref{eq.flip},
\begin{align*}
    (\tau \wotimes \tau^{\op})(q(u_1,u_2)\,w) & = (\tau \wotimes \tau^{\op})\big((q(u_1,u_2)\,w)^{\flip}\big) = (\tau \wotimes \tau^{\op})\big(w^{\flip}q(u_2,u_1)\big) \\
    & = (\tau \wotimes \tau^{\op})\big(w^{\flip}q(u_1,u_2)\big) = (\tau \wotimes \tau^{\op})\big(q(u_1,u_2)\,w^{\flip}\big).
\end{align*}
Therefore, Equation \eqref{eq.almost} reduces to
\[
\tau(\Delta_{u,v}p(m)) = (\tau \wotimes \tau^{\op})(\partial p'(m)\,uv^{\flip}) = (\tau \wotimes \tau^{\op})(v^{\flip}\partial p'(m)\,u),
\]
as desired.
\end{proof}

\begin{proof}[Proof of Theorem \ref{thm.trFFIF}]
We prove Theorem \ref{thm.trFFIF}\ref{item.trFFIFx} using Theorem \ref{thm.FFIFpoly}\ref{item.FFIFpolyx}.
Theorem \ref{thm.trFFIF}\ref{item.trFFIFz} follows in the exact same way from Theorem \ref{thm.FFIFpoly}\ref{item.FFIFpolyz}.
Fix an $n$-dimensional semicircular Brownian motion $(x_1,\ldots,x_n) \colon \R_+ \to \cA_{\sa}^n$, and suppose that $m$ is a free It\^{o} process satisfying Equation \eqref{eq.frItoprocess}.

Since free stochastic integrals against $x_i$ are noncommutative martingales that start at zero, they have trace zero.
Thus, applying $\tau$ to the result of Theorem \ref{thm.FFIFpoly}\ref{item.FFIFpolyx}, bringing $\tau$ (which is bounded-linear) into the Bochner integrals, and appealing to Lemma \ref{lem.trid}, we have
\begin{align*}
    \tau(p(m)) & = \tau(p(m(0))) + \int_0^{\boldsymbol{\cdot}} \Big(\tau(\partial p(m(t)) \sh k(t)) + \frac{1}{2}\sum_{i=1}^n\tau\big(\Delta_{u_i(t)}p(m(t))\big)\Big) \, dt \\
    & = \tau(p(m(0))) + \int_0^{\boldsymbol{\cdot}}\Big(\tau\big(p'(m(t))\,k(t)\big) + \frac{1}{2}\sum_{i=1}^n(\tau \wotimes \tau^{\op})\big(u_i^{\flip}(t)\,\partial p'(m(t))\,u_i(t)\big)\Big) \, dt,
\end{align*}
for all $p \in \C[\lambda]$.
\pagebreak

Suppose now that $m^*=m$ and $U \subseteq \R$ is an open set containing $\overline{\bigcup_{t \geq 0} \sigma(m(t))}$ such that $f \in C^2(U)$.
Since $m$ is continuous in the operator norm, $m$ is locally bounded in the operator norm.
In particular, $K_t \coloneqq \overline{\bigcup_{0 \leq s \leq t} \sigma(m(s))} \subseteq U$ is compact.
Next, fix $t \geq 0$, and let $V_t \subseteq \R$ and $g_t \in C^2(\R)$ be such that $V_t$ is open, $K_t \subseteq V_t \subseteq U$, and $g_t=f$ on $V_t$.
By the Weierstrass Approximation Theorem, there is a sequence $(q_N)_{N \in \N}$ of polynomials such that, for all $j \in \{0,1,2\}$, $q_N^{(j)} \to g_t^{(j)}$ uniformly on compact subsets of $\R$ as $N \to \infty$.
In particular, $(q_N')^{[1]} \to (g_t')^{[1]}$ uniformly on compact subsets of $\R^2$ as $N \to \infty$.
But now,
\[
\tau(q_N(m(t))) = \tau(q_N(m(0))) + \int_0^t \Big(\tau\big(q_N'(m(s))\,k(s)\big) + \frac{1}{2}\sum_{i=1}^n(\tau \wotimes \tau^{\op})\big(u_i^{\flip}(s)\,\partial q_N'(m(s))\,u_i(s)\big)\Big) \, ds,
\]
for all $N \in \N$, by the previous paragraph.
By basic operator norm estimates on functional calculus and the Dominated Convergence Theorem, we can take $N \to \infty$ in this identity to conclude
\[
\tau(g_t(m(t))) = \tau(g_t(m(0))) + \int_0^t \Big(\tau\big(g_t'(m(s))\,k(s)\big) + \frac{1}{2}\sum_{i=1}^n(\tau \wotimes \tau^{\op})\big(u_i^{\flip}(s)\,\partial g_t'(m(s))\,u_i(s)\big)\Big) \, ds.
\]
But $g_t=f$ on $V_t \supseteq K_t$ and thus $(g_t')^{[1]} = (f')^{[1]}$ on $K_t \times K_t$.
We therefore have that $g_t(m(s)) = f(m(s))$ and $\partial g'(m(s)) = \partial f'(m(s))$, for all $s \in [0,t]$.
Since $t \geq 0$ was arbitrary, this completes the proof.
\end{proof}

\section{Free Stochastic Calculus II: Noncommutative \texorpdfstring{$C^2$}{} Functions}\label{sec.fstochcalc2}

\subsection{Noncommutative \texorpdfstring{$C^k$}{} Functions}\label{sec.NCk}

We now briefly discuss the class $NC^k(\R)$ of noncommutative $C^k$ functions introduced in \cite{nikitopoulosNCk}, which contains proofs of all the statements in this section. 

\begin{nota}
If $\Om$ is a set and $\varphi \colon \Om \to \C$ is a function, then we write $\|\varphi\|_{\ell^{\infty}(\Om)} \coloneqq \sup_{\om \in \Om}|\varphi(\om)|$.
If $\sF$ is a $\sigma$-algebra of subsets of $\Om$, then we write $\ell^{\infty}(\Om,\sF) \coloneqq \{ \text{bounded }(\sF,\cB_{\C})\text{-measurable functions }\Om \to \C\}$.
For the duration of this section, fix $k \in \N$ and, for each $j \in \{1,\ldots,k+1\}$, a Polish space (i.e., a complete separable metric space) $\Om_j$ with Borel $\sigma$-algebra $\cB_{\Om_j}$.
Also, write $\Om \coloneqq \Om_1 \times \cdots \times \Om_{k+1}$.
\end{nota}

We now define the \textit{integral projective tensor product} $\ell^{\infty}(\Om_1,\cB_{\Om_1}) \iotimes \cdots \iotimes \ell^{\infty}(\Om_{k+1},\cB_{\Om_{k+1}})$, the idea for which comes from the work of V. V. Peller \cite{peller1}.
Formally, we shall replace the $\sum_{n=1}^{\infty}$ in the decomposition \eqref{eq.projdecomp} of elements of the classical projective tensor product (of $\ell^{\infty}$-spaces) with an integral $\int_{\Sigma} \cdot \, d\rho$ over a $\sigma$-finite measure space.
Here is a precise definition.

\begin{defi}[\textbf{IPTPs}]\label{def.babyIPTP}
A \textbf{$\boldsymbol{\ell^{\infty}}$-integral projective decomposition} (IPD) of a function $\varphi \colon \Om \to \C$ is a choice $(\Sigma,\rho,\varphi_1,\ldots,\varphi_{k+1})$ of a $\sigma$-finite measure space $(\Sigma,\sH,\rho)$ and, for each $j \in \{1,\ldots,k+1\}$, a product measurable function $\varphi_j \colon \Om_j \times \Sigma \to \C$ such that $\varphi_j(\cdot,\sigma) \in \ell^{\infty}(\Om_j,\cB_{\Om_j})$ whenever $\sigma\in \Sigma$,
\begin{align*}
    & \int_{\Sigma} \|\varphi_1(\cdot,\sigma)\|_{\ell^{\infty}(\Om_1)}\cdots\|\varphi_{k+1}(\cdot,\sigma)\|_{\ell^{\infty}(\Om_{k+1})} \, \rho(d\sigma) < \infty, \; \text{ and} \numberthis\label{eq.intfincond} \\
    & \varphi(\boldsymbol{\om}) = \int_{\Sigma} \varphi_1(\om_1,\sigma) \cdots \varphi_{k+1}(\om_{k+1},\sigma) \, \rho(d\sigma), \text{ for all } \boldsymbol{\om} \in \Om,
\end{align*}
where we write $\boldsymbol{\om} = (\om_1,\ldots,\om_{k+1})$.
Also, for any function $\varphi \colon \Om \to \C$, define
\begin{align*}
    \|\varphi\|_{\ell^{\infty}(\Om_1,\cB_{\Om_1}\hspace{-0.1mm}) \iotimes \cdots \iotimes \ell^{\infty}(\Om_{k+1},\cB_{\Om_{k+1}}\hspace{-0.1mm})} & \coloneqq \inf\int_{\Sigma} \prod_{j=1}^{k+1}\|\varphi_j(\cdot,\sigma)\|_{\ell^{\infty}(\Om_j)}\,\rho(d\sigma),
\end{align*}
where $(\Sigma,\rho,\varphi_1,\ldots,\varphi_{k+1})$ is a $\ell^{\infty}$-IPD of $\varphi$ and $\inf \emptyset \coloneqq \infty$.
Finally, we define
\[
\ell^{\infty}(\Om_1,\cB_{\Om_1}) \iotimes \cdots \iotimes \ell^{\infty}(\Om_{k+1},\cB_{\Om_{k+1}}) \coloneqq \big\{\varphi \in \ell^{\infty}(\Om,\cB_{\Om}) : \|\varphi\|_{\ell^{\infty}(\Om_1,\cB_{\Om_1}\hspace{-0.1mm}) \iotimes \cdots \iotimes \ell^{\infty}(\Om_{k+1},\cB_{\Om_{k+1}}\hspace{-0.1mm})} < \infty\big\}
\]
to be the \textbf{integral projective tensor product} (IPTP) of the spaces $\ell^{\infty}(\Om_1,\cB_{\Om_1}),\ldots,\ell^{\infty}(\Om_{k+1},\cB_{\Om_{k+1}})$. 
\end{defi}

It is not obvious that the integral in Equation \eqref{eq.intfincond} makes sense.
In fact, the function being integrated is not necessarily measurable, but it \textit{is} ``almost measurable," i.e., measurable with respect to the $\rho$-completion of $\sH$.
For a proof, please see Lemma 2.2.1 in \cite{nikitopoulosNCk}.
\pagebreak

It is easy to see that if $\varphi \colon \Om \to \C$ is a function, then
\[
\|\varphi\|_{\ell^{\infty}(\Om)} \leq  \|\varphi\|_{\ell^{\infty}(\Om_1,\cB_{\Om_1}\hspace{-0.1mm}) \iotimes \cdots \iotimes \ell^{\infty}(\Om_{k+1},\cB_{\Om_{k+1}}\hspace{-0.1mm})}.
\]
It is also the case that $\ell^{\infty}(\Om_1,\cB_{\Om_1}) \iotimes \cdots \iotimes \ell^{\infty}(\Om_{k+1},\cB_{\Om_{k+1}}) \subseteq \ell^{\infty}(\Om,\cB_{\Om})$ is a $\ast$-subalgebra with respect to pointwise operations and that
\[
\big(\ell^{\infty}(\Om_1,\cB_{\Om_1}) \iotimes \cdots \iotimes \ell^{\infty}(\Om_{k+1},\cB_{\Om_{k+1}}),\|\cdot\|_{\ell^{\infty}(\Om_1,\cB_{\Om_1}\hspace{-0.1mm}) \iotimes \cdots \iotimes \ell^{\infty}(\Om_{k+1},\cB_{\Om_{k+1}}\hspace{-0.1mm})}\big)
\]
is a Banach $\ast$-algebra.
For proofs, please see Proposition 2.2.3 in \cite{nikitopoulosNCk}.

\begin{nota}[The Space $\cC^{[k]}(\R)$]\label{nota.superCk}
For $\varphi \colon \R^{k+1} \to \C$ and $r > 0$, define
\[
\|\varphi\|_{r,k+1} \coloneqq \big\|\varphi|_{[-r,r]^{k+1}}\big\|_{\ell^{\infty}([-r,r],\cB_{[-r,r]}\hspace{-0.1mm})^{\iotimes(k+1)}} \in [0,\infty].
\]
Now, if $f \in C^k(\R)$, then we define\vspace{-0.25mm}
\begin{align*}
    \|f\|_{\cC^{[k]},r} & \coloneqq \sum_{j=0}^k\big\|f^{[j]}\big\|_{r,j+1} \in [0,\infty] \; \text{ and} \\
    \cC^{[k]}(\R) & \coloneqq \big\{ g \in C^k(\R) : \|g\|_{\cC^{[k]},r}  < \infty, \text{ for all } r > 0\big\},
\end{align*}
where $\|\cdot\|_{r,1} \coloneqq \|\cdot\|_{\ell^{\infty}([-r,r])}$.
\end{nota}

Note that $\cC^{[k]}(\R) \subseteq C^k(\R)$ is a linear subspace, and $\{\|\cdot\|_{\cC^{[k]},r} : r > 0\}$ is a collection of seminorms on $\cC^{[k]}(\R)$.
This collection of seminorms makes $\cC^{[k]}(\R)$ into a Fr\'{e}chet space --- actually, a Fr\'{e}chet $\ast$-algebra.
This is proven as Proposition 3.1.3(iv) in \cite{nikitopoulosNCk}.

\begin{ex}[Polynomials]\label{ex.polysuperCk}
Fix $n \in \N$.
For each $j \in \{1,\ldots,k+1\}$ and $\ell \in \{1,\ldots,n\}$, fix a bounded Borel measurable function $\psi_{j,\ell} \colon \Om_j \to \C$.
If $\psi(\boldsymbol{\om}) \coloneqq \sum_{\ell=1}^n\psi_{1,\ell}(\om_1)\cdots\psi_{k+1,\ell}(\om_{k+1})$, for all $\boldsymbol{\om} \in \Om$, then it is easy to see that $\psi \in \ell^{\infty}(\Om_1,\cB_{\Om_1}) \iotimes \cdots \iotimes \ell^{\infty}(\Om_{k+1},\cB_{\Om_{k+1}})$ with\vspace{-0.25mm}
\[
\|\psi\|_{\ell^{\infty}(\Om_1,\cB_{\Om_1}\hspace{-0.1mm}) \iotimes \cdots \iotimes \ell^{\infty}(\Om_{k+1},\cB_{\Om_{k+1}}\hspace{-0.1mm})} \leq \sum_{\ell=1}^n\|\psi_{1,\ell}\|_{\ell^{\infty}(\Om_1)}\cdots\|\psi_{k+1,\ell}\|_{\ell^{\infty}(\Om_{k+1})}.\vspace{-0.25mm}
\]
In particular, if $P \in \C[\lambda_1,\ldots,\lambda_{k+1}]$, then\vspace{-0.25mm}
\[
P|_{[-r,r]^{k+1}} \in \ell^{\infty}\big([-r,r],\cB_{[-r,r]}\big)^{\iotimes(k+1)},\vspace{-0.25mm}
\]
for all $r > 0$.
Therefore, by Example \ref{ex.divdiffpoly}, $\C[\lambda] \subseteq \cC^{[k]}(\R)$.
\end{ex}

This brings us to the definition of $NC^k(\R)$.

\begin{defi}[Noncommutative $C^k$ Functions]\label{def.NCk}
We define
\[
NC^k(\R) \coloneqq \overline{\C[\lambda]} \subseteq \cC^{[k]}(\R)
\]
to be the space of \textbf{noncommutative $\boldsymbol{C^k}$ functions}.
To be clear, the closure above takes place in the Fr\'{e}chet space $\cC^{[k]}(\R)$.
\end{defi}
\begin{rem}
The idea for the name of $NC^k(\R)$ comes from parallel work by D. A. Jekel, who, in Section 18 of \cite{jekel}, defined an abstract analog of $NC^k(\R)$ via completion and using classical projective tensor powers of $C([-r,r])$ in place of the integral projective tensor powers of $\ell^{\infty}([-r,r],\cB_{[-r,r]})$.
Jekel notates his space of noncommutative $C^k$ functions as $C_{\text{nc}}^k(\R)$.
By definition of $C_{\mathrm{nc}}^k(\R)$, the inclusion $\C[\lambda] \hookrightarrow NC^k(\R)$ extends uniquely to a continuous map linear map $C_{\text{nc}}^k(\R) \to NC^k(\R)$.
\end{rem}

Since $\C[\lambda] \subseteq \cC^{[k]}(\R)$ is a $\ast$-subalgebra, $NC^k(\R)$ is a Fr\'{e}chet $\ast$-algebra in its own right.
We record many examples of noncommutative $C^k$ functions in Theorem \ref{thm.NCk} below.

\begin{defi}[Wiener Space]\label{def.Wk}
Write $M(\R,\cB_{\R})$ for the space of Borel complex measures on $\R$.
For $\mu \in M(\R,\cB_{\R})$, write $|\mu|$ for the total variation measure of $\mu$, $\mu_{(0)} \coloneqq |\mu|(\R)$ for the total variation norm of $\mu$, and $\mu_{(k)} \coloneqq \int_{\R} |\xi|^k\,|\mu|(d\xi) \in [0,\infty]$ for the ``$k^{\text{th}}$ moment" of $|\mu|$.
The \textbf{$\boldsymbol{k^{\text{th}}}$ Wiener space} $W_k(\R)$ is the set of functions $f \colon \R \to \C$ such that there exists (necessarily unique) $\mu \in M(\R,\cB_{\R})$ with $\mu_{(k)} < \infty$ and $f(\lambda) = \int_{\R} e^{i\lambda \xi} \, \mu(d\xi)$, for all $\lambda \in \R$.
Finally, we define $W_k(\R)_{\loc}$ to be set of functions $f \colon \R \to \C$ such that for all $r > 0$, there exists $g \in W_k(\R)$ such that $f|_{[-r,r]} = g|_{[-r,r]}$.
\end{defi}
\pagebreak

\begin{thm}[Nikitopoulos \cite{nikitopoulosNCk}]\label{thm.NCk}
Write $\dot{B}_1^{k,\infty}(\R)$ for the homogeneous $(k,\infty,1)$-Besov space (Definition 3.3.1 in \cite{nikitopoulosNCk}) and $C_{\loc}^{k,\e}(\R)$ for the space of $C^k$ functions whose $k^{\text{th}}$ derivatives are locally $\e$-H\"{o}lder continuous (Definition 3.3.8 in \cite{nikitopoulosNCk}).
\begin{enumerate}[label=(\roman*),font=\normalfont,leftmargin=2\parindent]
    \item $C^{k+1}(\R) \subseteq W_k(\R)_{\loc} \subseteq NC^k(\R)$, and $W_k(\R)$ is dense in $NC^k(\R)$.\label{item.WkinNCk}
    \item $\dot{B}_1^{k,\infty}(\R) \subseteq NC^k(\R)$ and $C_{\loc}^{k,\e}(\R) \subseteq NC^k(\R)$, for all $\e > 0$.\label{item.BesovHolderinNCk}
    \item $W_k(\R)_{\loc} \subsetneq NC^k(\R)$. Specifically, $C^{k,\frac{1}{4}}(\R) \setminus W_k(\R)_{\loc} \neq \emptyset$.\label{item.Wklocstrict}
\end{enumerate}
\end{thm}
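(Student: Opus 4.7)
The plan is to treat each part in turn: Fourier analysis with cutoffs and Taylor approximation handles (i); Littlewood--Paley decomposition handles (ii); and an explicit lacunary construction yields (iii).

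For part (i), given $f\in C^{k+1}(\R)$ and $r>0$, fix a smooth bump $\chi\in C_c^\infty(\R)$ with $\chi\equiv 1$ on $[-r,r]$. Then $\chi f\in C_c^{k+1}(\R)$, so its Fourier transform $\widehat{\chi f}$ is smooth and $|\xi|^{k+1}\widehat{\chi f}(\xi)$ is bounded, which forces $|\xi|^k\widehat{\chi f}(\xi)\in L^1(\R)$; Fourier inversion then exhibits $\chi f$ as an element of $W_k(\R)$ agreeing with $f$ on $[-r,r]$. For $W_k(\R)_{\loc}\subseteq NC^k(\R)$, take $f\in W_k(\R)$ with representing measure $\mu$ and form truncated Taylor approximants
\[
p_n(\lambda)=\int_{\R}\sum_{j=0}^n\frac{(i\lambda\xi)^j}{j!}\,\mu(d\xi)\in\C[\lambda].
\]
Using the integral representation of the divided differences $f^{[j]}$ from Example \ref{ex.divdiffWk}, one exhibits explicit $\ell^\infty$-integral projective decompositions of $(f-p_n)^{[j]}|_{[-r,r]^{j+1}}$ whose $\|\cdot\|_{r,j+1}$-norms are controlled by tail integrals of the form $\int_{|\xi|>R_n}|\xi|^k\,|\mu|(d\xi)$, which vanish as $n\to\infty$. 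This puts $W_k(\R)\subseteq NC^k(\R)$; density of $W_k(\R)$ in $NC^k(\R)$ follows immediately because polynomials are already dense by definition, and any polynomial $p$ can be approximated in every $\|\cdot\|_{\cC^{[k]},r}$-seminorm by a cutoff $\chi_R p\in C_c^{k+1}(\R)\subseteq W_k(\R)$ (in fact the distance is zero once $\chi_R\equiv 1$ on $[-r,r]$).

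For part (ii), apply a Littlewood--Paley decomposition $f=\sum_{n\in\Z}\Delta_n f$ in which $\widehat{\Delta_n f}$ is supported in a dyadic annulus $\{|\xi|\sim 2^n\}$. Each band-limited piece $\Delta_n f$ belongs to $W_k(\R)$ via Fourier inversion, with $W_k$-type control of the form
\[
\int_{\R}|\xi|^k\,|\widehat{\Delta_n f}|(d\xi)\leq C\cdot 2^{nk}\|\Delta_n f\|_{L^\infty},
\]
and the defining summability of $\dot{B}_1^{k,\infty}(\R)$ then yields convergence of the series in $NC^k(\R)$ after separating the low- and high-frequency pieces (the former absorbed into part (i)). This gives $\dot{B}_1^{k,\infty}(\R)\subseteq NC^k(\R)$; the local H\"older inclusion $C_{\loc}^{k,\e}(\R)\subseteq NC^k(\R)$ follows either by a classical Besov embedding after localization, or directly by a dyadic analysis of the modulus of continuity of $f^{(k)}$.

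For part (iii), the plan is to construct a concrete lacunary function in $C^{k,1/4}(\R)\setminus W_k(\R)_{\loc}$. The natural candidate is a Weierstrass-type series
\[
f(\lambda)=\chi(\lambda)\sum_{n=1}^{\infty}2^{-n(k+1/4)}e^{i\,2^n\lambda},
\]
with $\chi$ a fixed smooth bump of compact support; a standard modulus-of-continuity estimate applied term by term to $f$ and its first $k$ derivatives places $f\in C^{k,1/4}(\R)$, so $f\in NC^k(\R)$ by part (ii). The main obstacle --- and genuinely the hardest step --- is ruling out membership in $W_k(\R)_{\loc}$: one must show that no Borel complex measure $\mu$ with $\mu_{(k)}<\infty$ can have Fourier transform agreeing with $f$ on any neighborhood of a point. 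The plan is to pair any hypothetical $\mu$ against a smooth bump localized at frequency $2^n$ and use Plancherel/Parseval-type identities to read off the spectral mass of $f$ near $2^n$; this mass decays only like $2^{-n/4}$, which --- once combined with local agreement and the constraint $\int|\xi|^k\,|\mu|(d\xi)<\infty$ --- produces a contradiction as $n\to\infty$.
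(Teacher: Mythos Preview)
Parts (i) and (ii) follow the paper's approach closely; the only real wrinkle in (i) is that your Taylor approximant $p_n(\lambda)=\int_{\R}\sum_{j=0}^n\frac{(i\lambda\xi)^j}{j!}\,\mu(d\xi)$ need not converge for $n>k$ when only $\mu_{(k)}<\infty$ is assumed --- the paper fixes this by first truncating $\mu$ to compact support and showing the truncations converge in $\cC^{[k]}(\R)$. You also implicitly use (but do not state) the localization principle $\mathcal{S}\subseteq NC^k(\R)\Rightarrow\mathcal{S}_{\loc}\subseteq NC^k(\R)$ to pass from $W_k(\R)\subseteq NC^k(\R)$ to $W_k(\R)_{\loc}\subseteq NC^k(\R)$.

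Part (iii), however, contains a genuine error: your candidate $f=\chi\sum_{n\geq 1}2^{-n(k+1/4)}e^{i2^n\cdot}$ actually \emph{lies in} $W_k(\R)$. Since $\hat{f}(\xi)=\sum_n 2^{-n(k+1/4)}\hat{\chi}(\xi-2^n)$ with $\hat{\chi}$ Schwartz, one has
\[
\int_{\R}|\xi|^k|\hat{f}(\xi)|\,d\xi\;\lesssim\;\sum_{n\geq 1}2^{-n(k+1/4)}\cdot 2^{nk}\;=\;\sum_{n\geq 1}2^{-n/4}<\infty,
\]
so $\mu=(2\pi)^{-1}\hat{f}\,d\xi$ witnesses $f\in W_k(\R)$. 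Your Parseval pairing correctly detects $|\xi|^k$-weighted spectral mass $\sim 2^{-n/4}$ near $\xi=2^n$, but that sequence is summable --- no contradiction arises. The obstruction is structural: for any dyadic lacunary series, $f^{(k)}\in C^{0,\alpha}$ forces its coefficients to be $O(2^{-n\alpha})$, hence summable, hence $\widehat{f^{(k)}}\in L^1$ automatically. The paper takes a different route: it first proves the criterion that, for compactly supported $g=f^{(k)}$, one has $f\in W_k(\R)_{\loc}$ if and only if $\hat g\in L^1$, and then exhibits a compactly supported $\kappa\in C^{0,1/4}(\R)$ with $\hat\kappa\notin L^1$. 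Such a $\kappa$ necessarily requires a \emph{non}-lacunary construction --- in effect one is exploiting the failure of Bernstein's absolute-convergence theorem below H\"older exponent $\tfrac12$.
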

\begin{proof}[Sketch of proof]
We outline the highlights of the proof and refer the reader to \cite{nikitopoulosNCk} for details.
First, we record a property of $NC^k(\R)$ that we shall use.
For $\mathcal{S} \subseteq C^k(\R)$, write $\mathcal{S}_{\loc}$ for the set of $f \in C^k(\R)$ such that for all $r > 0$, there exists $g \in \mathcal{S}$ such that $f|_{[-r,r]} = g|_{[-r,r]}$.
Proposition 3.1.3(ii) in \cite{nikitopoulosNCk} says that if $\mathcal{S} \subseteq \cC^{[k]}(\R)$, then $\mathcal{S}_{\loc} \subseteq \overline{\mathcal{S}} \subseteq \cC^{[k]}(\R)$, where the closure takes place in the Fr\'{e}chet space $\cC^{[k]}(\R)$.
Since $NC^k(\R) \subseteq \cC^{[k]}(\R)$ is closed, we have that if $\mathcal{S} \subseteq NC^k(\R)$, then $\mathcal{S}_{\loc} \subseteq \overline{\mathcal{S}} \subseteq NC^k(\R)$.

\ref{item.WkinNCk} The containment $C^{k+1}(\R) \subseteq W_k(\R)_{\loc}$ is proven using elementary Fourier analysis as Lemma 3.2.3(iii) in \cite{nikitopoulosNCk}.
More specifically, one proves that if $f \in C^{k+1}(\R)$, $r > 0$, and $\eta_r \in C_c^{\infty}(\R)$ satisfies $\eta_r \equiv 1$ on $[-r,r]$, then $\eta_rf = \frac{1}{2\pi}\int_{\R} e^{i\boldsymbol{\cdot}\xi}\cF(\eta_rf)(\xi)\,d\xi \in W_k(\R)$, where $\cF(g)(\xi) = \int_{\R} e^{-i\xi \lambda}g(\lambda)\,d\lambda$ is the Fourier transform of $g$. 

Now, one may use Equation \eqref{eq.divdiffWk} to show that $W_k(\R) \subseteq \cC^{[k]}(\R)$ (Lemma 3.2.4 in \cite{nikitopoulosNCk}).
It follows that $\C[\lambda] \subseteq C^{k+1}(\R) \subseteq W_k(\R)_{\loc} \subseteq \overline{W_k(\R)} \subseteq \cC^{[k]}(\R)$, and thus $NC^k(\R) \subseteq \overline{W_k(\R)}$.
To complete the proof of this item, we must show that $W_k(\R) \subseteq NC^k(\R)$. To this end, let $f = \int_{\R} e^{i \boldsymbol{\cdot} \xi}\,\mu(d\xi) \in W_k(\R)$.
For $n \in \N$, define $\mu_n(d\xi) \coloneqq 1_{[-n,n]}(\xi)\,\mu(d\xi)$ and $f_n \coloneqq \int_{\R} e^{i\boldsymbol{\cdot}\xi}\,\mu_n(d\xi) \in W_k(\R)$.
Then one has $f_n \to f$ in $\cC^{[k]}(\R)$ as $n \to \infty$, so it suffices to assume that $|\mu|$ has compact support.
If $|\mu|$ has compact support, then we may define $q_n(\lambda) \coloneqq \int_{\R} \sum_{j=0}^n \frac{(i\lambda\xi)^j}{j!}\,\mu(d\xi) \in \C[\lambda]$, for all $n \in \N$.
One then has that $q_n \to f$ in $\cC^{[k]}(\R)$ as $n \to \infty$.
For details, please see Theorem 3.2.6 in \cite{nikitopoulosNCk}.

\ref{item.BesovHolderinNCk} Using a result of Peller (Theorem 5.5 in \cite{peller1}), Littlewood--Paley decompositions, and the previous item, one can prove $\dot{B}_1^{k,\infty}(\R) \subseteq NC^k(\R)$.
Thus $\dot{B}_1^{k,\infty}(\R)_{\loc} \subseteq NC^k(\R)$.
But if $\e > 0$, then one also has $C_{\loc}^{k,\e}(\R) \subseteq \dot{B}_1^{k,\infty}(\R)_{\loc}$, so $C_{\loc}^{k,\e}(\R) \subseteq NC^k(\R)$.
For details, please see Section 3.3 of \cite{nikitopoulosNCk}.

\ref{item.Wklocstrict} Lemma 3.4.3 in \cite{nikitopoulosNCk} says that if $g \in C(\R)$ has compact support and $h \in C^k(\R)$ satisfies $h^{(k)} = g$, then $h \in W_k(\R)_{\loc}$ if and only if $\wh{g} = \cF(g)$ is integrable. Lemma 3.4.4 in \cite{nikitopoulosNCk} gives an example of a compactly supported function $\kappa \in C^{0,\frac{1}{4}}(\R)$ such that $\wh{\kappa}$ is not integrable. Therefore, if $f \in C^k(\R)$ and $f^{(k)} = \kappa$, then $f \in C^{k,\frac{1}{4}}(\R) \setminus W_k(\R)_{\loc} \subseteq NC^k(\R) \setminus W_k(\R)_{\loc}$.
\end{proof}

This collection of results paints the picture that a function only has to be ``a tiny bit better than $C^k$" to be noncommutative $C^k$.
However, a function does not have to belong locally to $W_k(\R)$ to belong to $NC^k(\R)$.
Though we shall not need it, it is also the case that $NC^k(\R) \subsetneq C^k(\R)$ (Theorem 4.4.1 in \cite{nikitopoulosNCk}).

\subsection{Multiple Operator Integrals (MOIs)}\label{sec.MOI}

In the next section, we shall define and/or interpret the quantities $\partial f(m) \sh k$ and $\Delta_{u,v}f(m)$ in terms of multiple operator integrals (MOIs).
In this section, we state the definitions and facts from the vast and rich theory of MOIs that we need for the present application.
Please see A. Skripka and A. Tomskova's book \cite{skripka} for a thorough and well-organized survey of the MOI literature and its applications.

For the duration of this section, fix a complex Hilbert space $H$, a von Neumann algebra $\cM \subseteq B(H)$, $k \in \N$, and $\boldsymbol{a} = (a_1,\ldots,a_{k+1}) \in \cM_{\sa}^{k+1}$.
For the present application, we shall use the ``separation of variables" approach, developed in \cite{peller1,azamovetal,peller2,nikitopoulosMOI}, to defining the MOI
\[
\big(I^{\boldsymbol{a}}\varphi\big)[b] = \int_{\sigma(a_{k+1})}\cdots\int_{\sigma(a_1)}\varphi(\blambda) \, P^{a_1}(d\lambda_1) \, b_1 \cdots P^{a_k}(d\lambda_k) \, b_k \, P^{a_{k+1}}(d\lambda_{k+1})
\]
for $b \in \cM^k$ and $\varphi \in \ell^{\infty}(\sigma(a_1),\cB_{\sigma(a_1)}) \iotimes \cdots \iotimes \ell^{\infty}(\sigma(a_{k+1}),\cB_{\sigma(a_{k+1})})$.
To motivate the definition, let $(\Sigma,\rho,\varphi_1,\ldots,\varphi_{k+1})$ be a $\ell^{\infty}$-IPD of $\varphi$.
We would like
\begin{align*}
    \big(I^{\boldsymbol{a}}\varphi\big)[b] & = \int_{\sigma(a_{k+1})}\cdots\int_{\sigma(a_1)}\int_{\Sigma} \varphi_1(\lambda_1,\sigma) \cdots \varphi_{k+1}(\lambda_{k+1},\sigma) \, \rho(d\sigma)\, P^{a_1}(d\lambda_1) \, b_1 \cdots P^{a_k}(d\lambda_k) \, b_k \, P^{a_{k+1}}(d\lambda_{k+1}) \\\displaybreak
    & = \int_{\Sigma}\int_{\sigma(a_{k+1})}\cdots\int_{\sigma(a_1)} \varphi_1(\lambda_1,\sigma) \cdots \varphi_{k+1}(\lambda_{k+1},\sigma) \, P^{a_1}(d\lambda_1) \, b_1 \cdots P^{a_k}(d\lambda_k) \, b_k \, P^{a_{k+1}}(d\lambda_{k+1})\, \rho(d\sigma) \\
    & = \int_{\Sigma}\Bigg(\int_{\sigma(a_1)}\varphi_1(\cdot,\sigma) \,dP^{a_1}\, b_1 \cdots \int_{\sigma(a_k)}\varphi_k(\cdot,\sigma) \,dP^{a_k}\,b_k \int_{\sigma(a_{k+1})}\varphi_{k+1}(\cdot,\sigma) \,dP^{a_{k+1}}\Bigg)\,\rho(d\sigma) \\
    & = \int_{\Sigma} \varphi_1(a_1,\sigma)\,b_1\cdots \varphi_k(a_k,\sigma)\,b_k\,\varphi_{k+1}(a_{k+1},\sigma)\,\rho(d\sigma). \numberthis\label{eq.MOIputdef}
\end{align*}
The integral in Equation \eqref{eq.MOIputdef} will become the definition of $(I^{\boldsymbol{a}}\varphi)[b]$.
First, we must explain what kind of integral this is.
If $(\Sigma,\sH,\rho)$ is a measure space and $F \colon \Sigma \to B(H)$ is a map, then we say that $F$ is \textbf{pointwise Pettis integrable} if, for every $h_1,h_2 \in H$, $\la F(\cdot)h_1,h_2 \ra \colon \Sigma \to \C$ is $(\sH,\cB_{\C})$-measurable and $\int_{\Sigma} |\la F(\sigma)h_1,h_2 \ra|\,\rho(d\sigma) < \infty$.
In this case, by Lemma 4.2.1 in \cite{nikitopoulosNCk}, there exists unique $T \in B(H)$ such that $\la Th_1,h_2 \ra = \int_{\Sigma} \la F(\sigma)h_1,h_2 \ra\,\rho(d\sigma)$, for all $h_1,h_2 \in H$;
moreover, $T \in W^*(F(\sigma) : \sigma \in \Sigma)$.
We shall write $\int_{\Sigma} F \, d\rho = \int_{\Sigma}F(\sigma)\,\rho(d\sigma) \coloneqq T$ for this operator.
Note that if $F \colon \Sigma \to B(H)$ is Bochner integrable, then it is also pointwise Pettis integrable, and the operator $T$ coincides with the Bochner $\rho$-integral of $F$.

\begin{prop}\label{prop.integ}
If $\varphi \in \ell^{\infty}(\sigma(a_1),\cB_{\sigma(a_1)}) \iotimes \cdots \iotimes \ell^{\infty}(\sigma(a_{k+1}),\cB_{\sigma(a_{k+1})})$, $(\Sigma,\rho,\varphi_1,\ldots,\varphi_{k+1})$ is a $\ell^{\infty}$-IPD of $\varphi$, and $b_1,\ldots,b_k \in \cM$, then the map
\[
\Sigma \ni \sigma \mapsto \varphi_1(a_1,\sigma)\,b_1\cdots \varphi_k(a_k,\sigma)\,b_k\,\varphi_{k+1}(a_{k+1},\sigma) \in \cM
\]
is pointwise Pettis integrable.
\end{prop}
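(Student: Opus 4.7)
The plan is to verify the two conditions defining pointwise Pettis integrability of the map $F(\sigma) \coloneqq \varphi_1(a_1,\sigma)\,b_1\cdots\varphi_k(a_k,\sigma)\,b_k\,\varphi_{k+1}(a_{k+1},\sigma)$: (i) $\sH$-measurability of $\sigma \mapsto \la F(\sigma)h_1,h_2\ra$ for all $h_1,h_2 \in H$, and (ii) $\rho$-integrability of $\sigma \mapsto |\la F(\sigma)h_1,h_2\ra|$. Condition (ii) is the easy one: the operator-norm bound from functional calculus gives $\|\varphi_j(a_j,\sigma)\|_{B(H)} \leq \|\varphi_j(\cdot,\sigma)\|_{\ell^{\infty}(\sigma(a_j))}$, so
\[
|\la F(\sigma) h_1,h_2 \ra| \leq \|h_1\|\,\|h_2\|\,\Bigg(\prod_{j=1}^k \|b_j\|\Bigg)\prod_{j=1}^{k+1}\|\varphi_j(\cdot,\sigma)\|_{\ell^{\infty}(\sigma(a_j))},
\]
and the right-hand side is $\rho$-integrable by the integrability condition \eqref{eq.intfincond} in the definition of an $\ell^{\infty}$-IPD.

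The real work is (i). My approach is a standard monotone-class / bounded-convergence scheme factor-by-factor. As a base case, if $\varphi_j$ has the elementary ``tensor-sum'' form $\varphi_j(\lambda_j,\sigma) = \sum_{n=1}^{N_j}\alpha_{j,n}(\lambda_j)\,\beta_{j,n}(\sigma)$ with $\alpha_{j,n} \in \ell^{\infty}(\sigma(a_j),\cB_{\sigma(a_j)})$ and $\beta_{j,n} \in \ell^{\infty}(\Sigma,\sH)$, then $\varphi_j(a_j,\sigma) = \sum_n \alpha_{j,n}(a_j)\,\beta_{j,n}(\sigma)$ by the linearity of functional calculus. Expanding $F(\sigma)$ makes it a finite sum of (fixed operators) times (scalar $\sH$-measurable functions of $\sigma$), so $\sigma \mapsto \la F(\sigma)h_1,h_2\ra$ is visibly $\sH$-measurable. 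I would then upgrade to arbitrary product-measurable $\varphi_j$ via the monotone class theorem applied to the class
\[
\cF_j \coloneqq \{\psi \in \ell^{\infty}(\sigma(a_j) \times \Sigma, \cB_{\sigma(a_j)} \otimes \sH) : \sigma \mapsto \la \psi(a_j,\sigma)\,c\,h_1,h_2\ra \text{ is $\sH$-measurable for all } c \in B(H),\,h_1,h_2 \in H\},
\]
which contains the algebra of tensor-sum functions above and is closed under uniformly bounded pointwise limits by the bounded convergence theorem for the spectral integral $\psi \mapsto \psi(a_j) = \int \psi \, dP^{a_j}$ (pointwise bounded convergence of $\psi_n \to \psi$ yields SOT convergence $\psi_n(a_j) \to \psi(a_j)$). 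Iterating this one coordinate at a time -- holding the other $\varphi_i(a_i,\sigma)$ fixed as the ``$c$'' in the definition of $\cF_j$ and exploiting that the uniform norm bound on $\varphi_j^{(n)}(a_j,\sigma)$ keeps SOT convergence stable under multiplication by bounded operators -- gives measurability of $\sigma \mapsto \la F(\sigma)h_1,h_2\ra$ in full generality.

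The main obstacle I expect is bookkeeping the monotone-class / SOT-bounded-convergence argument: one must be careful that the approximating sequences are bounded in the $\ell^{\infty}$-norm on $\sigma(a_j)$ for \emph{every} fixed $\sigma$ (not merely in $\rho$-mean), so that the pointwise (in $\sigma$) SOT convergence of each $\varphi_j^{(n)}(a_j,\sigma)$ composes stably across all $k+1$ factors. This is really just the observation that the monotone class one closes under is ``bounded pointwise convergence in both variables,'' applied $k+1$ times; once phrased that way, the argument is routine, and after the induction we obtain the desired pointwise Pettis integrability.
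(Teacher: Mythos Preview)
The paper's own proof is just a citation to external references, so a line-by-line comparison is not possible; your overall strategy --- integrability from the $\ell^\infty$-IPD bound, measurability by a monotone-class/SOT argument --- is the natural one and is presumably close to what those references carry out.

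That said, there is a real gap in your measurability step as written. Your class $\cF_j$ quantifies over \emph{fixed} $c \in B(H)$, so the monotone class theorem yields only that each $\sigma \mapsto \varphi_j(a_j,\sigma)$ is individually WOT-measurable. When you then ``hold the other $\varphi_i(a_i,\sigma)$ fixed as the $c$,'' that $c$ depends on $\sigma$, and membership in $\cF_j$ says nothing about $\sigma$-dependent $c$; in effect you are assuming that products of WOT-measurable operator-valued maps are WOT-measurable, which is not automatic (and is indeed where separability of $H$, or a substitute, typically enters). The fix --- which your final paragraph almost describes but your formal $\cF_j$ does not implement --- is to build the $\sigma$-dependence into the monotone class and induct on the number of factors. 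At the inductive step, take
\[
\mathcal{G} \coloneqq \big\{\psi \in \ell^{\infty}(\sigma(a_1)\times\Sigma) : \sigma \mapsto \langle \psi(a_1,\sigma)\,b_1\,\varphi_2(a_2,\sigma)\cdots\varphi_{k+1}(a_{k+1},\sigma)\,h_1,h_2\rangle \text{ is $\sH$-measurable for all } h_1,h_2 \in H\big\};
\]
tensor-sums lie in $\mathcal{G}$ by the inductive hypothesis (absorb the constant factor $\alpha(a_1)b_1$ into $h_2$), and closure under bounded pointwise limits holds exactly by your SOT argument, since right-multiplication by the per-$\sigma$ bounded tail $b_1\varphi_2(a_2,\sigma)\cdots h_1$ preserves SOT convergence. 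A second, minor point: $\varphi_j$ is only assumed bounded in $\lambda$ for each fixed $\sigma$, not jointly, so it need not lie in $\ell^{\infty}(\sigma(a_j)\times\Sigma)$ at all, and the monotone class theorem does not directly place it in $\mathcal{G}$. You need a final truncation $\varphi_j^{(N)} \coloneqq \varphi_j\,1_{\{|\varphi_j|\le N\}}$; for each fixed $\sigma$ this eventually equals $\varphi_j(\cdot,\sigma)$, so $\varphi_j^{(N)}(a_j,\sigma) \to \varphi_j(a_j,\sigma)$ pointwise in $\sigma$, and measurability passes to the limit.
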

\begin{proof}
Please see Corollary 4.2.4 in \cite{nikitopoulosMOI} or Theorem 4.2.4(i) in \cite{nikitopoulosNCk}.
\end{proof}

Crucially, the pointwise Pettis integral in Equation \eqref{eq.MOIputdef} is also independent of the chosen $\ell^{\infty}$-IPD.

\begin{thm}[Well-Definition of MOIs]\label{thm.basicMOI}
Fix
\[
\varphi \in \ell^{\infty}\big(\sigma(a_1),\cB_{\sigma(a_1)}\big) \iotimes \cdots \iotimes \ell^{\infty}\big(\sigma(a_{k+1}),\cB_{\sigma(a_{k+1})}\big),
\]
$b = (b_1,\ldots,b_k) \in \cM^k$, and a $\ell^{\infty}$-IPD $(\Sigma,\rho,\varphi_1,\ldots,\varphi_{k+1})$ of $\varphi$.
\begin{enumerate}[label=(\roman*),font=\normalfont,leftmargin=2\parindent]
    \item The pointwise Pettis integral
    \[
    \big(I^{\boldsymbol{a}}\varphi\big)[b] \coloneqq \int_{\Sigma} \varphi_1(a_1,\sigma)\,b_1\cdots \varphi_k(a_k,\sigma)\,b_k\,\varphi_{k+1}(a_{k+1},\sigma)\,\rho(d\sigma) \in \cM
    \]
    is independent of the chosen $\ell^{\infty}$-IPD.\label{item.welldef}
    \item The assignment $\varphi \mapsto (I^{\boldsymbol{a}}\varphi)[b]$ is linear.\label{item.lin}
    \item $\|(I^{\boldsymbol{a}}\varphi)[b]\| \leq \|\varphi\|_{\ell^{\infty}(\sigma(a_1),\cB_{\sigma(a_1)}\hspace{-0.1mm}) \iotimes \cdots \iotimes \ell^{\infty}(\sigma(a_{k+1}),\cB_{\sigma(a_{k+1})}\hspace{-0.1mm})}\|b_1\|\cdots\|b_k\|$.\label{item.bd}
\end{enumerate}
\end{thm}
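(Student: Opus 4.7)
The plan is to handle (i) first, after which (ii) and (iii) will follow quickly. The idea is that although the pointwise Pettis integral defining $(I^{\boldsymbol{a}}\varphi)[b]$ depends a priori on the chosen IPD, its matrix coefficients admit a manifestly IPD-free description. Concretely, for fixed $h_1, h_2 \in H$ and $b = (b_1,\ldots,b_k) \in \cM^k$, I would construct a complex Borel measure $\mu = \mu_{\boldsymbol{a},b,h_1,h_2}$ on $\sigma(a_1) \times \cdots \times \sigma(a_{k+1})$ characterized on product boxes by
\[
\mu(E_1 \times \cdots \times E_{k+1}) = \la P^{a_1}(E_1) \, b_1 \, P^{a_2}(E_2) \cdots b_k \, P^{a_{k+1}}(E_{k+1}) h_1, h_2 \ra
\]
and verify that its total variation obeys $\|\mu\| \leq \|b_1\| \cdots \|b_k\| \|h_1\| \|h_2\|$.

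The main claim is then that for every $\ell^{\infty}$-IPD $(\Sigma,\rho,\varphi_1,\ldots,\varphi_{k+1})$ of $\varphi$,
\[
\la (I^{\boldsymbol{a}}\varphi)[b] h_1, h_2 \ra = \int_{\sigma(a_1) \times \cdots \times \sigma(a_{k+1})} \varphi \, d\mu.
\]
To prove this I would unfold the left-hand side, using the defining property of the pointwise Pettis integral, as $\int_{\Sigma} \la \varphi_1(a_1,\sigma) \, b_1 \cdots \varphi_{k+1}(a_{k+1},\sigma) h_1, h_2 \ra \, \rho(d\sigma)$, use the Spectral Theorem to rewrite the inner bracket as $\int \varphi_1(\lambda_1,\sigma) \cdots \varphi_{k+1}(\lambda_{k+1},\sigma) \, \mu(d\lambda_1,\ldots,d\lambda_{k+1})$, and then interchange the order of integration via scalar Fubini. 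Absolute integrability on the product space $\Sigma \times \sigma(a_1) \times \cdots \times \sigma(a_{k+1})$ is guaranteed by \eqref{eq.intfincond}, the total-variation bound on $\mu$, and the estimate $\|\varphi_j(\cdot,\sigma)\|_{\ell^{\infty}(\sigma(a_j))} \leq \|\varphi_j(\cdot,\sigma)\|_{\ell^{\infty}(\Om_j)}$. Since the right-hand side depends only on $(\varphi,\boldsymbol{a},b,h_1,h_2)$ and $h_1,h_2$ are arbitrary, $(I^{\boldsymbol{a}}\varphi)[b]$ is IPD-independent, proving (i).

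Items (ii) and (iii) are then short. Linearity (ii) is immediate from the identity above, since its right-hand side is manifestly linear in $\varphi$. For (iii), for any single IPD the operator-valued pointwise Pettis integral is bounded in norm by $\int_{\Sigma} \|\varphi_1(a_1,\sigma) \, b_1 \cdots \varphi_{k+1}(a_{k+1},\sigma)\| \, \rho(d\sigma)$; applying $\|\varphi_j(a_j,\sigma)\| \leq \|\varphi_j(\cdot,\sigma)\|_{\ell^{\infty}(\Om_j)}$ (via $C^*$-functional calculus) together with submultiplicativity yields the bound $\|b_1\| \cdots \|b_k\| \int_{\Sigma} \prod_{j=1}^{k+1} \|\varphi_j(\cdot,\sigma)\|_{\ell^{\infty}(\Om_j)} \, \rho(d\sigma)$, and taking the infimum over IPDs produces the stated estimate.

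The main obstacle is the construction of $\mu_{\boldsymbol{a},b,h_1,h_2}$ and the rigorous justification of the Fubini interchange. Since the $b_j$ do not commute with the spectral projections $P^{a_j}$, $\mu$ is not a product of scalar spectral measures; its countable additivity on the product $\sigma$-algebra must be established slot-by-slot (using WOT $\sigma$-additivity of each $P^{a_j}$) and then extended via a monotone-class / $\pi$-$\lambda$ argument. The Fubini step is also subtle because the operator-valued integrand is not obviously jointly measurable in a Bochner sense, but testing against vectors reduces everything to a scalar Fubini legitimized by the absolute integrability bound above. These measure-theoretic verifications are essentially present, in the required generality, in the analysis supporting Proposition \ref{prop.integ}, so I would cite \cite{nikitopoulosMOI,nikitopoulosNCk} for the technical details and focus the proof on the IPD-independent formula.
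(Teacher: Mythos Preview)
Your treatment of (iii) is correct and matches the paper's own proof; (ii) would follow as you say once (i) is established.

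The gap is in (i). The set function $\mu(E_1\times\cdots\times E_{k+1})=\la P^{a_1}(E_1)\,b_1\cdots b_k\,P^{a_{k+1}}(E_{k+1})h_1,h_2\ra$ does \emph{not} in general extend to a complex Borel measure with the total-variation bound you claim, and for $k\geq 2$ it typically does not extend to a finite complex measure at all. If it did, your identity $\la (I^{\boldsymbol{a}}\varphi)[b]h_1,h_2\ra=\int\varphi\,d\mu$ would give $\|(I^{\boldsymbol{a}}\varphi)[b]\|\leq\|\varphi\|_{\ell^{\infty}}\|b_1\|\cdots\|b_k\|$, i.e., boundedness of the MOI in the \emph{sup} norm of $\varphi$ rather than the IPTP norm; for $k\geq 2$ this is false, and its failure is precisely why the IPTP framework is needed for multiple (as opposed to double) operator integrals. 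A finite-dimensional witness: take $H=\C^N$, each $a_j$ diagonal with $N$ distinct eigenvalues, each $b_j$ the unitary $N\times N$ discrete Fourier matrix (all entries of modulus $N^{-1/2}$), and $h_1=h_2=N^{-1/2}(1,\ldots,1)$. Summing $|\mu|$ over singleton boxes gives $N^{k+1}\cdot N^{-(k+2)/2}=N^{k/2}$ against your claimed bound of $1$; embedding these examples in an infinite-dimensional $H$ yields a set function of infinite variation. The paper does not prove (i) in the text either --- it cites \cite{peller2,azamovetal,nikitopoulosMOI,nikitopoulosNCk} --- but those arguments do \emph{not} construct your $\mu$; they establish IPD-independence by other means. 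So deferring the ``technical details'' to those references does not salvage the outline: the central object of your argument does not exist.
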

\begin{proof}
We prove or give references for each item in turn.

\ref{item.welldef} For a proof in the case where $H$ is separable, please see Theorem 2.1.1 in \cite{peller2} (or Lemma 4.3 in \cite{azamovetal} for a more restrictive class of $\ell^{\infty}$-IPDs).
For $H$ not necessarily separable (in a very general setting), please see Theorem 4.2.12 in \cite{nikitopoulosMOI}.
For a proof that the separable case implies the general case in the present setting, please see the sketch of Theorem 4.2.4(ii) in \cite{nikitopoulosNCk}.

\ref{item.lin} Please see Proposition 4.3.1(i) in \cite{nikitopoulosMOI} or Theorem 4.2.4(iii) in \cite{nikitopoulosNCk}.

\ref{item.bd} For $\sigma \in \Sigma$, write $F(\sigma) \coloneqq \varphi_1(a_1,\sigma)\,b_1\cdots \varphi_k(a_k,\sigma)\,b_k\,\varphi_{k+1}(a_{k+1},\sigma)$.
Then
\begin{align*}
    \|F(\sigma)\| & \leq \|\varphi_1(a_1,\sigma)\|\,\|b_1\|\cdots\|\varphi_k(a_k,\sigma)\|\,\|b_k\|\,\|\varphi_{k+1}(a_{k+1},\sigma)\| \\
    & \leq \|\varphi_1(\cdot,\sigma)\|_{\ell^{\infty}(\sigma(a_1))}\cdots\|\varphi_{k+1}(\cdot,\sigma)\|_{\ell^{\infty}(\sigma(a_{k+1}))}\|b_1\|\cdots\|b_k\|.
\end{align*}
Therefore,
\begin{align*}
    \big\|\big(I^{\boldsymbol{a}}\varphi\big)[b]\big\| & = \Bigg\|\int_{\Sigma} F \,d\rho\Bigg\| = \sup\Bigg\{\Bigg|\Bigg\la \Bigg(\int_{\Sigma} F\,d\rho\Bigg)h_1,h_2 \Bigg\ra\Bigg| : h_1,h_2 \in H, \, \|h_1\|,\|h_2\| \leq 1\Bigg\} \\
    & = \sup\Bigg\{\Bigg|\int_{\Sigma} \la F(\sigma)h_1,h_2 \ra \, \rho(d\sigma)\Bigg| : h_1,h_2 \in H, \, \|h_1\|,\|h_2\| \leq 1 \Bigg\} \\
    & \leq \|b_1\|\cdots\|b_k\|\int_{\Sigma} \prod_{j=1}^{k+1}\|\varphi_j(\cdot,\sigma)\|_{\ell^{\infty}(\sigma(a_j))}\,\rho(d\sigma).
\end{align*}
Taking the infimum over $\ell^{\infty}$-IPDs of $\varphi$ gives the desired bound.
\end{proof}

This development allows us to make the definition we wanted.

\begin{defi}[Multiple Operator Integral]\label{def.MOI}
If $\varphi \in \ell^{\infty}(\sigma(a_1),\cB_{\sigma(a_1)}) \iotimes \cdots \iotimes \ell^{\infty}(\sigma(a_{k+1}),\cB_{\sigma(a_{k+1})})$ and $b = (b_1,\ldots,b_k) \in \cM^k$, then we define
\[
\int_{\sigma(a_{k+1})}\cdots\int_{\sigma(a_1)}\varphi(\blambda) \, P^{a_1}(d\lambda_1) \, b_1 \cdots P^{a_k}(d\lambda_k) \, b_k \, P^{a_{k+1}}(d\lambda_{k+1}) \numberthis\label{eq.MOI}
\]
to be the element $(I^{\boldsymbol{a}}\varphi)[b] \in \cM$ from Theorem \ref{thm.basicMOI}\ref{item.welldef}.
We call the $k$-linear map $I^{\boldsymbol{a}}\varphi \colon \cM^k \to \cM$ the \textbf{multiple operator integral} (MOI) of $\varphi$ with respect to $(P^{a_1},\ldots,P^{a_{k+1}})$.
\end{defi}

\begin{ex}[Polynomials]\label{ex.polyMOI}
Fix $P(\blambda) = \sum_{|\delta| \leq d} c_{\delta}\,\blambda^{\delta} \in \C[\lambda_1,\ldots,\lambda_{k+1}]$.
By Example \ref{ex.polysuperCk} and the definition of MOIs, if $b = (b_1,\ldots,b_k) \in \cM^k$, then
\[
\big(I^{\boldsymbol{a}}P\big)[b] = \sum_{|\delta| \leq d} c_{\delta}\,a_1^{\delta_1}b_1\cdots a_k^{\delta_k}b_k\,a_{k+1}^{\delta_{k+1}}.
\]
In particular, by Example \ref{ex.divdiffpoly} and Equation \eqref{eq.polyNCder}, if $p \in \C[\lambda]$, then
\begin{align}
    \big(I^{a_1,a_2}p^{[1]}\big)[b] & = \partial p(a_1,a_2) \sh b  \; \text{ and} \label{eq.polyMOI1}\\
    \big(I^{u_1,u_2,u_3}p^{[2]}\big)[v_1,v_2] & = \frac{1}{2}\partial^2 p(u_1,u_2,u_3)\sh_2^{\mathsmaller{\otimes}}[v_1,v_2], \label{eq.polyMOI2}
\end{align}
for all $a_1,a_2 \in \cA_{\sa}$, $b \in \cA$, $u_1,u_2,u_3 \in (\cA \wotimes \cA^{\op})_{\sa}$, and $v_1,v_2 \in \cA \wotimes \cA^{\op}$.
Recall that $(\cA,(\cA_t)_{t \geq 0},\tau)$ is our fixed filtered $W^*$-probability space, and the operations $\#$ and $\#_2^{\mathsmaller{\otimes}}$ are defined in Notation \ref{nota.alg}.
\end{ex}

Because of the correction term $\frac{1}{2}\Delta_{u,v}f(m)$ in the functional free It\^{o} formula(s) to come, we shall also need to understand MOIs of the form $\int_{\sigma(a_2)} \int_{\Lambda} \int_{\sigma(a_1)} \varphi(\lambda_1,\lambda_2,\lambda_3)\,P^{a_1}(d\lambda_1)\,b_1\,\mu(d\lambda_2)\,b_2 \, P^{a_2}(d\lambda_3)$, where $\Lambda$ is a Polish space and $\mu$ is a Borel complex measure on $\Lambda$.

\begin{lem}\label{lem.onemeas}
Let $\Lambda$ be a Polish space and $\mu$ be a Borel complex measure on $\Lambda$.
If
\[
\varphi \in \ell^{\infty}(\sigma(a_1),\cB_{\sigma(a_1)}) \iotimes \ell^{\infty}(\Lambda,\cB_{\Lambda}) \iotimes \ell^{\infty}(\sigma(a_2),\cB_{\sigma(a_2)}) \; \text{ and } \; \varphi^{\mu}(\lambda_1,\lambda_3) \coloneqq \int_{\Lambda} \varphi(\lambda_1,\lambda_2,\lambda_3)\,\mu(d\lambda_2)
\]
for $(\lambda_1,\lambda_3) \in \sigma(a_1) \times \sigma(a_2)$, then $\varphi^{\mu} \in \ell^{\infty}(\sigma(a_1),\cB_{\sigma(a_1)}) \iotimes \ell^{\infty}(\sigma(a_2),\cB_{\sigma(a_2)})$.
Henceforth, if $b_1,b_2 \in \cM$, then we shall write
\[
\int_{\sigma(a_2)}\int_{\Lambda} \int_{\sigma(a_1)} \varphi(\lambda_1,\lambda_2,\lambda_3) \, P^{a_1}(d\lambda_1) \,b_1 \,\mu(d\lambda_2)\,b_2\,P^{a_2}(d\lambda_3)
\]
for the element $(I^{a_1,a_2}\varphi^{\mu})[b_1b_2] \in \cM$.
\end{lem}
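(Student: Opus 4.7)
The plan is to construct an $\ell^{\infty}$-IPD of $\varphi^{\mu}$ from an $\ell^{\infty}$-IPD of $\varphi$ by simply ``integrating the middle slot against $\mu$'' and then absorbing the resulting scalar function into one of the two remaining factors. Concretely, fix an $\ell^{\infty}$-IPD $(\Sigma,\rho,\varphi_1,\varphi_2,\varphi_3)$ of $\varphi$, so that
\[
\varphi(\lambda_1,\lambda_2,\lambda_3) = \int_{\Sigma}\varphi_1(\lambda_1,\sigma)\,\varphi_2(\lambda_2,\sigma)\,\varphi_3(\lambda_3,\sigma)\,\rho(d\sigma)
\]
with $I := \int_{\Sigma}\|\varphi_1(\cdot,\sigma)\|_{\ell^{\infty}}\|\varphi_2(\cdot,\sigma)\|_{\ell^{\infty}}\|\varphi_3(\cdot,\sigma)\|_{\ell^{\infty}}\,\rho(d\sigma) < \infty$. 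Define $\psi_2(\sigma) := \int_{\Lambda}\varphi_2(\lambda_2,\sigma)\,\mu(d\lambda_2)$, and then set $\widetilde{\varphi}_1 := \varphi_1$ and $\widetilde{\varphi}_3(\lambda_3,\sigma) := \psi_2(\sigma)\,\varphi_3(\lambda_3,\sigma)$. The target formula is
\[
\varphi^{\mu}(\lambda_1,\lambda_3) = \int_{\Sigma}\widetilde{\varphi}_1(\lambda_1,\sigma)\,\widetilde{\varphi}_3(\lambda_3,\sigma)\,\rho(d\sigma),
\]
which is just Fubini applied on the product space $\Lambda \times \Sigma$ (justified because the iterated absolute integral is dominated by $\mu_{(0)}\,I < \infty$).

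The next step is to verify that $(\Sigma,\rho,\widetilde{\varphi}_1,\widetilde{\varphi}_3)$ really is a $\ell^{\infty}$-IPD of $\varphi^{\mu}$. The estimate $|\psi_2(\sigma)| \leq \mu_{(0)}\,\|\varphi_2(\cdot,\sigma)\|_{\ell^{\infty}(\Lambda)}$ for every $\sigma \in \Sigma$ gives $\|\widetilde{\varphi}_3(\cdot,\sigma)\|_{\ell^{\infty}(\sigma(a_2))} \leq \mu_{(0)}\,\|\varphi_2(\cdot,\sigma)\|_{\ell^{\infty}(\Lambda)}\|\varphi_3(\cdot,\sigma)\|_{\ell^{\infty}(\sigma(a_2))}$, so
\[
\int_{\Sigma}\|\widetilde{\varphi}_1(\cdot,\sigma)\|_{\ell^{\infty}(\sigma(a_1))}\|\widetilde{\varphi}_3(\cdot,\sigma)\|_{\ell^{\infty}(\sigma(a_2))}\,\rho(d\sigma) \leq \mu_{(0)}\,I < \infty.
\]
Taking the infimum over all $\ell^{\infty}$-IPDs of $\varphi$ then yields the quantitative bound $\|\varphi^{\mu}\|_{\ell^{\infty}(\sigma(a_1),\cB_{\sigma(a_1)}) \iotimes \ell^{\infty}(\sigma(a_2),\cB_{\sigma(a_2)})} \leq \mu_{(0)}\,\|\varphi\|_{\ell^{\infty}(\sigma(a_1),\cB_{\sigma(a_1)}) \iotimes \ell^{\infty}(\Lambda,\cB_{\Lambda}) \iotimes \ell^{\infty}(\sigma(a_2),\cB_{\sigma(a_2)})}$, which is a welcome byproduct beyond the bare claim of membership.

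The main technical obstacle I anticipate is the measurability of $\psi_2$ and hence of $\widetilde{\varphi}_3$. Since $\varphi_2$ is only assumed to be measurable with respect to $\cB_{\Lambda} \otimes \sH$ (where $\sH$ is the $\sigma$-algebra on $\Sigma$), one must invoke the standard measurable-parameter Fubini theorem to conclude $\psi_2$ is $\sH$-measurable up to a $\rho$-null modification; this is precisely the type of ``almost measurability'' result encoded in Lemma 2.2.1 of \cite{nikitopoulosNCk}. After a harmless modification on a $\rho$-null set (which does not change $\varphi^{\mu}$), $\widetilde{\varphi}_3$ becomes genuinely $\cB_{\sigma(a_2)} \otimes \sH$-measurable, as required. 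All remaining verifications are routine. Finally, the displayed definition of the triple integral as $(I^{a_1,a_2}\varphi^{\mu})[b_1b_2]$ is then automatically sensible via Definition \ref{def.MOI} applied to $\varphi^{\mu}$.
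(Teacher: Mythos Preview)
Your proof is correct and essentially identical to the paper's: both fix an $\ell^{\infty}$-IPD $(\Sigma,\rho,\varphi_1,\varphi_2,\varphi_3)$ of $\varphi$, integrate out the middle factor to get the scalar $\psi_2(\sigma)=\int_{\Lambda}\varphi_2(\lambda_2,\sigma)\,\mu(d\lambda_2)$, and absorb it into one of the remaining factors (the paper into $\varphi_1$, you into $\varphi_3$). Your version is more detailed---supplying the Fubini justification, the norm bound, and the measurability discussion---whereas the paper simply asserts that the resulting pair is an $\ell^{\infty}$-IPD; note, however, that your measurability worry is slightly over-cautious, since $\varphi_2$ is genuinely product measurable and $|\mu|$ is finite, so standard Fubini gives honest $\sH$-measurability of $\psi_2$ without any null-set modification.
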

\begin{proof}
Let $(\Sigma,\rho,\varphi_1,\varphi_2,\varphi_3)$ be a  $\ell^{\infty}$-integral projective decomposition of $\varphi$.
If we define
\[
\varphi_1^{\mu}(\lambda_1,\sigma) \coloneqq \varphi_1(\lambda_1,\sigma) \int_{\Lambda} \varphi_2(\lambda_2,\sigma)\,\mu(d\lambda_2) \; \text{ and } \; \varphi_3^{\mu}(\lambda_3,\sigma) \coloneqq \varphi_3(\lambda_3,\sigma),
\]
for all $\lambda_1 \in \sigma(a_1),\lambda_3 \in \sigma(a_2), \sigma \in \Sigma$, then it is easy to see that $(\Sigma,\rho,\varphi_1^{\mu},\varphi_3^{\mu})$ is a $\ell^{\infty}$-IPD of $\varphi^{\mu}$.
\end{proof}

It follows from the proof above and Definition \ref{def.MOI} that
\begin{align*}
    & \int_{\sigma(a_2)}\int_{\Lambda} \int_{\sigma(a_1)} \varphi(\lambda_1,\lambda_2,\lambda_3) \, P^{a_1}(d\lambda_1) \,b_1 \,\mu(d\lambda_2)\,b_2\,P^{a_2}(d\lambda_3) \\
    & \hspace{35mm} = \int_{\Sigma} \mu(\varphi_2(\cdot,\sigma))\,\varphi_1(a_1,\sigma) \, b_1b_2\,\varphi_3(a_2,\sigma)\,\rho(d\sigma) \numberthis\label{eq.MOIwithcompmeas}
\end{align*}
whenever\hspace{-0.25mm} $(\Sigma,\hspace{-0.15mm}\rho,\hspace{-0.15mm}\varphi_1,\hspace{-0.15mm}\varphi_2,\hspace{-0.15mm}\varphi_3)$\hspace{-0.25mm} is \hspace{-0.25mm}a\hspace{-0.1mm} $\ell^{\infty}$-integral \hspace{-0.25mm}projective\hspace{-0.25mm} decomposition \hspace{-0.25mm}of\hspace{-0.25mm} $\varphi$, \hspace{-0.25mm}where\hspace{-0.25mm} $\mu(\varphi_2(\cdot,\sigma)) \hspace{-0.25mm}\coloneqq\hspace{-0.25mm} \int_{\Lambda} \hspace{-0.25mm}\varphi_2(\lambda,\sigma)\mu(d\lambda)$.

\subsection{Functional Free It\^{o} Formula for \texorpdfstring{$NC^2(\R)$}{}}\label{sec.FFIF}

In this section, we finally prove our It\^{o} formula for noncommutative $C^2$ functions of self-adjoint free It\^{o} processes (Theorem \ref{thm.FFIF}).
Recall that $(\cA,(\cA_t)_{t \geq 0},\tau)$ is a fixed filtered $W^*$-probability space.

We begin by identifying $\partial f(m) \sh k$ as a MOI.
\pagebreak

\begin{lem}\label{lem.partialfMOI}
If $f \in W_1(\R)_{\loc}$ and $a_1,a_2 \in \cA_{\sa}$, then
\[
\partial f(a_1,a_2) \in \cA \potimes \cA^{\op} \; \text{ and } \; \partial f(a_1,a_2) \sh b = \big(I^{a_1,a_2}f^{[1]}\big)[b],
\]
for all $b \in \cA$.
Moreover, the map
\[
\cA_{\sa}^2 \ni (a_1,a_2) \mapsto \partial f(a_1,a_2) \in \cA \potimes \cA^{\op}
\]
is continuous.
\end{lem}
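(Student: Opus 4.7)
The plan is to exploit the explicit Bochner integral formula for $\partial f(a_1,a_2)$ recorded in Example \ref{ex.NCderWk} and to match it with a natural $\ell^\infty$-integral projective decomposition of $f^{[1]}$ coming from Equation \eqref{eq.divdiffWk}. First, I would reduce to the case $f \in W_1(\R)$: since $a_1,a_2 \in \cA_{\sa}$ are bounded, $\sigma(a_1) \cup \sigma(a_2) \subseteq [-r,r]$ for some $r > 0$, so there exists $g \in W_1(\R)$ with $f = g$ on $[-r,r]$. Then $f^{[1]} = g^{[1]}$ on $[-r,r]^2$, so by multivariate functional calculus (and Lemma \ref{lem.onemeas}-style reasoning applied to the support of the joint spectral measure) both sides of the claimed identity agree for $f$ and $g$; similarly $\partial f(a_1,a_2) = \partial g(a_1,a_2)$ as elements of $\cA \otimes_{\min} \cA^{\op}$. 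This reduces everything to $f \in W_1(\R)$.

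So assume $f(\lambda) = \int_\R e^{i\lambda\xi}\,\mu(d\xi)$ with $\mu_{(1)} < \infty$. By Example \ref{ex.NCderWk}, $\partial f(a_1,a_2) = i \int_0^1 \int_\R \xi\, e^{ita_1\xi} \otimes e^{i(1-t)a_2\xi}\, \mu(d\xi)\,dt$ as an iterated Bochner integral in $\cA \potimes \cA^{\op}$, which in particular establishes the first assertion. Applying the bounded map $\sh b \colon \cA \potimes \cA^{\op} \to \cA$ yields
\[
\partial f(a_1,a_2) \sh b = i \int_0^1 \int_\R \xi\, e^{ita_1\xi}\, b\, e^{i(1-t)a_2\xi}\, \mu(d\xi)\,dt.
\]
On the MOI side, let $h = d\mu/d|\mu|$ (so $|h| \equiv 1$), set $\Sigma = [0,1] \times \R$ with $\rho = ds \otimes |\mu|(d\xi)$, and define $\varphi_1(\lambda,(s,\xi)) = e^{is\lambda\xi}$ and $\varphi_2(\lambda,(s,\xi)) = i\xi\, h(\xi)\, e^{i(1-s)\lambda\xi}$. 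By Equation \eqref{eq.divdiffWk}, $(\Sigma,\rho,\varphi_1,\varphi_2)$ is a $\ell^\infty$-IPD of $f^{[1]}|_{\sigma(a_1) \times \sigma(a_2)}$ with total mass $\leq \mu_{(1)}$. Substituting this IPD into the pointwise Pettis integral in Theorem \ref{thm.basicMOI}\ref{item.welldef} produces exactly the same iterated integral, proving $(I^{a_1,a_2} f^{[1]})[b] = \partial f(a_1,a_2) \sh b$.

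For continuity, I would again use localization plus the Bochner formula: given $(a_1^0,a_2^0) \in \cA_{\sa}^2$, fix $r > \|a_1^0\| + \|a_2^0\| + 1$ and choose $g \in W_1(\R)$ with $f = g$ on $[-r,r]$; then $\partial f = \partial g$ on the open neighborhood $\{(a_1,a_2) : \|a_i\| < r, \, i=1,2\}$ by the reduction argument. On this neighborhood, the integrand $i\xi\, e^{ita_1\xi} \otimes e^{i(1-t)a_2\xi}$ is jointly continuous in $(a_1,a_2)$ in the $\cA \potimes \cA^{\op}$ norm (by continuity of the exponential functional calculus and submultiplicativity of $\|\cdot\|_{\cA \potimes \cA^{\op}}$) and is dominated in that norm by $|\xi|$, which is $|\mu|$-integrable over $\R$ and hence $\rho$-integrable. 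The vector-valued Dominated Convergence Theorem in $\cA \potimes \cA^{\op}$ then yields continuity at $(a_1^0,a_2^0)$.

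The main obstacle is a conceptual rather than technical one: the element $\partial f(a_1,a_2)$ defined in Definition \ref{def.NCdertensor} lives a priori only in the larger space $\cA \otimes_{\min} \cA^{\op}$, so the statement $\partial f(a_1,a_2) \in \cA \potimes \cA^{\op}$ must be checked before $\partial f(a_1,a_2) \sh b$ can even be parsed via Notation \ref{nota.alg}\ref{item.hash}. The Bochner integral representation from Example \ref{ex.NCderWk} is exactly what upgrades membership from $\otimes_{\min}$ to $\potimes$, and once that upgrade is in hand, everything else (the MOI identity and continuity) reduces to manipulating the same explicit iterated integral in different ways.
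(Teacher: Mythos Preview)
Your proposal is correct and follows essentially the same route as the paper: localize to reduce from $W_1(\R)_{\loc}$ to $W_1(\R)$, invoke the Bochner integral formula of Example~\ref{ex.NCderWk} to land in $\cA \potimes \cA^{\op}$, push $\sh b$ through the Bochner integral, and recognize the result as the MOI by reading Equation~\eqref{eq.divdiffWk} as an $\ell^\infty$-IPD (with the Radon--Nikodym factor $d\mu/d|\mu|$ absorbed into one of the $\varphi_j$); continuity then comes from the vector-valued Dominated Convergence Theorem applied on a neighborhood where a single $g \in W_1(\R)$ works. Your closing remark that the real content is the upgrade from $\otimes_{\min}$ to $\potimes$ is exactly the point, and the paper handles it the same way.
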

\begin{proof}
Fix $a_1,a_2 \in \cA_{\sa}$, and let $r > 0$ be such that $\sigma(a_1) \cup \sigma(a_2) \subseteq [-r,r]$.
By definition of $W_1(\R)_{\loc}$, there is some $g = \int_{\R} e^{i \boldsymbol{\cdot} \xi}\,\mu(d\xi) \in W_1(\R)$ such that $g|_{[-r,r]} = f|_{[-r,r]}$.
In particular, $g^{[1]}|_{[-r,r]^2} = f^{[1]}|_{[-r,r]^2}$. Thus
\[
\partial f(a_1,a_2) = \partial g(a_1,a_2) \in \cA \potimes \cA^{\op} \numberthis\label{eq.partialagree}
\]
by Example \ref{ex.NCderWk}.
Moreover, since $\cA \potimes \cA^{\op} \ni u \mapsto u \sh b \in \cA$ is bounded-linear, the same example gives
\begin{align*}
    \partial g(a_1,a_2) \sh b & = \int_0^1 \int_{\R}(i\xi) \,e^{ita_1} b\, e^{i(1-t)a_2} \, \mu(d\xi)\,dt \\
    & = \int_{\R \times [0,1]}(i\xi)\,e^{ita_1} b\, e^{i(1-t)a_2}\frac{d\mu}{d|\mu|}(\xi)  \, |\mu|(d\xi)\,dt \\
    & = \big(I^{a_1,a_2}g^{[1]}\big)[b] = \big(I^{a_1,a_2}f^{[1]}\big)[b],
\end{align*}
for all $b \in \cA$, where the third identity holds by Equation \eqref{eq.divdiffWk} and the definition of MOIs.

For the continuity claim, note that the map
\[
\cA_{\sa}^2 \ni (a_1,a_2) \mapsto \int_0^1 \int_{\R}(i\xi) \,e^{ita_1} \otimes e^{i(1-t)a_2} \, \mu(d\xi)\,dt \in \cA \potimes \cA^{\op}
\]
is continuous by the vector-valued Dominated Convergence Theorem.
In particular, $(a_1,a_2) \mapsto \partial g(a_1,a_2)$ is continuous.
Since Equation \eqref{eq.partialagree} holds whenever $\sigma(a_1) \cup \sigma(a_2) \subseteq [-r,r]$ (i.e., whenever $\|a_1\| \leq r$ and $\|a_2\| \leq r$), we conclude that $(a_1,a_2) \mapsto \partial f(a_1,a_2)$ is continuous on $\{(a_1,a_2) \in \cA_{\sa}^2: \|a_1\| \leq r, \, \|a_2\| \leq r\}$.
Since $r > 0$ was arbitrary, we are done.
\end{proof}

Since $C^2(\R) \subseteq W_1(\R)_{\loc}$ (Theorem \ref{thm.NCk}\ref{item.WkinNCk}), the conclusion of Lemma \ref{lem.partialfMOI} holds for all $f \in C^2(\R)$.
Thus Equation \eqref{eq.polyMOI1} is a special case of Lemma \ref{lem.partialfMOI}.

Next, we make sense of $\Delta_{u,v}f(m)$ in terms of MOIs.
If $f \in \cC^{[2]}(\R)$, $m \in \cA_{\sa}$, and $u,v \in \cA \wotimes \cA^{\op}$, then we define
\[
\ell_{f,u,v}(a) \coloneqq (\tau \wotimes \tau^{\op})\big((a \otimes 1)\, \big(I^{m \otimes 1, 1 \otimes m, m \otimes 1}f^{[2]}\big)[uv^{\flip}+vu^{\flip},1 \otimes 1]\big),
\]
for all $a \in L^1(\cA,\tau)$.
By Theorem \ref{thm.basicMOI}\ref{item.bd},
\begin{align*}
    \|\ell_{f,u,v}\|_{L^1(\cA,\tau)^*} & \leq \big\|\big(I^{m \otimes 1, 1 \otimes m, m \otimes 1}f^{[2]}\big)[uv^{\flip}+vu^{\flip},1 \otimes 1]\big\|_{L^{\infty}(\tau \wotimes \tau^{\op})} \\
    & \leq \big\|f^{[2]}\big\|_{\ell^{\infty}(\sigma(m),\cB_{\sigma(m)})^{\iotimes 3}}\|uv^{\flip}+vu^{\flip}\|_{L^{\infty}(\tau \wotimes \tau^{\op})}\|1 \otimes 1\|_{L^{\infty}(\tau \wotimes \tau^{\op})} \\
    & \leq 2\big\|f^{[2]}\big\|_{\ell^{\infty}(\sigma(m),\cB_{\sigma(m)})^{\iotimes 3}}\|u\|_{L^{\infty}(\tau \wotimes \tau^{\op})}\|v\|_{L^{\infty}(\tau \wotimes \tau^{\op})} < \infty.
\end{align*}
In particular, the following definition makes sense.

\begin{defi}\label{def.Deltaf}
If $f \in \cC^{[2]}(\R)$, $m \in \cA_{\sa}$, and $u,v \in \cA \wotimes \cA^{\op}$, then we define $\Delta_{u,v}f(m)$ to be the unique element of $\cA$ such that
\[
\tau\big(a \, \Delta_{u,v}f(m)\big) = (\tau \wotimes \tau^{\op})\big((a \otimes 1)\, \big(I^{m \otimes 1, 1 \otimes m, m \otimes 1}f^{[2]}\big)[uv^{\flip}+vu^{\flip},1 \otimes 1]\big),
\]
for all $a \in \cA$ (and thus $a \in L^1(\cA,\tau)$).
Also, write $\Delta_u f(m) \coloneqq \Delta_{u,u}f(m)$.
\end{defi}

By Equation \eqref{eq.polyMOI2}, Definition \ref{def.Deltaf} agrees with Definition \ref{def.Deltap} when both definitions apply (i.e., when $f \in \C[\lambda]$ and $m \in \cA_{\sa}$).
Also, if $f \in \cC^{[2]}(\R)$, $m \in \cA_{\sa}$, and $u,v \in \cA \wotimes \cA^{\op}$, then
\[
\|\Delta_{u,v}f(m)\| = \|\ell_{f,u,v}\|_{L^1(\cA,\tau)^*} \leq 2\big\|f^{[2]}\big\|_{\ell^{\infty}(\sigma(m),\cB_{\sigma(m)})^{\iotimes 3}}\|u\|_{L^{\infty}(\tau \wotimes \tau^{\op})}\|v\|_{L^{\infty}(\tau \wotimes \tau^{\op})} \numberthis\label{eq.Deltafbd}
\]
by the paragraph before Definition \ref{def.Deltaf}.
\pagebreak

\begin{lem}\label{lem.Deltaf}
If $f \in NC^2(\R)$, $m \in C(\R_+;\cA_{\sa})$, and $u,v \in L_{\loc}^2(\R_+;\cA \wotimes \cA^{\op})$, then
\[
\Delta_{u,v}f(m) \in L_{\loc}^1(\R_+;\cA) \; \text{ and } \; \|\Delta_{u,v}f(m)\|_{L_t^1L^{\infty}(\tau)} \leq 2\big\|f^{[2]}\big\|_{r_t,3}\|u\|_{L_t^2L^{\infty}(\tau \wotimes \tau^{\op})}\|v\|_{L_t^2L^{\infty}(\tau \wotimes \tau^{\op})},
\]
for all $t \geq 0$, where $r_t \coloneqq \|m\|_{L_t^{\infty}L^{\infty}(\tau)} = \sup_{0 \leq s \leq t}\|m(s)\|$.
\end{lem}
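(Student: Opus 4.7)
The plan splits into (a) the quantitative $L_t^1 L^\infty$-estimate and (b) strong measurability of $s \mapsto \Delta_{u(s), v(s)} f(m(s))$ as a $\cA$-valued function on $\R_+$. Both rest on the pointwise operator-norm bound \eqref{eq.Deltafbd} together with polynomial approximation in $\cC^{[2]}(\R)$.

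For (a), fix $t \geq 0$ and observe that $\sigma(m(s)) \subseteq [-r_t, r_t]$ for every $s \in [0, t]$. The IPTP norm is monotone with respect to restriction of the product domain: any $\ell^\infty$-IPD $(\Sigma, \rho, \varphi_1, \varphi_2, \varphi_3)$ of $f^{[2]}|_{[-r_t, r_t]^3}$ restricts factor-by-factor to a $\ell^\infty$-IPD of $f^{[2]}|_{\sigma(m(s))^3}$ with $\|\varphi_j(\cdot, \sigma)|_{\sigma(m(s))}\|_{\ell^\infty} \leq \|\varphi_j(\cdot, \sigma)\|_{\ell^\infty([-r_t, r_t])}$, so taking infima gives
\[
\|f^{[2]}\|_{\ell^\infty(\sigma(m(s)), \cB_{\sigma(m(s))})^{\iotimes 3}} \leq \|f^{[2]}\|_{r_t, 3}.
\]
Substituting this into \eqref{eq.Deltafbd} applied at time $s$ and integrating on $[0, t]$ via Cauchy-Schwarz yields the stated estimate, modulo measurability.

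For (b), I would approximate $f$ by polynomials $(p_n)_{n \in \N} \subseteq \C[\lambda]$ with $p_n \to f$ in $\cC^{[2]}(\R)$, which exist by Definition \ref{def.NCk}; in particular $\|p_n^{[2]} - f^{[2]}\|_{r_t, 3} \to 0$. For a polynomial $p$, strong measurability of $s \mapsto \Delta_{u(s), v(s)} p(m(s))$ follows from the algebraic formula of Notation \ref{nota.Deltap}: when $u, v$ are $\cA \otimes \cA^{\op}$-valued simple functions, the expression is a finite sum of strongly measurable terms built from $m(s)$, $u(s)$, $v(s)$, and the uniform bilinear estimate \eqref{eq.Deltapbd} lets one pass to general $u, v \in L^2_{\loc}(\R_+; \cA \wotimes \cA^{\op})$ via pointwise-a.e.\ subsequential convergence. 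Applying \eqref{eq.Deltafbd} to $f - p_n$ then gives
\[
\|\Delta_{u(s), v(s)}(f - p_n)(m(s))\| \leq 2 \|f^{[2]} - p_n^{[2]}\|_{r_t, 3} \|u(s)\| \|v(s)\|
\]
for every $s \in [0, t]$, so that $\Delta_{u, v} p_n(m) \to \Delta_{u, v} f(m)$ in $L^1([0, t]; \cA)$ by the same Cauchy-Schwarz integration used in (a). The limit inherits strong measurability from the $\Delta_{u, v} p_n(m)$, and since $t \geq 0$ is arbitrary, $\Delta_{u, v} f(m) \in L^1_{\loc}(\R_+; \cA)$.

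The main obstacle is keeping the measurability bookkeeping clean: the extended Definition \ref{def.Deltaf} produces an element of $\cA$ at each $s$ via duality but does not by itself make $s$-measurability transparent. The approach above sidesteps this by first establishing strong measurability in the polynomial case (where the algebraic formula from Notation \ref{nota.Deltap} applies directly) and then transferring it to arbitrary $f \in NC^2(\R)$ through the $s$-pointwise bound on $f - p_n$, which is legitimate regardless of prior measurability of the $f$-term.
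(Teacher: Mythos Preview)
Your proposal is correct and follows essentially the same approach as the paper: establish the polynomial case first (where membership in $L_{\loc}^1(\R_+;\cA)$ is already known from Section \ref{sec.FFIFpoly}), apply the pointwise bound \eqref{eq.Deltafbd} together with Cauchy--Schwarz to get the $L_t^1L^\infty$-estimate, and then use polynomial approximation in $\cC^{[2]}(\R)$ to pass to general $f \in NC^2(\R)$. You spell out two points the paper leaves implicit --- the monotonicity of the IPTP norm under restriction to $\sigma(m(s))^3 \subseteq [-r_t,r_t]^3$ and the strong-measurability bookkeeping --- but the structure of the argument is the same.
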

\begin{proof}
When $f \in \C[\lambda]$, we know from Section \ref{sec.FFIFpoly} that $\Delta_{u,v}f(m) \in L_{\loc}^1(\R;\cA)$.
The claimed bound follows from applying Equation \eqref{eq.Deltafbd} pointwise and then using the Cauchy--Schwarz Inequality.
If $f \in NC^2(\R)$ is arbitrary, then there is a sequence $(q_N)_{N \in \N}$ of polynomials converging in $NC^2(\R)$ (i.e., in $\cC^{[2]}(\R)$) to $f$.
What we just proved implies that the sequence $(\Delta_{u,v}q_N(m))_{N \in \N}$ is Cauchy in $L_{\loc}^1(\R_+;\cA)$, and Equation \eqref{eq.Deltafbd} implies that $\Delta_{u,v}q_N(m) \to \Delta_{u,v}f(m)$ almost everywhere as $N \to \infty$.
It follows that $\Delta_{u,v}f(m) \in L_{\loc}^1(\R_+;\cA)$, and that the claimed bound holds for $\Delta_{u,v}f(m)$ as well.
\end{proof}

We are finally ready for the functional free It\^{o} formula for $NC^2$ functions.

\begin{thm}[Functional Free It\^{o} Formula]\label{thm.FFIF}
Fix $f \in NC^2(\R)$.
\begin{enumerate}[label=(\roman*),font=\normalfont,leftmargin=2\parindent]
   \item Suppose that $(x_1,\ldots,x_n) \colon \R_+ \to \cA_{\sa}^n$ is an $n$-dimensional semicircular Brownian motion, and that $m$ is a free It\^{o} process satisfying Equation \eqref{eq.frItoprocess}.
   If $m^*=m$, then\label{item.FFIFx}
    \[
    d\,f(m(t)) = \partial f(m(t))\sh dm(t) + \frac{1}{2}\sum_{i=1}^n\Delta_{u_i(t)}f(m(t))\,dt.\numberthis\label{eq.FFIFx}
    \]
    Recall from Corollary \ref{cor.safrItoprocess} that $m^*=m$ is equivalent to $m(0)^*=m(0)$, $k^*=k$ a.e., and $u_i^{\mathsmaller{\bigstar}} = u_i$ a.e. for all $i$.
    \item Suppose that $(z_1,\ldots,z_n) \colon \R_+ \to \cA^n$ is an $n$-dimensional circular Brownian motion, and that $m$ is a free It\^{o} process satisfying Equation \eqref{eq.zfrItoprocess}.
    If $m^*=m$, then\label{item.FFIFz}
    \[
    d\,f(m(t)) = \partial f(m(t))\sh dm(t) + \sum_{i=1}^n\Delta_{u_i(t),u_i^{\text{\scalebox{0.65}{$\bigstar$}}}(t)}f(m(t))\,dt.\numberthis\label{eq.FFIFz}
    \]
    Recall from Corollary \ref{cor.safrItoprocess} that $m^*=m$ is equivalent to $m(0)^*=m(0)$, $k^*=k$ a.e., and $u_i^{\mathsmaller{\bigstar}} = v_i$ a.e. for all $i$.
\end{enumerate}
\end{thm}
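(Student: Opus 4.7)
The plan is to reduce to the polynomial case (Theorem \ref{thm.FFIFpoly}) by density. As in the proof of Theorem \ref{thm.FFIFpoly}, part (ii) follows from part (i) applied to the $2n$ freely independent semicircular Brownian motions $\sqrt{2}\,\cRe z_i$ and $\sqrt{2}\,\cIm z_i$, so I focus on (i). Fix $t \geq 0$ and pass to the integral form of Equation \eqref{eq.FFIFx}. By definition of $NC^2(\R)$, I can choose a sequence $(q_N)_{N \in \N}$ in $\C[\lambda]$ converging to $f$ in the Fr\'echet topology of $\cC^{[2]}(\R)$; in particular, $\|q_N - f\|_{\cC^{[2]},r} \to 0$ for every $r > 0$. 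Set $r_t \coloneqq \sup_{0 \leq s \leq t}\|m(s)\| < \infty$, which is finite since $m$ is continuous, and apply Theorem \ref{thm.FFIFpoly}.\ref{item.FFIFpolyx} to each $q_N$.

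Next I would verify term-by-term convergence as $N \to \infty$, uniformly in $s \in [0,t]$. For the left-hand side, a basic functional calculus bound gives $\|q_N(m(s)) - f(m(s))\| \leq \|q_N - f\|_{r_t,1} \to 0$. For the Bochner drift piece of the stochastic integral, Lemma \ref{lem.partialfMOI} (applicable because $NC^2(\R) \subseteq C^2(\R) \subseteq W_1(\R)_{\loc}$) together with Theorem \ref{thm.basicMOI}.\ref{item.bd} gives the bound $\|(\partial q_N - \partial f)(m(s)) \sh k(s)\| \leq \|(q_N - f)^{[1]}\|_{r_t,2}\,\|k(s)\|$, and integrating in $s$ yields convergence. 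For each genuinely stochastic piece, Lemma \ref{lem.partialfMOI} also gives that $\partial f(m(\cdot))\,u_i$ lies in $\Lambda^2$ (via Corollary \ref{cor.RCLL}, using continuity of $(a_1,a_2) \mapsto \partial f(a_1,a_2)$ into $\cA \potimes \cA^{\op} \subseteq \cA \otimes_{\min} \cA^{\op}$), and the $L^{\infty}$-BDG inequality of Corollary \ref{cor.extensionofstochinteg} gives
\[
\Bigg\|\int_0^t \big((\partial q_N - \partial f)(m(s))\,u_i(s)\big) \sh dx_i(s)\Bigg\| \leq 2\sqrt{2}\,\|(q_N-f)^{[1]}\|_{r_t,2}\,\|u_i\|_{L_t^2L^{\infty}(\tau \wotimes \tau^{\op})},
\]
again by the functional-calculus estimate $\|(\partial q_N - \partial f)(m(s))\|_{\cA \wotimes \cA^{\op}} \leq \|(q_N - f)^{[1]}\|_{\ell^{\infty}(\sigma(m(s))^2)} \leq \|(q_N - f)^{[1]}\|_{r_t,2}$. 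Finally, for the correction term, Lemma \ref{lem.Deltaf} directly delivers
\[
\Bigg\|\int_0^t \Delta_{u_i(s)}(q_N - f)(m(s))\,ds\Bigg\| \leq 2\,\|(q_N - f)^{[2]}\|_{r_t,3}\,\|u_i\|_{L_t^2L^{\infty}(\tau \wotimes \tau^{\op})}^2.
\]
All three quantities $\|(q_N - f)^{[j]}\|_{r_t,j+1}$ ($j = 0,1,2$) tend to zero by the convergence $q_N \to f$ in $\cC^{[2]}(\R)$, so passing to the limit in the polynomial identity of Theorem \ref{thm.FFIFpoly} yields Equation \eqref{eq.FFIFx}.

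The only mildly subtle step is verifying that the MOI-based object $\Delta_{u_i(\cdot)}f(m(\cdot))$ coming from Definition \ref{def.Deltaf} is the correct limit of its polynomial analogue from Section \ref{sec.FFIFpoly} — but this is precisely the content of Lemma \ref{lem.Deltaf}, which was designed for this purpose and whose proof already uses the same approximation trick. Likewise, the compatibility of the two definitions of $\partial f(m)$ (as a noncommutative derivative and as a MOI) on polynomials is Equation \eqref{eq.polyMOI1}. Thus no additional obstacle arises; the main work is just the bookkeeping to ensure that each error term is controlled by the $\cC^{[2]}$-seminorm at radius $r_t$.
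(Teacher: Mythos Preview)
Your proposal is correct and follows essentially the same approach as the paper's proof: reduce (ii) to (i), approximate $f$ by polynomials $q_N$ in $\cC^{[2]}(\R)$, apply Theorem \ref{thm.FFIFpoly} to each $q_N$, and pass to the limit term by term using exactly the ingredients you cite (the $L^\infty$-BDG inequality for the stochastic integrals, Lemma \ref{lem.partialfMOI} with Theorem \ref{thm.basicMOI}.\ref{item.bd} for the drift, and Lemma \ref{lem.Deltaf} for the correction term). The only cosmetic difference is that the paper bounds the stochastic-integral error first by $\|(q_N-f)^{[1]}\|_{\ell^\infty([-r_t,r_t]^2)}$ and then remarks this is at most $\|(q_N-f)^{[1]}\|_{r_t,2}$, whereas you go straight to the latter.
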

\begin{rem}
Note that in either case, $\R_+ \ni t \mapsto \partial f(m(t)) \in \cA \potimes \cA^{\op}$ is continuous (Lemma \ref{lem.partialfMOI}) and adapted.
In particular, if $\ell \in L_{\loc}^1(\R_+;\cA)$ and $u \in \Lambda^2$, then $\partial f(m) \sh \ell \in L_{\loc}^1(\R_+;\cA)$ and, by Corollary \ref{cor.RCLL}, $\partial f(m) \, u \in \Lambda^2$.
Thus all of the integrals in the statement of Theorem \ref{thm.FFIF} make sense.
\end{rem}
\begin{proof}
As usual, item \ref{item.FFIFz} follows from item \ref{item.FFIFx} with twice the dimension, so it suffices to prove item \ref{item.FFIFx}.
To this end, let $m=m^*$ be a free It\^{o} process satisfying Equation \eqref{eq.frItoprocess}.
By Theorem \ref{thm.FFIFpoly}\ref{item.FFIFpolyx}, Equation \eqref{eq.FFIFx} holds when $f \in \C[\lambda]$.
For general $f \in NC^2(\R)$, let $(q_N)_{N \in \N}$ be a sequence of polynomials converging in $NC^2(\R)$ to $f$, and fix $t \geq 0$.
Since $q_N \to f$ uniformly on compact sets, we have that $q_N(m(t)) \to f(m(t))$ in $\cA$ as $N \to \infty$.
Next, fix $i \in \{1,\ldots,n\}$.
By Lemma \ref{lem.Deltaf}, $\Delta_{u_i}q_N(m) \to \Delta_{u_i}f(m)$ in $L_{\loc}^1(\R_+;\cA)$ as $N \to \infty$.
In particular, $\int_0^t\Delta_{u_i(s)}q_N(m(s))\,ds \to \int_0^t\Delta_{u_i(s)}f(m(s))\,ds$ in $\cA$ as $N \to \infty$.
Now, write $r_t \coloneqq \sup_{0 \leq s \leq t}\|m(s)\| < \infty$.
Then
\begin{align*}
    \|\partial q_N(m) \,u_i - \partial f(m) \,u_i\|_{L_t^2L^{\infty}(\tau \wotimes \tau^{\op})} & = \big\|(q_N-f)^{[1]}(m \otimes 1, 1 \otimes m) \, u_i\big\|_{L_t^2L^{\infty}(\tau \wotimes \tau^{\op})} \\
    & \leq \big\|(q_N-f)^{[1]}\big\|_{\ell^{\infty}([-r_t,r_r]^2)}\|u_i\|_{L_t^2L^{\infty}(\tau \wotimes \tau^{\op})} \to 0
\end{align*}
as $N \to \infty$ by basic properties of functional calculus and the fact that $\|\cdot\|_{\ell^{\infty}([-r_t,r_t]^2)} \leq \|\cdot\|_{r_t,2}$.
Therefore, by the $L^{\infty}$-BDG Inequality, $\int_0^t (\partial q_N(m(s))\,u_i(s))\sh dx_i(s) \to \int_0^t (\partial f(m(s))\,u_i(s))\sh dx_i(s)$ in $\cA$ as $N \to \infty$.
Finally, by Lemma \ref{lem.partialfMOI} and Theorem \ref{thm.basicMOI}\ref{item.lin}--\ref{item.bd}, we have
\[
\|\partial q_N(m) \sh k - \partial f(m) \sh k\|_{L_t^1L^{\infty}(\tau)} = \big\|\big(I^{m,m}(q_N-f)^{[1]}\big)[k]\big\|_{L_t^1L^{\infty}(\tau)} \leq \big\|(q_N-f)^{[1]}\big\|_{r_t,2}\|k\|_{L_t^1L^{\infty}(\tau)} \to 0
\]
as $N \to \infty$.
In particular, $\int_0^t\partial q_N(m(s)) \sh k(s)\,ds \to \int_0^t \partial f(m(s)) \sh k(s)\,ds$ in $\cA$ as $N \to \infty$.
Therefore, we may deduce Equation \eqref{eq.FFIFx} by taking $N \to \infty$ in the corresponding identity for $q_N$.
\end{proof}

We end this section by deriving an explicit formula for $\Delta_{u,v}f(m)$ (with $u,v \in \cA \otimes \cA^{\op}$) in terms of MOIs.
Using this formula, we shall see directly that Theorem \ref{thm.FFIF}\ref{item.FFIFx} generalizes Proposition 4.3.4 in \cite{bianespeicher}.
For this development, we shall view $\cA$ as a $W^*$-subalgebra of $B(L^2(\cA,\tau))$ via the standard representation, i.e., as acting on $L^2(\cA,\tau)$ by left multiplication.

\begin{prop}[Computing $\Delta_{u,v}f(m)$]\label{prop.expressDeltaf}
Fix $f \in \cC^{[2]}(\R)$ and $m \in \cA_{\sa}$, and let $(\Sigma,\rho,\varphi_1,\varphi_2,\varphi_3)$ be a $\ell^{\infty}$-IPD of $f^{[2]}$ on $\sigma(m)^3$.
If $u,v \in \cA \otimes \cA^{\op}$, then
\begin{align*}
    \Delta_{u,v}f(m) & = \int_{\Sigma}\mathcal{M}_{\tau}\big((1 \otimes v) \boldsymbol{\cdot} (\varphi_1(m,\sigma) \otimes \varphi_2(m,\sigma) \otimes \varphi_3(m,\sigma)) \boldsymbol{\cdot} (u \otimes 1) \\
    & \hspace{22.5mm}  + (1 \otimes u) \boldsymbol{\cdot} (\varphi_1(m,\sigma) \otimes \varphi_2(m,\sigma) \otimes \varphi_3(m,\sigma)) \boldsymbol{\cdot} (v \otimes 1)\big) \,\rho(d\sigma),
\end{align*}
where the right hand side is a pointwise Pettis integral in $\cA \subseteq B(L^2(\cA,\tau))$.
\end{prop}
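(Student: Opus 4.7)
The plan is threefold: first, unfold the MOI from Definition \ref{def.Deltaf} via the given IPD; second, establish an algebraic identity generalizing Lemma \ref{lem.keyalg}; third, conclude by $L^1$-$L^{\infty}$ duality. By Definition \ref{def.Deltaf}, writing $G(\sigma)$ for the proposed integrand, the identity follows once we show that the pointwise Pettis integral $T \coloneqq \int_{\Sigma} G(\sigma)\,\rho(d\sigma) \in \cA$ satisfies
\[
\tau(aT) = (\tau \wotimes \tau^{\op})\big((a \otimes 1)\cdot M\big), \qquad \text{for all } a \in \cA,
\]
where $M = \big(I^{m \otimes 1,\,1 \otimes m,\,m \otimes 1} f^{[2]}\big)[uv^{\flip}+vu^{\flip},\,1 \otimes 1]$. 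Using the IPD together with the identities $\varphi_1(m \otimes 1,\sigma) = \varphi_1(m,\sigma) \otimes 1$, $\varphi_2(1 \otimes m, \sigma) = 1 \otimes \varphi_2(m,\sigma)$, and $\varphi_3(m \otimes 1, \sigma) = \varphi_3(m,\sigma) \otimes 1$ (which follow from functional calculus in $\cA \wotimes \cA^{\op}$), Definition \ref{def.MOI} represents $M$ as a pointwise Pettis integral in $\cA \wotimes \cA^{\op}$. Applying the normal functional $(\tau \wotimes \tau^{\op})((a \otimes 1)\,\cdot\,)$ and commuting it past the pointwise Pettis integral yields
\[
(\tau \wotimes \tau^{\op})((a \otimes 1)\,M) = \int_{\Sigma} (\tau \wotimes \tau^{\op})\big((a\otimes 1)(\alpha \otimes 1)(uv^{\flip}+vu^{\flip})(1 \otimes \beta)(\gamma \otimes 1)\big)\,\rho(d\sigma),
\]
where $\alpha = \varphi_1(m,\sigma)$, $\beta = \varphi_2(m,\sigma)$, $\gamma = \varphi_3(m,\sigma)$.

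The second step is the algebraic identity
\[
(\tau \wotimes \tau^{\op})\big((a\otimes 1)(\alpha \otimes 1)(uv^{\flip}+vu^{\flip})(1 \otimes \beta)(\gamma \otimes 1)\big) = \tau\big(a\,G_{\alpha,\beta,\gamma}(u,v)\big),
\]
where $G_{\alpha,\beta,\gamma}(u,v)$ denotes the expression inside the $\mathcal{M}_{\tau}$ in the proposition (so $G(\sigma) = G_{\varphi_1(m,\sigma),\varphi_2(m,\sigma),\varphi_3(m,\sigma)}(u,v)$). I would verify this by writing $u = \sum_i a_i \otimes b_i$ and $v = \sum_j c_j \otimes d_j$ as finite sums of pure tensors, expanding both sides via the multiplication rule $(x_1 \otimes y_1)(x_2 \otimes y_2) = x_1x_2 \otimes y_2 y_1$ in $\cA \otimes \cA^{\op}$ (and the corresponding opposite-multiplication convention in the middle factor of $\cA \otimes \cA^{\op} \otimes \cA$ used in Lemma \ref{lem.keyalg}), and then invoking traciality of $\tau$ to match the scalar factors $\tau(\beta c_j b_i)$ and $\tau(\beta a_i d_j)$ that arise on either side. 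This is a direct extension of Lemma \ref{lem.keyalg}; it is essentially bookkeeping, though the careful handling of the opposite multiplication is the single subtle point of the proof.

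Finally, I would verify pointwise Pettis integrability of $\sigma \mapsto G(\sigma) \in \cA \subseteq B(L^2(\cA,\tau))$ by combining measurability of matrix elements (via the same spectral-calculus measurability used in Proposition \ref{prop.integ}) with the operator-norm estimate
\[
\|G(\sigma)\| \leq C_{u,v}\,\|\varphi_1(\cdot,\sigma)\|_{\ell^{\infty}(\sigma(m))}\|\varphi_2(\cdot,\sigma)\|_{\ell^{\infty}(\sigma(m))}\|\varphi_3(\cdot,\sigma)\|_{\ell^{\infty}(\sigma(m))}
\]
and the IPD integrability condition \eqref{eq.intfincond}. By the definition of pointwise Pettis integration applied to the functional $\tau(a\,\cdot\,) = \la \,\cdot\,\hat{1},a^*\hat{1}\ra_{L^2(\tau)}$, we obtain $\tau(aT) = \int_{\Sigma} \tau(a\,G(\sigma))\,\rho(d\sigma)$ for all $a \in \cA$. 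Combining this with the first two steps yields $\tau(a T) = \tau(a\,\Delta_{u,v}f(m))$ for every $a \in \cA$, and faithfulness of $\tau$ (via $L^1$-$L^{\infty}$ duality) completes the proof. The main obstacle is the second step, whose algebraic verification is routine but requires vigilance regarding the opposite multiplication.
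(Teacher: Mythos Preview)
Your proposal is correct and follows essentially the same route as the paper: unfold the MOI $(I^{m\otimes 1,\,1\otimes m,\,m\otimes 1}f^{[2]})$ via the given IPD, invoke the algebraic identity of Lemma~\ref{lem.keyalg} pointwise in $\sigma$, and conclude by duality. Two small remarks. First, your ``identity generalizing Lemma~\ref{lem.keyalg}'' is not a generalization but a direct application: taking $b=\alpha$, $c=\beta$, $d=\gamma$ in Lemma~\ref{lem.keyalg} (once with $(u,v)$ and once with $(v,u)$) gives exactly what you need, so no fresh pure-tensor computation is required. Second, the paper tests against arbitrary $L^2$ matrix elements $\langle\,\cdot\,a,b\rangle_{L^2(\tau)}$ rather than the trace pairing $\tau(a\,\cdot\,)$; this establishes Pettis integrability and the identity simultaneously, whereas you verify Pettis integrability separately and then match traces---both are fine, and your separation is arguably cleaner since it avoids the limiting argument the paper needs to push Lemma~\ref{lem.keyalg} from $a\in\cA$ to $ab^*\in L^1(\cA,\tau)$.
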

\begin{proof}
As usual, we write $\boldsymbol{1} \coloneqq 1 \otimes 1$.
If $a,b \in L^2(\cA,\tau)$ (so that $ab^* \in L^1(\cA,\tau)$), then we have
\begin{align*}
    \big\la \big(\Delta_{u,v}f(m)& \big)a,b \big\ra_{L^2(\tau)} = \tau\big(b^* \, \Delta_{u,v}f(m)a\big) = \tau\big(ab^* \, \Delta_{u,v}f(m)\big) \\
    & = (\tau \wotimes \tau^{\op})\big((ab^* \otimes 1) \big(I^{m \otimes 1, 1 \otimes m, m \otimes 1}f^{[2]}\big)[uv^{\flip}+vu^{\flip},\boldsymbol{1}]\big) \numberthis\label{eq.Deltacalc0} \\
    & = (\tau \wotimes \tau^{\op})\big((b \otimes 1)^* \big(I^{m \otimes 1, 1 \otimes m, m \otimes 1}f^{[2]}\big)[uv^{\flip}+vu^{\flip},\boldsymbol{1}](a \otimes 1)\big) \\
    & = \big\la \big(I^{m \otimes 1, 1 \otimes m, m \otimes 1}f^{[2]}\big)[uv^{\flip}+vu^{\flip},\boldsymbol{1}](a \otimes 1),b \otimes 1\big\ra_{L^2(\tau \wotimes \tau^{\op})} \\
    & = \int_{\Sigma} \la \varphi_1(m \otimes 1,\sigma)(uv^{\flip}+vu^{\flip})\varphi_2(1 \otimes m,\sigma)\,\varphi_3(m \otimes 1,\sigma)(a \otimes 1),b \otimes 1\ra_{L^2(\tau \wotimes \tau^{\op})}\,\rho(d\sigma) \numberthis\label{eq.Deltacalc1}\\
    & = \int_{\Sigma} \la (\varphi_1(m,\sigma) \otimes 1)(uv^{\flip}+vu^{\flip})(1 \otimes \varphi_2(m,\sigma))(\varphi_3(m ,\sigma)\otimes 1)(a \otimes 1),b \otimes 1\ra_{L^2(\tau \wotimes \tau^{\op})}\,\rho(d\sigma) \\
    & = \int_{\Sigma}(\tau \wotimes \tau^{\op})\big((ab^* \otimes 1)(\varphi_1(m,\sigma) \otimes 1)(uv^{\flip}+vu^{\flip})(1 \otimes \varphi_2(m,\sigma))(\varphi_3(m ,\sigma)\otimes 1)\big)\,\rho(d\sigma) \\
    & = \int_{\Sigma}\tau\big(ab^*\,\mathcal{M}_{\tau}\big((1 \otimes v) \boldsymbol{\cdot} (\varphi_1(m,\sigma) \otimes \varphi_2(m,\sigma) \otimes \varphi_3(m,\sigma)) \boldsymbol{\cdot} (u \otimes 1) \\
    & \hspace{27.5mm}  + (1 \otimes u) \boldsymbol{\cdot} (\varphi_1(m,\sigma) \otimes \varphi_2(m,\sigma) \otimes \varphi_3(m,\sigma)) \boldsymbol{\cdot} (v \otimes 1)\big)\big) \,\rho(d\sigma) \numberthis\label{eq.Deltacalc2}\\
    & = \int_{\Sigma}\big\la \mathcal{M}_{\tau}\big((1 \otimes v) \boldsymbol{\cdot} (\varphi_1(m,\sigma) \otimes \varphi_2(m,\sigma) \otimes \varphi_3(m,\sigma)) \boldsymbol{\cdot} (u \otimes 1) \\
    & \hspace{27.5mm}  + (1 \otimes u) \boldsymbol{\cdot} (\varphi_1(m,\sigma) \otimes \varphi_2(m,\sigma) \otimes \varphi_3(m,\sigma)) \boldsymbol{\cdot} (v \otimes 1)\big)a,b\big\ra_{L^2(\tau)} \,\rho(d\sigma),
\end{align*}
where Equation \eqref{eq.Deltacalc0} holds by definition of $\Delta_{u,v}f(m)$, Equation \eqref{eq.Deltacalc1} holds by definition of MOIs and pointwise Pettis integrals, and Equation \eqref{eq.Deltacalc2} holds by Lemma \ref{lem.keyalg} (and an elementary limiting argument).
By definition of pointwise Pettis integrals, this completes the proof.
\end{proof}

\begin{cor}\label{cor.expressDeltaf}
Retain the setting of Proposition \ref{prop.expressDeltaf}.
If $a,b,c,d \in \cA$, then
\begin{align*}
    \Delta_{a \otimes b, c \otimes d}f(m) & =  \int_{\sigma(m)}\int_{\sigma(m)}\int_{\sigma(m)}f^{[2]}(\lambda_1,\lambda_2,\lambda_3) \, P^m(d\lambda_1) \,a\,\tau(b \,P^m(d\lambda_2)\,c)\,d \,P^m(d\lambda_3) \\
    & \hspace{12.5mm} + \int_{\sigma(m)}\int_{\sigma(m)}\int_{\sigma(m)} f^{[2]}(\lambda_1,\lambda_2,\lambda_3) \, P^m(d\lambda_1) \,c\,\tau(d\, P^m(d\lambda_2)\,a)\,b \,P^m(d\lambda_3).\numberthis\label{eq.Deltafdet}
\end{align*}
Note that $\mu(d\lambda) = \tau(b\,P^m(d\lambda)\,c)$ and $\nu(d\lambda) = \tau(d \, P^m(d\lambda)\,a)$ are Borel complex measures on $\sigma(m)$.
\end{cor}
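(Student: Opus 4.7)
The plan is to derive Equation \eqref{eq.Deltafdet} by a direct computation that unwinds the pointwise Pettis integral formula of Proposition \ref{prop.expressDeltaf} and then reassembles the result using the MOI-with-scalar-measure construction of Lemma \ref{lem.onemeas} (equivalently, Equation \eqref{eq.MOIwithcompmeas}). First, I would fix an arbitrary $\ell^{\infty}$-IPD $(\Sigma,\rho,\varphi_1,\varphi_2,\varphi_3)$ of $f^{[2]}$ on $\sigma(m)^3$ and apply Proposition \ref{prop.expressDeltaf} with $u = a \otimes b$ and $v = c \otimes d$; this reduces the corollary to the verification of the identification of each of the two summands in the integrand over $\Sigma$ with one of the two triple MOIs on the right-hand side of \eqref{eq.Deltafdet}.

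Second, I would carry out the $\mathcal{M}_{\tau}$ computation in $\cA \otimes \cA^{\op} \otimes \cA$ for pure tensors. Using that multiplication in the middle slot is the opposite multiplication, the product $(1 \otimes c \otimes d) \boldsymbol{\cdot} (\varphi_1(m,\sigma) \otimes \varphi_2(m,\sigma) \otimes \varphi_3(m,\sigma)) \boldsymbol{\cdot} (a \otimes b \otimes 1)$ expands to the elementary tensor $\varphi_1(m,\sigma)\,a \otimes b\,\varphi_2(m,\sigma)\,c \otimes d\,\varphi_3(m,\sigma)$, to which $\mathcal{M}_{\tau}$ assigns the operator $\varphi_1(m,\sigma)\,a\,\tau(b\,\varphi_2(m,\sigma)\,c)\,d\,\varphi_3(m,\sigma)$; the second summand is handled symmetrically by swapping $(a,b) \leftrightarrow (c,d)$. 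So Proposition \ref{prop.expressDeltaf} expresses $\Delta_{a \otimes b, c \otimes d}f(m)$ as the pointwise Pettis $\rho$-integral of the sum of these two scalar-weighted operators.

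Third, I would match each of these pointwise Pettis integrals to the MOI notation on the right-hand side of \eqref{eq.Deltafdet}. For the first piece, set $\mu(d\lambda) \coloneqq \tau(b\,P^m(d\lambda)\,c)$, which is a Borel complex measure on $\sigma(m)$ because $P^m$ is a projection-valued measure and $\tau$ is a normal state (so $\tau(b\,\varphi_2(m,\sigma)\,c) = \mu(\varphi_2(\cdot,\sigma))$). Equation \eqref{eq.MOIwithcompmeas} applied with $a_1 = a_2 = m$, $\Lambda = \sigma(m)$, $b_1 = a$, $b_2 = d$, and $\varphi = f^{[2]}$ then identifies the pointwise Pettis $\rho$-integral $\int_{\Sigma}\mu(\varphi_2(\cdot,\sigma))\,\varphi_1(m,\sigma)\,ad\,\varphi_3(m,\sigma)\,\rho(d\sigma)$ with the first triple MOI in \eqref{eq.Deltafdet}. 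The second summand is obtained the same way with $\nu(d\lambda) = \tau(d\,P^m(d\lambda)\,a)$, $b_1 = c$, $b_2 = b$.

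The only genuine bookkeeping obstacle is keeping the opposite multiplication straight when expanding the triple product in $\cA \otimes \cA^{\op} \otimes \cA$; once that is done correctly, scalars pass through freely (they are traces of operators, hence lie in $\C$), and the identification with the MOI formalism is essentially a rewriting of Lemma \ref{lem.onemeas}. No estimates or approximation arguments are needed, since Proposition \ref{prop.expressDeltaf} already produces an honest pointwise Pettis integral, and the right-hand side of \eqref{eq.Deltafdet} is well-defined for $f \in \cC^{[2]}(\R)$ by Lemma \ref{lem.onemeas}.
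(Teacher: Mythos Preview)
Your proposal is correct and follows exactly the route the paper takes: the paper's proof is the single sentence ``This follows immediately from Proposition \ref{prop.expressDeltaf}, the definition of $\mathcal{M}_{\tau}$, and Equation \eqref{eq.MOIwithcompmeas},'' and your three steps are precisely the unpacking of those three ingredients. Your expansion of the triple product in $\cA \otimes \cA^{\op} \otimes \cA$ and the identification $\tau(b\,\varphi_2(m,\sigma)\,c) = \mu(\varphi_2(\cdot,\sigma))$ are both correct, so there is nothing to add.
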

\begin{proof}
This follows immediately from Proposition \ref{prop.expressDeltaf}, the definition of $\mathcal{M}_{\tau}$, and Equation \eqref{eq.MOIwithcompmeas}.
\end{proof}

\begin{ex}[Connection to Biane--Speicher Formula]\label{ex.BS}
Retain the setting of Proposition \ref{prop.expressDeltaf}, but suppose further that $f \in W_2(\R)_{\loc} \subseteq NC^2(\R)$.
Let $g = \int_{\R} e^{i\boldsymbol{\cdot}\xi}\,\mu(d\xi) \in W_2(\R)$ be such that $g|_{[-r,r]} = f|_{[-r,r]}$, where $r = \|m\|$.
Since $f^{[2]}|_{[-r,r]^3} = g^{[2]}|_{[-r,r]^3}$, Equation \eqref{eq.divdiffWk} gives
\begin{align*}
    f^{[2]}(\lambda_1,\lambda_2,\lambda_3) & = \int_0^1\int_0^{1-t} \int_{\R}(i\xi)^2e^{is \lambda_1\xi}e^{it\lambda_2 \xi}e^{i(1-s-t)\lambda_3\xi}\,\mu(d\xi)\,ds\,dt \\
    & = \int_{\R \times \Sigma_2}(i\xi)^2e^{is \lambda_1\xi}e^{it\lambda_2 \xi}e^{i(1-s-t)\lambda_3\xi}\frac{d\mu}{d|\mu|}(\xi)\,|\mu|(d\xi)\,ds\,dt,
\end{align*}
for all $(\lambda_1,\lambda_2,\lambda_3) \in [-r,r]^3$.
Therefore, by Proposition \ref{prop.expressDeltaf}, if $u,v \in \cA \otimes \cA^{\op}$, then
\begin{align*}
    \Delta_{u,v}f(m) & = \int_{\R \times \Sigma_2} \mathcal{M}_{\tau}\Big( (1 \otimes v) \boldsymbol{\cdot} \Big((i\xi)^2e^{is\xi m} \otimes e^{it \xi m } \otimes \Big(e^{i(1-s-t)\xi m}\frac{d\mu}{d|\mu|}(\xi)\Big)\Big) \boldsymbol{\cdot} (u \otimes 1) \\
    & \hspace{27.5mm} + (1 \otimes u) \boldsymbol{\cdot} \Big((i\xi)^2e^{is\xi m} \otimes e^{it \xi m } \otimes \Big(e^{i(1-s-t)\xi m}\frac{d\mu}{d|\mu|}(\xi)\Big)\Big) \boldsymbol{\cdot} (v \otimes 1) \Big) \,|\mu|(d\xi)\,ds\,dt \\
    & = -\int_0^1\int_0^{1-t}\int_{\R} \xi^2 \mathcal{M}_{\tau}\big( (1 \otimes v) \boldsymbol{\cdot} (e^{is\xi m} \otimes e^{it \xi m } \otimes e^{i(1-s-t)\xi m}) \boldsymbol{\cdot} (u \otimes 1) \\
    & \hspace{42.5mm} + (1 \otimes u) \boldsymbol{\cdot} (e^{is\xi m} \otimes e^{it \xi m } \otimes e^{i(1-s-t)\xi m}) \boldsymbol{\cdot} (v \otimes 1) \big) \,\mu(d\xi)\,ds\,dt.
\end{align*}
When $u=v$, this is exactly Biane and Speicher's definition of $\Delta_uf(m)$ from \cite{bianespeicher}.\footnote{Beware: As is noted in \cite{bianespeicherF}, the definition of $\Delta_uf(m)$ actually written in \cite{bianespeicher} is missing a factor of $2$.}
Moreover, since we saw in the proof of Lemma \ref{lem.partialfMOI} that $\partial f(m) = i\int_0^1\int_{\R}\xi\,e^{itm} \otimes e^{i(1-t)m}\,\mu(d\xi)\,dt$, this demonstrates directly that Theorem \ref{thm.FFIF}\ref{item.FFIFx} does, in fact, generalize Proposition 4.3.4 in \cite{bianespeicher}.
\end{ex}

\begin{rem}\label{rem.tech}
If $X,Y,Z$ are topological spaces and $F \colon X \times Y \to Z$ is a function, then we call $F$ \textbf{argumentwise continuous} if, for every $x \in X$ and $y \in Y$, the maps $F(x,\cdot) \colon Y \to Z$ and $F(\cdot,y) \colon X \to Z$ are continuous.
Now, fix $m \in \cA$, $p \in \C[\lambda]$, and $f \in \cC^{[2]}(\R)$.
Write $B \colon (\cA \wotimes \cA^{\op})^2 \to \cA$ for any one of the bilinear maps $Q_{\tau}$, $\Delta_{\cdot,\cdot}p(m)$, or $\Delta_{\cdot,\cdot}f(m)$.
Of course, when $B = \Delta_{\cdot,\cdot}f(m)$, we implicitly assume $m \in \cA_{\sa}$.
When $B \in \{Q_{\tau},\Delta_{\cdot,\cdot}p(m)\}$, it is easy to see from the definition that $B$ is argumentwise continuous with respect to the weak$^*$ topologies (i.e., $\sigma$-WOTs)  on $\cA \wotimes \cA^{\op}$ and $\cA$. This is also true when $B = \Delta_{\cdot,\cdot}f(m)$, but it is substantially harder to prove.
The key is that the MOI in Equation \eqref{eq.MOI} is argumentwise $\sigma$-weakly continuous in $b$;
this is a special case of Corollary 4.2.11 in \cite{nikitopoulosMOI}.
In any case, no matter the choice of $B$, $B$ is argumentwise $\sigma$-weakly continuous.
Since $\cA \otimes \cA^{\op}$ is $\sigma$-weakly dense in $\cA \wotimes \cA^{\op}$, $B|_{(\cA \otimes \cA^{\op})^2}$ extends uniquely to \textit{an argumentwise $\sigma$-weakly continuous bilinear map} $(\cA \wotimes \cA^{\op})^2 \to \cA$.
To this extent, $B$ is determined by its respective algebraic formula (Equations \eqref{eq.Qtau}, \eqref{eq.Deltap}, or \eqref{eq.Deltafdet}).
However, $\cA \otimes \cA^{\op}$ is \textit{not} necessarily (operator) norm dense in $\cA \wotimes \cA^{\op}$.
For example, if $\cA = L^{\infty}([0,1])$, then $\cA \wotimes \cA^{\op} = L^{\infty}([0,1]) \wotimes L^{\infty}([0,1]) = L^{\infty}([0,1]^2)$, and it is a standard exercise to show that if $\Delta_+ = \{(x,y) : 0 \leq x \leq y \leq 1\}$, then $1_{\Delta_+} \in L^{\infty}([0,1]^2) \setminus L^{\infty}([0,1]) \otimes_{\min} L^{\infty}([0,1])$.
In particular, boundedness of $B|_{(\cA \otimes \cA^{\op})^2}$ as a bilinear map does \textit{not} necessarily imply that there exists a unique bounded bilinear extension of $B|_{(\cA \otimes \cA^{\op})^2}$ to $(\cA \wotimes \cA^{\op})^2$.
Biane and Speicher implicitly claim uniqueness of such an extension in the paragraphs after Definition 4.3.1 and Lemma 4.3.3 in \cite{bianespeicher}.
However, this luckily does not harm their development because we \textit{can} guarantee a unique bounded bilinear extension to $(\cA \otimes_{\min} \cA^{\op})^2$, and, as we noted in Remark \ref{rem.min}, $\Lambda^2 \subseteq L_{\loc}^2(\R_+;\cA \otimes_{\min} \cA^{\op})$.
\end{rem}

\appendix

\section{Matrix Stochastic Calculus Formulas}\label{sec.fdmotiv}

The main purpose of this appendix is to motivate our main results (Theorems \ref{thm.FFIF} and \ref{thm.trFFIF}) by studying an It\^{o} formula for $C^2$ scalar functions of Hermitian matrix-valued It\^{o} processes (Theorem \ref{thm.FIF}).
To the author's knowledge, this formula is not written elsewhere in the literature, though its existence is mentioned --- at least for polynomials --- in \cite{anshelevich}.
For the duration of the appendix, fix a filtered probability space $(\Om,\sF,(\sF_t)_{t \geq 0},P)$, with filtration satisfying the usual conditions, to which all processes to come are adapted.
Also, we shall adhere to Notation \ref{nota.mat} and, for a function $f \colon \R \to \C$, write $f_{\MnC} \colon \MnC_{\sa} \to \MnC$ for the associated \textbf{matrix function} $\MnC_{\sa} \ni M \mapsto f(M) \in \MnC$ defined via functional calculus.

Fix $n,N \in \N$, and --- as in the introduction --- let $\big(X_1^{(N)},\ldots,X_n^{(N)}\big)  = (X_1,\ldots,X_n)$ be an $n$-tuple of independent standard $(\MnC_{\sa},\la \cdot,\cdot \ra_N)$-valued Brownian motions.
Concretely, if $\mathcal{E} \subseteq \MnC_{\sa}$ is any orthonormal basis (ONB) for the real inner product space $(\MnC_{\sa},\la\cdot,\cdot\ra_N)$, then
\[
X_i = \sum_{E \in \mathcal{E}} b_{i,E}\,E,\numberthis\label{eq.repofXj}
\]
where $\{b_{j,E} = (b_{j,E}(t))_{t \geq 0} : 1 \leq j \leq n, \,E \in \mathcal{E}\}$ is a collection of $nN^2$ independent standard real Brownian motions.
This representation of $X_i$ will allow us to use the following ``Magic Formula" to identify various ``trace terms" in our stochastic calculus formulas.
Please see Section 3.1 of \cite{driverhallkempSB}, the paper from which the name ``Magic Formula" originates, for a proof.
\pagebreak

\begin{lema}[Magic Formula]\label{lem.magic}
If $\mathcal{E} \subseteq \MnC_{\sa}$ is a $\la \cdot, \cdot \ra_N$-ONB for $\MnC_{\sa}$, then
\[
\sum_{E \in \mathcal{E}}EBE = \tr(B) \, I_N,
\]
for all $B \in \MnC$, where $I_N$ is the $N \times N$ identity matrix.
\end{lema}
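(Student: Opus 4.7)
The plan is to exploit basis-independence of the quadratic expression $\sum_E E B E$ and then evaluate on a single convenient orthonormal basis.

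First, I would show that if $\mathcal{E}$ and $\mathcal{E}'$ are two $\la \cdot, \cdot \ra_N$-ONBs of the real inner product space $\MnC_{\sa}$, then the change-of-basis matrix $O = (O_{E F})$ defined by $F = \sum_{E \in \mathcal{E}} O_{E F}\,E$ (for $F \in \mathcal{E}'$) is real orthogonal. A direct expansion then gives
\[
\sum_{F \in \mathcal{E}'} F B F = \sum_{E, E' \in \mathcal{E}} \Big(\sum_{F \in \mathcal{E}'} O_{E F} O_{E' F}\Big) E B E' = \sum_{E, E' \in \mathcal{E}} \delta_{E E'} E B E' = \sum_{E \in \mathcal{E}} E B E,
\]
so the quantity $T(B) \coloneqq \sum_{E \in \mathcal{E}} E B E$ does not depend on the ONB. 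Since $T$ and $B \mapsto \tr(B) I_N$ are both $\C$-linear in $B \in \MnC$, it now suffices to verify the identity on any spanning set of $\MnC$ using any one ONB of our choosing.

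Second, I would choose the concrete ONB $\mathcal{E}_0$ consisting of the diagonal matrices $\tfrac{1}{\sqrt{N}} E_{ii}$ for $1 \leq i \leq N$, together with $\tfrac{1}{\sqrt{2N}}(E_{ij}+E_{ji})$ and $\tfrac{\mathrm{i}}{\sqrt{2N}}(E_{ij}-E_{ji})$ for $1 \leq i < j \leq N$, where $\{E_{ij}\}$ are the standard matrix units. (One quickly checks normalization: $\la E_{ij}, E_{k\ell} \ra_N = N \delta_{ik}\delta_{j\ell}$.) Using the expansion identity
\[
(E_{ij}+E_{ji}) B (E_{ij}+E_{ji}) - (E_{ij}-E_{ji}) B (E_{ij}-E_{ji}) = 2\,E_{ij} B E_{ji} + 2\,E_{ji} B E_{ij},
\]
and combining with the diagonal contribution, the sum collapses neatly to
\[
\sum_{E \in \mathcal{E}_0} E B E = \frac{1}{N} \sum_{i,j=1}^N E_{ij} B E_{ji}.
\]

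Third, a direct matrix-unit computation shows $E_{ij} B E_{ji} = B_{jj} E_{ii}$, so
\[
\sum_{E \in \mathcal{E}_0} E B E = \frac{1}{N} \sum_{i, j = 1}^N B_{jj} E_{ii} = \frac{1}{N} \Tr(B)\,I_N = \tr(B)\,I_N,
\]
completing the proof. The only real obstacle is careful bookkeeping in the sign accounting of the expansion; no substantive analytic input is needed.
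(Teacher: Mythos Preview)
Your proof is correct and complete. The paper itself does not supply a proof of this lemma; it simply refers the reader to Section~3.1 of \cite{driverhallkempSB} for one. Your argument---establishing basis independence via the orthogonality of the change-of-basis matrix, then computing on the standard Hermitian ONB built from matrix units---is exactly the kind of direct verification one expects, and every step checks out (including the sign from the factor of $\mathrm{i}$ in the skew-symmetric basis elements, which makes the subtraction in your expansion identity the right thing).
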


Next, we set some algebraic notation.

\begin{notaa}
For $k \in \N$, write $L_k(\MnC)$ for the space of $k$-linear maps $\MnC^k \to \MnC$, and let $\#_k \colon \MnC^{\otimes (k+1)} \to L_k(\MnC)$ be the linear map determined by
\[
\#_k(A_1 \otimes \cdots \otimes A_{k+1})[B_1,\ldots,B_k] = A_1B_1\cdots A_kB_kA_{k+1},
\]
for all $A_1,B_1,\ldots,A_k,B_k,A_{k+1} \in \MnC$.
Whenever $U \in \MnC^{\otimes(k+1)}$ and $B = (B_1,\ldots,B_k) \in \MnC^k$, we shall write
\[
U \sh_k B \coloneqq \#_k(U)[B].
\]
Also, when $k=1$, we shall view the domain of $\#_1$ as $\MnC \otimes \MnC^{\op}$ and write simply $\#_1 = \#$.
\end{notaa}

Using basic linear algebra, one can show that if $k \in \N$, then $\#_k \colon \MnC^{\otimes(k+1)} \to L_k(\MnC)$ is a linear isomorphism.
Also, $\# \colon \MnC \otimes \MnC^{\op} \to L_1(\MnC) = \End(\MnC)$ is an algebra homomorphism.
In particular, we may identify $\End(\MnC)$-valued processes $U = (U(t))_{t \geq 0}$ with $\MnC \otimes \MnC^{\op}$-valued processes and write, for instance, $\int_0^t U(s)\sh dY(s) = \int_0^t U(s)[dY(s)]$ for the stochastic integral of $U$ against the $\MnC$-valued semimartingale $Y$ (when this makes sense).
In view of this identification and notation, we introduce $N \times N$ \textit{matrix It\^{o} processes}.

\begin{defia}[Matrix It\^{o} Process]\label{def.matItoprocess}
A $\boldsymbol{N \times N}$ \textbf{matrix It\^{o} process} is an adapted process $M$ taking values in $\MnC$ that satisfies
\[
dM(t) = \sum_{i=1}^nU_i(t) \sh dX_i(t) + K(t) \,dt \numberthis\label{eq.matItoprocess} 
\]
for some predictable $\MnC \otimes \MnC^{\op}$-valued processes $U_1,\ldots,U_n$ and some progressively measurable $\MnC$-valued process $K$ satisfying
\[
\sum_{i=1}^n\int_0^t\|U_i(s)\|_{\otimes_N}^2\,ds + \int_0^t\|K(s)\|_N \, ds < \infty, \, \text{ for all } t \geq 0, \numberthis\label{eq.integUjK}
\]
almost surely, where $\|\cdot\|_{\otimes_N}$ is the norm associated to the tensor inner product $\la \cdot, \cdot \ra_{\otimes_N}$ on $\MnC \otimes \MnC^{\op}$ induced by the usual Hilbert--Schmidt (Frobenius) inner product on $\MnC$ (and $\MnC^{\op}$).
\end{defia}
\begin{rema}
The conditions in and preceding Equation \eqref{eq.integUjK} guarantee that all the integrals in Equation \eqref{eq.matItoprocess} make sense and that $M$ is a continuous $\MnC$-valued semimartingale.
\end{rema}

Now, we compute the quadratic covariation of two matrix It\^{o} processes.

\begin{defia}[Magic Operator]
Write $\mathcal{M}_{\tr} \colon \MnC^{\otimes 3} \to \MnC$ for the linear map determined by
\[
\mathcal{M}_{\tr}(A \otimes B \otimes C) = A \,\tr(B) \, C = \tr(B)\,AC,
\]
for all $A,B,C \in \MnC$. We call $\mathcal{M}_{\tr}$ the \textbf{magic operator}.
Another way to write it is
\[
\mathcal{M}_{\tr} = \mathfrak{m}_{\MnC} \circ (\id_{\MnC} \otimes \tr \otimes \id_{\MnC}),
\]
where $\mathfrak{m}_{\MnC} \colon \MnC \otimes \MnC \to \MnC$ is the linear map induced by multiplication in $\MnC$.
\end{defia}

\begin{lema}\label{lem.keyFIFcalc}
Suppose that $\mathcal{E} \subseteq \MnC_{\sa}$ is a $\la \cdot, \cdot \ra_N$-orthonormal basis.
If $W \in \MnC^{\otimes 3}$ and $U,V \in \MnC \otimes \MnC^{\op}$, then
\[
\sum_{E \in \mathcal{E}} W\sh_2 [U\sh E, V \sh E] = \mathcal{M}_{\tr}((I_N \otimes V)\boldsymbol{\cdot} W \boldsymbol{\cdot} (U \otimes I_N)),
\]
where $\boldsymbol{\cdot}$ is multiplication in $\MnC \otimes \MnC^{\op} \otimes \MnC$ (for example, one has $(A \otimes B \otimes C)\boldsymbol{\cdot} (D \otimes E \otimes F) = (AD) \otimes (EB) \otimes (CF)$).
\end{lema}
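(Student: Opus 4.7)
The plan is to exploit multilinearity to reduce both sides to pure-tensor inputs, at which point the identity follows from a one-line application of the Magic Formula (Lemma \ref{lem.magic}).

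Specifically, both sides of the claimed equation are trilinear in $W \in \MnC^{\otimes 3}$ and bilinear in the pair $(U,V) \in (\MnC \otimes \MnC^{\op})^2$. So by linearity it suffices to verify the identity when $W = A_1 \otimes A_2 \otimes A_3$, $U = P \otimes Q$, and $V = R \otimes S$ are elementary tensors. In this case $U \sh E = PEQ$ and $V \sh E = RES$, so
\[
W \sh_2 [U \sh E,\, V \sh E] = A_1 (PEQ) A_2 (RES) A_3 = (A_1 P)\, E\, (QA_2 R)\, E\, (SA_3).
\]
Therefore, pulling the fixed matrices outside the sum,
\[
\sum_{E \in \mathcal{E}} W \sh_2 [U \sh E, V \sh E] = (A_1 P) \Bigg(\sum_{E \in \mathcal{E}} E\,(QA_2 R)\,E\Bigg)(SA_3) = \tr(QA_2 R)\, A_1 P SA_3,
\]
where the second equality is the Magic Formula applied to $B = QA_2 R \in \MnC$.

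For the right-hand side, the definition of $\boldsymbol{\cdot}$ in $\MnC \otimes \MnC^{\op} \otimes \MnC$ gives
\[
(I_N \otimes R \otimes S)\boldsymbol{\cdot}(A_1 \otimes A_2 \otimes A_3)\boldsymbol{\cdot}(P \otimes Q \otimes I_N) = (A_1 P) \otimes (QA_2 R) \otimes (SA_3),
\]
and then applying $\mathcal{M}_{\tr}$ yields exactly $\tr(QA_2 R)\,(A_1 P)(SA_3)$, matching the left-hand side.

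There is no real obstacle here: the only subtlety is keeping careful track of the opposite multiplication in the middle factor of $\MnC \otimes \MnC^{\op} \otimes \MnC$ (which is precisely why $Q$ ends up to the left of $A_2 R$ in $\tr(QA_2 R)$ and why the tensor-product multiplication produces $(A_2 R)$ rather than $(RA_2)$ in the middle slot). Since the computation is entirely algebraic and uses only the Magic Formula as a nontrivial input, the proof will be a few lines long.
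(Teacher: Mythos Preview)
Your proposal is correct and is essentially identical to the paper's own proof: both reduce by multilinearity to pure tensors $W = A_1 \otimes A_2 \otimes A_3$, $U$, $V$, then apply the Magic Formula to the resulting expression $(A_1P)E(QA_2R)E(SA_3)$ and match against the definition of $\mathcal{M}_{\tr}$ and the $\boldsymbol{\cdot}$ product. The only difference is cosmetic (your $P,Q,R,S$ are the paper's $A,B,C,D$).
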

\begin{proof}
It suffices to prove the formula when $U = A \otimes B$, $V = C \otimes D$, and $W = A_1 \otimes A_2 \otimes A_3$ are pure tensors.
In this case, we have
\begin{align*}
    \sum_{E \in \mathcal{E}} W\sh_2 [U\sh E, V \sh E] & = \sum_{E \in \mathcal{E}} W\sh_2 [AEB, CED] = \sum_{E \in \mathcal{E}} A_1AEBA_2CEDA_3 \\
    & = A_1A\,\tr(BA_2C)\,DA_3 = \mathcal{M}_{\tr}(A_1A \otimes BA_2C \otimes DA_3) \\
    & = \mathcal{M}_{\tr}((I_N \otimes C \otimes D)\boldsymbol{\cdot}(A_1 \otimes A_2 \otimes A_3) \boldsymbol{\cdot} (A \otimes B \otimes I_N)) \\
    & = \mathcal{M}_{\tr}((I_N \otimes V)\boldsymbol{\cdot} W \boldsymbol{\cdot} (U \otimes I_N))
\end{align*}
by Lemma \ref{lem.magic} and the definitions of $\mathcal{M}_{\tr}$ and the $\boldsymbol{\cdot}$ operation.
\end{proof}

\begin{thma}[Quadratic Covariation of Matrix It\^{o} Processes]\label{thm.IMPR}
If, for each $\ell \in \{1,2\}$, $M_{\ell}$ is a $N \times N$ matrix It\^{o} process satisfying $dM_{\ell}(t) = \sum_{i=1}^n U_{\ell i}(t) \sh dX_i(t) + K_{\ell}(t)\,dt$ and $W = (W(t))_{t \geq 0}$ is a continuous $\MnC^{\otimes 3}$-valued process, then
\[
\int_0^t W(s)\sh_2[dM_1(s),dM_2(s)] = \sum_{i=1}^n\int_0^t  \mathcal{M}_{\tr}((I_N \otimes U_{2i}(s))\boldsymbol{\cdot} W(s) \boldsymbol{\cdot} (U_{1i}(s) \otimes I_N))\,ds
\]
for all $t \geq 0$, almost surely.
\end{thma}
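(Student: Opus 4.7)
The strategy is to reduce the matrix-valued covariation to scalar It\^{o} calculus via the ONB representation \eqref{eq.repofXj} and then invoke Lemma \ref{lem.keyFIFcalc} to collapse the resulting basis sum. Fix a $\la \cdot,\cdot\ra_N$-ONB $\mathcal{E} \subseteq \MnC_{\sa}$, so that $X_i = \sum_{E \in \mathcal{E}} b_{i,E}\,E$ with $\{b_{i,E}\}$ a family of $nN^2$ independent standard real Brownian motions. Since $\#$ is an algebra homomorphism, for each $\ell \in \{1,2\}$ and $i \in \{1,\ldots,n\}$ we have $U_{\ell i}(s)\sh dX_i(s) = \sum_{E \in \mathcal{E}}(U_{\ell i}(s)\sh E)\,db_{i,E}(s)$, a $\MnC$-valued It\^{o} integral against a scalar Brownian motion. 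Consequently
\[
dM_{\ell}(s) = \sum_{i=1}^n\sum_{E \in \mathcal{E}} (U_{\ell i}(s)\sh E)\,db_{i,E}(s) + K_{\ell}(s)\,ds.
\]

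Next I would interpret $\int_0^t W(s)\sh_2 [dM_1(s),dM_2(s)]$ as the $\MnC$-valued quadratic covariation bracket obtained by applying the bilinear map $\#_2(W(s))$ to the vector-valued semimartingales $M_1,M_2$. Since $\#_2$ is linear in $W$, this bracket is completely determined by the scalar quadratic covariations of the coordinate semimartingales. The standard rules from classical stochastic calculus give $d[b_{i,E},b_{j,F}](s) = \delta_{ij}\delta_{EF}\,ds$ together with the vanishing of all covariations involving the finite-variation parts $K_{\ell}\,ds$. Bilinearity of $\#_2(W(s))$ in its two arguments then yields
\[
W(s)\sh_2 [dM_1(s),dM_2(s)] = \sum_{i,j=1}^n\sum_{E,F \in \mathcal{E}} W(s)\sh_2\big[U_{1i}(s)\sh E,\,U_{2j}(s)\sh F\big]\,\delta_{ij}\delta_{EF}\,ds,
\]
which collapses to $\sum_{i=1}^n\sum_{E \in \mathcal{E}} W(s)\sh_2[U_{1i}(s)\sh E,\,U_{2i}(s)\sh E]\,ds$. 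Applying Lemma \ref{lem.keyFIFcalc} pointwise in $s$ to the inner sum over $E$ (with $W = W(s)$, $U = U_{1i}(s)$, $V = U_{2i}(s)$) yields exactly the claimed integrand.

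To make the ``bilinearity + classical covariation rules'' step rigorous, I would first verify the identity when $W, U_{1i}, U_{2i}, K_{\ell}$ are simple predictable processes (say step processes with pure-tensor values), in which case both sides reduce to finite sums of scalar It\^{o} integrals and the classical product rule for continuous semimartingales applies directly. The general case then follows by the usual approximation argument, using \eqref{eq.integUjK} together with the fact that all the operations involved ($\#_k$, $\mathcal{M}_{\tr}$, multiplication in $\MnC \otimes \MnC^{\op} \otimes \MnC$) are continuous multilinear maps between finite-dimensional spaces, so both sides are jointly continuous in the integrands with respect to the relevant $L^2$- and $L^1$-in-time topologies.

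The main obstacle is essentially bookkeeping: keeping the order of tensor factors straight when passing between the ``classical'' It\^{o}-formula picture (where covariations produce diagonal $\delta_{EF}$ contractions) and the algebraic form in the statement (where the contraction is encoded by $\mathcal{M}_{\tr}$ via the magic formula). Once Lemma \ref{lem.keyFIFcalc} is in hand this is immediate, and there are no genuine analytic difficulties beyond the standard density arguments used for scalar It\^{o} integrals.
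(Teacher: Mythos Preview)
Your proposal is correct and follows essentially the same approach as the paper: expand each $X_i$ in an ONB via \eqref{eq.repofXj}, use the classical covariation rules $db_{i,E}(s)\,db_{j,F}(s) = \delta_{ij}\delta_{EF}\,ds$ (together with vanishing of bounded-variation contributions) to reduce to the diagonal sum $\sum_{i}\sum_{E} W(s)\sh_2[U_{1i}(s)\sh E,\,U_{2i}(s)\sh E]$, and then apply Lemma \ref{lem.keyFIFcalc}. The paper's proof is shorter only because it dispenses with your final approximation paragraph --- since everything is finite-dimensional and the quadratic covariation of vector-valued It\^{o} processes is already a standard object in classical stochastic calculus, one can apply the scalar rules directly without first reducing to simple step processes.
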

\begin{proof}
Recall that bounded variation terms do not contribute to quadratic covariation, so we may assume $K_1 \equiv K_2 \equiv 0$.
Now, using the expression \eqref{eq.repofXj} for $X_i$ and the fact that $db_{i,E}(t)\,db_{j,F}(t) = \delta_{ij}\delta_{E,F}\,dt$, we get
\begin{align*}
    \int_0^t W(s)\sh_2[dM_1(s),dM_2(s)] & = \sum_{i,j=1}^n\sum_{E,F\in \mathcal{E}} \int_0^t W(s)\sh_2[U_{1i}(s)\sh E, U_{2j}(s)\sh F]\,db_{i,E}(s)\,db_{j,F}(s) \\
    & = \sum_{i = 1}^n\sum_{E \in \mathcal{E}} \int_0^t W(s)\sh_2[U_{1i}(s)\sh E, U_{2i}(s)\sh E]\,ds \\
    & = \sum_{i = 1}^n\int_0^t\Bigg(\sum_{E\in \mathcal{E}}W(s)\sh_2[U_{1i}(s)\sh E,U_{2i}(s)\sh E]\Bigg)\,ds \\
    & = \sum_{i=1}^n\int_0^t  \mathcal{M}_{\tr}((I_N \otimes U_{2i}(s))\boldsymbol{\cdot} W(s) \boldsymbol{\cdot} (U_{1i}(s) \otimes I_N))\,ds
\end{align*}
by Lemma \ref{lem.keyFIFcalc}.
\end{proof}

From the cases $W = A \otimes B \otimes C$, $M_1 \in \{X_i,I_N\}$, and $M_2 \in \{X_j,I_N\}$, we get Equations \eqref{eq.dXidXj1}--\eqref{eq.dXidXj2}.
Let us now see how Theorem \ref{thm.IMPR} gives rise to a ``functional" It\^{o} formula for $C^2$ scalar functions of Hermitian matrix It\^{o} processes.

\begin{notaa}[Noncommutative Derivatives]
For $f \in C^k(\R)$, write $f^{[k]} \in C(\R^{k+1})$ for the $k^{\text{th}}$ divided difference of $f$.
(Please see Definition \ref{def.divdiff} and Proposition \ref{prop.divdiff}.)
If $M \in \MnC_{\sa}$, then
\[
\partial^kf(M) \coloneqq k!\sum_{\blambda \in \sigma(M)^{k+1}} f^{[k]}(\blambda)\,P_{\lambda_1}^M \otimes \cdots \otimes P_{\lambda_{k+1}}^M \in \MnC^{\otimes(k+1)},
\]
where $\blambda = (\lambda_1,\ldots,\lambda_{k+1})$ above. We shall view $\partial f(M) \coloneqq \partial^1f(M)$ as an element of $\MnC \otimes \MnC^{\op}$.
\end{notaa}

Here is the key fact.
Please see Appendix A of \cite{nikitopoulosNCk} for a self-contained proof.

\begin{thma}[Dalteskii--Krein \cite{daletskiikrein}, Hiai \cite{hiai}]\label{thm.matfunccalcder}
If $f \in C^k(\R)$, then $f_{\MnC} \in C^k(\MnC_{\sa};\MnC)$ and
\begin{align*}
    D^kf_{\MnC}(A)[B_1,\ldots,B_k] & = \frac{1}{k!}\sum_{\pi \in S_k}\partial^kf(A)\sh_k [B_{\pi(1)},\ldots, B_{\pi(k)}] \\
    & = \sum_{\pi \in S_k}\sum_{\blambda \in \sigma(A)^{k+1}}f^{[k]}(\blambda) \, P_{\lambda_1}^AB_{\pi(1)}\cdots P_{\lambda_k}^AB_{\pi(k)}P_{\lambda_{k+1}}^A, \numberthis\label{eq.matfunccalcderncder}
\end{align*}
for all $A,B_1,\ldots,B_k \in \MnC_{\sa}$, where $D^k$ is the $k^{\text{th}}$ Fr\'{e}chet derivative and $S_k$ is the symmetric group on $k$ letters.
\end{thma}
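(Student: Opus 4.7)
The plan is to prove the theorem in two stages: first establish the formula for polynomials by direct computation, then pass to general $f \in C^k(\R)$ via Weierstrass approximation together with the standard upgrade from convergence of derivatives to differentiability of the limit.

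For the polynomial case, consider $p_n(\lambda) = \lambda^n$ and expand
\[
p_n\big(A + t_1 B_1 + \cdots + t_k B_k\big) = \big(A + t_1 B_1 + \cdots + t_k B_k\big)^n
\]
as a polynomial in $(t_1,\ldots,t_k)$. Extracting the coefficient of $t_1 \cdots t_k$ -- which, by multilinearity, is precisely $D^k p_{n,\MnC}(A)[B_1,\ldots,B_k]$ -- and summing over the choices of which of the $n$ factors contributes which $t_j B_j$ yields
\[
\sum_{\pi \in S_k}\sum_{j_0 + \cdots + j_k = n-k} A^{j_0} B_{\pi(1)} A^{j_1} \cdots B_{\pi(k)} A^{j_k}.
\]
By Example \ref{ex.divdiffpoly} and Definition \ref{def.NCderalg}, this equals $\frac{1}{k!}\sum_{\pi \in S_k} \partial^k p_n(A) \sh_k [B_{\pi(1)},\ldots,B_{\pi(k)}]$, establishing the first form of Equation \eqref{eq.matfunccalcderncder}. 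The second form follows by substituting the spectral decomposition $A = \sum_{\lambda \in \sigma(A)} \lambda P_\lambda^A$ into each $A^{j_i}$ and recognizing the polynomial $\sum_{|\delta|=n-k}\lambda_1^{\delta_1} \cdots \lambda_{k+1}^{\delta_{k+1}}$ as $p_n^{[k]}(\blambda)$. By linearity the formula then holds for all of $\C[\lambda]$.

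For general $f \in C^k(\R)$, fix $A_0 \in \MnC_{\sa}$, choose $r > \|A_0\|$, and restrict to $A$ in the operator-norm ball of radius $r$, so $\sigma(A) \subseteq [-r, r]$. By Weierstrass approximation applied to $f^{(k)}$ on $[-r, r]$ followed by $k$-fold antidifferentiation, there exist polynomials $(p_N)_{N \in \N}$ with $p_N^{(j)} \to f^{(j)}$ uniformly on $[-r, r]$ for every $j \in \{0, 1, \ldots, k\}$. The integral representation of Proposition \ref{prop.divdiff}.\ref{item.divdiffcont} then gives $p_N^{[j]} \to f^{[j]}$ uniformly on $[-r, r]^{j+1}$ for every such $j$. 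Because the right-hand side of Equation \eqref{eq.matfunccalcderncder} depends on $g$ only through the values $g^{[j]}(\blambda)$ for $\blambda \in \sigma(A)^{j+1}$, and the spectral projectors $P_\lambda^A$ have norm at most $1$, the explicit formula for $p_N$ converges at every order $j \leq k$, uniformly in $A$ (in the ball of radius $r$) and in $B_1,\ldots,B_j$ (in the unit ball), to the corresponding formula for $f$, yielding continuous candidate $j$-th derivatives $G^j$.

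The final step is to conclude that the $G^j$ really are the Fréchet derivatives of $f_{\MnC}$. Here we iteratively apply the classical fact that if $g_N \in C^1(U; Y)$ are maps between Banach spaces, $g_N \to g$ pointwise, and $D g_N \to G$ uniformly on compact subsets of the open set $U$ with $G$ continuous, then $g \in C^1(U; Y)$ with $Dg = G$. Starting from the uniform-on-bounded-sets convergence of $p_{N,\MnC}$ together with all its Fréchet derivatives up to order $k$, $k$ successive applications of this fact (at orders $k-1, k-2, \ldots, 0$) produce $f_{\MnC} \in C^k(\MnC_{\sa}; \MnC)$ with $D^k f_{\MnC}$ given by Equation \eqref{eq.matfunccalcderncder}. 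The main obstacle is the bookkeeping of uniform convergence of derivatives across orders; once the polynomial formula is in hand, however, this reduces to the estimate that the right-hand side of Equation \eqref{eq.matfunccalcderncder} is bounded by $k!\,\|g^{[k]}\|_{\ell^{\infty}([-r,r]^{k+1})}\|B_1\|\cdots\|B_k\|$, which propagates the uniform convergence $p_N^{[k]} \to f^{[k]}$ to uniform operator-norm convergence of the candidate derivatives.
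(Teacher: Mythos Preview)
The paper does not prove this theorem; it simply refers the reader to Appendix~A of \cite{nikitopoulosNCk} for a self-contained argument. Your two-stage approach---direct verification for monomials via expansion of $(A+\sum_j t_jB_j)^n$ and extraction of the $t_1\cdots t_k$ coefficient, followed by Weierstrass approximation combined with the standard ``uniform convergence of derivatives implies $C^k$ of the limit'' lemma---is correct and is a standard route to this result.

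One small correction: the operator-norm bound $k!\,\|g^{[k]}\|_{\ell^\infty([-r,r]^{k+1})}\|B_1\|\cdots\|B_k\|$ you claim for the right-hand side of Equation~\eqref{eq.matfunccalcderncder} is not right as stated; a triangle inequality over the spectral sum $\sum_{\blambda \in \sigma(A)^{k+1}}$ picks up an extra factor of $|\sigma(A)|^{k+1} \leq N^{k+1}$. Since $N$ is fixed throughout, the corrected bound is still uniform in $A$ over $\{\|A\|\leq r\}$, so the approximation argument goes through unchanged. Note also that continuity of the candidate derivatives $G^j$ in $A$ is not obvious from the spectral formula itself (the projections $P_\lambda^A$ are discontinuous in $A$), but it follows, as you implicitly use, from the fact that $G^j$ is a locally uniform limit of the continuous maps $D^j(p_N)_{\MnC}$.
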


We are now ready to state and prove the (matrix) functional It\^{o} formula that motivates our functional free It\^{o} formula (Theorem \ref{thm.FFIF}).

\begin{notaa}
If $f \in C^2(\R)$ and $U \in \MnC \otimes \MnC^{\op}$, then we define
\[
\Delta_Uf(M) \coloneqq \mathcal{M}_{\tr}((I_N \otimes U)\boldsymbol{\cdot}\partial^2f(M)\boldsymbol{\cdot}(U \otimes I_N)) \in \MnC,
\]
where $\boldsymbol{\cdot}$ is multiplication in $\MnC \otimes \MnC^{\op} \otimes \MnC$ as usual.
\end{notaa}

\begin{thma}[Functional It\^{o} Formula]\label{thm.FIF}
Let $M$ be a $N \times N$ matrix It\^{o} process satisfying Equation \eqref{eq.matItoprocess}, and suppose $M^*=M$.
If $f \in C^2(\R)$, then
\[
d\,f(M(t)) = \partial f(M(t))\sh dM(t) + \frac{1}{2}\sum_{i=1}^n\Delta_{U_i(t)}f(M(t))\,dt. \numberthis\label{eq.FIF}
\]
\end{thma}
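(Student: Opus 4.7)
The plan is to reduce the theorem to the classical It\^{o} formula for matrix-valued continuous semimartingales, using Theorem \ref{thm.matfunccalcder} to identify the Fr\'{e}chet derivatives of $f_{\MnC}$ with noncommutative derivatives and using Theorem \ref{thm.IMPR} to evaluate the resulting quadratic covariation term.

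First I would observe that since $M^* = M$, the process $M$ is a continuous semimartingale taking values in the real Hilbert space $(\MnC_{\sa},\la\cdot,\cdot\ra_N)$, and by Theorem \ref{thm.matfunccalcder} the matrix function $f_{\MnC} \colon \MnC_{\sa} \to \MnC$ is of class $C^2$. The classical It\^{o} formula applied to $f_{\MnC}(M(t))$ (in the real vector space $\MnC_{\sa}$) therefore yields
\[
d\,f(M(t)) = Df_{\MnC}(M(t))[dM(t)] + \tfrac{1}{2}D^2 f_{\MnC}(M(t))[dM(t),dM(t)].
\]
Next, Theorem \ref{thm.matfunccalcder} with $k=1$ gives $Df_{\MnC}(M(t))[B] = \partial f(M(t)) \sh B$, so the first term on the right equals $\partial f(M(t)) \sh dM(t)$. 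With $k=2$, the same theorem yields
\[
D^2 f_{\MnC}(M(t))[B,B] = \tfrac{1}{2}\bigl(\partial^2 f(M(t))\sh_2[B,B] + \partial^2 f(M(t))\sh_2[B,B]\bigr) = \partial^2 f(M(t))\sh_2[B,B],
\]
so the second-order differential reads $\tfrac{1}{2}\,\partial^2 f(M(t)) \sh_2 [dM(t), dM(t)]$.

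The key computation is then to apply Theorem \ref{thm.IMPR} with $M_1 = M_2 = M$ and with the continuous $\MnC^{\otimes 3}$-valued process $W(t) \coloneqq \partial^2 f(M(t))$; the continuity of $W$ follows from the continuity of $t \mapsto M(t)$ and of the functional calculus map $\MnC_{\sa} \ni A \mapsto \partial^2 f(A) \in \MnC^{\otimes 3}$ (which in finite dimensions reduces to joint continuity of the spectral projections, or equivalently to functional calculus on the commuting triple $A \otimes I_N \otimes I_N, I_N \otimes A \otimes I_N, I_N \otimes I_N \otimes A$). Theorem \ref{thm.IMPR} produces
\[
\int_0^t \partial^2 f(M(s)) \sh_2 [dM(s), dM(s)] = \sum_{i=1}^n \int_0^t \mathcal{M}_{\tr}\bigl((I_N \otimes U_i(s)) \boldsymbol{\cdot} \partial^2 f(M(s)) \boldsymbol{\cdot} (U_i(s) \otimes I_N)\bigr)\,ds,
\]
and the integrand on the right-hand side is exactly $\Delta_{U_i(s)} f(M(s))$ by definition. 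Assembling the three ingredients gives \eqref{eq.FIF}.

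The only real obstacle is a bit of technical bookkeeping: I need to confirm that because $M^* = M$, the decomposition \eqref{eq.matItoprocess} genuinely lives in $\MnC_{\sa}$ (so that $f_{\MnC}$ may be applied pathwise), and to justify replacing the abstract classical quadratic-covariation differential ``$[dM, dM]$'' in the It\^{o} formula by the explicit identity of Theorem \ref{thm.IMPR} on the random, continuous $\MnC^{\otimes 3}$-valued integrand $\partial^2 f(M(\cdot))$. The integrability condition \eqref{eq.integUjK} together with continuity of $W$ makes this a standard localization/approximation argument. Once that is settled, the rest of the proof is a purely algebraic combination of Theorems \ref{thm.matfunccalcder} and \ref{thm.IMPR}.
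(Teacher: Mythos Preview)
Your proposal is correct and follows essentially the same approach as the paper's proof: apply the classical It\^{o} formula to $f_{\MnC}$, invoke Theorem \ref{thm.matfunccalcder} to identify the Fr\'{e}chet derivatives with $\partial f(M)\sh\cdot$ and $\partial^2 f(M)\sh_2[\cdot,\cdot]$, then use Theorem \ref{thm.IMPR} with $W=\partial^2 f(M)$ to rewrite the quadratic covariation as $\sum_i \Delta_{U_i}f(M)\,dt$. The paper's proof is slightly terser and does not pause over the continuity of $W$ or the technical bookkeeping you flag, but the logical structure is identical.
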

\begin{proof}
If $f \in C^2(\R)$, then $f_{\MnC} \in C^2(\MnC_{\sa};\MnC)$, so we may apply It\^{o}'s formula (Equation \eqref{eq.Itoform}) with $F = f_{\MnC}$.
Doing so gives
\begin{align*}
    d\,f(M(t)) & = d\,f_{\MnC}(M(t)) \\
    & = Df_{\MnC}(M(t))[dM(t)] + \frac{1}{2}D^2f_{\MnC}(M(t))[dM(t),dM(t)] \\
    & = \partial f(M(t)) \sh dM(t) + \frac{1}{2}\partial^2f(M(t)) \sh_2[dM(t),dM(t)] \\
    & = \partial f(M(t)) \sh dM(t) + \frac{1}{2}\sum_{i=1}^n\mathcal{M}_{\tr}((I_N \otimes U_i(t))\boldsymbol{\cdot}\partial^2f(M(t))\boldsymbol{\cdot}(U_i(t) \otimes I_N))\,dt \\
    & = \partial f(M(t)) \sh dM(t) + \frac{1}{2}\sum_{i=1}^n \Delta_{U_i(t)}f(M(t))\,dt
\end{align*}
by Theorem \ref{thm.matfunccalcder}, Theorem \ref{thm.IMPR}, and the definition of $\Delta_Uf(M)$.
\end{proof}

Applying $\tr = \frac{1}{N}\Tr$ to Equation \eqref{eq.FIF} and using symmetrization arguments similar to those from the proof of Lemma \ref{lem.trid} yields the following ``traced" formula that motivates Theorem \ref{thm.trFFIF}.
We leave the details to the interested reader.
In the statement below, if $U = \sum_{i=1}^k A_i \otimes B_i \in \MnC \otimes \MnC^{\op}$, then $U^{\flip} \coloneqq \sum_{i=1}^k B_i \otimes A_i \in \MnC \otimes \MnC^{\op}$.
Also, we write $\tr^{\op}$ for $\tr$ considered as a function $\MnC^{\op} \to \C$.

\begin{cora}[Traced Functional It\^{o} Formula]\label{cor.trFIF}
Let $M$ be a $N \times N$ matrix It\^{o} process satisfying Equation \eqref{eq.matItoprocess}, and suppose that $M^*=M$.
If $f \in C^2(\R)$, then
\[
d\,\tr(f(M(t))) = \tr\big(f'(M(t))\,dM(t)\big) + \frac{1}{2}\sum_{i=1}^n(\tr \otimes \tr^{\op})\big(U_i^{\flip}(t)\,\partial f'(M(t))\,U_i(t)\big)\,dt,
\]
where $U_i^{\flip}\partial f'(M)\,U_i$ is a product in the algebra $\MnC \otimes \MnC^{\op}$.
Under sufficient additional boundedness/integrability conditions (e.g., $U_i$, $K$, and $M$ are all uniformly bounded), we also have
\[
d\,\tau_N(f(M(t))) = \Big(\tau_N\big(f'(M(t))\,K(t)\big) + \frac{1}{2}\sum_{i=1}^n(\tau_N \otimes \tau_N^{\op})\big(U_i^{\flip}(t)\,\partial f'(M(t))\,U_i(t)\big)\Big)\,dt
\]
where $\tau_N = \mathbb{E} \circ \tr$ and $\tau_N^{\op} = \mathbb{E} \circ \tr^{\op}$.
\end{cora}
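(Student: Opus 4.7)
The plan is to obtain both formulas from the functional It\^{o} formula (Theorem \ref{thm.FIF}) by applying $\tr$ (respectively $\tau_N = \mathbb{E}\circ\tr$) to Equation \eqref{eq.FIF} and then identifying the resulting two scalar quantities. Specifically, I will prove the matrix analogues of the two identities in Lemma \ref{lem.trid}:
\begin{align*}
\tr(\partial f(M) \sh B) &= \tr(f'(M)\,B) \quad \text{for all } B \in \MnC, \, M \in \MnC_{\sa}, \text{ and} \\
\tr\big(\Delta_U f(M)\big) &= (\tr \otimes \tr^{\op})\big(U^{\flip}\,\partial f'(M)\,U\big) \quad \text{for all } U \in \MnC \otimes \MnC^{\op},
\end{align*}
and then combine them with the extant functional formula. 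Taking expectations will then give the second identity, provided the stochastic integral in the first formula is a true martingale.

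For the first identity, I would use Theorem \ref{thm.matfunccalcder} (with $k=1$) to write $\partial f(M)\sh B = \sum_{\lambda,\mu \in \sigma(M)} f^{[1]}(\lambda,\mu)\,P_{\lambda}^M B P_{\mu}^M$, apply $\tr$, use cyclicity to rearrange to $\sum_{\lambda,\mu} f^{[1]}(\lambda,\mu)\,\tr(P_{\mu}^M P_{\lambda}^M B)$, and then collapse using $P_{\mu}^M P_{\lambda}^M = \delta_{\lambda\mu} P_{\lambda}^M$ and $f^{[1]}(\lambda,\lambda) = f'(\lambda)$. For the second identity, expand $\Delta_U f(M) = \mathcal{M}_{\tr}((I_N \otimes U)\boldsymbol{\cdot} \partial^2 f(M) \boldsymbol{\cdot} (U \otimes I_N))$ with a pure tensor $U = A \otimes B$ (the general case follows by bilinearity) so that
\[
\tr(\Delta_{A \otimes B}f(M)) = 2\sum_{\lambda_1,\lambda_2,\lambda_3}f^{[2]}(\lambda_1,\lambda_2,\lambda_3)\,\tr(BP_{\lambda_2}^M A)\,\tr(P_{\lambda_1}^M AB P_{\lambda_3}^M).
\]
Using $\tr(P_{\lambda_1}^M AB P_{\lambda_3}^M) = \delta_{\lambda_1,\lambda_3}\tr(P_{\lambda_1}^M AB)$ and relabelling, this reduces to a double sum in $(\lambda,\mu)$ with kernel $f^{[2]}(\mu,\lambda,\mu)$. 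Symmetrising over $\lambda \leftrightarrow \mu$ using that the trace factor $\tr(BP_\lambda A)\tr(BP_\mu A)$ is symmetric, I can replace the kernel by the symmetric combination $f^{[2]}(\mu,\lambda,\mu) + f^{[2]}(\lambda,\mu,\lambda)$, which equals $(f')^{[1]}(\lambda,\mu)$ by the divided-difference identity used in the proof of Lemma \ref{lem.trid} (verified from $f^{[2]}(\lambda_1,\lambda_2,\lambda_1) = [f^{[1]}(\lambda_1,\lambda_2)-f'(\lambda_1)]/(\lambda_2-\lambda_1)$). The right-hand side then matches $(\tr \otimes \tr^{\op})((B \otimes A)(\partial f'(M))(A \otimes B)) = (\tr \otimes \tr^{\op})((A\otimes B)^{\flip}\,\partial f'(M)\,(A\otimes B))$ after a straightforward computation in $\MnC \otimes \MnC^{\op}$.

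Combining these two identities with the integrated form of Equation \eqref{eq.FIF} yields the first assertion of the corollary. For the second (expectation) identity, I would take $\mathbb{E}$ of the integrated form. The $dt$ terms transfer directly since $\tau_N = \mathbb{E}\circ\tr$ and $\tau_N\otimes\tau_N^{\op} = \mathbb{E}\circ(\tr\otimes\tr^{\op})$ (after Fubini, using the joint integrability assumptions). The stochastic integral term $\int_0^t \tr(f'(M(s))\,dM(s))$ decomposes, via the representation $X_i = \sum_{E\in\mathcal{E}}b_{i,E}\,E$ of Equation \eqref{eq.repofXj}, into Bochner-type $dt$ integrals against $K$ plus a sum of ordinary It\^{o} integrals $\int_0^t \tr(f'(M(s))\,U_i(s)\sh E)\,db_{i,E}(s)$ against independent scalar Brownian motions. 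Under the stated uniform bounds on $U_i$, $K$, and $M$, the integrands $\tr(f'(M(s))\,U_i(s)\sh E)$ are uniformly bounded, so each of these integrals is a genuine square-integrable martingale with zero expectation, and only the drift $\tau_N(f'(M(s))K(s))$ survives.

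The main obstacle is Step~3, the $\tr(\Delta_U f(M))$ identity: the algebraic shuffle that exposes the divided-difference identity $f^{[2]}(\mu,\lambda,\mu)+f^{[2]}(\lambda,\mu,\lambda)=(f')^{[1]}(\lambda,\mu)$ is the only non-routine ingredient, and everything else (the first identity, the passage to expectations, and the martingale vanishing) is a straightforward trace manipulation or a standard application of the integrability hypotheses. The argument essentially mirrors that of Lemma \ref{lem.trid}, but I would present it directly in the matrix setting so that the corollary stands on Theorem \ref{thm.FIF} alone without invoking the free-probabilistic machinery of Section \ref{sec.trFFIF}.
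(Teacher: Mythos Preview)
Your proposal is correct and follows precisely the route the paper itself indicates: apply $\tr$ to Equation~\eqref{eq.FIF} and carry out the matrix analogues of the two identities in Lemma~\ref{lem.trid}, including the key symmetrization step $f^{[2]}(\mu,\lambda,\mu)+f^{[2]}(\lambda,\mu,\lambda)=(f')^{[1]}(\lambda,\mu)$. The paper leaves the details to the reader, and you have supplied them accurately; the only small imprecision is the phrase ``the general case follows by bilinearity'' for the second identity, which is really bilinearity together with the observation that for $U=\sum_j A_j\otimes B_j$ the combined trace factor $\sum_{j,k}\tr(B_j P_\lambda^M A_k)\tr(B_k P_\mu^M A_j)$ is symmetric in $(\lambda,\mu)$, so the same symmetrization goes through.
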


\begin{ack}
\phantomsection
\addcontentsline{toc}{section}{Acknowledgements}
I acknowledge support from NSF grant DGE 2038238 and partial support from NSF grants DMS 1253402 and DMS 1800733.
I am grateful to Bruce Driver, Adrian Ioana, and Todd Kemp for many helpful conversations.
Bruce Driver's extensive feedback on earlier versions of the paper was particularly helpful.
I would also like to thank David Jekel for bringing to my attention his parallel work on $C_{\mathrm{nc}}^k(\R)$ in Section 18 of his dissertation, \cite{jekel}.
Finally, I thank the anonymous referee, whose careful review led to improvements of both the present paper and my paper on noncommutative $C^k$ functions \cite{nikitopoulosNCk}.
\end{ack}

\small

\end{document}